\newcommand{\HC}{\mathrm{HC}}
\newcommand{\acrysh}{\widehat{\acrys}}
\newcommand{\mot}{\mathrm{mot}}
\newcommand{\spec}{\mathrm{Spec}}
\newcommand{\shv}{\mathrm{Shv}}
\renewcommand{\phi}{\varphi}
\newcommand{\D}{\mathcal{D}}
\newcommand{\HP}{\mathrm{HP}}
\newcommand{\TC}{\mathrm{TC}}
\newcommand{\TP}{\mathrm{TP}}
\newcommand{\sF}{\mathcal{F}}
\newcommand{\qrsp}{\mathrm{QRSPerfd}}
\newcommand{\qrspp}{\mathrm{QRSPerf}_{\mathbb{F}_p}}
\newcommand{\triplearrows}{\begin{smallmatrix} \to \\ \to \\ 
\to \end{smallmatrix} }
\DeclareSymbolFontAlphabet{\mathbb}{AMSb} 
\DeclareSymbolFontAlphabet{\mathbbl}{bbold}
\newcommand{\Prism}{{\mathlarger{\mathbbl{\Delta}}}}
\newcommand{\Prismc}{\widehat{\Prism}}
\theoremstyle{definition}
\newtheorem{definition}{Definition}[section]
\newtheorem{question}[definition]{Question}
\newcommand{\gr}{\mathrm{gr}}
\newtheorem{variant}[definition]{Variant}
\newtheorem{construction}[definition]{Construction}
\newtheorem{example}[definition]{Example}
\newtheorem{remark}[definition]{Remark}
\theoremstyle{theorem}
\newtheorem{corollary}[definition]{Corollary}
\newtheorem{theorem}[definition]{Theorem}
\newtheorem{proposition}[definition]{Proposition}
\begin{document}

\renewcommand{\sp}{\mathrm{Sp}}
\newcommand{\THH}{\mathrm{THH}}

\title{Some recent advances in topological Hochschild homology}
\author{Akhil Mathew}
\address{Department of Mathematics, University of Chicago, 5734 S University
Ave., Chicago, IL 60637 USA}
\email{amathew@math.uchicago.edu}

\subjclass[2010]{19D55, 13D03}
\date{\today}

\maketitle

\begin{abstract}
We give an account of the construction of the Bhatt--Morrow--Scholze motivic
filtration on topological cyclic homology and related invariants, focusing on
the case of equal characteristic $p$ and the connections to crystalline and de
Rham--Witt theory. 
\end{abstract}

\section{Introduction}

Let $X$ be a smooth quasi-projective scheme over a field $k$. 
In this case, one has the algebraic $K$-theory spectrum $K(X)$ of $X$,
defined by Quillen \cite{Qui72} using the exact category of vector bundles on $X$ (in
general, one should  use perfect complexes as in \cite{TT90}). One can think of $K(X)$ as a type of ``cohomology theory'' for the scheme $X$,
analogously to the topological $K$-theory of a compact topological space. 
With this in mind, 
the following fundamental result gives an analog of the classical Atiyah--Hirzebruch
spectral sequence relating topological $K$-theory to singular cohomology. 

\begin{theorem}[The motivic filtration on algebraic $K$-theory,
\cite{FS00, Lev08}] 
\label{motivicfilt}
There is a functorial, convergent,  decreasing  
multiplicative filtration 
$\mathrm{Fil}^{\geq \ast} K(X)$ and identifications
$\mathrm{gr}^i K(X) \simeq \mathbb{Z}(i)^{\mot}(X)[2i]$ for $i \geq 0$. \end{theorem}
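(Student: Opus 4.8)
The plan is to realize $\mathrm{Fil}^{\geq\ast}K(X)$ as Levine's \emph{homotopy coniveau tower}, which is (a form of) the slice filtration of the motivic spectrum $KGL$ representing algebraic $K$-theory in $\mathrm{SH}(k)$; an alternative is to follow Friedlander--Suslin and globalize the Bloch--Lichtenbaum spectral sequence for fields. In the coniveau approach, write $\Delta^n=\spec k[t_0,\dots,t_n]/(\textstyle\sum_i t_i-1)$ for the algebraic $n$-simplex and, for $q\geq 0$, let $K^{(q)}_n(X)$ denote the $K$-theory of $X\times\Delta^n$ with supports on closed subsets of codimension $\geq q$ meeting every face of $\Delta^n$ properly --- the filtered colimit over such $Z$ of $\mathrm{fib}\bigl(K(X\times\Delta^n)\to K((X\times\Delta^n)\smallsetminus Z)\bigr)$. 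Pullback along cofaces and codegeneracies makes $n\mapsto K^{(q)}_n(X)$ a simplicial spectrum; its geometric realization $K^{(q)}(X):=\lvert\, n\mapsto K^{(q)}_n(X)\,\rvert$ fits into a tower $\cdots\to K^{(q+1)}(X)\to K^{(q)}(X)\to\cdots\to K^{(0)}(X)$, and I would \emph{define} $\mathrm{Fil}^{\geq q}K(X)$ to be a suitable reindexing of $K^{(q)}(X)$.

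Next I would establish the formal properties. Homotopy invariance of $K$-theory identifies $K^{(0)}(X)$, the realization of $n\mapsto K(X\times\Delta^n)$, with $K(X)$, so the filtration is exhaustive. Completeness and convergence follow because $K^{(q)}(X)$ becomes increasingly connective as $q\to\infty$, which one reads off from a connectivity estimate for $K$-theory with supports in high codimension (or, a posteriori, from the description of the graded pieces below). Functoriality of the tower in $X$ is the first genuinely nontrivial point: one must show it is independent of the auxiliary choices and admits pullback maps along arbitrary morphisms of smooth schemes, and this rests on Bloch's moving lemma, in Bloch's or Levine's refined forms over an infinite field, together with Suslin's trick to descend to finite fields.

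Then I would identify the layers. Level-wise in $n$, the localization theorem in $K$-theory together with Quillen dévissage identify the cofiber $K^{(q)}_n(X)/K^{(q+1)}_n(X)$ with $\bigoplus_x K(\kappa(x))$, the sum over codimension-$q$ points $x$ of $X\times\Delta^n$ whose closure meets all faces properly. The simplicial abelian group $n\mapsto\pi_0$ of this cofiber is precisely Bloch's cycle complex $z^q(X,\bullet)$, so $CH^q(X,\ast)=H^{2q-\ast}_{\mot}(X,\mathbb{Z}(q))$; the content of the theorem is that the \emph{higher} homotopy of the residue-field spectra $K(\kappa(x))$ does not contribute to the graded pieces, so that $\mathrm{gr}^q K(X)\simeq z^q(X,\bullet)[2q]\simeq\mathbb{Z}(q)^{\mot}(X)[2q]$. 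For $X=\spec F$ a field this is the theorem of Bloch--Lichtenbaum --- with the original gap repaired and the statement reproved by Friedlander--Suslin and by Levine --- whose bottom case is the Nesterenko--Suslin--Totaro isomorphism $K^M_q(F)\cong CH^q(F,q)$; for general smooth $X$ one deduces it from the field case, either by splicing the Bloch--Lichtenbaum spectral sequences along a Gersten resolution (Friedlander--Suslin) or by noting that the layers of the coniveau tower --- equivalently the slices of $KGL$ --- may be computed Zariski-locally, hence from the stalks, which are fields (Levine). I expect this step, the identification over a field, to be the main obstacle: it is where the hard input is concentrated --- the moving and cancellation lemmas, and the comparison of the $K$-theoretic weight filtration with higher Chow groups --- while the rest is essentially formal once it is in hand.

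Finally, for the multiplicative refinement I would work in $\mathrm{SH}(k)$ throughout. The spectrum $KGL$ is an $E_\infty$-ring, the slice filtration $f_{\geq q}KGL$ is a filtration by $E_\infty$-$KGL$-modules compatible with the multiplication (the slice functors being lax symmetric monoidal), so its associated graded is a graded $E_\infty$-ring, and Levine's theorem $s_q KGL\simeq\Sigma^{2q,q}H\mathbb{Z}$ (Voevodsky's slice conjecture for $KGL$) identifies this graded ring with $\bigoplus_{q\geq 0}\mathbb{Z}(q)^{\mot}[2q]$. Evaluating on $X$ and invoking Levine's identification of the slice filtration of $KGL$ with the homotopy coniveau tower then yields the multiplicative filtration on $K(X)$ asserted in the theorem.
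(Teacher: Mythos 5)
The paper does not prove \Cref{motivicfilt}; it is stated as background with citations to Friedlander--Suslin and Levine, and the body of the article is concerned with the Bhatt--Morrow--Scholze analogue for $\THH$ and $\TC$. So there is no ``paper's own proof'' to compare against. Judged against the literature the theorem points to, your sketch is essentially correct and tracks both of the standard routes: the Friedlander--Suslin globalization of Bloch--Lichtenbaum, and Levine's homotopy coniveau tower identified with the slice filtration of $KGL$. You have put the hard input in the right places --- Bloch's moving lemma for functoriality of the tower, the Bloch--Lichtenbaum theorem over a field (with the Nesterenko--Suslin--Totaro result $K^M_q(F)\cong CH^q(F,q)$ at the bottom), and Levine's $s_q KGL \simeq \Sigma^{2q,q}H\mathbb{Z}$ for the multiplicative refinement --- and the shift bookkeeping $z^q(X,\bullet)\simeq\mathbb{Z}(q)^{\mot}(X)[2q]$ is right.

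Two small remarks, not gaps so much as places to be careful if you were to flesh this out. First, the identification $K^{(q)}_n(X)/K^{(q+1)}_n(X)\simeq\bigoplus_x K(\kappa(x))$ is a Quillen-style coniveau/localization-plus-d\'evissage statement that needs the ``good position'' condition on supports to behave well simplicially; stating it as a direct sum over codimension-$q$ points is the right picture, but the passage requires care with colimits over admissible supports. Second, convergence is not just ``increasingly connective'': one needs a bound coming from the dimension of $X$ (vanishing of $z^q(X,n)$ for $q$ large relative to $\dim X + n$) to know the filtration is complete; a purely connectivity-based argument would need Beilinson--Soul\'e-type vanishing, which is not known. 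With those caveats, the proposal is an accurate account of the proof in \cite{FS00, Lev08}.
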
 

Here the $\mathbb{Z}(i)^{\mot}(X)$, called the \emph{motivic cohomology} of $X$, are 
explicit cochain complexes introduced by Bloch \cite{Blo86} 
(see also \cite{Voe02})
in terms of
algebraic cycles on $X \times \mathbb{A}^n_k$ for $n \geq 0$. 
In particular, we have that $$H^{2i}(X, \mathbb{Z}(i))
\stackrel{\mathrm{def}}{=} 
H^{2i}( \mathbb{Z}(i)^{\mot}(X))
= \mathrm{CH}^i(X)$$ is given by the Chow
group $\mathrm{CH}^i(X)$ of codimension $i$ cycles on $X$ modulo rational
equivalence.\footnote{One difference in this analogy is that algebraic
$K$-theory historically preceded motivic cohomology, whereas singular
cohomology preceded topological $K$-theory.} 
The complexes $\mathbb{Z}(i)^{\mot}(X)$ (considered as objects of the derived category
of abelian groups) can also be described as maps in the $\mathbb{A}^1$-motivic stable homotopy
category from $X$ into motivic Eilenberg--MacLane spectra.

\Cref{motivicfilt} gives substantial information about algebraic $K$-theory,  
especially after profinite completion. After reducing modulo a prime $l$ which is
different from the
characteristic, the Beilinson--Lichtenbaum conjecture proved by
Voevodsky--Rost 
\cite{Voe03, Voe11}
identifies mod $l$ motivic cohomology as Zariski
(or Nisnevich) sheaves,
\begin{equation}   \label{BLconj} \mathbb{Z}/l(i)^{\mathrm{mot}} \simeq \tau^{\leq i} (R \nu_*  \mu_l^{\otimes
i} ), \end{equation}
for $\nu$ 
the pushforward from the \'etale to the Zariski topology. 
For example, 
\Cref{motivicfilt}
implies that in high degrees, we can compute mod $l$ algebraic $K$-theory
of a variety over $\mathbb{C}$ as the topological $K$-theory of the space of 
$\mathbb{C}$-points and is therefore finitely generated since the
underlying homotopy type
is that of a finite CW complex. (By contrast, $K_0$ with mod $l$ coefficients can be
enormous, cf.~for instance \cite{Sch02, RS10, Tot16}.) 

After $p$-adic completion when the ground field $k$ has characteristic $p$, 
the analog of the 
Beilinson--Lichtenbaum conjecture \eqref{BLconj} is given by the theorems of
Geisser--Levine \cite{GL00} and Bloch--Kato--Gabber \cite{BK}, 
\[ \mathbb{Z}/p(i)^{\mathrm{mot}} \simeq \Omega^i_{\mathrm{log}}[-i],  \]
identifying the object $\mathbb{Z}/p(i)^{\mathrm{mot}}$ (which lives in the
derived category of Zariski or Nisnevich sheaves on $X$) with the $-i$-shift of
the 
subsheaf $\Omega^{i}_{\mathrm{log}} \subset \Omega^i$ of differential $i$-forms
generated by $\frac{dx_1}{x_1} \wedge \dots \wedge \frac{dx_i}{x_i}$, for the
$x_i$ local units. 
For example, this coupled with 
\Cref{motivicfilt}
implies that if $X$ is a smooth variety over a perfect field
of characteristic $p$, then the mod $p$ $K$-theory $K_*(X; \mathbb{Z}/p)$ 
vanishes in degrees $>\dim(X)$.

However, the construction of \Cref{motivicfilt} (and the definition of motivic
cohomology) relies heavily on the
smoothness assumption on $X$. 
The higher Chow groups of a singular variety $X$ over a field $k$ give an analog
of Borel--Moore homology rather than cohomology; in particular, they are
nilinvariant (while algebraic $K$-theory is far from nilinvariant) and lack a product structure. 
The existing constructions of 
the motivic filtration use the $\mathbb{A}^1$-invariance of $K$-theory (valid
only in the regular case), and giving a general notion of the
motivic filtration (or of motivic
cohomology) on $K$-theory applicable to singular rings appears to be an open
problem.  
Even the setting of regular rings in mixed characteristic is not fully
understood (but see \cite{Lev06}). 

 \begin{question} 
Is there a motivic 
filtration on $K(R)$ for any ring $R$ 
which extends \Cref{motivicfilt} when $R$ is smooth over a field?\footnote{For
affine schemes, one
proposal is to left Kan extend motivic cohomology from smooth algebras using the
result of Bhatt--Lurie 
(cf.~\cite[Appendix A]{EHKSY} for an account)
that the connective $K$-theory of rings is left Kan
extended from smooth algebras. However, this construction will not satisfy
Zariski descent and does not obviously globalize; moreover, it does not apply to
nonconnective $K$-theory.} 
\end{question}

In the study of the algebraic $K$-theory of singular rings, 
the main new tool is the theory of trace methods. 
Trace methods provide maps from algebraic $K$-theory to more
computable invariants built from Hochschild homology, and the basic tools are
relative comparison results to the effect that the homotopy fiber of such maps
satisfy excision and nilinvariance.  
The most general and powerful form of these results uses topological cyclic
homology $\TC$, 
introduced by B\"okstedt--Hsiang--Madsen \cite{BHM93} in the $p$-complete
case (see \cite{DGM13} for the
integral version), which compares to $K$-theory via the
cyclotomic trace map  
\begin{equation} 
\label{cyctrace}
K(R) \to \TC(R). 
\end{equation}

\begin{theorem} 
The homotopy fiber $F$ of 
\eqref{cyctrace} has the following properties: 
\begin{enumerate}
\item  (Dundas--Goodwillie--McCarthy \cite{DGM13}\footnote{See also
\cite{Raskin} for an account using the approach to $\TC$ of \cite{NS18}.}) $F$ is nilinvariant. 
\item (Land--Tamme \cite{LT19}) $F$ satisfies excision, i.e., given a pullback
square of rings with the vertical arrows surjective, then $F$ carries this to a
pullback of spectra. 
\item (Clausen--Mathew--Morrow \cite{CMM}) The profinite completion of the
variant $F'(R) = \mathrm{fib}(K_{\geq 0}(R) \to \TC(R))$  is rigid 
for henselian pairs, i.e., if $(R, I)$ is a henselian pair, then $F'(R)/n
\xrightarrow{\sim} F'(R/I)/n$ for any integer $n > 0$. 
\end{enumerate}
\end{theorem}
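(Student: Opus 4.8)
The three parts are independent deep theorems; the plan is to sketch the strategy for each.

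\emph{(1) Nilinvariance.} The plan is to reduce to square-zero extensions and run Goodwillie calculus. Filtering a nilpotent ideal $I \subset R$ by its powers reduces us to the case $I^2 = 0$, so that $R = B \oplus M$ is a square-zero extension of $B = R/I$ by the $B$-bimodule $M = I$; we must show $F(B \oplus M) \simeq F(B)$. Regard $\widetilde{K}(B \oplus M)$ and $\widetilde{\TC}(B \oplus M)$ (the fibers of the maps to $K(B)$ and $\TC(B)$) as functors of $M$. The content of \cite{DGM13} is that the cyclotomic trace induces an equivalence on the Goodwillie--Taylor towers of these two functors: the tower of $M \mapsto \widetilde{K}(B \oplus M)$ is identified, via Goodwillie's derivative computation, in terms of $\THH$ of $B$ with coefficients in $M$ together with its $S^1$-equivariant (cyclotomic) structure, and the same cyclic bar constructions compute the tower of $\widetilde{\TC}$ since $\TC$ is manufactured functorially from $\THH$. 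Convergence is Goodwillie's analyticity theorem for the relative $K$-theory of nilpotent extensions (the $\THH$-side tower converges by inspection), so an equivalence on towers is an equivalence of functors. The main obstacle is precisely this identification of the $K$-theoretic Taylor tower with $\THH$ and the compatibility of the trace with the cyclotomic structure --- the technical heart of \cite{DGM13}.

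\emph{(2) Excision.} The key notion is that of a \emph{truncating} localizing invariant: a localizing invariant $E$ with $E(A) \xrightarrow{\sim} E(\pi_0 A)$ for every connective $\mathbb{E}_1$-ring $A$. First I would upgrade part (1) to show that $F = \mathrm{fib}(K \to \TC)$ is truncating: the stages $\tau_{\le n+1} A \to \tau_{\le n} A$ of a Postnikov tower are square-zero extensions, so the ring-spectrum form of \cite{DGM13} gives $F(\tau_{\le n+1}A) \simeq F(\tau_{\le n}A)$, and passing to the limit (valid since $F$ is a finitary localizing invariant and the Postnikov tower is eventually constant on any fixed homotopy group) yields $F(A) \simeq F(\pi_0 A)$. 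Then I would invoke the structural theorem of \cite{LT19}: for a Milnor square, i.e. an ideal $I \subseteq A$ mapping isomorphically onto an ideal of $B$ along a map $A \to B$ (so $A = B \times_{B/I} A/I$), there is an $\mathbb{E}_1$-ring $(A/I) \odot_A B$ with $\pi_0\bigl((A/I) \odot_A B\bigr) = B/I$ such that every localizing invariant carries the square $\{A, B, A/I, (A/I)\odot_A B\}$ to a pullback. Since $F$ is truncating, $F\bigl((A/I)\odot_A B\bigr) \simeq F(B/I)$, and the cartesian square becomes the excision square for $F$. The main obstacle is the construction of the ring $(A/I)\odot_A B$ and the verification of these two properties, which is the technical core of \cite{LT19}.

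\emph{(3) Rigidity for henselian pairs.} Combining (1) and (2), $F$ and $F'$ are (up to the connective cover on $K$-theory) truncating, finitary localizing invariants, hence satisfy excision, nilinvariance, and continuity; from these formal properties one reduces the claim for a general henselian pair $(R, I)$ to the case of $R$ Noetherian and, by dévissage, to henselian local rings, and one reduces $n$ to a prime power $\ell^\nu$. If $\ell$ is invertible in $R/I$, then Gabber--Suslin rigidity handles $K$-theory, $K(R; \mathbb{Z}/\ell) \xrightarrow{\sim} K(R/I; \mathbb{Z}/\ell)$, and the corresponding statement for $\TC$ follows after reducing --- again by continuity and nilinvariance --- to $\mathbb{Z}_p$- and $\mathbb{F}_p$-algebras, where relative $\TC$ with $\mathbb{Z}/\ell$-coefficients vanishes because $\THH$ of such rings is $p$-complete. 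The remaining case $\ell = p$ equal to the residue characteristic is the hard one: étale-local methods are unavailable, and one instead uses the cyclotomic structure of $p$-adic $\TC$ together with part (1) to reduce to henselian pairs of $\mathbb{F}_p$-algebras, for which mod-$p$ rigidity of both $K$ and $\TC$ follows from the pro-descent and de Rham--Witt results of Geisser--Hesselholt. The main obstacle is precisely this $p$-primary case and the passage from characteristic $p$ back to mixed characteristic.
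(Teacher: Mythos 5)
The paper itself offers no proof here --- this is a survey statement recording three theorems due to other authors, with proofs deferred entirely to the cited references. So the comparison has to be with the strategies in \cite{DGM13}, \cite{LT19}, and \cite{CMM}.

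Your sketches of (1) and (2) are accurate summaries of those arguments. In particular, for (2) you correctly isolate the two pillars of Land--Tamme: the notion of a truncating invariant (which for $F$ follows from (1) by running up the Postnikov tower, exactly as you say), and the universal $\mathbb{E}_1$-ring $(A/I) \odot_A B$ with $\pi_0$ equal to $B/I$, which turns a Milnor square into a square that every localizing invariant sends to a pullback. For a truncating $F$, this pullback square is precisely the excision square.

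Your sketch of (3) has the right skeleton --- split off the $\ell$-prime-to-$p$ case via Gabber rigidity, handle the $\ell = p$ case via characteristic-$p$ comparison results --- but several details do not match what \cite{CMM} actually do, and one step is outright wrong. First, the CMM argument does not reduce to Noetherian rings and henselian local rings by d\'evissage; the strategy is rather the opposite, systematically passing to very large non-Noetherian rings (ind-smooth, perfect, semiperfect, etc.), since the key computations are cleanest there and the invariant $F'/p$ commutes with filtered colimits. Second, the claim that ``relative $\TC$ with $\mathbb{Z}/\ell$-coefficients vanishes because $\THH$ of such rings is $p$-complete'' is not correct as stated: $\THH(R)$ is not $p$-complete for a $\mathbb{Z}_p$-algebra $R$ (only $\THH(R; \mathbb{Z}_p)$ is), and the actual argument for $\ell \ne p$ passes through the rational and $\ell$-adic decomposition of $\TC$ and Goodwillie's theorem relating relative $K$ and $\HC^-$. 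Third, and most significantly, attributing the $p$-primary case to ``the pro-descent and de Rham--Witt results of Geisser--Hesselholt'' undersells the content of \cite{CMM}: the genuinely new technical inputs there are a continuity theorem and a finite-flat/hensel descent theorem for $\TC/p$, proved using the Nikolaus--Scholze formula \eqref{formulaTC}, which are then combined with the Geisser--Levine/Geisser--Hesselholt comparison for smooth $\mathbb{F}_p$-algebras to nail down the base case. Without those new descent theorems, the $\ell = p$ reduction you describe does not close.
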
 

When $R$ is a $\mathbb{Q}$-algebra, then $\mathrm{TC}(R)$ agrees with the
negative cyclic homology $\mathrm{HC}^-(R/\mathbb{Q})$, which is closely related
to the  de Rham cohomology of $R$, and parts (1) and (2) of the result are due
respectively to Goodwillie \cite{Goo86} and to Corti\~nas \cite{Cort06}. 
For example, compare \cite{GRW89} for applications of these results (at the
time conjectural) to the calculation of the $K$-theory of singular
curves over $\mathbb{Q}$ (e.g., rings such as $\mathbb{Q}[x,y]/(xy)$).  
Part (3) in this case, or more generally when $n$  is invertible on $R$,  is the Gabber rigidity
theorem \cite{Gab92}.

In this survey, we concentrate on the situation after $p$-adic completion for $p$-adic
rings, in which case parts (1) and (2) are due to McCarthy \cite{McCarthy97} and
Geisser--Hesselholt \cite{GH06birelative}. 
In this case, it is known that the 
map \eqref{cyctrace} is not only 
useful for detecting ``infinitesimal'' behavior, but 
is also an 
absolute approximation to $p$-adic $K$-theory $R$: specifically, it is $p$-adic
\'etale (connective) $K$-theory;  moreover,  it is an equivalence in high enough degrees
depending on the ring, under mild hypotheses. 
This follows from the work of Geisser--Levine \cite{GL00} on the $p$-adic
$K$-theory of smooth algebras in characteristic $p$, Geisser--Hesselholt
\cite{GH99} on
$\mathrm{TC}$ of such rings, extended to the more general situation using 
the rigidity result of \cite{CMM} (see also \cite{CM19}). 
\begin{theorem}  
\begin{enumerate}
\item  
For $p$-adic rings $R$, 
the trace \eqref{cyctrace} exhibits $\TC(R; \mathbb{Z}_p)$ as the $p$-completion
of the \'etale $K$-theory of $R$.\footnote{Strictly speaking, this refers to the
\'etale sheafification of the connective $K$-theory of $R$.} 
\item 
If $R$ is $p$-complete, $R/p$ has finite Krull dimension 
and $d = \sup_{x \in \spec(R/p)} \mathrm{log}_p [ k(x): k(x)^p]$, then 
the map $K(R; \mathbb{Z}_p) \to \TC(R; \mathbb{Z}_p)$ is an equivalence 
in degrees $\geq \mathrm{max}(d, 1)$. 
\end{enumerate}
\end{theorem}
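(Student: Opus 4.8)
The plan is to obtain both parts from three external inputs --- that $\TC(-;\mathbb{Z}_p)$ satisfies étale (hence Nisnevich) descent, the rigidity theorem of \cite{CMM} recalled above, and the characteristic $p$ computations of Geisser--Levine \cite{GL00} and Geisser--Hesselholt \cite{GH99} --- via a chain of reductions. For part (1), granting étale descent for $\TC(-;\mathbb{Z}_p)$, the cyclotomic trace automatically factors through the étale sheafification $K^{\text{\'et}}(-;\mathbb{Z}_p)$ of $p$-completed connective $K$-theory, so one has a map of étale sheaves $K^{\text{\'et}}(-;\mathbb{Z}_p)\to\TC(-;\mathbb{Z}_p)$ to be shown an equivalence. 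Both sides being étale sheaves that commute with filtered colimits, it suffices to test on a strictly henselian local ring $A$, where the small étale topos is a point and sheafification is invisible; the claim then becomes the vanishing of $\mathrm{fib}(K_{\geq 0}(A)\to\TC(A))^{\wedge}_p$. The pair $(A,(p))$ is henselian (as $A$ is henselian local), so by \cite{CMM} this fiber agrees after $p$-completion with the one for the strictly henselian local $\mathbb{F}_p$-algebra $A/p$, and applying \cite{CMM} again to $(A/p,\mathfrak{m})$ reduces us to $K(\kappa;\mathbb{Z}_p)\xrightarrow{\sim}\TC(\kappa;\mathbb{Z}_p)$ for $\kappa$ a separably closed field of characteristic $p$. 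Such a $\kappa$ is a filtered colimit of smooth $\mathbb{F}_p$-algebras (Popescu's theorem, $\mathbb{F}_p$ being perfect), so one may pass to the colimit: on smooth $\mathbb{F}_p$-algebras the mod $p$ motivic filtration of \Cref{motivicfilt} has $\gr^i K(-;\mathbb{Z}/p)\simeq\Omega^i_{\mathrm{log}}[i]$ (Geisser--Levine \cite{GL00}), computed on the Nisnevich site, whereas the Bhatt--Morrow--Scholze filtration on $\TC(-;\mathbb{Z}/p)$ has the analogous graded pieces computed on the étale site (Geisser--Hesselholt \cite{GH99}); after passing to the colimit $\kappa$ these become the Nisnevich resp.\ étale cohomology of $\spec\kappa$, which agree as $\kappa$ is separably closed. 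This proves part (1).

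For part (2), one uses part (1) to replace $\TC(R;\mathbb{Z}_p)$ by $K^{\text{\'et}}(R;\mathbb{Z}_p)$, so the claim becomes a bound on the failure of étale descent for $K$-theory. By \cite{CMM} applied to $(R,(p))$, the fiber $F'(R):=\mathrm{fib}(K_{\geq 0}(R)\to\TC(R))$ agrees after $p$-completion with $F'(R/p)$, and since the assertion concerns degrees $\geq 1$, where $K$ and $K_{\geq 0}$ coincide, it suffices to bound the connective range of $F'(R/p;\mathbb{Z}_p)$. Now $F'(-;\mathbb{Z}_p)$ satisfies Nisnevich descent (replacing connective by nonconnective $K$-theory where this is used, which is invisible in the relevant degrees); the finite Krull dimension of $R/p$ bounds the Nisnevich cohomological dimension of $\spec(R/p)$, and its Nisnevich stalks are henselian local rings, so by the descent spectral sequence and one further application of \cite{CMM} (to $(A,\mathfrak{m}_A)$) it is enough to bound $F'(\kappa;\mathbb{Z}_p)$ for the residue fields $\kappa$. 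For such $\kappa$ with $e:=\log_p[\kappa:\kappa^p]\leq d$, the Geisser--Levine/Geisser--Hesselholt description of the two filtrations identifies the associated graded of $\mathrm{cofib}(K(\kappa;\mathbb{Z}/p)\to\TC(\kappa;\mathbb{Z}/p))$ with the groups $H^1_{\text{\'et}}(\kappa,\Omega^i_{\mathrm{log}})$ placed in degree $i-1$: the Cartier/Artin--Schreier sequence $0\to\Omega^i_{\mathrm{log}}\to\Omega^i\to\Omega^i/d\Omega^{i-1}\to 0$, together with the vanishing of higher étale cohomology of quasi-coherent sheaves on the affine scheme $\spec\kappa$, shows $R\Gamma_{\text{\'et}}(\kappa,\Omega^i_{\mathrm{log}})$ lies in cohomological degrees $\leq 1$, and $\Omega^i_\kappa=0$ for $i>e$. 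Hence this cofiber, and so $F'(\kappa;\mathbb{Z}_p)$, is concentrated in homological degrees $\leq e-2\leq d-2$; feeding this back through the descent spectral sequence --- in which the cohomological dimension only governs convergence, not the output range --- yields the bound $\max(d,1)$, the ``$\max(\,\cdot\,,1)$'' accounting for the low-degree discrepancy between $K_0$ and $\TC_0$ on singular inputs.

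The genuine difficulty lies in the external inputs rather than in the organization above: étale (and Nisnevich) descent for $\TC(-;\mathbb{Z}_p)$, which rests on the fine structure of $\THH$, $\TC^-$ and $\TP$; the rigidity theorem of \cite{CMM}; and the characteristic $p$ computations of \cite{GL00} and \cite{GH99} are each substantial. Granting them, the remaining steps --- that sheafification is invisible on strictly henselian local rings, that the invariants in play commute with the filtered colimits used to descend to smooth $\mathbb{F}_p$-algebras, and the bookkeeping of cohomological dimensions and low-degree terms needed for the sharp range $\max(d,1)$ --- are comparatively formal, the last being the one place where genuine care is required.
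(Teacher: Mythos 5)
The paper states this theorem without proof, attributing it to \cite{GL00, GH99} and to the rigidity of \cite{CMM} (with \cite{CM19}), and your outline does reproduce the overall strategy of those sources: reduce to the characteristic $p$ base via henselian rigidity, identify both sides on (separably closed) fields using Geisser--Levine and Geisser--Hesselholt, and globalize by descent. So the ingredients are the right ones. However, one justification is genuinely wrong as stated, and a second point is silently assumed where it is in fact a substantial part of the content of \cite{CM19}.

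First, the small \'etale topos of a strictly henselian local ring $A$ is \emph{not} a point unless $A$ is a separably closed field: for $\dim A\geq 1$ there are nontrivial \'etale $A$-schemes (open subschemes and their covers) and the topos has genuine nontrivial structure. What is true, and what your argument actually uses, is that every \'etale cover of the terminal object $\spec A$ splits (by the strictly henselian hypothesis), so \v{C}ech sheafification is invisible at $\spec A$; and, more robustly, that $R\Gamma_{\text{\'et}}(\spec A,\mathcal{F})$ agrees with the stalk at the closed geometric point for $p$-torsion $\mathcal{F}$ --- this is Gabber's affine analog of proper base change, a theorem, not a triviality about a punctual topos. Second, the step ``it suffices to test on strictly henselian stalks'' is the assertion that a map of \'etale sheaves of ($p$-complete) spectra is an equivalence once it is on stalks; this requires knowing that both $\TC(-;\mathbb{Z}_p)$ and the \'etale sheafification of connective $K$-theory are \emph{hypercomplete} sheaves. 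For unbounded-below sheaves of spectra this is not automatic, and establishing it (or circumventing it via the arc/pro-\'etale topology) is precisely the subject of \cite{CM19}; calling this step ``comparatively formal'' understates where the real difficulty lies. Finally, for part (2), the bookkeeping at the end (``yields the bound $\max(d,1)$'') is looser than the preceding analysis supports --- your cofiber-range argument on residue fields gives a sharper truncation than $\max(d,1)-1$ directly, and the passage from the ``$\leq e-2$'' truncation of stalks to the stated global bound needs to be spelled out, including the comparison between $F'=\mathrm{fib}(K_{\geq 0}\to\TC)$ (which rigidity controls) and the Nisnevich sheaf $\mathrm{fib}(K\to\TC)$ (which is what descent applies to).
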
 

The definition of $\TC$ is much more elaborate than that of $K$-theory
and (in particular in the $p$-adic setting) requires significant homotopical
foundations, classically approached through equivariant stable homotopy theory
\cite{BM15},
which were recently dramatically reworked (and simplified) by Nikolaus--Scholze
\cite{NS18} (see also \cite{AMGRnaive, BG} for $\infty$-categorical accounts of the theory
of cyclotomic spectra). 
We refer to \cite{HN19} for a modern survey of topological Hochschild
and cyclic homology. 

There is a sense, however, in which topological cyclic homology is a
structurally simpler
theory: while the building blocks of algebraic $K$-theory come from algebraic
cycles, the building blocks for topological cyclic 
homology are the 
cotangent complex and its wedge powers. In practice, this means that the formal
properties of $\TC$ are somewhat simpler (e.g., $\TC$ has much better descent properties), and that $\TC$ is easier to compute. There is an extensive literature calculating various instances of the 
$p$-adic $K$-theory of 
$p$-adic rings using $\TC$, cf.~for instance \cite{HM97, HM97trunc, HM03, HM04,
GH06} for some examples.

The work of Bhatt--Morrow--Scholze \cite{BMS2} constructs analogs of the motivic
filtration for topological Hochschild homology and its variants in great
generality; the associated
graded objects of this filtration are objects of deep interest in arithmetic
geometry and especially $p$-adic Hodge theory, and have now been constructed
purely algebraically using the prismatic theory \cite{Prisms}. 
Here we state first the analog for $\mathrm{TC}$. 

\begin{theorem}[Bhatt--Morrow--Scholze \cite{BMS2}] 
\label{motivicfiltTC}
Let $R$ be any $p$-complete ring.\footnote{The work
\cite{BMS2} only treats the case where $R$ is quasisyntomic; it is shown in
\cite[Sec.~5]{AMNN} that the construction naturally extends (via left Kan
extension) to all $p$-adic rings.} 
Then there exists a natural multiplicative, $\mathbb{Z}_{\geq 0}^{op}$-indexed
convergent filtration $\mathrm{Fil}^{\geq \ast} \mathrm{TC}(R; \mathbb{Z}_p)$
with associated graded terms $\mathrm{gr}^{i} \mathrm{TC}(R; \mathbb{Z}_p)
\simeq \mathbb{Z}_p(i)(R)[2i]$, for the $\mathbb{Z}_p(i)(R)$ natural objects
of the $p$-complete derived $\infty$-category.  The constructions
$\mathbb{Z}_p(i)$ (as functors to the derived $\infty$-category
$\mathcal{D}(\mathbb{Z}_p)$) satisfy
flat descent, and for regular $\mathbb{F}_p$-algebras  reproduce the objects $R
\Gamma_{\mathrm{proet}}( -, W \Omega^i_{\mathrm{log}})[-i]$.  
\end{theorem}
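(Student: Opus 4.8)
The plan is to construct the filtration $\mathrm{Fil}^{\geq\ast}\mathrm{TC}(R;\mathbb{Z}_p)$ locally on the quasisyntomic site and then descend. First I would recall that on quasiregular semiperfectoid rings the invariants $\mathrm{THH}(-;\mathbb{Z}_p)$, $\mathrm{TP}(-;\mathbb{Z}_p)$, and $\mathrm{TC}^{-}(-;\mathbb{Z}_p)$ are concentrated in even degrees, so the Postnikov (double-speed) filtration on each of these is canonically split and multiplicative. This is the base case of the construction: for $S\in\qrsp$ one sets $\mathrm{Fil}^{\geq i}\mathrm{THH}(S;\mathbb{Z}_p)=\tau_{\geq 2i}$, and similarly for $\mathrm{TP}$ and $\mathrm{TC}^{-}$, the associated graded pieces being (shifts of) the prismatic cohomology $\Prismc_S$ and its Nygaard-type twists. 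Since the quasisyntomic site has a basis of quasiregular semiperfectoid rings, and since $\mathrm{THH}(-;\mathbb{Z}_p)$ and its relatives satisfy quasisyntomic descent by \cite{BMS2}, one defines $\mathrm{Fil}^{\geq\ast}\mathrm{THH}(R;\mathbb{Z}_p)$ for general quasisyntomic $R$ by right Kan extension from the basis, i.e. as the limit over the \v{C}ech nerve of a quasisyntomic cover by a semiperfectoid ring; convergence and multiplicativity are inherited from the base case because finite limits commute with the (exhaustive, complete) filtration degreewise. The extension to all $p$-complete rings is then the left Kan extension of \cite[Sec.~5]{AMNN}.

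Second I would assemble these into a filtration on $\mathrm{TC}$. Using the Nikolaus--Scholze formula, $\mathrm{TC}(R;\mathbb{Z}_p)$ is the fiber of $\mathrm{can}-\varphi\colon \mathrm{TC}^{-}(R;\mathbb{Z}_p)\to\mathrm{TP}(R;\mathbb{Z}_p)$ (after $p$-completion), so one sets $\mathrm{Fil}^{\geq i}\mathrm{TC}(R;\mathbb{Z}_p)=\mathrm{fib}\big(\mathrm{Fil}^{\geq i}\mathrm{TC}^{-}\xrightarrow{\mathrm{can}-\varphi}\mathrm{Fil}^{\geq i}\mathrm{TP}\big)$, which makes sense once one checks that both $\mathrm{can}$ and the (divided) Frobenius are filtered maps. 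The Frobenius is the subtle one: on $\qrsp$ it carries the Nygaard filtration on $\mathrm{TC}^{-}$ to the Beilinson/$t$-adic filtration on $\mathrm{TP}$ with a shift, and the fact that it lands in $\mathrm{Fil}^{\geq i}$ is exactly the statement that the divided Frobenius on prismatic cohomology is well-defined — this is where the Nygaard filtration does its work. Taking associated graded pieces gives a fiber sequence $\mathbb{Z}_p(i)(R)[2i]\to \mathcal{N}^{\geq i}\Prismc_R/\mathcal{N}^{\geq i+1}\{i\}[2i]\xrightarrow{\mathrm{can}-\varphi_i}\Prismc_R\{i\}[2i]$, and one *defines* $\mathbb{Z}_p(i)(R)$ to be the fiber of the resulting map $\mathcal{N}^{\geq i}\Prismc_R\{i\}\to\Prismc_R\{i\}$; convergence of the $\mathrm{TC}$-filtration then follows from convergence of the $\mathrm{TC}^{-}$- and $\mathrm{TP}$-filtrations together with the fact that the fiber of a map of convergent filtrations is convergent.

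Third, flat descent for $\mathbb{Z}_p(i)$ as a functor to $\D(\mathbb{Z}_p)$: this follows formally once one knows $\mathrm{THH}(-;\mathbb{Z}_p)$, $\mathrm{TP}(-;\mathbb{Z}_p)$, $\mathrm{TC}^{-}(-;\mathbb{Z}_p)$ all satisfy flat (indeed quasisyntomic) descent and that the filtrations were constructed by Kan extension from a basis — descent is then inherited termwise, hence on associated graded pieces, hence for $\mathbb{Z}_p(i)$. Finally, for the identification with $R\Gamma_{\mathrm{proet}}(-,W\Omega^i_{\mathrm{log}})[-i]$ when $R$ is a regular $\mathbb{F}_p$-algebra, I would first reduce (by flat/pro-\'etale descent on both sides, using that $W\Omega^i_{\mathrm{log}}$ is a pro-\'etale sheaf) to the case of a smooth $\mathbb{F}_p$-algebra, then further localize to a basis of semiperfect (or perfectoid-type) rings. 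Over such rings, by the Geisser--Hesselholt--Hesselholt--Madsen computations of $\mathrm{TC}$ of smooth $\mathbb{F}_p$-algebras — equivalently, by the known structure of the de Rham--Witt complex and its Nygaard filtration — the fiber of $\mathcal{N}^{\geq i}\Prismc\{i\}\to\Prismc\{i\}$ is identified with $W\Omega^i_{\mathrm{log}}[-i]$; the crucial input is the Artin--Schreier-type exact sequence $0\to W\Omega^i_{\mathrm{log}}\to W\Omega^i\xrightarrow{F-1}W\Omega^i/dV W\Omega^{i-1}\to 0$, which matches the $\mathrm{can}-\varphi_i$ fiber sequence degree by degree after one identifies $\Prismc_R$ with the (saturated) de Rham--Witt complex of $R$ in the equal-characteristic case.

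I expect the main obstacle to be the second step: proving that the divided Frobenius $\varphi_i\colon \mathcal{N}^{\geq i}\mathrm{TC}^{-}\to\mathrm{TP}$ is well-defined and filtered — i.e. that the naive Frobenius on $\mathrm{THH}$ sends the double-speed Postnikov filtration into the correct shift of the $t$-adic filtration on $\mathrm{TP}$ — since this is precisely the point where the Nygaard filtration must be introduced by hand on the basis $\qrsp$ and shown to be the "right" filtration (rather than, say, the Postnikov filtration, which it is not on $\mathrm{TC}^{-}$). Verifying compatibility of this Nygaard filtration with the \v{C}ech descent, and that it is exhaustive and complete so that the resulting $\mathrm{TC}$-filtration converges, is the technical heart of the argument.
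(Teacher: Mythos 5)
Your overall strategy matches the paper's development: descend the even\-ness on quasiregular semiperfectoids through the quasisyntomic site, and identify the graded pieces as $\mathbb{Z}_p(i) = \mathrm{fib}(\varphi_i - \mathrm{can})$ on prismatic cohomology. But there are two points worth flagging, one of which is a genuine gap.

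First, your construction of the filtration on $\mathrm{TC}$ as $\mathrm{fib}(\mathrm{can}-\varphi)$ at the filtered level is the route that \cite{BMS2} actually takes, but the paper here emphasizes a cleaner option: define $\mathrm{Fil}^{\geq i}\mathrm{TC}(-;\mathbb{Z}_p)$ as the $2i$-connective cover of the quasisyntomic \emph{sheaf} $\mathrm{TC}(-;\mathbb{Z}_p)$, just like the other invariants. These two agree only because of the odd vanishing conjecture (Theorem~\ref{oddvanish}, proved in \cite{Prisms}): evenness of the sheaf $\mathrm{TC}(-;\mathbb{Z}_p)$ is genuinely harder than evenness of $\THH$, $\TC^-$, $\TP$ on $\qrsp$, and in fact fails for $\mathrm{TC}(S;\mathbb{Z}_p)$ as an individual spectrum for general $S\in\qrsp$. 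Your fiber construction sidesteps this, but you then silently assume that the resulting filtration behaves like a Postnikov filtration; without the odd vanishing you only know $\mathrm{Fil}^{\geq i}\mathrm{TC}$ is $(2i-1)$-connective rather than $2i$-connective, which affects the convergence estimate. This is the point the paper's footnote to Definition~6.6 is making, and your writeup should acknowledge the dependence explicitly.

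Second, a small but genuine error: the double-speed Postnikov filtration on (say) $\mathrm{TP}(S;\mathbb{Z}_p)$ for $S\in\qrsp$ is not ``canonically split.'' Having homotopy groups concentrated in even degrees does not make the Postnikov $k$-invariants vanish. What evenness actually buys you is that each $\mathrm{gr}^i$ is concentrated in a single degree, so the graded pieces are discrete sheaves; that is what makes the filtration usable, not a splitting. Relatedly, for the last part (the de Rham--Witt identification for regular $\mathbb{F}_p$-algebras), the reduction from regular to smooth should go via N\'eron--Popescu (regular noetherian $\mathbb{F}_p$-algebras are ind-smooth), not by pro-\'etale descent; and the precise input in degree $i$ is the expression $\mathbb{Z}_p(i)(R) = \mathrm{fib}(\mathrm{id} - \varphi/p^i\colon \mathcal{N}^{\geq i}LW\Omega_R \to LW\Omega_R)$ together with Proposition~\ref{Cartsmoothphidiv} identifying $\varphi/p^i$ on $\mathrm{gr}^i$ with the $\tau^{\leq i}$-truncation map on de Rham cohomology, rather than a direct Artin--Schreier short exact sequence on de Rham--Witt sheaves; the latter is a consequence but the derivation needs the Nygaard-graded computation (\Cref{strongNygaardcontrol}).
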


\Cref{motivicfiltTC} is supposed to be an analog
of \Cref{motivicfilt} for $\mathrm{TC}$. It is not entirely clear  if this
analogy can be made precise (i.e., if both filtrations can be realized as
instances of a common construction). However, it is at least known in the case where
$R$ is a smooth algebra over a field $k$ of characteristic $p$ (so that
\Cref{motivicfilt} is in effect), the 
 filtration $\mathrm{TC}(R; \mathbb{Z}_p)$ is the $p$-completion of the \'etale
 sheafification of the motivic filtration on $K(R)$ (indeed, both are Postnikov
 towers in the (pro-)Nisnevich and (pro-)\'etale topologies). 
The construction of \Cref{motivicfiltTC} has the advantage of being very direct: the filtration is the Postnikov filtration when
these invariants are considered as sheaves in the quasisyntomic topology
(\Cref{sec:quasisyntomic}).

The $\mathbb{Z}_p(i)$ in their most generality are supposed to be a 
general version of $p$-adic \'etale motivic cohomology for $p$-adic rings. 
They arise as a type of 
filtered Frobenius eigenspaces on prismatic cohomology, a new $p$-adic
cohomology theory for $p$-adic formal schemes introduced by Bhatt--Scholze
\cite{Prisms} of deep interest in integral $p$-adic Hodge theory and constructed in some cases 
in \cite{BMS1, BMS2}. 
The case of ``absolute'' prismatic cohomology 
was originally 
constructed using topological Hochschild homology. 
For the formulation of the next result, we write $\THH$ for topological
Hochschild homology equipped with its natural $\mathbb{T}$-action. 

\begin{theorem}[{\cite{BMS2}}]
Let $R$ be a formally smooth algebra over a perfectoid ring $R_0$. 
Then there is a 
complete, exhaustive $\mathbb{Z}$-indexed filtration on $\TP(R; \mathbb{Z}_p)
\stackrel{\mathrm{def}}{=} \THH(R; \mathbb{Z}_p)^{t\mathbb{T}}$ such that 
$\mathrm{gr}^i \mathrm{TP}(R; \mathbb{Z}_p) \simeq \Prismc_{R}[2i]$. 
\end{theorem}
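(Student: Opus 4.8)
The plan is to construct the filtration as a \emph{doubled Postnikov filtration in the quasisyntomic topology}, following \cite{BMS2}. I would first record that, $R_0$ being perfectoid (hence quasisyntomic) and $R$ formally smooth over it, $R$ is quasisyntomic and its quasisyntomic site $\mathrm{QSyn}_R$ has a basis consisting of quasiregular semiperfectoid (QRSP) $R$-algebras: there is a quasisyntomic cover $R \to R_\infty$ by a perfectoid ring --- for instance, \'etale-locally on $\spec R$ one has $R \simeq R_0\langle x_1,\dots,x_d\rangle$ and then takes $R_\infty := \bigl(R_0\langle x_1^{1/p^\infty},\dots,x_d^{1/p^\infty}\rangle\bigr)^{\wedge}_p$ --- all of whose \v{C}ech powers over $R$ are QRSP. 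I would then invoke quasisyntomic descent for $\THH(-;\mathbb{Z}_p)$, which follows from flat descent for the $p$-completed cotangent complex together with a completeness argument, and deduce descent for $\TP(-;\mathbb{Z}_p) = \THH(-;\mathbb{Z}_p)^{t\mathbb{T}}$ since the Tate construction is a limit. In particular $\TP(-;\mathbb{Z}_p)$ is a sheaf of spectra on $\mathrm{QSyn}_R$ with $\TP(R;\mathbb{Z}_p) \simeq R\Gamma_{\mathrm{qsyn}}\bigl(R, \TP(-;\mathbb{Z}_p)\bigr)$.

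The heart of the argument is an \emph{evenness} statement on the QRSP basis. Using B\"okstedt periodicity for perfectoid rings --- in the form of \cite{BMS2}, $\pi_*\THH(R_0;\mathbb{Z}_p) \simeq R_0[u]$ with $u$ in degree $2$ --- together with a base-change computation along $R \to R_\infty$ and a Koszul resolution, one shows $\pi_*\THH(S;\mathbb{Z}_p)$ is concentrated in even degrees, with $\pi_0 = S$, for every QRSP $R$-algebra $S$. Feeding this into the homotopy fixed point and Tate spectral sequences for the $\mathbb{T}$-action produces $E_2$-pages concentrated in even total degree, hence degeneration, so $\pi_*\mathrm{TC}^-(S;\mathbb{Z}_p)$ and $\pi_*\TP(S;\mathbb{Z}_p)$ are also concentrated in even degrees. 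One sets $\Prismc_S := \pi_0\TP(S;\mathbb{Z}_p)$ and checks that $\pi_{2i}\TP(S;\mathbb{Z}_p)$ is the Breuil--Kisin twist $\Prismc_S\{i\}$, which here is (non-canonically) trivial because $\ker\bigl(\theta\colon A_{\mathrm{inf}}(R_0)\to R_0\bigr)$ is principal. Since every term of a QRSP \v{C}ech nerve is even, the descent spectral sequence for $\TP$ collapses, showing that each presheaf $S \mapsto \Prismc_S\{i\}$ is already a sheaf with vanishing higher cohomology on the QRSP basis.

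Now I would assemble the filtration. For $i \in \mathbb{Z}$ set
\[
  \mathrm{Fil}^{\geq i}\,\TP(R;\mathbb{Z}_p) \;:=\; R\Gamma_{\mathrm{qsyn}}\bigl(R,\ \tau_{\geq 2i}\,\TP(-;\mathbb{Z}_p)\bigr),
\]
the global sections of the sheafification of $S \mapsto \tau_{\geq 2i}\TP(S;\mathbb{Z}_p)$. On the QRSP basis this sheaf takes the value $\tau_{\geq 2i}\TP(S;\mathbb{Z}_p)$ --- by the evenness and the vanishing of higher cohomology of the homotopy sheaves just recorded --- so $(\tau_{\geq 2i}/\tau_{\geq 2i+2})\TP(-;\mathbb{Z}_p) \simeq \pi_{2i}\TP(-;\mathbb{Z}_p)[2i]$ there, and passing to global sections yields $\mathrm{gr}^i\,\TP(R;\mathbb{Z}_p) \simeq \Prismc_R\{i\}[2i] \simeq \Prismc_R[2i]$, where $\Prismc_R := R\Gamma_{\mathrm{qsyn}}(R, S\mapsto\Prismc_S)$; this object coincides, for formally smooth $R$, with the Nygaard-completed prismatic cohomology of \cite{Prisms}. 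Completeness is immediate, since $R\Gamma$ preserves limits and $\lim_i \tau_{\geq 2i}\TP(-;\mathbb{Z}_p) \simeq 0$. Exhaustiveness is the commutation of $R\Gamma_{\mathrm{qsyn}}(R,-)$ with the colimit $\mathrm{colim}_i \tau_{\geq -2i}\TP(-;\mathbb{Z}_p) \simeq \TP(-;\mathbb{Z}_p)$; I would deduce it from the fact that, $R$ being formally smooth of relative dimension $d$, the sheaves $\Prismc_R\{j\}$ have cohomological amplitude bounded in terms of $d$ on $\mathrm{QSyn}_R$, so the totalization over the \v{C}ech nerve of $R \to R_\infty$ is a limit of uniformly bounded amplitude and hence commutes with the filtered colimit in question.

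The hardest step is the evenness of $\THH(S;\mathbb{Z}_p)$ for QRSP $S$. It rests on B\"okstedt periodicity for perfectoid rings, whose proof --- already for $R_0 = \mathbb{F}_p$, where it is B\"okstedt's theorem $\pi_*\THH(\mathbb{F}_p) \simeq \mathbb{F}_p[u]$ --- is genuinely delicate, relying on the structure of $\mathbb{F}_p \otimes_{\mathbb{S}} \mathbb{F}_p$ and the Dyer--Lashof operations; its perfectoid generalization is one of the technical cores of \cite{BMS2}. A secondary difficulty is that $\TP$ is not bounded below, so convergence of the filtration must be argued through the descent-theoretic amplitude bounds above rather than naive connectivity estimates; one could alternatively package the whole construction through the ``even filtration'' formalism, but the route sketched here is the original one.
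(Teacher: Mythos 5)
Your proposal is correct and follows essentially the same route as the paper: the filtration is the double-speed Postnikov filtration of the quasisyntomic sheaf $\TP(-;\mathbb{Z}_p)$, the key inputs are B\"okstedt periodicity for perfectoid rings (Theorem \ref{THHperfectoid}) together with the evenness of $\THH$, $\TC^-$, $\TP$ on quasiregular semiperfectoid algebras (Corollary \ref{evennnessqrsp}), descent is checked by restricting to the QRSP basis, and the Breuil--Kisin twists are trivialized using the principal generator $\xi$ of $\ker\theta$. One small remark: in the completeness step you write $\lim_i \tau_{\geq 2i}\TP(-;\mathbb{Z}_p) \simeq 0$, but for the sheaf-theoretic truncation this identity is exactly the statement that the sheaf is hypercomplete (Theorem \ref{descentforHH}), so it is worth flagging that hypercompleteness is where the vanishing of the limit of the Postnikov tower actually comes from; likewise your exhaustiveness argument via uniform cohomological amplitude of $\Prismc_R\{j\}$ in the smooth case is the correct way to handle the fact that $\TP$ is unbounded below, and it is the same point made in \cite[Sec.~11]{BMS2}.
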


In fact, the above filtration is constructed using the similar descent
techniques, which gives a construction of prismatic cohomology, independent of
the prismatic site of \cite{Prisms}. When $R_0$ is the ring of integers
$\mathcal{O}_C$ in a
complete, algebraically closed 
nonarchimedean field $C$, then the $\Prismc_R$ recover the
${A}_{\mathrm{inf}}$-cohomology of \cite{BMS1}: in particular, they
specialize both to the de Rham cohomology of the formal scheme
$\mathrm{Spf}(R)$ and the $p$-adic \'etale cohomology of the generic fiber. 

We will not attempt to do justice to the new landscapes of integral $p$-adic
Hodge theory. 
In this survey article, we will work through the characteristic $p$ situation in
some detail, in particular, constructing the filtration on $\TP(R;
\mathbb{Z}_p)$ for $R$ a smooth (or more generally quasisyntomic, e.g., lci)
$\mathbb{F}_p$-algebra $R$ and identifying the associated graded pieces in
terms of crystalline cohomology. 
In equal characteristic $p$, absolute prismatic cohomology in this context
reduces to crystalline cohomology, constructed using the (quasi-)syntomic site
instead of the crystalline site (an approach that goes back to \cite{FM87}). 
Finally, we will circle back to the motivation of algebraic
$K$-theory, and explain how one can recover the calculations of the algebraic
$K$-theory of the dual numbers over a perfect field \cite{HM97, Speirs}. It would be
interesting to revisit other such calculations. 

\subsection*{Acknowledgments}
It is a pleasure to thank Benjamin Antieau, 
Alexander Beilinson, Bhargav Bhatt, Dustin Clausen, Vladimir Drinfeld, Lars
Hesselholt, Jacob Lurie, Matthew Morrow, Thomas
Nikolaus, Nick Rozenblyum, and Peter Scholze for numerous helpful conversations related to this
subject over the past few years. I thank Benjamin Antieau, Burt Totaro,  H\'el\`ene Esnault,
Jonas McCandless, and
the referee for comments and corrections on an
earlier version. 
I also thank  Lars Hesselholt and Shuji Saito for the invitation to a workshop
on this material in Hara-mura, and 
Mike Hopkins and Jacob Lurie for organizing the Thursday seminar at
Harvard in 2015--2016 on this subject. 
This work was done while the author was a Clay Research Fellow. 

\subsection*{Notation} We let $\mathbb{T}$ denote the circle group. 
We denote by $\sp$ the $\infty$-category of spectra, with the smash product
$\otimes$, and $\mathbb{S}$ the sphere spectrum. 
For a ring $R$, we let $\mathcal{D}(R)$ denote the derived $\infty$-category of
$R$. 

We will freely use the language of higher algebra \cite{HA}, and in particular the theory of
$\mathbb{E}_\infty$-ring spectra. 
We refer to \cite{Gepner} for a modern survey and introduction.

Throughout the paper, we fix a prime $p$. 
We will occasionally use the theory of $\delta$-rings, but only 
in the $p$-torsionfree case; a $p$-torsionfree $\delta$-ring consists of a
commutative ring $R$ equipped with an endomorphism $\varphi: R \to R$ which
lifts the Frobenius modulo $p$. 
We refer to \cite[Sec.~2]{Prisms} for an account of the theory of $\delta$-rings
in general.

We will often drop the notation of $p$-completions, since we will almost
exclusively be working with $p$-complete objects.

\section{Topological Hochschild homology}

Let $R$ be a commutative ring. 

\newcommand{\einf}{\mathbb{E}_\infty}
\newcommand{\TR}{\mathrm{TR}}
\newcommand{\HH}{\mathrm{HH}}
\begin{definition} 
\label{THH:def}
The \emph{topological Hochschild homology} $\THH(R)$ is the 
universal $\einf$-algebra equipped with a $\mathbb{T}$-action and a map $R \to
\THH(R)$. 
As an $\einf$-ring, there is an identification
\[ \THH(R) = R \otimes_{R \otimes_{\mathbb{S}} R} R ,  \]
and $\THH(R)$ can be obtained as the geometric realization of the cyclic bar
construction (a simplicial $\einf$-ring obtained from the tensor powers of $R$
as a spectrum), \cite[Sec.~III.2]{NS18}. 
\end{definition}

\Cref{THH:def} (which works equally for an $\mathbb{E}_\infty$-ring $R$) is not the most flexible definition of $\THH$, since $\THH$ is
more generally defined for stable $\infty$-categories (it is a localizing
invariant in the sense of \cite{BGT13}, like algebraic $K$-theory); then $\THH$ of a
commutative ring is defined as $\THH$ of its $\infty$-category of perfect
complexes. 
For example, $\THH$ can be defined using factorization homology over the circle
\cite{AMGR}. This perspective, while extremely important in the foundations of the
theory (in particular, in producing the cyclotomic trace), plays less of a role
in the work of \cite{BMS2}, which focuses on commutative rings. The above
formulation for $\mathbb{E}_\infty$-rings is due to \cite{MSV}.

Before describing some of the features of $\THH$, we begin by reviewing the
simpler algebraic analog of Hochschild homology.

\begin{variant}[Classical Hochschild homology] 
\label{classicalHH}
Let $R$ be a commutative $k$-algebra, for $k$ a base ring. 
Then the \emph{Hochschild homology} $\HH(R/k)$ is the universal
$\einf$-algebra\footnote{One could also formulate the universal property in
terms of animated (or simplicial) commutative $k$-algebras, without using the
language of $\einf$-rings.}
under $k$ equipped with a $\mathbb{T}$-action
and a map $R \to \HH(R/k)$ of $\einf$-$k$-algebras. 
As an $\einf$-$k$-algebra, one has
\[ \HH(R/k) = R \otimes^L_{R \otimes^L_k R} R .   \]
When $k = \mathbb{Z}$, we will sometimes drop the $k$ in the above notation. 
\end{variant} 

Hochschild homology over $k$ is a very controllable construction, because of the
classical Hochschild--Kostant--Rosenberg theorem. 
Since $\mathbb{T}$ acts on the 
$\einf$-$k$-algebra $\HH(R/k)$, 
one obtains a commutative differential graded algebra structure on $\HH_*(R/k)$
(with the differential arising from the $\mathbb{T}$-action). 
The Hochschild--Kostant--Rosenberg theorem gives a natural isomorphism
of commutative differential graded algebras
for $R$ smooth over $k$,
\begin{equation}  \HH_*(R/k) \simeq  (\Omega^{\ast}_{R/k}, d),  \end{equation}
where $d$ is the de Rham differential.

Topological Hochschild homology is a much richer theory than classical
Hochschild homology for $p$-adic rings (whereas for $\mathbb{Q}$-algebras, it
reduces to Hochschild homology relative to $\mathbb{Q}$). 
Taking Hochschild homology over the base $\mathbb{S}$ leads to extra
symmetries in the theory which are not available with 
an ordinary ring (e.g., $\mathbb{F}_p$ or $\mathbb{Z}$) as the base; moreover,
it leads to B\"okstedt's computation of $\THH(\mathbb{F}_p)$. 
We begin by reviewing these aspects, following Nikolaus--Scholze \cite{NS18}; see also the survey
\cite{HN19} for a more detailed overview. 

\begin{construction}[{The cyclotomic Frobenius on $\THH$, cf.
\cite[Sec.~IV.2]{NS18}}] 
Given a commutative ring $R$, one has a natural $\mathbb{T}$-equivariant map 
\begin{equation} 
\varphi: \THH(R)  \to \THH(R)^{tC_p},
\label{cycfrob}
\end{equation} 
using the natural embedding $C_p \subset \mathbb{T}$ and the $\mathbb{T} \simeq
\mathbb{T}/C_p$-action on the right-hand-side. 
To construct
\eqref{cycfrob}, 
we use the universal property of $\THH(R)$
to construct a map 
of $\mathbb{E}_\infty$-rings 
$R \to \THH(R)^{tC_p}$ and then extend it canonically to a
$\mathbb{T}$-equivariant map as in \eqref{cycfrob}. This in turn comes from the
\emph{Tate diagonal} \cite[Sec.~III.1]{NS18}
\[ R \to (R \otimes_{\mathbb{S}} \dots \otimes_{\mathbb{S}} R)^{tC_p} =(
\otimes^{C_p}_{\mathbb{S}} R )^{tC_p},  \]
(which exists for every spectrum) 
followed by the map 
$(\otimes^{C_p}_{\mathbb{S}} R)^{tC_p} \to \THH(R)^{tC_p}$ obtained 
from the inclusion $C_p \subset \mathbb{T}$. 
\end{construction} 

The map $\varphi$ is called the cyclotomic Frobenius and plays a central role
in the theory.
Its construction depends crucially on working over the sphere spectrum, and is
thus a feature of $\mathrm{THH}$ that does not exist for ordinary Hochschild
homology: by
universal properties of the $\infty$-category of spectra, one shows  
\cite[Sec.~III.1]{NS18}
that
there is a canonical, lax symmetric monoidal natural transformation, called the
\emph{Tate diagonal},
\begin{equation} \label{Tatediagonal} X \to (X \otimes_{\mathbb{S}} X \otimes_{\mathbb{S}} \dots
\otimes_{\mathbb{S}}  X)^{tC_p} \end{equation}
for any spectrum $X$. 
Indeed, the Tate diagonal is roughly analogous to (and refines) the map
defined for every abelian group $A$, 
$$A \to
H^0_{\mathrm{Tate}}(C_p, A^{\otimes p}) = (A^{\otimes p})^{C_p}/ \mathrm{norms}
,$$
given by the 
formula $a \mapsto a \otimes a \otimes \dots \otimes a$. 
The analog of the map \eqref{Tatediagonal} does not exist in $\mathcal{D}(\mathbb{Z})$, the derived
$\infty$-category of the integers $\mathbb{Z}$, which lacks the analogous
universal property, and is a key reason why $\THH$ yields a richer theory. 

In the work  \cite{NS18},
it is shown that $\varphi$ 
is enough to study the so-called ($p$-typical) ``cyclotomic structure'' on the
$p$-completion $\THH(R; \mathbb{Z}_p)$; in particular, it can be used to define
the topological cyclic homology. 

\newcommand{\can}{\mathrm{can}}

\begin{construction}[Topological cyclic homology] 
Let $R$ be a ring (or more generally a connective $\mathbb{E}_\infty$-ring). 
We define $\TC^-(R) = \THH(R)^{h\mathbb{T}} $ and $ \TP(R) = \THH(R)^{t
\mathbb{T}}$ to be the $\mathbb{T}$-homotopy fixed points and Tate
construction, respectively. 
We have two maps
\[ \can, \varphi: \TC^-(R; \mathbb{Z}_p) \to \TP(R; \mathbb{Z}_p),  \]
where $\can$ 
is the canonical map from $\mathbb{T}$-invariants to the $\mathbb{T}$-Tate
construction, and $\varphi$ is obtained by taking $\mathbb{T}$-invariants from
\eqref{cycfrob} and using the identification 
$\TP(R; \mathbb{Z}_p) = \left( \THH(R; \mathbb{Z}_p)^{tC_p} \right)^{h
\mathbb{T}/C_p}$, cf.~\cite[Lem.~II.4.2]{NS18}. 
In particular, since $\can$ identifies $\pi_0 \TC^-(R; \mathbb{Z}_p)$ and
$\pi_0 \TP(R; \mathbb{Z}_p)$, we can regard 
$\varphi$ as an endomorphism of the ring $\pi_0 \TC^-(R; \mathbb{Z}_p)$. 
The spectrum $\TC(R; \mathbb{Z}_p)$ is the homotopy equalizer
\begin{equation} 
\label{formulaTC}
 \TC(R; \mathbb{Z}_p) = \mathrm{fib}( \varphi - \can): \TC^-(R; \mathbb{Z}_p)
 \to \TP(R; \mathbb{Z}_p).
\end{equation} 
\end{construction} 

The expression \eqref{formulaTC} 
is very different from ones that appear in the more classical approach to
$\THH$. 
It plays an essential role in the work \cite{BMS2}, and has many applications both
structural and computational. For example, it implies the following basic
structural feature of $\TC$: for connective ring spectra, the construction
$\TC/p$ commutes with filtered colimits \cite[Theorem G]{CMM}.

By contrast, 
in the classical approach to $\THH$ and cyclotomic spectra via equivariant
stable homotopy theory (cf.~\cite{Ma94} for a survey), 
the objects $\TC^-, \TP$ do not play a direct role. 
One constructs the structure of a genuine $C_{p^n}$-spectrum on $\THH(R)$, which enables one to
form 
various fixed points 
$\THH(R)^{C_{p^n}}, n \geq 0$, together with 
maps
\[  R, F: \THH(R)^{C_{p^n}} \to \THH(R)^{C_{p^{n-1}}},  \quad V:
\THH(R)^{C_{p^{n-1}}} \to \THH(R)^{C_{p^n}}.   \]
The various fixed points are related to each other inductively using the cofiber
sequences
\[  \THH(R)_{hC_{p^n}} \to \THH(R)^{C_{p^n}} \stackrel{R}{\to}
\THH(R)^{C_{p^{n-1}}} . \]
In particular, one forms the inverse limit
\[ \TR(R) = \varprojlim_R \THH(R)^{C_{p^n}},  \]
which is a connective $\mathbb{E}_\infty$-ring. 
The maps $F, V$ act on $\TR(R)$, and $\TC(R; \mathbb{Z}_p)$ is the equalizer
$\mathrm{fib}(F - 1: \TR(R; \mathbb{Z}_p) \to \TR(R; \mathbb{Z}_p))$. 
A major advance of the work \cite{NS18}
is the insight that much of the ``glueing'' data that leads to the construction
of the fixed points $\THH(R)^{C_{p^n}}$ is actually redundant (and is not
needed to construct $\TC$ in particular). On the other hand, 
$\TR$ has recently been used to give an entirely new formulation (and ``decompletion'') of the theory of
cyclotomic spectra via the theory of topological Cartier modules due to
Antieau--Nikolaus \cite{AN};
this has many advantages, including that it yields a natural $t$-structure on
cyclotomic spectra. 

The most fundamental calculation in topological Hochschild homology is that of
$\mathbb{F}_p$. 
\begin{theorem}[B\"okstedt] 
\label{Bthm}
We have $\THH_*(\mathbb{F}_p) = \mathbb{F}_p[\sigma]$ with $|\sigma| = 2$. 
\end{theorem}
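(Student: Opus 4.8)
The plan is to run the B\"okstedt spectral sequence and then resolve a multiplicative extension. From the presentation $\THH(\mathbb{F}_p) = \mathbb{F}_p \otimes_{\mathbb{F}_p \otimes_{\mathbb{S}} \mathbb{F}_p} \mathbb{F}_p$ of \Cref{THH:def} and Milnor's computation that $\pi_*(\mathbb{F}_p \otimes_{\mathbb{S}} \mathbb{F}_p) = H\mathbb{F}_{p*}H\mathbb{F}_p = \mathcal{A}_*$ is the dual Steenrod algebra, the K\"unneth (base-change) spectral sequence for this relative tensor product over the connective $\einf$-ring $\mathbb{F}_p \otimes_{\mathbb{S}} \mathbb{F}_p$ reads
\[
E^2_{s,t} = \mathrm{Tor}^{\mathcal{A}_*}_{s,t}(\mathbb{F}_p, \mathbb{F}_p) \Longrightarrow \THH_{s+t}(\mathbb{F}_p),
\]
a convergent multiplicative spectral sequence (convergence being automatic since all the inputs are connective), in which $\mathbb{F}_p$ is the $\mathcal{A}_*$-module along the augmentation, both factors being such via the multiplication $\mathbb{F}_p \otimes_{\mathbb{S}} \mathbb{F}_p \to \mathbb{F}_p$. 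One could equally run the classical form $HH^{\mathbb{F}_p}_*(\mathcal{A}_*) \Rightarrow H_*(\THH(\mathbb{F}_p); \mathbb{F}_p)$ coming from the cyclic bar construction, at the cost of afterwards dividing out the spurious tensor factor $\mathcal{A}_* = H_*(\mathbb{F}_p; \mathbb{F}_p)$.

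Next I compute the $E^2$-page and the differentials. For $p = 2$, $\mathcal{A}_* = \mathbb{F}_2[\xi_1, \xi_2, \dots]$ is polynomial on classes of degree $2^i - 1$, so the Koszul resolution gives $E^2 = \Lambda_{\mathbb{F}_2}(\sigma\xi_1, \sigma\xi_2, \dots)$ with $\sigma\xi_i$ of homological degree $1$ and total degree $2^i$; this is concentrated in even total degrees, while each $d_r$ lowers total degree by one, so the spectral sequence degenerates at $E^2$. For $p$ odd, $\mathcal{A}_* = \mathbb{F}_p[\xi_i] \otimes \Lambda_{\mathbb{F}_p}(\tau_j)$ with $|\xi_i| = 2p^i - 2$ and $|\tau_j| = 2p^j - 1$, so $E^2 = \Lambda_{\mathbb{F}_p}(\sigma\xi_i) \otimes \Gamma_{\mathbb{F}_p}(\sigma\tau_j)$ with $\sigma\xi_i$ in odd total degree $2p^i - 1$; these classes cannot survive, and B\"okstedt's determination of the differential pattern collapses the page to $E^\infty = \Gamma_{\mathbb{F}_p}(\sigma\tau_0)$. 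In either case $\THH_*(\mathbb{F}_p)$ acquires a multiplicative filtration whose associated graded has Poincar\'e series $(1-t^2)^{-1}$, i.e.\ one dimension in each even degree.

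It remains to pin down the ring structure. The associated graded algebras just found ($\Lambda(\sigma\xi_i)$, resp.\ $\Gamma(\sigma\tau_0)$) are not polynomial, so matching Poincar\'e series does not yet give $\mathbb{F}_p[\sigma]$; one must instead show that every power $\sigma^n$ of the degree-$2$ generator $\sigma$ is nonzero, detected in the filtration degree prescribed by the base-$p$ expansion of $n$. The input that makes this work is that $\THH(\mathbb{F}_p)$ is an $\einf$-algebra over $H\mathbb{F}_p$, so $\THH_*(\mathbb{F}_p)$ carries Dyer--Lashof operations; tracking these, starting from the known Dyer--Lashof action on $\mathcal{A}_*$, identifies the powers $\sigma^{p^i}$, up to units, with the algebra generators of $E^\infty$ in degree $2p^i$, which forces $\THH_*(\mathbb{F}_p) \cong \mathbb{F}_p[\sigma]$ with $|\sigma| = 2$. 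I expect this extension analysis --- together with the differential computation at odd primes in the previous step --- to be the main obstacle; at the prime $2$ one can also sidestep the extension problem altogether via the Hopkins--Mahowald description of $H\mathbb{F}_2$ as a Thom spectrum over $\Omega^2 S^3$.
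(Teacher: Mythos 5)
The paper does not supply its own proof of this statement; it refers the reader to \cite{HN19} and notes the connection to the Hopkins--Mahowald theorem that $H\mathbb{F}_p$ is the free $\mathbb{E}_2$-ring with $p=0$. Your route --- the B\"okstedt spectral sequence, phrased as the K\"unneth/Tor spectral sequence for $\mathbb{F}_p\otimes_{\mathbb{F}_p\otimes_{\mathbb{S}}\mathbb{F}_p}\mathbb{F}_p$, followed by a Dyer--Lashof argument for the multiplicative extension --- is the classical one, and much of it is in good shape: the $E^2$-page identification, the collapse at $p=2$ on parity grounds, and the observation that the $E^\infty$-pages $\Lambda(\sigma\xi_i)$, resp.\ $\Gamma(\sigma\tau_0)$, are not polynomial so that there is a real extension problem, are all correct.

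Two places need genuine work rather than a gesture. At odd $p$, invoking ``B\"okstedt's determination of the differential pattern'' is circular in a proof of B\"okstedt's theorem: this is precisely where the content lies. One must exhibit the first nonzero differential (essentially $d^{p-1}\gamma_p(\sigma\tau_0)$ hitting $\sigma\xi_1$ up to a unit) and then propagate it using the compatibility of Dyer--Lashof operations with the differentials in the B\"okstedt spectral sequence, so that $\Lambda(\sigma\xi_i)\otimes\Gamma(\sigma\tau_j)_{j\geq 1}$ cancels and $\Gamma(\sigma\tau_0)$ survives; the claim that the odd-degree classes ``cannot survive'' is, as stated, also only known after the fact. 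Second, your extension argument is pitched on $\THH_*(\mathbb{F}_p)$, but the suspension that carries the known Dyer--Lashof action on $\mathcal{A}_*$ into $\THH(\mathbb{F}_p)$ in a $Q$-compatible way is $\sigma\colon H_*(\mathbb{F}_p;\mathbb{F}_p)\to H_{*+1}(\THH(\mathbb{F}_p);\mathbb{F}_p)$, landing in mod-$p$ homology, not in homotopy. So the extension should be settled for the spectral sequence converging to $H_*(\THH(\mathbb{F}_p);\mathbb{F}_p)$ --- using Steinberger's relation $Q^{p^j}\bar\tau_j=\bar\tau_{j+1}$ together with $Q^n x = x^p$ for $|x|=2n$ to obtain $(\sigma\bar\tau_j)^p=\sigma\bar\tau_{j+1}$ --- and only then does one strip off the $\mathcal{A}_*$ tensor factor to read off $\THH_*(\mathbb{F}_p)$. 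As a point of comparison, the Hopkins--Mahowald route the paper alludes to sidesteps both the differential analysis and the extension problem at all primes (not only $p=2$), by producing $\THH(\mathbb{F}_p)$ directly from a Thom-spectrum/free-loop-space model.
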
 

For a discussion of a proof of \Cref{Bthm} (and in particular the
multiplicative structure, which is crucial to everything that follows),
cf.~\cite{HN19}.  
In particular, \Cref{Bthm} is closely related to the result of Hopkins--Mahowald that 
the free $\mathbb{E}_2$-algebra over $\mathbb{S}$ with $p = 0$ is $H
\mathbb{F}_p$. 
Moreover, using B\"okstedt's theorem, one can in fact give a complete
description of $\THH(\mathbb{F}_p)$ as a cyclotomic spectrum,
cf.~\cite[Sec.~IV-2]{NS18}. 

It is instructive to compare B\"okstedt's theorem with the calculation of
$\HH(\mathbb{F}_p/\mathbb{Z})$. 
One sees easily (e.g., using the Hochschild--Kostant--Rosenberg theorem,
cf.~\cite[Prop.~IV.4.3]{NS18}) that 
$\HH_*(\mathbb{F}_p/\mathbb{Z})$ is 
the divided power algebra $\Gamma^*[\sigma]$ (for the same class $\sigma$ in
degree two); in particular, replacing
$\mathbb{Z}$ by $\mathbb{S}$ replaces the divided power algebra by the
polynomial algebra.

For any $\mathbb{F}_p$-algebra $R$, one has 
the formula
\[ \THH(R) \otimes_{\THH(\mathbb{F}_p)} \mathbb{F}_p \simeq
\HH(R/\mathbb{F}_p).  \]
Given the above description of $\THH_*(\mathbb{F}_p)$, one may view $\THH(R)$ as
a ``one-parameter deformation'' of $\HH(R/\mathbb{F}_p)$ along $\sigma$. 
Upon taking circle-fixed points and passing to associated gradeds, this
observation is ultimately connected to 
the fact that crystalline cohomology gives a one-parameter deformation of de
Rham cohomology in characteristic $p$ (along the parameter given by ``$p$'').

Connections between $\THH$ and arithmetic 
have been explored in 
\cite{Hes96, HM03, HM04, GH06}, which relate the homotopy groups of the fixed
points of $\THH$ to (various forms) of the de Rham--Witt complex. 
The first such result, in equal characteristic, gives a complete calculation of $\TR$ in the case of a
regular $\mathbb{F}_p$-algebra: 

\begin{theorem}[Hesselholt \cite{Hes96}] 
Let $R$ be a regular $\mathbb{F}_p$-algebra.\footnote{Recall that a regular
$\mathbb{F}_p$-algebra is ind-smooth, by N\'eron--Popescu desingularization.} 
Then there is an isomorphism 
$\TR(R; \mathbb{Z}_p)_* \simeq W \Omega_R^{\ast}$, where $W \Omega_R^{\ast}$ is the de
Rham--Witt complex of Bloch--Deligne--Illusie \cite{Ill79}. 
This isomorphism carries the operators $F, V$ on $\TR(R; \mathbb{Z}_p)$ to the similarly named
operator $F, V$ on $W \Omega_R^{\ast}$. 
\end{theorem}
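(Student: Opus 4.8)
The plan is to build the comparison between $\TR(R;\mathbb{Z}_p)_*$ and the de Rham--Witt complex $W\Omega_R^{*}$ in three stages: first establish it for polynomial algebras over $\mathbb{F}_p$ (building on B\"okstedt's \Cref{Bthm}), then extend by N\'eron--Popescu desingularization to all regular $\mathbb{F}_p$-algebras, and finally match up the operators $F$ and $V$. Throughout one should remember that the strategy is the classical one (following Hesselholt, and with modern foundations as in \cite{NS18,AN}), so the emphasis is on organizing the input rather than introducing new ideas.

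First I would treat the case $R = \mathbb{F}_p$ directly. Here B\"okstedt's computation $\THH_*(\mathbb{F}_p) = \mathbb{F}_p[\sigma]$ with $|\sigma|=2$, together with the description of $\THH(\mathbb{F}_p)$ as a cyclotomic spectrum from \cite[Sec.~IV.2]{NS18}, lets one compute $\THH(\mathbb{F}_p)^{C_{p^n}}$ inductively via the cofiber sequences $\THH(\mathbb{F}_p)_{hC_{p^n}} \to \THH(\mathbb{F}_p)^{C_{p^n}} \xrightarrow{R} \THH(\mathbb{F}_p)^{C_{p^{n-1}}}$; one finds that $\TR(\mathbb{F}_p;\mathbb{Z}_p)$ is concentrated in degree zero, equal to $W(\mathbb{F}_p) = \mathbb{Z}_p$, which matches $W\Omega^{*}_{\mathbb{F}_p} = \mathbb{Z}_p$ in degree zero. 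Next I would pass to a polynomial algebra $\mathbb{F}_p[x_1,\dots,x_d]$. Using the multiplicativity of $\THH$ and the fact that $\THH(\mathbb{F}_p[x])$ splits according to weights (the $\mathbb{N}^d$-grading by monomials), one reduces to understanding each weight component; this is precisely where Hesselholt's original computation of $\TR$ of a polynomial algebra enters, producing $W\Omega^{*}_{\mathbb{F}_p[x_1,\dots,x_d]}$ with its standard basis of "basic Witt differentials." One key compatibility to check here is that the cyclotomic Frobenius on $\THH$ induces, after taking fixed points, exactly the Frobenius $F$ on the de Rham--Witt complex, and that the transfer/Verschiebung $V: \THH(R)^{C_{p^{n-1}}} \to \THH(R)^{C_{p^n}}$ maps to the de Rham--Witt $V$; this follows by naturality and by checking on the generating classes $\sigma$ and the $x_i$.

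Then I would extend from polynomial algebras to arbitrary smooth $\mathbb{F}_p$-algebras. Since $\TR(-;\mathbb{Z}_p)$ commutes with filtered colimits of rings (in each fixed homotopical degree, using that $\THH$ and the relevant fixed points do) and étale-localizes appropriately, and since the de Rham--Witt complex satisfies the analogous étale base-change and localization properties (Illusie \cite{Ill79}), one can descend the polynomial-algebra case to all smooth algebras by choosing an étale map from a polynomial algebra. Finally, a general regular $\mathbb{F}_p$-algebra is a filtered colimit of smooth $\mathbb{F}_p$-algebras by N\'eron--Popescu desingularization, so both sides and the comparison map pass to the colimit, giving the isomorphism $\TR(R;\mathbb{Z}_p)_* \simeq W\Omega^{*}_R$ compatibly with $F$ and $V$.

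The main obstacle is the polynomial-algebra computation itself: identifying $\TR_*(\mathbb{F}_p[x_1,\dots,x_d];\mathbb{Z}_p)$ explicitly with $W\Omega^{*}_{\mathbb{F}_p[x_1,\dots,x_d]}$, together with the $F$- and $V$-actions, is the technical heart and requires a careful analysis of the Tate spectral sequences (or the $\mathrm{RO}(\mathbb{T})$-graded homotopy of $\THH$) weight by weight. Everything else — the case of $\mathbb{F}_p$, the étale descent, and the passage to filtered colimits — is comparatively formal once the foundations of cyclotomic spectra from \cite{NS18} (or the topological Cartier module perspective of \cite{AN}) are in hand. A secondary subtlety is verifying that the isomorphism is one of graded rings (not just graded abelian groups) and that it is natural in $R$, which is what makes the colimit arguments legitimate; this is handled by checking the multiplicative structure on generators at the polynomial-algebra stage.
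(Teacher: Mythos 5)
The paper does not prove this theorem; it is stated as a citation to Hesselholt's original work \cite{Hes96}, so there is no internal proof to compare your argument against. Your outline is, however, a faithful sketch of the strategy of the cited paper (and of subsequent treatments such as \cite{BLM} and \cite{AN}): compute $\THH(\mathbb{F}_p)$ as a cyclotomic spectrum via B\"okstedt, bootstrap to a pointed monoid algebra $\mathbb{F}_p[x]$ using the cyclic bar construction and a weight-by-weight analysis, identify the answer with basic Witt differentials together with $F$ and $V$, and then spread out by \'etale localization and N\'eron--Popescu. In that sense your proposal correctly identifies both the route and the technical bottleneck.

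Two points deserve sharper treatment before this could be called a proof. First, $\TR$ is an inverse limit $\varprojlim_R \THH(R)^{C_{p^n}}$, and inverse limits do not commute with filtered colimits, so the passage to general regular (i.e.\ ind-smooth) $R$ cannot proceed by naively applying N\'eron--Popescu to the already-completed object $\TR(R;\mathbb{Z}_p)$. The standard fix is to first establish a finite-level isomorphism $\TR_n(R;\mathbb{Z}_p)_* \simeq W_n\Omega_R^{*}$ (i.e.\ $\pi_*\THH(R)^{C_{p^{n-1}}}$ against the $n$-truncated de Rham--Witt complex), check that both sides of the finite-level statement commute with filtered colimits and with \'etale base change, and only then take the inverse limit over $n$; the inverse limits then agree because they are taken over an identified tower. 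Your parenthetical ``in each fixed homotopical degree'' gestures at this but does not quite address it, since even in a fixed degree the limit over $n$ is genuinely infinite. Second, \'etale base change for $\THH$ itself ($\THH(B) \simeq \THH(A)\otimes_A B$ for $A\to B$ \'etale) does not immediately yield base change for the $C_{p^n}$-fixed points; one needs an argument (going back to Geisser--Hesselholt, or via the cofiber sequences $\THH_{hC_{p^n}} \to \THH^{C_{p^n}} \to \THH^{C_{p^{n-1}}}$ together with base change for homotopy orbits and induction) to propagate \'etale descent to $\THH^{C_{p^n}}$ and hence to $\TR_n$. With these two points spelled out, the outline you give is the correct shape of Hesselholt's argument.
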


In particular, one obtains a complete calculation of $\TC$ using the fixed
points of operator $F$ on the de Rham--Witt forms. 
In mixed characteristic, \cite{HM04} introduces analogs of the de Rham--Witt
complex, and \cite{HM03, HM04, GH06} discuss the connections between $\TR$ of 
smooth algebras in mixed characteristic and the de Rham--Witt complex. 
This in particular was used in \emph{loc.~cit.} to verify the Lichtenbaum--Quillen conjecture for
certain $p$-adic fields (prior to the general proof by Voevodsky--Rost). 
See also \cite{LW21} for a new approach to this calculation inspired by the methods of
\cite{BMS2}.

\section{The cotangent complex and its wedge powers}

The building blocks of all the constructions involved are the cotangent complex
and its wedge powers; these are the ``animations'' (for our purposes, left Kan
extensions) 
of the usual differential forms functors. We begin with a brief review. 
Fix a base ring $k$. 

\begin{construction}[Left Kan extension]
Let $\mathcal{C}$ be an $\infty$-category admitting sifted colimits. 
Let $\mathrm{Poly}_k$ be the category of finitely generated polynomial
$k$-algebras, and let 
$F: \mathrm{Poly}_k \to \mathcal{C}$ be a functor. 
Then one can construct the \emph{left Kan extension} or \emph{left derived
functor} $LF: \mathrm{Ring}_k \to
\mathcal{C}$, extending the functor $F$ to $\mathrm{Ring}_k$. 
Explicitly, we define $LF$ on all polynomial $k$-algebras (possibly on
infinitely many variables) by forcing $LF$ 
to commute with filtered colimits. 
Given an arbitrary $k$-algebra $R$, we can choose a simplicial resolution $
P_\bullet \to R$ where each $P_i$ is a polynomial $k$-algebra, and then 
$LF(R)  = | F(P_\bullet)|$. 
\end{construction} 

The above construction 
(in various forms classical, going back to Quillen) 
is a type of nonabelian left derived functor \cite[Sec.~5.5.8]{HTT}. 
More generally, we can express the 
above construction using the theory of animated rings. 
Let $\mathrm{Ani}(\mathrm{Ring}_k)$ be the $\infty$-category of animated
$k$-algebras 
(also called simplicial commutative $k$-algebras; we refer to \cite{CS} for a
discussion of this terminology). 
Then with $\mathcal{C}$ as above one has an equivalence of $\infty$-categories 
$\mathrm{Fun}_{\Sigma}(\mathrm{Ani}(\mathrm{Ring}_k), \mathcal{C} ) \simeq
\mathrm{Fun}( \mathrm{Poly}_k, \mathcal{C})$ between sifted-colimit preserving
functors $\mathrm{Ani}(\mathrm{Ring}_k) \to \mathcal{C}$ and functors
$\mathrm{Poly}_k \to \mathcal{C}$. 

\begin{definition}[The cotangent complex and its wedge powers] 
Let $R$ be a $k$-algebra. 
Then the \emph{cotangent complex} $L_{R/k} \in \D(R)$ is defined as the left
derived functor of the functor $R \mapsto \Omega^1_{R/k}$ of K\"ahler
differentials.\footnote{In principle, the functor $L_{\cdot/k}$ takes values
in $\mathcal{D}(k)$ as the argument varies, but with more effort (e.g., using
the $\infty$-category of pairs of an animated ring and a module over it) one can
construct the functor as stated.}   
Similarly, the wedge power $\bigwedge^i L_{R/k} \in \D(R)$ (for $i \geq 0$) is
defined as 
the derived functor 
of the functor $\bigwedge^i_R \Omega_{R/k}$ of differential $i$-forms; this can also be defined using the
Dold--Puppe nonabelian derived exterior
powers $\bigwedge^i: \D(R)^{\leq 0} \to \D(R)^{\leq 0}$
(cf.~\cite[Sec.~25.2.1]{SAG} for a modern account) applied to the
cotangent complex. 
\end{definition} 

We refer to  \cite[Tag 08P5]{stacks-project} for a comprehensive treatment of
the cotangent complex. 
A basic fact about the cotangent complex  is that it agrees with ordinary
differential forms not only for polynomial $k$-algebras, but more generally for
smooth $k$-algebras. The other fundamental tools are 
the transitivity sequence for  a sequence of ring maps $A \to B \to C$, which
yields a cofiber sequence in $\mathcal{D}(C)$,
\begin{equation}  L_{B/A} \otimes^L_B C \to L_{C/A} \to L_{C/B},  \end{equation}
and the base-change property 
$L_{B/A} \otimes_A A' = L_{B \otimes_A A'/A'}$ for a map $A \to A'$ of
$k$-algebras
such that $B, A'$ are $\mathrm{Tor}$-independent over $A$ (if they are not
$\mathrm{Tor}$-independent, one has to consider the derived tensor product $B
\otimes^L_A A'$ as an animated ring itself). 

\begin{example} 
Using the cofiber sequence and base-change, we find that if 
$B = A/r$ for $r \in A$ a nonzerodivisor, then $L_{B/A} \simeq (r)/(r^2)[1]$. 
\end{example} 

\begin{example} 
More generally, suppose $B = A/I$ for $I \subset A$ an ideal generated by a
regular sequence; this (and its generalizations) will be one of the primary
examples for us. 
Then $L_{B/A} = I/I^2[1]$, which is the suspension of a free $B$-module 
(cf.~\cite[Tag 08SH]{stacks-project}, \cite[III.3.2]{Ill71})
A consequence is that 
$\bigwedge^i L_{B/A} = \Gamma^i( I/I^2) [i]$, for $\Gamma^i$ the $i$th divided
power functor on flat $B$-modules. This is a consequence of the 
d\'ecalage isomorphism of \cite[Sec.~I.4.3.2]{Ill71} between $\bigwedge^i ( M[1])
= (\Gamma^i M)[i]$ for any $M \in \mathcal{D}(B)^{\leq 0}$. 
\end{example}

\begin{proposition} 
\label{perfectcot}
Let $R$ be a perfect $\mathbb{F}_p$-algebra. Then $L_{R/\mathbb{F}_p} =0 $. 
\end{proposition}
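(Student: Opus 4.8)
The plan is to play off two competing properties of the Frobenius endomorphism $\varphi\colon R\to R$: since $R$ is perfect, $\varphi$ is an \emph{isomorphism}, whereas in characteristic $p$ the Frobenius \emph{annihilates K\"ahler differentials}, since $d(x^p)=p\,x^{p-1}\,dx=0$. Making these two facts coexist will force $L_{R/\mathbb{F}_p}=0$.

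First I would record the relevant functoriality map. Applying the transitivity cofiber sequence to the composite $\mathbb{F}_p\to R\xrightarrow{\varphi}R$ yields a cofiber sequence in $\D(R)$
\[
L_{R/\mathbb{F}_p}\otimes^L_{R,\varphi}R\xrightarrow{\ c\ }L_{R/\mathbb{F}_p}\to L_{R/(R,\varphi)},
\]
where $(R,\varphi)$ denotes $R$ regarded as an $R$-algebra along $\varphi$. Since $R$ is perfect, $\varphi$ is a ring isomorphism, so $L_{R/(R,\varphi)}=0$ --- the cotangent complex of an isomorphism vanishes, e.g.\ because isomorphisms are formally \'etale --- and hence $c$ is an equivalence.

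Next I would show that $c$ is nullhomotopic by left Kan extension. Regarded as functors of $R$ with values in $\D(\mathbb{Z})$, both $R\mapsto L_{R/\mathbb{F}_p}$ and $R\mapsto L_{R/\mathbb{F}_p}\otimes^L_{R,\varphi}R$ preserve sifted colimits (the latter because base change along $\varphi$ does), so each is left Kan extended from its restriction to $\mathrm{Poly}_{\mathbb{F}_p}$, and the natural transformation $c$ is in turn the left Kan extension of $c|_{\mathrm{Poly}_{\mathbb{F}_p}}$. But on a finitely generated polynomial algebra $P=\mathbb{F}_p[x_1,\dots,x_n]$ we have $L_{P/\mathbb{F}_p}=\bigoplus_{i}P\,dx_i$, and $c$ carries $dx_i\otimes 1$ to $d(\varphi(x_i))=d(x_i^p)=0$; thus $c|_{\mathrm{Poly}_{\mathbb{F}_p}}=0$, and therefore $c=0$ for every $\mathbb{F}_p$-algebra $R$.

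Combining the two steps, $c\colon L_{R/\mathbb{F}_p}\otimes^L_{R,\varphi}R\to L_{R/\mathbb{F}_p}$ is at once an equivalence and nullhomotopic, so $\mathrm{id}_{L_{R/\mathbb{F}_p}}$ factors through $0$ and $L_{R/\mathbb{F}_p}=0$. The step I expect to be the main obstacle is the left Kan extension argument: one has to make sure that $c$ is genuinely a natural transformation of sifted-colimit-preserving functors --- which means carrying along the module structure, e.g.\ working in the $\infty$-category of pairs consisting of an animated ring and a module over it --- so that checking $c=0$ on the generating category $\mathrm{Poly}_{\mathbb{F}_p}$ really does suffice.
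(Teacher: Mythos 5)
Your proof is correct and takes essentially the same approach as the paper: the key point in both is that the Frobenius endomorphism induces the zero map on $L_{-/\mathbb{F}_p}$ (checked on polynomial algebras, then propagated by left Kan extension), while for a perfect ring the Frobenius is an isomorphism and hence must induce an equivalence, forcing the cotangent complex to vanish. The only packaging difference is that you invoke the transitivity cofiber sequence along $\mathbb{F}_p\to R\xrightarrow{\varphi}R$ together with $L_{R/(R,\varphi)}=0$ to make the ``$c$ is an equivalence'' half explicit, whereas the paper leaves this step implicit.
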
 
\begin{proof} 
For every $\mathbb{F}_p$-algebra $S$, the Frobenius $S \to S$ induces zero on
$L_{S/\mathbb{F}_p}$; this follows by inspection in the case of a polynomial
$\mathbb{F}_p$-algebra and then follows in general by taking simplicial resolutions. 
The claim follows. 
\end{proof}

The key structural result in \cite{BMS2} used in defining the motivic
filtrations 
is the 
flat descent for the cotangent complex and its wedge powers. 
This was originally observed by Bhatt \cite{Bha12} and is treated further in
\cite[Sec.~3]{BMS2}.

\begin{theorem}[{\cite{Bha12, BMS2}}] 
\label{flatcotangent}
Let $k$ be a base ring. Then the construction $A \mapsto \bigwedge^i L_{A/k}$, as a functor
from $k$-algebras to $\mathcal{D}(k)$, satisfies flat descent. 
More generally, for any $k$-module $M$, the construction $A \mapsto \bigwedge^i
L_{A/k} \otimes_k M$ 
satisfies flat descent.
\end{theorem}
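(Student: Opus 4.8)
The plan is to combine faithfully flat descent for the derived $\infty$-category $\mathcal{D}(-)$ with the transitivity sequence and the Koszul (d\'ecalage) filtration on derived exterior powers, reducing the statement to the acyclicity of an explicit double complex. The one external input I take for granted is that $R \mapsto \mathcal{D}(R)$ is a sheaf for the fpqc topology, so that for $A \to B$ faithfully flat with \v{C}ech nerve $B^\bullet$ (levelwise discrete, since $B$ is $A$-flat) and any $N \in \mathcal{D}(A)$ the canonical map $N \to \mathrm{Tot}(N \otimes_A^L B^\bullet)$ is an equivalence. Since the flat topology is generated by Zariski covers together with single faithfully flat maps, and $A \mapsto \bigwedge^i L_{A/k}$ (resp.\ $A \mapsto \bigwedge^i L_{A/k}\otimes_k M$) is trivially an \'etale, hence Zariski, sheaf because the cotangent complex commutes with \'etale localization, it suffices to prove that $\bigwedge^i L_{A/k}\otimes_k M \to \mathrm{Tot}\bigl(\bigwedge^i L_{B^\bullet/k}\otimes_k M\bigr)$ is an equivalence for every faithfully flat $A\to B$. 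The coefficient $M$ is independent of the ring, so it rides passively through every step; I suppress it and treat $M = k$.

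First I would peel off the ``absolute'' direction. Applying transitivity levelwise gives a cosimplicial cofiber sequence $L_{A/k}\otimes_A B^\bullet \to L_{B^\bullet/k} \to L_{B^\bullet/A}$ of connective objects, and the functorial Koszul filtration on the derived exterior power of a two-term extension in $\mathcal{D}(-)^{\leq 0}$ then endows $\bigwedge^i L_{B^\bullet/k}$ with a functorial finite filtration with graded pieces $\bigwedge^a L_{A/k}\otimes_A\bigwedge^{i-a}L_{B^\bullet/A}$, $0 \le a \le i$. As $\mathrm{Tot}$ is exact it respects this finite filtration, and the top piece $a = i$ contributes $\mathrm{Tot}\bigl(\bigwedge^i L_{A/k}\otimes_A B^\bullet\bigr) \simeq \bigwedge^i L_{A/k}$ by flat descent for $\mathcal{D}(A)$, matching the left-hand side. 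Everything thus reduces to showing that $\mathrm{Tot}\bigl(\bigwedge^j L_{B^{\otimes_A\bullet+1}/A}\otimes_A P\bigr) \simeq 0$ for every $j \ge 1$ and every connective $P \in \mathcal{D}(A)$ (we only need $P = \bigwedge^a L_{A/k}$).

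For this vanishing I would exploit that the \v{C}ech nerve becomes split after one base change. Base-changing $B^{\otimes_A\bullet+1}$ along $A\to B$ yields the cosimplicial $B$-algebra $C^\bullet := B^{\otimes_A\bullet+2}$, which is the \v{C}ech nerve of $B \to B\otimes_A B$ in $B$-algebras; as the latter map admits the multiplication retraction $B\otimes_A B \to B$, the associated augmented cosimplicial $B$-algebra carries strict extra codegeneracies, i.e.\ it is split, with augmentation term $B$. Applying $\bigwedge^j L_{-/B}$ (or $\bigwedge^j L_{-/B}\otimes_A P$) preserves splitness, and base change identifies $\bigwedge^j L_{C^\bullet/B} \simeq \bigl(\bigwedge^j L_{B^{\otimes_A\bullet+1}/A}\bigr)\otimes_A B$; so $\bigl(\bigwedge^j L_{B^{\otimes_A\bullet+1}/A}\otimes_A P\bigr)\otimes_A B$ is split with augmentation $\bigwedge^j L_{B/B}\otimes_A P = 0$ for $j \ge 1$. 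Concretely, representing each $\bigwedge^j L_{B^{\otimes_A n+1}/A}\otimes_A P$ by a complex of flat $A$-modules in non-negative homological degrees (using connectivity of $L_{B/A}$, of its exterior powers, and of $P$), the normalized cochain complex of the cosimplicial object $\bigwedge^j L_{B^{\otimes_A\bullet+1}/A}\otimes_A P$ is a double complex whose rows become acyclic after $\otimes_A B$, hence are already acyclic because $A\to B$ is faithfully flat; and since the double complex lies in a single quadrant, its product-total complex — which computes the totalization — is acyclic as well (truncate the internal direction, where acyclic rows force the resulting finite total complex to be acyclic, and pass to the limit; there is no $\lim^1$ obstruction because the homology of each truncation already vanishes). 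This gives $\mathrm{Tot}\bigl(\bigwedge^j L_{B^{\otimes_A\bullet+1}/A}\otimes_A P\bigr) \simeq 0$.

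I expect the genuinely delicate point to be precisely that last convergence step: \emph{a priori} $-\otimes_A B$ does not commute with the totalization of an arbitrary cosimplicial spectrum, and the argument sidesteps this by descending to the level of complexes and using the connectivity of $L_{B/A}$ to confine the relevant double complex to one quadrant, where ``acyclic rows'' does propagate to ``acyclic total complex''. Making this bookkeeping fully rigorous — equivalently, verifying that $\bigwedge^j L_{-/k}$ is a \emph{hypercomplete} sheaf for the flat topology — is the technical heart of the argument; the remaining ingredients (transitivity, the filtration on exterior powers, the splitting trick, and descent for $\mathcal{D}$) are formal.
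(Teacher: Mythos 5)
Your outline follows the paper's strategy closely: peel off the absolute part via the transitivity cofiber sequence and the Koszul filtration on derived exterior powers, reduce to the vanishing of $\mathrm{Tot}\bigl(\bigwedge^j L_{B^{\otimes_A\bullet+1}/A}\otimes_A P\bigr)$ for $j\ge 1$, and exploit the splitting of the \v{C}ech nerve after base change to $B$. That much agrees with what the paper attributes to \cite{Bha12,BMS2}. But the one step you flag as ``the technical heart'' is where a genuine gap appears, and your own diagnosis of that step is off.

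The gap: you assert that, for a chain-complex model $C^{\bullet}_{\bullet}$ of $\bigwedge^j L_{B^{\otimes_A\bullet+1}/A}\otimes_A P$ by flat $A$-modules, the rows $s\mapsto C^s_t\otimes_A B$ become acyclic, because ``applying $\bigwedge^j L_{-/B}$ preserves splitness''. What is true is that the cosimplicial \emph{object} $\bigwedge^j L_{B^{\otimes_A\bullet+2}/B}$ in $\mathcal{D}(B)$ is split: the extra codegeneracy on $B^{\otimes_A\bullet+2}$ is a map of $B$-algebras, and applying the functorial resolution \emph{over} $B$ and then $\Omega^j_{-/B}$ does produce a cosimplicial chain complex of $B$-modules with strict extra codegeneracies, hence exact rows. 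But that model is \emph{not} the base change $C^{\bullet}_{\bullet}\otimes_A B$ of your $A$-level model: the functorial polynomial resolution over $A$ applied to $B^{\otimes_A s+1}$, then base changed, is a different resolution of $B^{\otimes_A s+2}$ over $B$ than the functorial one over $B$, and the extra codegeneracies $B^{\otimes_A s+2}\to B^{\otimes_A s+1}$ are $B$-algebra maps that do not descend to $A$-algebra maps $B^{\otimes_A s+1}\to B^{\otimes_A s}$, so they do not lift to the base-changed $A$-resolutions. The two models are levelwise quasi-isomorphic in each cosimplicial degree, but a levelwise quasi-isomorphism of bicomplexes says nothing about the acyclicity of the rows at a fixed chain degree $t$; and $-\otimes_A B$ does not commute with the infinite products appearing in the product total complex. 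So faithful flatness cannot be applied row by row, and the truncation-in-$t$ argument does not get off the ground.

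Your closing remark — that this last step is ``equivalently, verifying that $\bigwedge^j L_{-/k}$ is a hypercomplete sheaf for the flat topology'' — is the second problem, and it is a red flag that the argument has drifted somewhere it should not: the statement you are asked to prove is \v{C}ech descent for a single faithfully flat map, which is strictly weaker than hyperdescent, and the paper explicitly records that flat \emph{hyper}descent for these functors is open. So if your bookkeeping really required hypercompleteness, the proof would be doomed; in fact, what it requires is only a convergence statement adapted to \v{C}ech nerves, which does hold. The way \cite{Bha12,BMS2} handle it is to truncate not at the level of a chosen bicomplex but at the level of the cosimplicial object itself: since each $\bigwedge^j L_{B^{\otimes_A n+1}/A}\otimes_A P$ (for $P$ connective) lies in $\mathcal{D}(A)_{\ge 0}$, the Postnikov truncations $\tau_{\le m}$ form cosimplicial objects with homotopy concentrated in the bounded window $[0,m]$; for such objects, the $\mathrm{Tot}$-tower stabilizes in each degree, so $-\otimes_A B$ commutes with $\mathrm{Tot}$, and faithful flatness plus the splitting gives $\mathrm{Tot}(\tau_{\le m}(-))=0$; finally one passes to the limit over $m$ using left-completeness of $\mathcal{D}(A)$. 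Combined with the transitivity filtration, this is the induction on $i$ alluded to by the paper. The distinction between this Postnikov truncation and your chain-degree truncation of an arbitrary representative is exactly what separates a correct convergence argument from a gap.
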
 
This result is proved using the transitivity cofiber sequence for the cotangent complex, 
flat descent for modules, and an inductive argument on the degree $i$. 
As pointed out in \emph{loc.~cit.}, it remains open whether the above functors are flat
hypersheaves. 

\section{Quasisyntomic rings}
\label{sec:quasisyntomic}

A key insight in \cite{BMS2} is that to understand invariants such as $\THH$,
etc.~  it is extremely clarifying 
to work with very ``large'' (e.g., perfectoid) rings. 
In particular, one should make highly ramified extensions by adding lots of
$p$-power roots. 
This strategy is expressed using the quasisyntomic topology, a key construction
of \cite[Sec.~4]{BMS2}; all the filtrations of \emph{loc.~cit.} are defined on
quasisyntomic rings, and probably cannot be defined more generally (without
sacrificing convergence properties). 
This class of rings is also extremely useful in other contexts, including in the
prismatic Dieudonn\'e theory of Ansch\"utz--Le Bras \cite{ALB20}.

\newcommand{\qsyn}{\mathrm{QSyn}}
\begin{definition}[Quasisyntomic rings] 
A ring $R$ is \emph{quasisyntomic} if: 
\begin{enumerate}
\item $R$ is $p$-complete, and the $p$-power torsion in $R$ is bounded, i.e.,
annihilated by $p^N$ for $N \gg 0$.  
\item The cotangent complex $L_{R/\mathbb{Z}_p} \in \D(R)$ 
has the property that $L_{R/\mathbb{Z}_p} \otimes^L_R (R/p) \in \D(R/p)$ has
$\mathrm{Tor}$-amplitude in $[-1, 0]$. 
\end{enumerate}
We let $\qsyn$ denote the category of quasisyntomic rings. 
\end{definition} 

\begin{example}[Complete intersections] 
Let $(R, \mathfrak{m})$ be a $p$-complete local noetherian ring  with $p \in
\mathfrak{m}$. 
Then $R$ is quasisyntomic if $R$ is a complete intersection, i.e.,
if for any (or one) surjection $f: A \twoheadrightarrow \widehat{R}$ with $A$ complete
regular local, the kernel of $f$ is generated by a regular sequence. 
Indeed, a result of Avramov \cite{Av99} states that 
$R$ is a complete intersection if and only if $L_{R/\mathbb{Z}_p}$ has
$\mathrm{Tor}$-amplitude in $[-1, 0]$. 
Therefore, 
if $R$ is a complete intersection then $R$ is clearly quasisyntomic. 
\end{example} 

The idea is that quasisyntomic rings are those which behave like local complete
intersections at the level of the cotangent complex (which is enough to control
all Hochschild-type invariants). 
However, this class of rings includes many highly non-noetherian examples. 

\begin{example}[Perfect rings] 
Any perfect $\mathbb{F}_p$-algebra $R$ (i.e., one where the Frobenius is an
isomorphism) is quasisyntomic. 
In fact, we have that $L_{R/\mathbb{F}_p} = 0$ 
as we saw in  \Cref{perfectcot}; the transitivity cofiber sequence applied to
$\mathbb{Z}_p \to \mathbb{F}_p \to R$ thus implies that
$L_{R/\mathbb{Z}_p}$ is the suspension of a rank $1$ free $R$-module. 
\end{example} 

\begin{example}[Witt vectors of perfect rings] 
If $R$ is a perfect $\mathbb{F}_p$-algebra, then 
the ring of Witt vectors 
$W(R)$ is quasisyntomic. In fact, 
$W(R)$ is $p$-torsionfree and $W(R)/p \simeq R$; one thus obtains that
$L_{W(R)/\mathbb{Z}_p}$ vanishes $p$-adically, whence the claim. 
\end{example}

The class of perfect $\mathbb{F}_p$-algebras 
admits a remarkable generalization to mixed characteristic, namely the class of
integral perfectoid rings \cite[Sec.~3.2]{BMS1} (based on the notion of perfectoid Tate ring
introduced in \cite{Sch12, KL15, Fo13}).

\begin{definition}[Perfectoid rings] 
A $p$-adically complete ring $R$ is called \emph{perfectoid} if 
$R$ can be expressed as the quotient $W(R')/\xi$, where $R'$ is a perfect
$\mathbb{F}_p$-algebra, and $\xi \in W(R')$ is an element of the form $[a] +
pu$  
where $u \in W(R')$ is a unit and $a \in R'$; here $[\cdot]$ denotes the
Teichm\"uller lift.\footnote{Such elements $\xi$ are also called
\emph{primitive} or \emph{distinguished}; in the terminology of \cite{Prisms},
a perfectoid ring $R$ is the same data as the \emph{perfect prism} $(W(R'),
(\xi))$.} 
\end{definition} 

In the above, by replacing $R'$ by its $a$-adic completion, 
which does not change the quotient $W(R')/([a] + pu)$, we may 
 in fact assume that $R'$ is $a$-adically complete. 
Note that this in particular implies that $R/[a]$ is an $\mathbb{F}_p$-algebra, and
the Frobenius induces an isomorphism 
of $\mathbb{F}_p$-algebras, 
$R/[a]^{1/p} \xrightarrow{\sim} R/[a]$. 
This is in fact the essential feature of 
perfectoid rings: 

\begin{proposition}[{\cite[Lem.~3.10]{BMS1}}] 
Let $R$ be a ring such that there exists a nonzerodivisor $\omega \in R$
such that: 
\begin{enumerate}
\item $R$ is $\omega$-adically complete. 
\item $\omega^p \mid p $.  
\item  The Frobenius induces an isomorphism $R/\omega \xrightarrow{\sim}
R/\omega^p$. 
\end{enumerate}
Then $R$ is perfectoid. 
Conversely, if $R$ is perfectoid, then there exists an element $\omega$ such
that $\omega^p \mid p$, and for any such element, the Frobenius map $R/\omega
\to R/\omega^p$ is an isomorphism. 
\end{proposition}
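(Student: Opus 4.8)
The plan is to prove both directions by manipulating the structural description of perfectoid rings as quotients $W(R')/\xi$ with $\xi = [a] + pu$, keeping careful track of which elements play the role of $\omega$. For the converse direction (a perfectoid $R$ admits such an $\omega$), I would first produce the element: given $R = W(R')/\xi$ with $\xi = [a]+pu$, set $\omega = [a^{1/p}]$, which makes sense after replacing $R'$ by its $a$-adic completion so that $a$ has a $p$-power root; more robustly, choose any $a_0 \in R'$ with $a_0^p = a$ (using perfectness) and put $\omega = [a_0]$. Then $\omega^p = [a] \equiv -pu \pmod{\xi}$ in $R$, so $\omega^p$ divides $p$ up to a unit, giving (2). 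That $R$ is $\omega$-adically complete follows because $R$ is $p$-adically complete and $\omega^p \mid p$ forces the $\omega$-adic and $p$-adic topologies to agree (each of $\omega^{pN}$ and $p^N$ divides a power of the other). For (3), I would invoke exactly the remark made just before the proposition: $R/[a]^{1/p} \xrightarrow{\sim} R/[a]$ via Frobenius, which is the statement $R/\omega \xrightarrow{\sim} R/\omega^p$ once one notes $[a] = \omega^p$ and $[a]^{1/p} = \omega$; this in turn comes from the fact that $R/\omega$ is an $\mathbb{F}_p$-algebra on which Frobenius is induced by the perfect ring $R'/a_0$, together with the observation that modulo $\xi$ the Frobenius $x \mapsto x^p$ on $W(R')$ is compatible with the ghost-component description.

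For the forward direction — the interesting half — I start with $R$ that is $\omega$-adically complete, with $\omega$ a nonzerodivisor, $\omega^p \mid p$, and Frobenius inducing $R/\omega \xrightarrow{\sim} R/\omega^p$. The goal is to exhibit a perfect $\mathbb{F}_p$-algebra $R'$ and a distinguished element $\xi \in W(R')$ with $R \simeq W(R')/\xi$. The natural candidate is the tilt: set $R' = \varprojlim_{x \mapsto x^p} R/p$ (equivalently $\varprojlim_{x \mapsto x^p} R/\omega$, since $p$ and $\omega$ define the same topology up to the bounded gap coming from $\omega^p \mid p$). The hypothesis (3) is precisely what makes this inverse limit a \emph{perfect} $\mathbb{F}_p$-algebra: it guarantees the transition maps $x \mapsto x^p$ on the tower $R/\omega \leftarrow R/\omega \leftarrow \cdots$ are "as surjective as they need to be," so that Frobenius on $R'$ is an isomorphism. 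Then I would construct Fontaine's map $\theta: W(R') \to R$ using the multiplicative section (which exists because $R'$ is the inverse limit along $p$-th powers), show $\theta$ is surjective with $p$-adically (equivalently $\omega$-adically) complete source and target, and identify $\ker\theta = (\xi)$ for a single element $\xi$. The element $\xi$ is then shown to be distinguished: one picks $\varpi \in R'$ lifting $\omega \bmod \omega^p$ appropriately, and argues $\xi = [\varpi] - p \cdot(\text{something})$, or more cleanly that $\xi \bmod p$ generates $\ker(R'/p \to R/\omega)$, which is $(\varpi)$ — forcing $\xi$ into the form $[a] + pu$.

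The main obstacle, and where the hypotheses (1)–(3) really get used, is showing that $\ker\theta$ is \emph{principal} and generated by a \emph{distinguished} element — i.e., not just that $R$ is a quotient of $W(R')$, but a quotient by a primitive element. This requires (a) knowing $\theta$ is surjective, which needs the surjectivity of $R'/p \to R/\omega$ extracted from hypothesis (3) plus an $\omega$-adic (Newton-type) approximation/completeness argument from (1); and (b) controlling the kernel: one shows $\ker\theta$ is generated modulo $p$ by a single nonzerodivisor and then lifts this generator using $p$-completeness, checking that the lift is distinguished because $\omega^p \mid p$ (so the "$p$" term is not a unit multiple of the Teichmüller term). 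I expect the routine parts — the topology comparison, multiplicativity of the tilt, basic properties of $\theta$ — to go quickly, and would cite \cite[Lem.~3.10]{BMS1} and the surrounding discussion in \cite[Sec.~3.2]{BMS1} for the precise bookkeeping; the one genuinely delicate point to write carefully is the passage from "Frobenius is an iso on $R/\omega \to R/\omega^p$" to "the tilt is perfect and $\theta$ is surjective with principal, distinguished kernel."
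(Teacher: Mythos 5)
First, note that the paper does not prove this proposition: it states it as a citation to \cite[Lem.~3.10]{BMS1} and gives no argument, so there is no ``paper's own proof'' to compare against. Evaluating your proposal on its own terms, the overall strategy (tilt, Fontaine's $\theta$-map, identify a principal distinguished kernel) is the right one, but there are a few genuine problems.

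The most concrete error is your claim that hypothesis (3) ``is precisely what makes this inverse limit a perfect $\mathbb{F}_p$-algebra.'' The inverse-limit perfection $R' = \varprojlim_{\phi}(R/p)$ is perfect \emph{for any} $\mathbb{F}_p$-algebra $R/p$: Frobenius on the limit is the shift map of the tower, which is a bijection by construction, with no surjectivity hypothesis needed. What hypothesis (3) actually supplies is something else: it makes Frobenius on $R/\omega^p$ (equivalently on $R/p$) \emph{surjective} with kernel exactly $\omega R/\omega^p R$, which is what makes the projection $R' \to R/\omega^p$ surjective and hence $\theta \colon W(R') \to R$ surjective, and also what lets you pin down $\ker\theta$ modulo $p$ as a principal ideal. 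Misplacing where (3) enters means your sketch never actually uses (3) in the step where it is indispensable.

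Two further gaps. In the converse direction, the statement requires that for \emph{every} $\omega$ with $\omega^p \mid p$ the Frobenius $R/\omega \to R/\omega^p$ is an isomorphism, but you only verify this for the particular $\omega = [a_0]$ extracted from a chosen presentation $R = W(R')/\xi$; you would still need to show the assertion is independent of the choice (the standard route is to show the ideal $\sqrt{pR}$ together with the structure theorem forces any such $\omega$ to differ from your chosen one by a unit, or to argue directly with the tilt). And in the forward direction, you openly defer the hardest step --- that $\ker\theta$ is principal and generated by a \emph{distinguished} element --- to a citation of \cite[Lem.~3.10]{BMS1} itself, which is circular: that is the very statement you are trying to prove. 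The argument one actually needs here is to produce a preimage of $\omega$ in $W(R')$ whose mod-$p$ reduction generates $\ker(R' \to R/\omega)$, check it is a nonzerodivisor, and verify the coefficient of $p$ in its Teichm\"uller expansion is a unit using $\omega^p \mid p$; none of this is sketched.
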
 

\begin{remark} 
Let $R = W(R')/\xi$ be a perfectoid ring with $\xi = [a]  + p u$. 
By $a$-adically completing $R'$ if necessary, we may assume that $R'$ is
$a$-adically complete. 
In this case, $R'$ is the \emph{tilt} $R^{\flat}$ of $R$, namely, 
$R' = \varprojlim_{\varphi} R/p$. In fact, $R/p = R'/a$, and for any perfect
$\mathbb{F}_p$-algebra $S$ which is $x$-adically complete,  we see that $S$
agrees with the inverse limit perfection of $S/x$. 
\end{remark}

\begin{remark} 
A perfectoid ring $R$ is quasisyntomic. 
Indeed, the $p$-complete cotangent complex $L_{R/\mathbb{Z}_p} =
L_{R/W(R^{\flat})}$ is  the suspension of a free $R$-module of rank $1$, since
$W(R^{\flat}) \to R$ is the quotient by a nonzerodivisor. 
\end{remark}

\begin{example} 
The $p$-adic completion of the ring $\mathbb{Z}_p[p^{1/p^\infty}]$ 
is
perfectoid. 
In fact, this ring can be written as the quotient of 
\[  W( \mathbb{F}_p[t^{1/p^\infty}])/( [t] - p) =
\left( \mathbb{Z}_p[u^{1/p^\infty}] \right)_{\hat{p}}/(u- p) .  \]
More generally, let $R$ be a $p$-torsionfree, $p$-adically complete
$\mathbb{Z}_p[p^{1/p^\infty}]$-algebra. 
Then $R$ is perfectoid if and only if the Frobenius induces an isomorphism 
$\phi: R/p^{1/p} \xrightarrow{\sim}
R/p$. 
\end{example} 

\begin{example} 
The ring $\mathbb{Z}_p[\zeta_{p^\infty}]_{\hat{p}}$ is
perfectoid. In this case, we can 
form the ring $\mathbb{Z}_p[ q^{1/p^\infty}]_{\widehat{(p,q-1) }} = W(
\mathbb{F}_p[\epsilon^{1/p^\infty}]_{\widehat{(\epsilon -1)}})$ (via $q = [\epsilon]$) and the element 
$[p]_q := \frac{q^p -1}{q-1} = 1 + q + \dots + q^{p-1}$. 
By considering the map $\mathbb{F}_p[\epsilon^{1/p^\infty}]_{\widehat{(\epsilon
- 1)}} \to \mathbb{F}_p, \epsilon \mapsto 1$, 
one checks that the coefficient of $p$ in the Teichm\"uller expansion is a unit. 
Thus, we can take $R' = \mathbb{F}_p[\epsilon^{1/p^\infty}$ and $\xi = [p]_q$. 
\end{example}

The original definition of a \emph{perfectoid field} was given in \cite{Sch12}: a
perfectoid field $K$ is a complete nonarchimedean field $K$ with ring of
integers $\mathcal{O}_K \subset K$ such that the
valuation of $K$ is 
nondiscrete, $p$ is topologically nilpotent, and the Frobenius on $\mathcal{O}_K/p$ is surjective. 
This implies that there exists a nonzero topologically nilpotent element $\omega \in \mathcal{O}_K$ with $\omega^p 
\mid p $ and 
such that 
the Frobenius induces an isomorphism $\mathcal{O}_K/\omega \xrightarrow{\sim}
\mathcal{O}_K/\omega^p$. 
The datum of a perfectoid field is equivalent to the datum of a complete, rank
$1$ valuation ring which is perfectoid. 
The above two examples arise in this manner. 

 To obtain more examples of perfectoid rings, note that if $R$ is a
perfectoid ring, then the $p$-completion $R\left \langle
t^{1/p^\infty}\right\rangle$ of $R[t^{1/p^\infty}]$ is perfectoid. 
More subtly, there is the construction of the ``perfectoidization'' of a
semiperfectoid ring. 
If $R$ is a perfectoid ring and $I \subset R$ is a $p$-complete ideal, then
there is a $p$-complete ideal $J \supset I$ such that  $R/J$ is perfectoid, and
is the universal perfectoid ring to which $R/I$ maps
(cf.~\cite[Th.~7.4]{Prisms}). This construction is not easy to describe
explicitly in general. But if $I = (f)$ for $f \in R$ admitting a system of
$p$-power roots $\{f^{1/p^n}\}_{n \geq 0}$, then $J $ is the $p$-completion of
$\bigcup_{n \geq 0}
(f^{1/p^n})$.

We next review the quasisyntomic topology on $\qsyn$, cf.~\cite[Def.~4.1,
Cor.~4.8]{BMS2}. 
This is a non-noetherian version of the syntomic topology, cf.~\cite{FM87} or
\cite[Tag 0224]{stacks-project}, and in the $p$-complete context. 
Strictly speaking, it is $\qsyn^{op}$ that has the structure of a site. 

\begin{definition}[The quasisyntomic site] 
A map $R \to R'$ in $\qsyn$ is 
a cover if: 
\begin{enumerate}
\item $R/p^n \to R'/p^n $ is faithfully flat for all $n \geq 0$.   
\item  $L_{R'/R} \otimes^L_{R'} R'/p \in \D(R'/p)$ has $\mathrm{Tor}$-amplitude
in $[-1, 0]$. 
\end{enumerate}
\end{definition} 

The condition (1) is called \emph{$p$-complete faithful flatness}, and is the
appropriate replacement for faithful flatness in this (highly non-noetherian)
setup. Note that if $R$ is noetherian, then the condition (1) is simply faithful
flatness thanks to \cite{Yek18}. 

\begin{example}[Adding systems of $p$-power roots] 
Given a collection of elements $\{x_t \in R\}$, the ring $R'$ obtained as the
$p$-completion of 
$R[u_t^{1/p^\infty}, t \in T]/(u_t - x_t)$, i.e., obtained by
$p$-completely adding a system of $p$-power roots
of the elements $x_t$, gives a cover of $R$ in the quasisyntomic topology.  
Iterating this construction transfinitely many times, one sees that 
every object of $\qsyn$ can be covered by an object where all elements admit
compatible systems of $p$-power roots. 
\end{example} 

\begin{example}[Covers of regular rings] 
Let $R$ be a $p$-complete, regular noetherian ring. 
Then there is a quasisyntomic cover $R \to R_\infty$, with $R_\infty$
perfectoid. Conversely, a $p$-complete noetherian ring admitting such  a cover
is regular. This is proved in \cite[Theorem 4.7]{BIM}. 
\end{example}

\begin{example}[$p$-complete valuation rings are quasisyntomic] 
This follows by a result of Gabber--Ramero \cite[Th.~6.5.8]{GR03}. 
Moreover,  if $V$ is a valuation ring over $\mathbb{F}_p$, then
$L_{V/\mathbb{F}_p}$ is a flat $V$-module. 
\end{example} 

An important general 
structural result for perfectoid rings, formulated in terms of the quasisyntomic
site, is that locally one can add solutions to polynomial equations. 
This is highly non-trivial, since there is no obvious way to add such solutions
while retaining the perfectoid property. 
For this result, compare \cite[Sec.~2.5]{Andre18}, 
\cite[Th.~16.9.17]{GRfoundation}, 
\cite[Th.~7.12]{Prisms}, and \cite[Th.~2.3.4]{CS}. 

\begin{theorem}[Andr\'e's lemma] 
Let $R$ be a perfectoid ring. Then there exists a map of perfectoid rings $R \to
R_\infty$ such that: 
\begin{enumerate}
\item $R_\infty$ is absolutely integrally closed, i.e., every monic
polynomial equation over $R_\infty$ has a root in $R_\infty$.   
\item $R \to R_\infty$ is a cover in $\qsyn$. In fact, $R \to R_\infty$ can be 
taken to be the $p$-completion of an ind-syntomic map. 
\end{enumerate}
\end{theorem}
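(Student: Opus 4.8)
The plan is to reduce Andr\'e's lemma to a statement about absolutely integrally closed perfectoid rings, constructed as a (possibly transfinite) filtered colimit of quotients of polynomial extensions. First I would recall that to build $R_\infty$ it suffices to repeatedly adjoin a root of a single monic polynomial $f(T) = T^n + a_{n-1}T^{n-1} + \dots + a_0 \in R[T]$ and then take a transfinite composite: if at each stage the map is a quasisyntomic cover (in fact the $p$-completion of an ind-syntomic map) and is again perfectoid, then the colimit $R_\infty$ will be perfectoid, will receive a cover map from $R$, and — by iterating over all monic polynomials over all intermediate rings, using a cardinality bound to close off the transfinite process — will be absolutely integrally closed. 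So the whole statement comes down to the single-polynomial step: given a perfectoid $R$ and a monic $f \in R[T]$, produce a perfectoid $R'$ over $R$ in which $f$ acquires a root, with $R \to R'$ a quasisyntomic cover arising from a $p$-completed ind-syntomic map.

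For the single-polynomial step, the naive guess $R[T]/(f)$ is syntomic over $R$ but has no reason to be perfectoid, so the key is to first enlarge $R$ so that $f$ becomes ``close to'' splitting in a controlled way, and then perfectoidize. Concretely, I would pass to a quasisyntomic perfectoid cover $R \to S$ in which we may assume all relevant elements (the coefficients $a_i$, and $p$) admit compatible systems of $p$-power roots — this is available by the earlier example on adding systems of $p$-power roots, followed by applying the perfectoidization construction recorded just before the theorem. Inside a syntomic extension $S[T]/(f)$ of $S$ (which is a semiperfectoid ring: a quotient of a perfectoid ring by a finitely generated, $p$-completely regular ideal), one then forms the \emph{perfectoidization} in the sense of \cite[Th.~7.4]{Prisms}: there is a universal perfectoid ring $S[T]/(f) \to \bigl(S[T]/(f)\bigr)^{\mathrm{perfd}}$, and this map, being a perfectoidization of a $p$-completely syntomic $S$-algebra, is itself $p$-completely faithfully flat with controlled cotangent complex (this is exactly the content one extracts from the prismatic analysis of perfectoidizations in \cite{Prisms}). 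Setting $R' = \bigl(S[T]/(f)\bigr)^{\mathrm{perfd}}$ gives a perfectoid ring over $R$, containing a root of $f$, with $R \to R'$ a composite of quasisyntomic covers, each of which is (a $p$-completion of) an ind-syntomic map.

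The main obstacle — and the genuinely hard input — is precisely the assertion that the perfectoidization of a $p$-completely syntomic algebra over a perfectoid ring is again $p$-completely faithfully flat (with cotangent complex of $\mathrm{Tor}$-amplitude in $[-1,0]$ mod $p$), so that it defines a quasisyntomic cover. This is not formal: the perfectoidization functor is defined by a universal property and is, as the excerpt warns, hard to describe explicitly, and faithful flatness must be proved by computing (or bounding) the perfectoidization after base change, which is where the derived prismatic cohomology / derived de Rham machinery of \cite{Prisms} enters. The remaining steps — the transfinite iteration, the cardinality estimate ensuring the process terminates at an absolutely integrally closed ring, and the verification that a filtered colimit of perfectoid rings along $p$-completely faithfully flat maps is perfectoid and that the composite remains a quasisyntomic cover — are bookkeeping once the flatness of perfectoidization of syntomic algebras is in hand. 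I would therefore organize the write-up to isolate that flatness statement as the sole nontrivial lemma, citing \cite[Sec.~2.5]{Andre18}, \cite[Th.~7.12]{Prisms}, and \cite[Th.~2.3.4]{CS} for its various incarnations, and treat everything else as a short formal wrapper.
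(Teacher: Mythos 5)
The paper states this result without a proof of its own, simply citing \cite[Sec.~2.5]{Andre18}, \cite[Th.~16.9.17]{GRfoundation}, \cite[Th.~7.12]{Prisms}, and \cite[Th.~2.3.4]{CS}. Your outline reconstructs the strategy of those sources: reduce to a single-polynomial step, adjoin a root by perfectoidizing, and isolate as the key nontrivial input the $p$-complete faithful flatness of the resulting perfectoidization. That architecture is on target, and you correctly flag the flatness of perfectoidization as the genuinely hard lemma.

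However, the single-polynomial step as you describe it contains a genuine error. You assert that $S[T]/(f)$ is semiperfectoid, ``a quotient of a perfectoid ring by a finitely generated, $p$-completely regular ideal,'' but this is false: $S[T]/(f)$ is cut out from $S[T]$, and the polynomial ring $S[T]$ is not perfectoid, since the variable $T$ admits no compatible system of $p$-power roots (so Frobenius on $S[T]/p$ is not surjective). Enlarging $S$ so that the coefficients $a_i$ acquire $p$-power roots does nothing to repair this, because the obstruction is $T$ itself, not the $a_i$. Consequently the perfectoidization recipe quoted from \cite[Th.~7.4]{Prisms} — whose input is a $p$-complete quotient of a perfectoid ring — does not apply to $S[T]/(f)$ in the form you invoke it. The standard correction, and what the references actually do, is to enlarge the variable rather than the coefficients: pass to the perfectoid ring $S\langle T^{1/p^\infty}\rangle$ (the $p$-completion of $S[T^{1/p^\infty}]$), observe that $S\langle T^{1/p^\infty}\rangle/(f(T))$ is genuinely semiperfectoid (a quotient of a perfectoid ring by the principal ideal generated by the monic, hence $p$-completely regular, element $f$), and perfectoidize that. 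The map $S \to S\langle T^{1/p^\infty}\rangle/(f(T))$ is already $p$-completely faithfully flat — it factors as the finite free extension $S \to S[T]/(f)$ followed by a $p$-completed filtered colimit of finite free maps obtained by adjoining $p$-power roots of $T$ — so the crux, as you say, is the $p$-complete faithful flatness of the composite $S \to \bigl(S\langle T^{1/p^\infty}\rangle/(f)\bigr)_{\mathrm{perfd}}$, together with (for part~(2)) the claim that this perfectoidization is the $p$-completion of an ind-syntomic map. With that correction your outline matches the argument underlying the cited references.
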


\begin{definition}[Quasiregular semiperfectoid rings]
A quasisyntomic ring $R$ is said to be \emph{quasiregular semiperfectoid} 
(or \emph{quasiregular semiperfect} if $R$ is additionally an
$\mathbb{F}_p$-algebra)
if
either of the following equivalent conditions hold: 
\begin{enumerate}
\item $R$ receives a surjection from a perfectoid ring.  
\item $R/p$ is semiperfect (i.e., the Frobenius is surjective), and $R$
receives a map from a perfectoid ring $R_0$. 
\end{enumerate}
To see that (2) implies (1), 
consider the map $\theta: W(R^{\flat}) \to R$ for $R^{\flat}$ the inverse limit
perfection of $R/p$; this is surjective modulo $p$,
hence surjective since $R$ is $p$-complete. 
The extension $R_0 \hat{\otimes}_{\mathbb{Z}_p} W(R^{\flat}) \to R$ is also
therefore surjective, and the source is perfectoid, whence (1). 
Note also that (1) implies (2) because the reduction mod $p$ of a perfectoid
ring is semiperfect. 
\end{definition}

We denote by $\qrsp$
the category of quasiregular semiperfectoid rings, equipped with the induced
site structure. 
The subcategory $\qrsp \subset \qsyn$ is a \emph{basis} for the quasisyntomic site: any 
object of $\qsyn$ admits a cover by an object of $\qrsp$. 
Moreover, the tensor product of two rings in $\qrsp$ remains in $\qrsp$.

\begin{remark} 
Suppose that $R$ is a quasiregular 
semiperfectoid ring. In this case, 
$L_{R/\mathbb{Z}_p}$ is the 
suspension of a $p$-completely flat $R$-module. 
\end{remark}

Heuristically quasisyntomic rings are those which behave like lci rings, at
least at the level of the cotangent complex (and after $p$-completion). We also
discuss a class of $\mathbb{F}_p$-algebras which behave more like smooth algebras.

\begin{definition}[Cartier smooth $\mathbb{F}_p$-algebras, \cite{KM18}] 
Let $R$ be an $\mathbb{F}_p$-algebra. We say that $R$ is \emph{Cartier smooth}
if: 
\begin{enumerate}
\item The cotangent complex $L_{R/\mathbb{F}_p}$ is a flat $R$-module in degree
zero. 
\item For each $i$, the inverse Cartier operator $C^{-1}:
\Omega^i_{R/\mathbb{F}_p} \to H^i( \Omega^{\ast}_{R/\mathbb{F}_p})$ 
is an isomorphism. 
Here the inverse Cartier operator is the unique map 
of graded algebras $\Omega^{\ast}_{R/\mathbb{F}_p} \to H^*(
\Omega^{\ast}_{R/\mathbb{F}_p})$ carrying $r \in R$ to the class of $r^p$ and
$ds, s \in R$ to the class of $s^{p-1} ds$. Compare \cite[Prop.~3.3.4]{BLM}. 
\end{enumerate}
\end{definition}

\begin{example} 
\begin{enumerate}
\item Any smooth algebra over a perfect field (or more generally a perfect
$\mathbb{F}_p$-algebra) is Cartier smooth, thanks to the classical Cartier
isomorphism (cf.~\cite[Th.~7.2]{Ka70} for an account).  
\item 
Any regular noetherian $\mathbb{F}_p$-algebra is Cartier smooth. Indeed, this
follows because Cartier smooth algebras are closed under filtered colimits and
any regular noetherian $\mathbb{F}_p$-algebra is ind-smooth by N\'eron--Popescu
desingularization. However, one can also prove this claim directly,
\cite[Sec.~9.5]{BLM}. 
\item Any valuation ring over $\mathbb{F}_p$ is Cartier smooth. 
This follows from results of Gabber--Ramero  
\cite[Th.~6.5.8]{GR03}
and Gabber \cite[App.~A]{KST20}. 
Conjecturally (by local uniformization, a weak form of resolution of singularities) valuation rings over $\mathbb{F}_p$ are
ind-smooth, which would imply Cartier smoothness, but local uniformization is not known in general. 
\item A collection of elements $\left\{x_i\right\}_{i \in I}$ in an
$\mathbb{F}_p$-algebra $R$ is a \emph{$p$-basis} if 
the elements $\prod_{i \in I} x_i^{a_i} \in R$, as $\left\{a_i \right\}_{i \in
I}$ ranges over all finitely supported functions $I \to \left\{0, 1,
\dots, p-1\right\}$, forms a basis for $R$ as a module over itself via the
Frobenius map. 
If $R$ admits a $p$-basis, then $R$ is Cartier smooth,
cf.~\cite[Th.~9.5.21]{BLM} and its proof. 
\end{enumerate}
\end{example}

We refer to \cite{KM18, KST20} for some applications of the theory of Cartier smooth
algebras. In particular, \emph{loc.~cit.} it is shown that the
 calculation 
 \cite{GL00, GH99}
 of the $p$-adic $K$-theory and topological cyclic homology
of regular local $\mathbb{F}_p$-algebras also generalizes to local Cartier smooth 
$\mathbb{F}_p$-algebras (e.g., valuation rings). 
\begin{remark} 
We do not know if the condition of Cartier smoothness guarantees that the
Frobenius endomorphism is flat. By a classical theorem of Kunz (see \cite{Ku69} or
\cite[Tag 0EC0]{stacks-project}), 
a noetherian $\mathbb{F}_p$-algebra is regular if and only if the Frobenius
endomorphism is flat. 
All the above examples of Cartier smooth algebras have the property that the
Frobenius is flat. 

On the other hand, condition (2) in the definition of Cartier smoothness is definitely not implied
by condition (1). For instance, there exist semiperfect $\mathbb{F}_p$-algebras
$R$ such that $L_{R/\mathbb{F}_p} = 0$ but such that $R$ is not perfect;
compare \cite{trivcc} for an example. 
In this case, the inverse Cartier operator reproduces the Frobenius $\varphi: R
\to R$ in degree zero, which is not an isomorphism. 
\end{remark} 

\begin{question} 
Is there an analog of Cartier smoothness for arbitrary quasisyntomic rings? 
\end{question} 
\section{Some quasisyntomic sheaves}

Throughout, we use the language of sheaves of spectra  \cite[Sec.~1.3]{SAG}
(this was implicitly used in the formulation of \Cref{flatcotangent}).
Note that this is slightly more general than the theory
introduced by Jardine \cite{Jardine}, which corresponds to the
subcategory of hypercomplete sheaves,
cf.~\cite{DHI}. However, all the sheaves used in the constructions of
\cite{BMS2} will be shown to be
hypercomplete (this is a convenient feature of the quasisyntomic site), so the
distinction does not play a significant role in \cite{BMS2}. 

\begin{definition}[Sheaves on $\qsyn$]
A spectrum-valued sheaf on $\qsyn$ is a functor $F: \qsyn \to \sp$ such that 
\begin{enumerate}
\item  
$F$ preserves finite products. 
\item
If $A \to B$ is a cover in $\qsyn$,
then the natural map 
$F(A) \to \varprojlim \left( F(B) \rightrightarrows F(B \hat{\otimes}_A B)
\triplearrows \dots \right) $ is an equivalence. 

\end{enumerate}
We let $\shv( \qsyn, \sp)$ denote the $\infty$-category of sheaves of spectra. 
\end{definition}

\newcommand{\shvh}{\mathrm{Shv}_{\mathrm{hyp}}}

\begin{definition}[Hypercomplete sheaves] 
We will say that a sheaf of spectra $F \in \shv(\qsyn, \sp)$ is \emph{hypercomplete} if 
$F$ satisfies descent for hypercovers in the quasisyntomic topology (rather than
only for \v{C}ech covers as above). 
We let $\shvh(\qsyn, \sp) \subset \shv(\qsyn, \sp)$ denote the subcategory of
hypercomplete sheaves; this inclusion is the right adjoint of a Bousfield
localization $(-)^h: \shv(\qsyn, \sp) \to \shvh(\qsyn, \sp)$ called
hypercompletion. 
\end{definition} 

The presentable, stable $\infty$-category $\shv(\qsyn, \sp)$ 
admits a canonical $t$-structure (as sheaves on any site do). 
A $\sp$-valued sheaf $\sF$ on $\qsyn$ is \emph{connective} if for every
$A \in \qsyn$ and $x \in \pi_j(\sF(A))$ for $j < 0$, there exists a quasisyntomic cover $A
\to B$ such that $x$ is carried to zero in $\pi_j( \sF(B))$. 
Similarly, $\sF$ is \emph{coconnective} if it takes values in coconnective
spectra. 
The $t$-structure restricts to a $t$-structure on the hypercomplete sheaves,
and every bounded-above sheaf is automatically hypercomplete.\footnote{The
hypercomplete sheaves are those sheaves which receive no maps from
$\infty$-connected sheaves.} 
With respect to this $t$-structure, the 
heart of $\shv(\qsyn, \sp)$ is the ordinary 
category of sheaves of abelian groups on $\qsyn$. 

\begin{construction}[Postnikov towers] 
Given any $\sF \in \shv(\qsyn, \sp)$, we have 
its Postnikov tower $\left\{\sF_{\leq n}\right\}_{n \in \mathbb{Z}}$ with
respect to the above $t$-structure. 
The limit of this Postnikov tower  is given by its hypercompletion $\sF^h$. 
This is a consequence of the fact that the quasisyntomic site is ``replete'' in
the sense of \cite[Sec.~3]{BSproet}; compare \cite[Prop.~A.10]{MTR}. In
particular, if $\sF$ is already hypercomplete, then $\sF$ is the limit of its
Postnikov tower. 
\end{construction}

A basic tool for working with sheaves on $\qsyn$ is restriction to the basis
$\qrsp \subset \qsyn$. 
In general, given a Grothendieck site, then it is a classical result
\cite[Exp.~III, Th.~4.1]{SGA4} that
sheaves of sets or abelian groups are equivalent to sheaves on any basis of the
site. 
The analog need not hold for sheaves of spaces or spectra, but it at least holds for
hypercomplete sheaves  in general, cf.~\cite[App.~A]{Aoki} or
\cite[Prop.~3.12.11]{Exodromy}. 
In the case of $\qrsp \subset \qsyn$, it is actually true that 
arbitrary sheaves on $\qsyn$ identify with sheaves on the basis $\qrsp$,
cf.~\cite[Prop.~4.31]{BMS2} or \cite[Lem.~C.3]{Hoy13} (for a more general
statement); the main point is that a pushout $B \widehat{\otimes}_A C$ in
$\qsyn$ along quasisyntomic covers with $B, C \in \qrsp$ belongs to $\qrsp$.  
Note that this strategy of restricting to $\qrsp \subset \qsyn$ is useful
precisely because we are working with such ``infinitary'' sites; it would be
much less useful if we worked with more classical sites such as the syntomic
or fppf site.

Now we discuss some examples of sheaves on $\qsyn$.

\begin{example} 
\label{cotangentwedgesheaf}
For each $i \geq 0$, the construction $R \mapsto \bigwedge^i
L_{R/\mathbb{Z}_p}[-i]$
defines a sheaf of spectra on $\qsyn$ (thanks to \Cref{flatcotangent}, with a slight
modification since we are working with $p$-completely faithful flatness). This sheaf  belongs to
the heart (so thus corresponds to a sheaf of ordinary abelian groups). 
In fact, this follows because it takes discrete values on the quasiregular
semiperfectoid rings. 
\end{example}

\begin{theorem} 
\label{descentforHH}
The functors
$\HH(-; \mathbb{Z}_p), \HH(-; \mathbb{Z}_p)^{h\mathbb{T}}, \HH(-;
\mathbb{Z}_p)^{t\mathbb{T}}$, 
$\THH(-; \mathbb{Z}_p),$ $ \TC^-(-; \mathbb{Z}_p),$ $ \TP(-;\mathbb{Z}_p), \TC(-;
\mathbb{Z}_p)$, etc. all
define hypercomplete sheaves on $\qsyn$. 
\end{theorem}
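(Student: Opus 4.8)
The plan is to reduce the statement to a single key input: flat descent for the cotangent complex and its wedge powers (\Cref{flatcotangent}), together with the multiplicative and $t$-structure-theoretic formalism of sheaves on $\qsyn$ developed above. First I would treat $\HH(-;\mathbb{Z}_p)$ and $\THH(-;\mathbb{Z}_p)$. Using the $\mathrm{HKR}$-type filtration on Hochschild homology — more precisely, the complete, exhaustive filtration on $\HH(R/\mathbb{Z}_p)$ whose $i$-th graded piece is $\bigwedge^i L_{R/\mathbb{Z}_p}[i]$ (and likewise a filtration on $\THH(R;\mathbb{Z}_p)$ built from $\THH(\mathbb{F}_p)$-module structure, with graded pieces that are, up to shift and $\sigma$-powers, wedge powers of the cotangent complex after $p$-completion) — I would observe that each graded piece is, by \Cref{cotangentwedgesheaf} and \Cref{flatcotangent}, a hypercomplete sheaf on $\qsyn$ (indeed it lies in the heart, hence is bounded above and automatically hypercomplete). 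Since the functors $\shv(\qsyn,\sp)$ and $\shvh(\qsyn,\sp)$ are stable under limits, and a complete filtered object whose graded pieces are (hypercomplete) sheaves is itself a (hypercomplete) sheaf, this shows $\HH(-;\mathbb{Z}_p)$ and $\THH(-;\mathbb{Z}_p)$ are hypercomplete sheaves. A mild point to check here is $p$-completeness interacting with the filtration: one works throughout in the $p$-complete derived category, where the relevant filtrations are still complete.

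Next I would pass from $\THH$ to $\TC^-,\TP$. The functors $X\mapsto X^{h\mathbb{T}}$ and $X\mapsto X^{t\mathbb{T}}$ are limits (a homotopy limit over $B\mathbb{T}$, resp.\ a cofiber of a norm map, which is again built from limits and colimits of such), and more to the point they commute with limits in the variable $X$; hence applying them objectwise to a sheaf of spectra produces a sheaf of spectra, and applying them to a hypercomplete sheaf produces a hypercomplete sheaf — the key formal fact being that the property of satisfying (hyper)descent is tested by a limit diagram, which $(-)^{h\mathbb{T}}$ and $(-)^{t\mathbb{T}}$ preserve because they themselves are computed by limits. Concretely: for a hypercover $A\to B^\bullet$, the augmented cosimplicial diagram $\THH(A;\mathbb{Z}_p)\to\THH(B^\bullet;\mathbb{Z}_p)$ is a limit diagram, and applying $(-)^{h\mathbb{T}}$ (which commutes with the totalization over $\Delta$) keeps it a limit diagram; the Tate case follows since $(-)^{t\mathbb{T}}$ sits in a fiber sequence with $(-)_{h\mathbb{T}}$ and $(-)^{h\mathbb{T}}[1]$, but here one must be slightly careful — homotopy orbits are a colimit, not a limit — so the cleanest route is to use the Nikolaus--Scholze formula $\TP(R;\mathbb{Z}_p)=(\THH(R;\mathbb{Z}_p)^{tC_p})^{h\mathbb{T}/C_p}$ and the fact that the Tate construction $(-)^{tC_p}$ also commutes with the relevant limits after $p$-completion (equivalently, on $p$-complete objects $(-)^{tC_p}$ is an exact functor commuting with limits, since $p$-completely $C_p$-norms are well-controlled). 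The same bootstrapping handles $\HH(-;\mathbb{Z}_p)^{h\mathbb{T}},\HH(-;\mathbb{Z}_p)^{t\mathbb{T}}$.

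Finally, $\TC(-;\mathbb{Z}_p)$ is the fiber of $\varphi-\can:\TC^-(-;\mathbb{Z}_p)\to\TP(-;\mathbb{Z}_p)$ by \eqref{formulaTC}, hence a finite limit of hypercomplete sheaves, hence a hypercomplete sheaf; the ``etc.''\ (e.g.\ $\TR$, or truncated/relative variants) is handled the same way, as each is built from $\THH(-;\mathbb{Z}_p)$ by limits and fiber sequences. The main obstacle I expect is not any of the sheaf-theoretic bookkeeping — which is formal once one knows limits of (hypercomplete) sheaves are (hypercomplete) sheaves — but rather the two genuinely substantive inputs: (i) the existence of the HKR-type filtration on $\THH(R;\mathbb{Z}_p)$ for $R\in\qsyn$ with graded pieces controlled by wedge powers of the cotangent complex (so that \Cref{flatcotangent} applies), and (ii) the claim that $(-)^{tC_p}$ and $(-)^{t\mathbb{T}}$ preserve the relevant limits on $p$-complete spectra. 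Point (ii) is where one genuinely uses that we are $p$-completing everything: without $p$-completion the Tate construction would not commute with the infinite limits appearing in hyperdescent, and indeed this is the technical reason the filtrations in \cite{BMS2} are only defined on (quasi)syntomic rings. A clean way to organize (ii) is to note that on each $\qrsp$ ring the relevant spectra are concentrated in even degrees with well-understood homotopy, reduce hyperdescent to descent along the basis $\qrsp\subset\qsyn$ (legitimate here, as noted above, even for non-hypercomplete sheaves), and there the $\mathbb{T}$-homotopy-fixed-point and Tate spectral sequences degenerate, making the limit computations transparent.
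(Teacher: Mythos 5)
Your overall decomposition (HKR filtration $\Rightarrow$ $\HH$ is a hypersheaf $\Rightarrow$ bootstrap to $\THH$ $\Rightarrow$ derive $\TC^-, \TP, \TC$) matches the paper, and you correctly identify the two real issues: reducing to wedge powers of the cotangent complex, and the non-limit-preservation of $(-)^{t\mathbb{T}}$. However, there is a concrete gap in your step from $\HH$ to $\THH$. You propose ``a filtration on $\THH(R;\mathbb{Z}_p)$ built from $\THH(\mathbb{F}_p)$-module structure'' with graded pieces given by wedge powers of the cotangent complex. But $\THH(R)$ is a $\THH(\mathbb{F}_p)$-module only when $R$ is an $\mathbb{F}_p$-algebra; the theorem applies to all of $\qsyn$, which includes mixed-characteristic rings (e.g.\ $\mathbb{Z}_p$, perfectoid rings over $\mathcal{O}_C$), where no such module structure exists. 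The paper instead uses the base-change formula $\THH(-;\mathbb{Z}_p)\otimes_{\THH(\mathbb{Z})}\mathbb{Z}=\HH(-;\mathbb{Z}_p)$, which is valid for every commutative ring, and bootstraps via the Postnikov tower of $\THH(\mathbb{Z})$: base-changing along $\tau_{\leq n}\THH(\mathbb{Z})$ gives a tower with graded pieces $\HH(-;\mathbb{Z}_p)\otimes_{\mathbb{Z}}\pi_n\THH(\mathbb{Z})[n]$, each a hypersheaf by the HKR step and \Cref{flatcotangent}, and the tower converges after $p$-completion. Replacing your $\THH(\mathbb{F}_p)$-argument with this one closes the gap; nothing else in your plan needs to change.

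Your handling of the Tate construction is the right instinct and is in fact the delicate part that the paper's terse sketch glosses over (deferring to \cite[Sec.~3]{BMS2}). The cleanest way to carry out your point (ii) is indeed to restrict to the basis $\qrsp\subset\qsyn$, where $\THH,\TC^-,\TP$ have evenly concentrated homotopy groups and are uniformly bounded below, so the homotopy orbits $(-)_{h\mathbb{T}}$ are controlled Postnikov-degree by Postnikov-degree and the norm cofiber sequence commutes with the relevant totalizations; alternatively one can use that $(-)^{tC_p}$ preserves limits of uniformly bounded-below $p$-complete spectra and the formula $\TP = (\THH^{tC_p})^{h\mathbb{T}/C_p}$, as you suggest. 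Either way, the point you flag — that this is exactly where $p$-completeness and the restriction to quasisyntomic rings enter — is correct and worth emphasizing.
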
 
In fact, all of these functors define (a priori not hypercomplete) sheaves on the ($p$-completely) flat
topology on all rings, as in \cite[Sec.~3]{BMS2}; for quasisyntomic rings the
argument shows that they are hypersheaves. 
One uses the Hochschild--Kostant--Rosenberg filtration \cite[Prop.~IV.4.1]{NS18}
to
prove that $\HH(-; \mathbb{Z}_p)$ is a hypercomplete sheaf on $\qsyn$ starting
from the fact that the $p$-complete cotangent complex and its wedge powers are sheaves 
on $\qsyn$. 
Taking homotopy fixed points, we find that $\HH(-; \mathbb{Z}_p)^{h\mathbb{T}}$ is a
hypercomplete sheaf. 
Similarly, using $\THH(-; \mathbb{Z}_p) \otimes_{\THH(\mathbb{Z})} \mathbb{Z} = \HH(-;
\mathbb{Z}_p)$ and taking the limit of the Postnikov tower of
$\THH(\mathbb{Z})$, one bootstraps to $\THH(-; \mathbb{Z}_p)$ and the invariants
defined from it.

\section{The motivic filtrations of \cite{BMS2}}

 To begin with, we describe the 
Hochschild--Kostant--Rosenberg filtration on Hochschild homology using the
quasisyntomic site. 

\begin{construction}[The Hochschild--Kostant--Rosenberg filtration] 
For a ring $R$ and any $R$-algebra $A$, there is a functorial,
complete multiplicative  $\mathbb{Z}_{\geq
0}$-indexed descending filtration 
$\mathrm{Fil}^{\geq \ast}_{\mathrm{HKR}} \HH(A/R)$
on $\HH(A/R)$ with $\mathrm{gr}^i \HH(A/R)
=\bigwedge^i L_{A/R}[i]$. This filtration is the Postnikov filtration when $A$
is a polynomial algebra over $R$ (using the Hochschild--Kostant--Rosenberg
theorem to identify the graded pieces), and is more generally 
defined via left Kan extension, cf.~\cite[Prop.~IV.4.1]{NS18}. 
A universal property of this filtration has been given by Raksit,
\cite{Raksit20}. 
\end{construction} 

The Hochschild--Kostant--Rosenberg filtration is the prototype of the motivic
filtrations of \cite{BMS2}. However, the strategy is to define the filtration by
descent from quasiregular semiperfectoids, i.e., by a right Kan extension
process rather than a left Kan extension process. These filtrations will
generally be more complicated to construct directly for polynomial algebras. 
To begin with, we show that the HKR filtration can be obtained for quasisyntomic
rings in such a fashion, after $p$-completion. 
\begin{construction}[The Hochschild--Kostant--Rosenberg filtration as a Postnikov
filtration] 
Suppose $R$ is a quasisyntomic ring. 
On the category of quasisyntomic $R$-algebras, we consider the functor $A
\mapsto \HH(A/R; \mathbb{Z}_p)$, which defines a hypercomplete sheaf of spectra. 
We claim that the homotopy sheaves 
are concentrated in even degrees, and that 
$ A\mapsto \mathrm{Fil}^{\geq \ast}_{\mathrm{HKR}} \HH(A/R; \mathbb{Z}_p)$
defines the double-speed Postnikov tower. 
In other words, $\mathrm{Fil}^{\geq i}_{\mathrm{HKR}}$ is the $2i$th connective
cover in quasisyntomic sheaves. 
This follows easily from the observation that if $A/R$ is such that the
$p$-completion $L_{A/R}$ is the suspension of a $p$-completely flat $R$-module, 
then $\mathrm{gr}^i \HH(A/R; \mathbb{Z}_p)$ is concentrated in degree $2i$
(e.g, if $A$ is a quasiregular semiperfectoid $R$-algebra), and
the HKR filtration reduces to the double-speed Postnikov filtration on the
individual spectrum 
$\mathrm{gr}^i \HH(A/R; \mathbb{Z}_p)$. 
\end{construction}

The starting point of the extension of the above strategy to invariants
defined from $\THH$ is the following generalization of B\"okstedt's theorem. 

\begin{theorem} 
\label{THHperfectoid}
 Let $R$ be a perfectoid ring. 
 Then one has an isomorphism 
 $\THH_*(R; \mathbb{Z}_p) \simeq R[\sigma]$, for $|\sigma| = 2$. 
 Moreover, one has $\pi_*(\THH(R; \mathbb{Z}_p)^{tC_p}) = R[u^{\pm 1}]$ for $|u| = 2$,
 and the Frobenius $\varphi: \THH(R; \mathbb{Z}_p) \to \THH(R;
 \mathbb{Z}_p)^{tC_p}$ exhibits the source as the connective cover of the
 target. 
\end{theorem}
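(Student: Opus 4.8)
The plan is to reduce in stages: first to perfect $\mathbb{F}_p$-algebras, where the statement comes from B\"okstedt's theorem (\Cref{Bthm}); then to arbitrary perfectoid rings by base change along $W(R^{\flat})$ together with $p$-complete flat descent; and finally to read off the assertions about $\THH(R;\mathbb{Z}_p)^{tC_p}$ and $\varphi$. For $S$ perfect over $\mathbb{F}_p$ we have $L_{S/\mathbb{F}_p}=0$ by \Cref{perfectcot}, so the HKR filtration on $\HH(S/\mathbb{F}_p)$ is trivial and $\HH(S/\mathbb{F}_p)\simeq S$ is discrete. Feeding this into the base change $\THH(S;\mathbb{Z}_p)\otimes_{\THH(\mathbb{F}_p)}\mathbb{F}_p\simeq\HH(S/\mathbb{F}_p)$ recorded above, and using that the augmentation $\THH(\mathbb{F}_p)\to\mathbb{F}_p$ realizes $\mathbb{F}_p$ as the cofiber of multiplication by $\sigma$ (immediate from \Cref{Bthm}), one obtains $\THH(S;\mathbb{Z}_p)/\sigma\simeq S$. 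As $\THH(S;\mathbb{Z}_p)$ is connective and the cofiber of $\sigma$ is the discrete ring $S$, the long exact sequence for $\Sigma^{2}\THH(S;\mathbb{Z}_p)\xrightarrow{\sigma}\THH(S;\mathbb{Z}_p)\to S$ and an induction on degree force $\sigma$ to be a non-zerodivisor with $\THH_*(S;\mathbb{Z}_p)\cong S[\sigma]$ (automatically $p$-complete, $S$ being an $\mathbb{F}_p$-algebra).

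Now let $R$ be an arbitrary perfectoid ring, written $R=W(R^{\flat})/\xi$ with $R^{\flat}$ perfect and $\xi$ distinguished (hence a non-zerodivisor). Since $L_{W(R^{\flat})/\mathbb{Z}_p}$ vanishes $p$-adically, $\THH(W(R^{\flat});\mathbb{Z}_p)$ is obtained from $\THH(\mathbb{Z}_p;\mathbb{Z}_p)$ by $p$-complete base change along the ($p$-completely formally \'etale) map $\mathbb{Z}_p\to W(R^{\flat})$, hence is governed by the previous paragraph. On the other hand the relative base change gives $\THH(R;\mathbb{Z}_p)\otimes_{\THH(W(R^{\flat});\mathbb{Z}_p)}W(R^{\flat})\simeq\HH(R/W(R^{\flat});\mathbb{Z}_p)$, and since $\xi$ is a non-zerodivisor $L_{R/W(R^{\flat})}\simeq(\xi)/(\xi^{2})[1]$ is the suspension of a free rank-one module, so by the d\'ecalage identity $\bigwedge^{i}(M[1])\simeq(\Gamma^{i}M)[i]$ this relative Hochschild homology is concentrated in even degrees with $\pi_{2i}\cong R$. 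Combining the two and tracking the ring structure, the remaining point is to identify the resulting deformation with the free algebra $R[\sigma]$; for this one uses the $p$-complete flat descent of \Cref{descentforHH} (for $\THH(-;\mathbb{Z}_p)$, and trivially for $R\mapsto R[\sigma]$) to reduce to a convenient cover — alternatively one first handles $R=\mathcal O_{C}$ by hand and descends. This yields $\THH_*(R;\mathbb{Z}_p)\cong R[\sigma]$.

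For the second half, run the Tate spectral sequence for the $C_p\subset\mathbb{T}$ action on $\THH(R;\mathbb{Z}_p)$; connectedness of $\mathbb{T}$ makes the action on homotopy trivial, so $E_2=\hat H^{\ast}(C_p;R[\sigma])$. The crux is that this spectral sequence degenerates: for $R=\mathbb{F}_p$ this is the computation of \cite[Sec.~IV.4]{NS18} (a form of the Segal conjecture, equivalently B\"okstedt periodicity in the Tate construction), and for general perfectoid $R$ it follows from the $\mathbb{F}_p$-case via the base-change reductions above and flat descent. Degeneration, together with the multiplicative relation making a periodicity class invertible, identifies $\pi_{\ast}\THH(R;\mathbb{Z}_p)^{tC_p}\cong R[u^{\pm1}]$ with $|u|=2$ and shows $\THH(R;\mathbb{Z}_p)^{tC_p}$ is $2$-periodic. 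Finally $\varphi\colon\THH(R;\mathbb{Z}_p)\to\THH(R;\mathbb{Z}_p)^{tC_p}$ is an isomorphism on $\pi_0$ (both canonically $R$) and carries $\sigma$ to a unit multiple of $u$ (again \cite{NS18} for $\mathbb{F}_p$, transported along the reductions); being a ring map between homotopy rings generated in degrees $0$ and $\pm2$, it is an isomorphism in all non-negative degrees, which is exactly the assertion that it exhibits the source as the connective cover of the target. The main obstacle throughout is the general perfectoid case: once B\"okstedt's theorem and the Nikolaus--Scholze analysis of $\THH(\mathbb{F}_p)$ are granted, the perfect $\mathbb{F}_p$-algebra computation and the bookkeeping with the Tate construction are routine, but the non-formal input — that $\THH(-;\mathbb{Z}_p)$ and its $C_p$-Tate construction remain even on all perfectoid rings, i.e.\ that the deformation above is free and the Tate spectral sequence degenerates — is where the real work lies.
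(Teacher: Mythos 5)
Your computation for perfect $\mathbb{F}_p$-algebras is correct and is indeed the intended base case: $L_{S/\mathbb{F}_p}=0$ gives $\HH(S/\mathbb{F}_p)\simeq S$, the base-change formula $\THH(S;\mathbb{Z}_p)\otimes_{\THH(\mathbb{F}_p)}\mathbb{F}_p\simeq\HH(S/\mathbb{F}_p)$ gives $\THH(S;\mathbb{Z}_p)/\sigma\simeq S$, and the long exact sequence together with connectivity forces $\THH_*(S;\mathbb{Z}_p)\cong S[\sigma]$. The strategy sketched in the paper (citing \cite{BMS2}, and echoed in your write-up) is exactly this bootstrapping from $\mathbb{F}_p$.

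However, there is a genuine gap in the passage to general perfectoid $R$, and it is worth being precise about where it is. First, a local misstep: you write that $\THH(W(R^{\flat});\mathbb{Z}_p)$ ``is obtained from $\THH(\mathbb{Z}_p;\mathbb{Z}_p)$ by $p$-complete base change \dots\ hence is governed by the previous paragraph.'' The base change is correct, but the previous paragraph concerned $\THH(\mathbb{F}_p)$, and $\THH(\mathbb{Z}_p;\mathbb{Z}_p)$ is emphatically \emph{not} a polynomial algebra; so the $\sigma$-cofiber trick you used in characteristic $p$ has no analogue over $\THH(W(R^\flat))$, and the computation of $\HH(R/W(R^\flat);\mathbb{Z}_p)\simeq\Gamma_R[\sigma]$ by itself does not determine $\THH(R;\mathbb{Z}_p)$ as a $\THH(W(R^\flat);\mathbb{Z}_p)$-algebra. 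Second, and more seriously, the proposed remedy---``use flat descent to reduce to a convenient cover''---cannot reduce a $p$-torsionfree perfectoid $R$ to the characteristic-$p$ case you have proved: a $p$-completely faithfully flat map $R\to R'$ requires $R/p^n\to R'/p^n$ to be faithfully flat for all $n$, which is impossible if $R'$ is an $\mathbb{F}_p$-algebra and $R$ is $p$-torsionfree. So there is no cover by perfect $\mathbb{F}_p$-algebras to descend from. What is actually needed---and what constitutes the bulk of \cite[Sec.~6]{BMS2} (and, before that, of \cite{Hesselholt06} for $\mathcal{O}_{\mathbb{C}_p}$)---is a genuine mixed-characteristic computation for a base case such as $\mathbb{Z}_p^{\mathrm{cycl}}$ or $\mathcal{O}_C$, after which one can descend (e.g.\ using Andr\'e's lemma to cover an arbitrary perfectoid ring by one containing $\mu_{p^\infty}$). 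You flag this yourself at the end (``the non-formal input \dots\ is where the real work lies''), which is a fair self-assessment: the write-up correctly identifies the skeleton of the argument, but the step that makes the polynomial ring appear in mixed characteristic---and likewise the degeneration of the $C_p$-Tate spectral sequence and the identification of $\varphi$ in that setting---is precisely what remains unproved.
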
 

\Cref{THHperfectoid} reduces to B\"okstedt's theorem for $R = \mathbb{F}_p$, and
is extended to an arbitrary perfectoid ring in \cite[Sec.~6]{BMS2}. 
The case of $R = \mathcal{O}_{\mathbb{C}_p}$ had been previously proved in 
\cite{Hesselholt06}. 
See also \cite[Sec.~1.3]{HN19} for an account of this result. The basic strategy
is to bootstrap from the case $R = \mathbb{F}_p$, using the
Hochschild--Kostant--Rosenberg theorem and that for any map of
perfectoid rings $R \to R'$, the $p$-completed relative cotangent complex
$L_{R'/R}$ vanishes.

In \emph{loc.~cit.}, the 
constructions $\TC^-_*(R; \mathbb{Z}_p), \TP_*(R; \mathbb{Z}_p)$
are also identified. 
Let $A_{\mathrm{inf}}  = A_{\mathrm{inf}}(R)$ be the Witt vectors of
$R^{\flat}$, so one has a canonical surjection $\theta: A_{\mathrm{inf}} \to R$
with kernel generated by a nonzerodivisor $\xi \in A_{\mathrm{inf}}$. 
Then one has isomorphisms: 
\begin{gather} 
\TC^-_*(R; \mathbb{Z}_p) = A_{\mathrm{inf}}[x, \sigma]/(x \sigma = \xi) , \quad |x| = -2,
|\sigma| = 2\\
\TP_*(R; \mathbb{Z}_p) = A_{\mathrm{inf}}(R)[u^{\pm 1}], \quad |u| = 2.
\end{gather} 
Here $\sigma \in \pi_2 \TC^-(R; \mathbb{Z}_p)$ is a lift of the generator in
$\pi_2 \THH(R; \mathbb{Z}_p)$. 
With respect to these isomorphisms, the 
canonical map $\TC^-_*(R; \mathbb{Z}_p) \to \TP(R; \mathbb{Z}_p)$ 
carries $x$ to $ u^{-1} $ and $\sigma$ to $\xi u$. 
The 
cyclotomic Frobenius 
$\varphi: \TC^-_*(R; \mathbb{Z}_p) \to \TP_*(R; \mathbb{Z}_p)$ carries $\sigma
\mapsto u$ and $x \mapsto \phi(\xi) u^{-1}$, and is the Witt vector Frobenius on
$\pi_0$. 

Identifying $\TC^-_*(R; \mathbb{Z}_p), \TP_*(R; \mathbb{Z}_p)$ for quasiregular
semiperfectoids is significantly more difficult (and the description in purely
algebraic terms is a major result of \cite{BMS2, Prisms}). To begin with, we
make the simple observation that these are concentrated in even degrees. 
\begin{corollary}[Evenness for quasiregular semiperfectoids] 
\label{evennnessqrsp}
Let $A$ be a quasiregular semiperfectoid $R$-algebra. 
Then $\THH_*(A; \mathbb{Z}_p)$ is concentrated in even degrees. 
Consequently, $\TC^-_*(A; \mathbb{Z}_p), \TP_*(A; \mathbb{Z}_p)$ are concentrated in
even degrees. 
\end{corollary}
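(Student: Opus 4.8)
The plan is to deduce the statement for a quasiregular semiperfectoid $R$-algebra $A$ from the known case of a perfectoid ring (\Cref{THHperfectoid}), together with the fact that $A$ receives a surjection from a perfectoid ring. First I would fix a surjection $R_0 \twoheadrightarrow A$ with $R_0$ perfectoid; such a surjection exists by the very definition of quasiregular semiperfectoid. The strategy is then to compare $\THH(A; \mathbb{Z}_p)$ with $\THH(R_0; \mathbb{Z}_p)$ via the base-change formula $\THH(A; \mathbb{Z}_p) \simeq \THH(R_0; \mathbb{Z}_p) \otimes_{R_0} A$ — or, more precisely, since we are in the relative setting over $R$, by writing $\THH(A/R) \simeq A \otimes_{A \otimes_R A} A$ and using a resolution of $A$ over $R_0$. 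The key computational input is that $\pi_* \THH(R_0; \mathbb{Z}_p) = R_0[\sigma]$ is concentrated in even degrees and is a polynomial ring on a degree-$2$ generator, together with the observation (from the remark just before the corollary) that for a quasiregular semiperfectoid ring the $p$-complete cotangent complex $L_{A/\mathbb{Z}_p}$ is the suspension of a $p$-completely flat $A$-module.

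The cleanest route is via the $p$-complete flat descent / HKR-type computation: one uses $\THH(A; \mathbb{Z}_p) \otimes_{\THH(R_0; \mathbb{Z}_p)} R_0$, or alternatively the relative theory $\THH(A/R_0; \mathbb{Z}_p)$, whose associated graded for the HKR-type filtration is $\bigwedge^i L_{A/R_0}[i]$. Since $R_0 \to A$ is a surjection of perfectoid-to-semiperfectoid type, $L_{A/R_0}$ is (after $p$-completion) the suspension of a $p$-completely flat $A$-module $M$, so $\bigwedge^i L_{A/R_0}[i] \simeq (\Gamma^i M)[2i]$ by the décalage isomorphism recorded in the earlier examples; this sits in degree $2i$ only. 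Feeding this into the (complete, multiplicative) filtration and using that $\THH_*(R_0; \mathbb{Z}_p)$ is already even, a spectral sequence / filtration-completeness argument shows $\THH_*(A; \mathbb{Z}_p)$ is concentrated in even degrees. Concretely, the filtration on $\THH(A; \mathbb{Z}_p)$ has even associated graded pieces and is complete, and there are no possible differentials or extension problems across a single parity, so the homotopy groups are even.

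Once $\THH_*(A; \mathbb{Z}_p)$ is concentrated in even degrees, the statements for $\TC^-$ and $\TP$ follow formally. Indeed, $\TC^-(A; \mathbb{Z}_p) = \THH(A; \mathbb{Z}_p)^{h\mathbb{T}}$ and $\TP(A; \mathbb{Z}_p) = \THH(A; \mathbb{Z}_p)^{t\mathbb{T}}$ are computed by the homotopy-fixed-point and Tate spectral sequences for the circle action, whose $E_2$-pages are $\pi_*\THH(A;\mathbb{Z}_p)[t]$ (respectively $\pi_*\THH(A;\mathbb{Z}_p)[t^{\pm 1}]$) with $t$ in homological degree $-2$. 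Since the input $\pi_*\THH(A;\mathbb{Z}_p)$ is concentrated in even degrees and $|t|$ is even, the entire $E_2$-page is concentrated in even total degree; hence all differentials vanish for parity reasons, the spectral sequences degenerate, and there are no extension problems across a single parity, so $\TC^-_*(A;\mathbb{Z}_p)$ and $\TP_*(A;\mathbb{Z}_p)$ are concentrated in even degrees as well. The convergence of these spectral sequences is guaranteed because everything in sight is $p$-complete and (for $\TC^-$) the homotopy-fixed-point spectral sequence converges conditionally and then strongly by the parity collapse.

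The main obstacle is the first step: establishing evenness of $\THH_*(A;\mathbb{Z}_p)$ itself, i.e.\ correctly setting up the base change along the surjection $R_0 \twoheadrightarrow A$ and controlling $L_{A/R_0}$. The delicate points are (i) that one must work $p$-completely throughout, so ``flat'' means $p$-completely flat and the décalage identity must be invoked in the $p$-complete derived category; and (ii) that one needs the filtration on $\THH(A;\mathbb{Z}_p)$ (obtained by base change from the even ring $\THH(R_0;\mathbb{Z}_p)$, combined with the HKR filtration on the relative term) to be complete, so that evenness of the graded pieces propagates to the homotopy groups. Both are by now standard in the setting of \cite{BMS2}, but they are where the real content lies; the passage to $\TC^-$ and $\TP$ is purely formal once evenness of $\THH$ is in hand.
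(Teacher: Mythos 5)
Your argument is correct and coincides with the paper's: pass to relative Hochschild homology over a perfectoid ring via the equivalence $\THH(A;\mathbb{Z}_p) \otimes_{\THH(R_0;\mathbb{Z}_p)} R_0 \simeq \HH(A/R_0;\mathbb{Z}_p)$, observe via the HKR filtration and d\'ecalage that $\HH(A/R_0;\mathbb{Z}_p)$ is even because $L_{A/R_0}$ is the suspension of a $p$-completely flat module, propagate evenness through the complete $\sigma$-adic filtration on $\THH(A;\mathbb{Z}_p)$, and then run the $\mathbb{T}$-homotopy fixed point and Tate spectral sequences. One minor caveat: the ``base-change formula'' $\THH(A;\mathbb{Z}_p) \simeq \THH(R_0;\mathbb{Z}_p) \otimes_{R_0} A$ you float near the start of your second paragraph is false as stated (it fails already for $R_0 = \mathbb{F}_p$, $A = \mathbb{F}_p[x]$), but you immediately replace it with the correct identification, so the rest of the argument is unaffected.
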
 
\begin{proof} 
This follows from the equivalence
\begin{equation}  \THH(A; \mathbb{Z}_p) \otimes_{\THH(R; \mathbb{Z}_p)} R \simeq
\mathrm{HH}(A/R; \mathbb{Z}_p),   \label{THHdeformsHH} \end{equation}
the Hochschild--Kostant--Rosenberg filtration (which shows that the latter is
concentrated in even degrees). 
Then the $\mathbb{T}$-homotopy fixed point and Tate spectral sequences prove the
remaining claims. 
\end{proof} 
Similarly from \eqref{THHdeformsHH} one obtains: 
\begin{corollary} 
Let $A$ be a smooth algebra over the perfectoid ring $R$. Then 
$\THH_*(A; \mathbb{Z}_p) \simeq R[\sigma] \otimes_R
\Omega^{\ast}_{R/\mathbb{Z}_p}$ with $|\sigma| = 2$. 
\end{corollary}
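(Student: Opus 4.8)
The plan is to bootstrap from \eqref{THHdeformsHH}, \Cref{THHperfectoid}, and the Hochschild--Kostant--Rosenberg theorem, proceeding exactly as in the proof of \Cref{evennnessqrsp} but keeping track of the full graded ring rather than only its evenness. Work $p$-completed throughout, and write $\sigma \in \THH_2(R;\mathbb{Z}_p)$ for the polynomial generator of \Cref{THHperfectoid}, so $\pi_*\THH(R;\mathbb{Z}_p) = R[\sigma]$. Since $\sigma$ is a polynomial generator, the augmentation $\THH(R;\mathbb{Z}_p) \to R$ to $\pi_0$ exhibits $R$ as the cofiber of $\sigma\colon \Sigma^2\THH(R;\mathbb{Z}_p) \to \THH(R;\mathbb{Z}_p)$; hence for any $\THH(R;\mathbb{Z}_p)$-module $M$ one has $M \otimes_{\THH(R;\mathbb{Z}_p)} R \simeq \mathrm{cofib}(\sigma\colon \Sigma^2 M \to M)$.

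First I would construct a comparison map. By the universal property of \Cref{classicalHH}, the $\mathbb{E}_\infty$-ring $\THH(A;\mathbb{Z}_p)$ --- which is an $\mathbb{E}_\infty$-$R$-algebra with $\mathbb{T}$-action under $A$ --- receives a canonical $\mathbb{T}$-equivariant $R$-algebra map $\HH(A/R;\mathbb{Z}_p) \to \THH(A;\mathbb{Z}_p)$; multiplying this against the functoriality map $\THH(R;\mathbb{Z}_p) \to \THH(A;\mathbb{Z}_p)$ yields
\[ f\colon \THH(R;\mathbb{Z}_p) \otimes_R \HH(A/R;\mathbb{Z}_p) \longrightarrow \THH(A;\mathbb{Z}_p). \]
Both sides are connective $\THH(R;\mathbb{Z}_p)$-modules, and I claim $f$ is an equivalence. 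By the previous paragraph it is enough to see that $f \otimes_{\THH(R;\mathbb{Z}_p)} R$ is an equivalence: the source of this base change is $\HH(A/R;\mathbb{Z}_p)$, the target is $\THH(A;\mathbb{Z}_p)\otimes_{\THH(R;\mathbb{Z}_p)} R \simeq \HH(A/R;\mathbb{Z}_p)$ by \eqref{THHdeformsHH}, and one checks the resulting endomorphism of $\HH(A/R;\mathbb{Z}_p)$ is a map under $A$, so the universal property of \Cref{classicalHH} forces it to be the identity. Once $f \otimes_{\THH(R;\mathbb{Z}_p)} R$ is an equivalence, the cofiber $C$ of $f$ (connective, as the cofiber of a map of connective spectra) satisfies $\sigma\colon \Sigma^2 C \xrightarrow{\sim} C$, and $\pi_n C \cong \pi_{n-2} C$ for all $n$ then forces $\pi_* C = 0$, i.e.\ $C \simeq 0$. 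With $f$ an equivalence and $\pi_*\THH(R;\mathbb{Z}_p) = R[\sigma]$ free over $R$, the K\"unneth spectral sequence for $\THH(R;\mathbb{Z}_p) \otimes_R \HH(A/R;\mathbb{Z}_p)$ degenerates and gives $\THH_*(A;\mathbb{Z}_p) \simeq R[\sigma] \otimes_R \HH_*(A/R;\mathbb{Z}_p)$; and since $A$ is smooth over $R$, Hochschild--Kostant--Rosenberg identifies $\HH_*(A/R;\mathbb{Z}_p)$ with the ($p$-completed) de Rham forms $\Omega^{\ast}_{A/R}$, placing $\Omega^i_{A/R}$ in degree $i$. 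This yields the asserted isomorphism, with $\Omega^{\ast}_{A/R}$ the de Rham forms of $A$ over $R$.

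The one point requiring genuine care --- and, I expect, the main obstacle --- is the claim that $f \otimes_{\THH(R;\mathbb{Z}_p)} R$ recovers the comparison \eqref{THHdeformsHH} on the nose, i.e.\ that the universal-property map $\HH(A/R;\mathbb{Z}_p) \to \THH(A;\mathbb{Z}_p)$ is compatible with the canonical identification $\THH(A;\mathbb{Z}_p)\otimes_{\THH(R;\mathbb{Z}_p)} R \simeq \HH(A/R;\mathbb{Z}_p)$, since everything downstream (the forcing of $f$ to be an equivalence, hence the $\sigma$-torsion-freeness of $\THH_*(A;\mathbb{Z}_p)$) hinges on it. This is a coherence statement about maps under $A$ in the $\infty$-category of $\mathbb{E}_\infty$-$R$-algebras with $\mathbb{T}$-action; alternatively, one can sidestep it by first proving the equivalence $f$ for a $p$-completed polynomial $R$-algebra --- where a multiplicative lift $\Omega^{\ast}_{A/R} \to \THH_*(A;\mathbb{Z}_p)$ can be written down from coordinates --- and then deducing the general smooth case via \'etale base change and Zariski descent for $\THH(-;\mathbb{Z}_p)$, using that a smooth $R$-algebra is Zariski-locally \'etale over an affine space.
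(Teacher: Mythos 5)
Your argument is correct and fills in the details behind the paper's one-line "Similarly from \eqref{THHdeformsHH} one obtains"; the route via the comparison map $f\colon \THH(R;\mathbb{Z}_p) \otimes_R \HH(A/R;\mathbb{Z}_p) \to \THH(A;\mathbb{Z}_p)$, reduced mod $\sigma$ and using connectivity of the cofiber, is precisely the natural way to make the paper's intended argument precise, and you also correctly read the statement's $\Omega^{\ast}_{R/\mathbb{Z}_p}$ as a typo for $\Omega^{\ast}_{A/R}$. The coherence concern you flag at the end is already resolved by the point you make in the body: $f\otimes_{\THH(R;\mathbb{Z}_p)}R$ is a $\mathbb{T}$-equivariant $\mathbb{E}_\infty$-$R$-algebra map under $A$ between two objects each canonically identified with the initial object $\HH(A/R;\mathbb{Z}_p)$, so initiality forces it to be the identity regardless of how \eqref{THHdeformsHH} was constructed; the backup route via polynomial algebras and \'etale base change is fine but not needed.
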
 
With respect to the above equivalence, the motivic filtration on $\THH(A;
\mathbb{Z}_p)$ is such that
$\sigma$ belongs to filtration $1$ and $\Omega^1_{A/R}$ belongs to filtration
$1$. This is not a Postnikov filtration, so it seems difficult
to construct the filtration on $\THH(A; \mathbb{Z}_p)$ purely within the setting
of smooth $R$-algebras. Thus, one needs to use instead the quasisyntomic site.

In particular, it follows from \Cref{evennnessqrsp} that the constructions $\THH(-; \mathbb{Z}_p), \TC^-(-; \mathbb{Z}_p),
\TP(-; \mathbb{Z}_p)$, when considered as objects of $\shv(\qsyn, \sp)$, have
homotopy groups concentrated in even degrees. In fact, the same holds for $\TC(-;
\mathbb{Z}_p)$. 

\begin{theorem}[{The odd vanishing conjecture, \cite[Sec.~14]{Prisms}}] 
\label{oddvanish}
The quasisyntomic sheaf $\TC(-; \mathbb{Z}_p)$ has homotopy groups concentrated
in even degrees. 
\end{theorem}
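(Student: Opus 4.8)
The plan is to reduce the vanishing of odd homotopy groups of the quasisyntomic sheaf $\TC(-;\mathbb{Z}_p)$ to the analogous (and already established) statement for $\TC^-(-;\mathbb{Z}_p)$ and $\TP(-;\mathbb{Z}_p)$, using the fiber sequence \eqref{formulaTC}. Recall from \Cref{evennnessqrsp} and the discussion following it that, as objects of $\shv(\qsyn,\sp)$, the sheaves $\TC^-(-;\mathbb{Z}_p)$ and $\TP(-;\mathbb{Z}_p)$ have homotopy sheaves concentrated in even degrees. The fiber sequence
\[
\TC(-;\mathbb{Z}_p) \to \TC^-(-;\mathbb{Z}_p) \xrightarrow{\ \varphi - \can\ } \TP(-;\mathbb{Z}_p)
\]
of quasisyntomic sheaves then yields a long exact sequence of homotopy sheaves, and the only way for $\TC$ to acquire homotopy in odd degrees is via the cokernel sheaf of $\varphi-\can$ on even homotopy sheaves. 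So it suffices to prove that $\varphi - \can \colon \pi_{2i}\TC^-(-;\mathbb{Z}_p) \to \pi_{2i}\TP(-;\mathbb{Z}_p)$ is \emph{surjective} as a map of quasisyntomic sheaves, for every $i$.

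Since $\qrsp$ is a basis for $\qsyn$, surjectivity of a map of sheaves can be checked on quasiregular semiperfectoid rings, after passing to quasisyntomic covers. So the core of the argument is: given $A \in \qrsp$ and a class in $\pi_{2i}\TP(A;\mathbb{Z}_p)$, find a quasisyntomic cover $A \to B$ over which the class lifts along $\varphi - \can$. The first step would be to recall the explicit structure of $\pi_* \TC^-(A;\mathbb{Z}_p)$ and $\pi_*\TP(A;\mathbb{Z}_p)$ for $A$ quasiregular semiperfectoid: these are, up to completion issues, the (Nygaard-filtered) prismatic cohomology $\Prismc_A$ and the prism $\Prism_A$ respectively, with $\can$ and $\varphi$ the canonical and (divided) Frobenius maps. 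The key input from prismatic theory is that the cofiber of $\varphi - \can$ on these modules is concentrated in a single (even) degree after suitable localization, which reduces the problem to showing that a certain $A$-linear map (essentially $\varphi - \can$ on $\pi_0$) becomes surjective after a quasisyntomic cover. This is where one invokes that over an absolutely integrally closed perfectoid base — available by Andr\'e's lemma after a quasisyntomic cover — one can solve the relevant equations: the equation $\varphi(x) - x = a$ (or its twisted analogue coming from $[p]_q$-type elements) can be solved after adjoining a root, and adjoining such a root is a quasisyntomic cover by the ``adding systems of $p$-power roots'' example.

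The main obstacle I expect is precisely this last point: showing that $\varphi - \can$ is a \emph{sheaf-theoretic} epimorphism, i.e., that the obstruction classes genuinely die after a quasisyntomic cover rather than merely after an arbitrary faithfully flat or \'etale cover. The subtlety is that solving an Artin--Schreier-type or Frobenius-linear equation naively produces an \'etale (or merely integral) extension, which need not be quasisyntomic, and need not preserve the (semi)perfectoid property. This is exactly the difficulty that Andr\'e's lemma is designed to circumvent — it provides quasisyntomic covers by absolutely integrally closed perfectoid rings — so the proof hinges on organizing the obstruction theory so that every equation one needs to solve is of the form handled by Andr\'e's lemma (or by adjoining $p$-power roots). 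A secondary technical point is keeping careful track of $p$-completions and derived completeness throughout the spectral-sequence and long-exact-sequence manipulations, since the homotopy groups in question are all $p$-complete and the relevant modules are genuinely non-noetherian; but this is bookkeeping rather than a conceptual obstacle.
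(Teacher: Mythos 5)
Your reduction is sound: the fiber sequence $\TC \to \TC^- \xrightarrow{\varphi-\can} \TP$ together with the evenness of the homotopy sheaves of $\TC^-(-;\mathbb{Z}_p)$ and $\TP(-;\mathbb{Z}_p)$ (\Cref{evennnessqrsp}) does reduce the odd vanishing to the claim that $\varphi-\can$ is an epimorphism of quasisyntomic sheaves on $\pi_{2i}$, and it is fine to test this on $\qrsp$ after passing to covers. Note, however, that the paper itself does not prove \Cref{oddvanish}; it records the statement and defers to \cite[Sec.~14]{Prisms}, remarking only that the proof uses Andr\'e's lemma and prismatic cohomology and that evenness genuinely fails for a fixed quasiregular semiperfectoid ring. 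So the most one can say is that your reduction and your identification of Andr\'e's lemma as the critical tool are consistent with the paper's indications.

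The genuine gap is in the last step, and your own diagnosis of it is accurate but your sketched remedy does not quite work as written. You propose to solve the Frobenius-linear equation $\varphi(x)-x=a$ by ``adjoining a root,'' with the ``adding systems of $p$-power roots'' example providing the quasisyntomic cover. But the unknown $x$ here lives in $\mathcal{N}^{\geq i}\Prismc_A\{i\}$ (equivalently, after trivializing the twist, in the prism $\Prismc_A$), not in $A$ itself, so ``adjoining a root of the equation'' is not an operation on $A$ of the form you can control with that example. Enlarging $A$ by a quasisyntomic cover $A\to B$ only changes the target through the functoriality $\Prismc_A\to\Prismc_B$, and there is no a priori reason a given obstruction class becomes hit; this is precisely why the theorem is nontrivial and why one cannot just quote that Artin--Schreier equations are solvable after an \'etale or integral extension. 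Andr\'e's lemma enters, as you suspect, by letting one replace $A$ by an absolutely integrally closed quasiregular semiperfectoid ring via a quasisyntomic cover, but the actual argument in \cite[Sec.~14]{Prisms} then requires a careful analysis of the (non-Nygaard-completed) prismatic cohomology of such rings and of the map $\varphi_i-1$ on it, together with completeness arguments to pass back to $\Prismc$; it is not a one-line consequence of absolute integral closedness. Your proposal correctly locates the difficulty but does not resolve it, and the resolution is where essentially all of the content of the theorem lies.
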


\Cref{oddvanish}
(which was conjectured in \cite{BMS2})
is much more difficult than \Cref{evennnessqrsp}. 
In particular, 
the evenness of $\TC(R; \mathbb{Z}_p)$ does not hold for an arbitrary
quasiregular semiperfectoid ring, and the proof relies on Andr\'e's lemma and
the theory of prismatic cohomology.

\begin{definition}[The motivic filtrations] 
The motivic filtration on $\THH(-; \mathbb{Z}_p)$ (resp.~$\TC^-(-;
\mathbb{Z}_p), $ $ \TP(-;
\mathbb{Z}_p), \TC(-;
\mathbb{Z}_p)$) is given as
the double speed Postnikov filtration in $\shv(\qsyn, \sp)$, in other words  
$\mathrm{Fil}^{\geq i} \THH(-; \mathbb{Z}_p) $ is the $2i$-th connective cover
of the quasisyntomic sheaf $\THH(- ; \mathbb{Z}_p)$.\footnote{In the
definition of the motivic filtration on $\mathrm{TC}(-; \mathbb{Z}_p)$, we want the formula \eqref{formulaTC} to work at the level of
filtered spectra, which here follows from the odd vanishing conjecture. One could also directly define the motivic filtration on $\mathrm{TC}(-;
\mathbb{Z}_p)$ to ensure this, which is the approach taken in \cite{BMS2}.  Then the fact that $\mathrm{Fil}^{\geq i}$
is the $2i$-connective cover (not simply the $(2i-1)$-connective cover) requires the
odd vanishing conjecture.} 
We define the objects
for $A \in \qsyn$,
\begin{gather}   \Prismc_A\left\{i\right\} = \mathrm{gr}^i \TP(A;
\mathbb{Z}_p)[-2i],  \\
\mathcal{N}^{\geq i} \Prismc_A\left\{i\right\} = \mathrm{gr}^i \TC^-(A;
\mathbb{Z}_p)[-2i], \\ \mathbb{Z}_p(i)(A) = \mathrm{gr}^i \TC(A;
\mathbb{Z}_p)[-2i].
\end{gather}
\end{definition} 

All these define sheaves of $p$-complete, coconnective spectra on $\qsyn$. 

\begin{remark}[The motivic filtrations on $\qrsp$] 
A priori, the motivic filtrations are defined using the abstract theory of
sheaves of spectra, and the $t$-structure there. 
However, if $A \in \qrsp$, the motivic filtrations 
on $\THH(A; \mathbb{Z}_p), \TC^-(A; \mathbb{Z}_p), \TP(A; \mathbb{Z}_p)$
are very explicit: they are
simply the double-speed 
Postnikov filtrations on these individual spectra. 
In other words, when restricted 
to quasiregular semiperfectoid rings, the individual \emph{homotopy groups} 
of $\THH(-; \mathbb{Z}_p)$, $ \TC^-(-; \mathbb{Z}_p)$, $\TP(-; \mathbb{Z}_p)$ form
sheaves of spectra, cf.~\cite[Sec.~7]{BMS2}. 
In particular, for a quasiregular semiperfectoid ring $A$, we have
$\Prismc_A\left\{i\right\} = \pi_{2i} \TP(A; \mathbb{Z}_p)$; this is an
invertible module over $\Prismc_A = \pi_0 \TP(A; \mathbb{Z}_p)$. 
\end{remark}

Indeed, the object $\Prismc_A = \mathrm{gr}^0 \TP(A; \mathbb{Z}_p)$ (for $A$
quasisyntomic) is perhaps the most fundamental of all the above structures and
is closely related to prismatic cohomology \cite{Prisms}. 
Let us discuss some of the structure that it carries, which follows directly
from its definition. 

Let $A \in \qrsp$. 
The \emph{Nygaard filtration} 
on $\Prismc_A = \pi_0 \TP(A; \mathbb{Z}_p) = \pi_0 \TC^-(A; \mathbb{Z}_p)$ is the
filtration that comes from the homotopy fixed point spectral sequence. 
In particular, we define $\mathcal{N}^{\geq i} \Prismc_A = \pi_0 ( \tau_{\geq 2i}
\THH(A; \mathbb{Z}_p))^{h\mathbb{T}} \subset \Prismc_A$. 
This defines a descending, multiplicative, and complete filtration on $\Prismc_A$
such that $\mathcal{N}^{\geq i} \Prismc_A/\mathcal{N}^{\geq i+1} \Prismc_A =
\pi_{2i} \THH(A; \mathbb{Z}_p)$. By descent, we obtain the Nygaard filtration on
$\Prismc_A$ for all quasisyntomic $A$. 
For $A$ quasiregular semiperfectoid, $\Prismc_A\left\{i\right\} = \pi_{2i} \TP(A;
\mathbb{Z}_p)$ is an invertible $\Prismc_A$-module (which can be trivialized, but
not canonically in general) 
and the notation above $\mathcal{N}^{\geq i}\Prismc_A\left\{i\right\} = \pi_{2i} \TC^-(A;
\mathbb{Z}_p) $ is consistent. 
The \emph{Frobenius} gives an endomorphism $\varphi: \Prismc_A \to \Prismc_A$
which for $A \in \qrsp$ comes from the cyclotomic Frobenius. 
The filtration and the Frobenius interact: we also have ``divided'' Frobenii
$\varphi_i : \mathcal{N}^{\geq i} \Prismc_A \left\{i\right\} \to
\Prismc_A\left\{i\right\}$ for $i \geq 0$, which arise from the cyclotomic
Frobenius on $\pi_{2i}$.

If $A$ is a quasiregular semiperfectoid algebra over the perfectoid ring $R$, 
then $\pi_{2i} \THH(A; \mathbb{Z}_p)$ has a finite filtration whose associated
graded terms are (the $p$-completions of) $\bigwedge^j L_{A/R}[-j]$ for $0 \leq
j \leq i$. Moreover, as $A$ ranges over quasiregular semiperfectoid $R$-algebras, we can
trivialize the Breuil--Kisin twists $\Prismc_A\left\{i\right\}$ for $i \in
\mathbb{Z}$ using the description of $\TC^-_*(R; \mathbb{Z}_p), \TP_*(R;
\mathbb{Z}_p)$. In particular, we have that 
$\varphi$ becomes divisible by $\phi(\xi)^i$ (one typically writes
$\widetilde{\xi} = \phi(\xi)$) on $\mathcal{N}^{\geq i} \Prismc_A$  and
we have a divided Frobenius 
$ \varphi/\widetilde{\xi}^i: \mathcal{N}^{\geq i} \Prismc_A \to \Prismc_A$. By descent, we
obtain this structure for any quasisyntomic $R$-algebra. 

\begin{example} 
In the base case of the perfectoid ring $R$, we have 
$\Prismc_R = A_{\mathrm{inf}}$ and  $\mathcal{N}^{\geq i} \Prismc_R = \xi^i
A_{\mathrm{inf}}$. 
Given a quasiregular semiperfectoid $R$-algebra $A$, the ideal $(\xi)$ is
$\Prismc_A$ is well-defined (and is contained in $\mathcal{N}^{\geq 1}
\Prismc_A$); however, it depends on the choice of perfectoid ring $R$. 
On the other hand, the ideal $(\widetilde{\xi})  = (\varphi(\xi))$ is
well-defined purely in terms of $A$ without reference to $R$. 
In fact, it is the kernel of the map 
$\Prismc_A = \pi_0( \TP(A; \mathbb{Z}_p)) \to \pi_0 ( \THH(A;
\mathbb{Z}_p)^{tC_p})$. 
\end{example} 

In particular, by analyzing topological Hochschild homology 
and its homotopical structure, one obtains the above quasisyntomic sheaf of
rings, equipped with the Frobenius and filtration. This is a structure of great
interest to $p$-adic arithmetic geometry in mixed characteristic. 
For formally smooth algebras over a perfectoid ring, this agrees with the
construction of $A_{\mathrm{inf}}$-cohomology of \cite{BMS1} (and later
\cite{Prisms}). 

In the next couple of sections, 
we will discuss the situation in more detail in characteristic $p$, where one
recovers the theory of crystalline cohomology.

\section{Derived de Rham cohomology}

In this section, we discuss some of the properties of $p$-adic derived de Rham
cohomology, after \cite{Bhattpadic}; see also \cite{SzZa15}. 
Fix a base ring $k$.

\begin{definition}[{Derived de Rham cohomology \cite[Sec.~VIII.2]{Ill72}}] 
Let $R$ be a $k$-algebra. 
The \emph{derived de Rham cohomology} $L \Omega_{R/k} \in \mathcal{D}(k)$ is the
left derived functor of the functor
$P \mapsto \Omega_{P/k}^\bullet$ sending a polynomial $k$-algebra to its de Rham
complex considered as an $\mathbb{E}_\infty$-algebra over $k$. 
Moreover, $L \Omega_{R/k}$ is equipped with the descending,
multiplicative derived Hodge filtration
$\left\{L \Omega_{R/k}^{\geq \ast}\right\}$
obtained as the left Kan extension of the naive filtration on the de Rham complex of a
polynomial $k$-algebra (i.e., the $i$th filtration piece consists of $j$-forms
for $j \geq i$). 
\end{definition}

\begin{remark}[The Hodge completion]
The Hodge completion of derived de Rham cohomology is often more tractable. 
For example, for a smooth $k$-algebra $R$, the Hodge completion of $L
\Omega_{R/k}$ agrees with the usual de Rham complex; this follows 
by considering the map from derived to underived de Rham cohomology (with
respective Hodge filtrations), and using that $\bigwedge^i L_{R/k} = \Omega^i_{R/k}$
for $R/k$ smooth. 
\end{remark} 
\begin{example}[Derived de Rham cohomology in characteristic zero] 
Let $k = \mathbb{C}$, and let $R$ be a finitely generated $\mathbb{C}$-algebra. 
On the one hand, derived de Rham cohomology of animated $\mathbb{C}$-algebras is easily
seen to be the constant functor with value $\mathbb{C}$ in this case; indeed,
this follows because the de Rham complex of a polynomial $\mathbb{C}$-algebra
is acyclic in positive degrees. 
On the other hand, the Hodge completion of derived de Rham cohomology agrees
with the singular cohomology (with $\mathbb{C}$-coefficients) of the associated
complex points. This is a classical result of Grothendieck \cite{Gro66} for $R$
smooth; compare \cite{Bha12} for a discussion in general. 
\end{example} 

In the sequel, we will only consider the $p$-adic version of derived de Rham 
cohomology, and we will simply drop the $p$-completion from the notation. 
We will also often drop the $p$-completion notation on the cotangent complex and
its wedge powers. 

\begin{construction}[The derived conjugate filtration] 
Let $A \to B$ be a map of animated $\mathbb{F}_p$-algebras. 
By left Kan extension of the Postnikov filtration (and using the Cartier
isomorphism) we see that $L \Omega_{B/A}$ admits a natural $B^{(1)} := B
\otimes_{A, \phi} A$-structure and an increasing, multiplicative, and
exhaustive filtration 
$\mathrm{Fil}_{\mathrm{conj}}^{\ast} L \Omega_{B/A}$ in $\mathcal{D}(B^{(1)})$; the associated graded pieces are given by 
$\mathrm{gr} ^i = \bigwedge^i L_{B/A}^{(1)}[-i]$. 
\end{construction} 

A key consequence of the derived conjugate filtration, the fact that
differential forms and the cotangent complex agree for smooth algebras, and the
Cartier isomorphism for smooth algebras, is the
following result. Note that it shows that derived de Rham cohomology behaves
entirely differently in characteristic $p$ than in characteristic zero. 
\begin{theorem}[{Bhatt \cite[Cor.~3.10]{Bhattpadic}}] 
\label{Lomegasmooth}
Given a smooth map $A \to B$ of rings, the 
$p$-complete derived de Rham cohomology $L \Omega_{B/A}$ agrees with the
$p$-complete de Rham
cohomology $\Omega^\bullet_{B/A}$ (with derived and classical Hodge filtrations
matching). 
\end{theorem}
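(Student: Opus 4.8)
The plan is to reduce the statement to the case of a polynomial algebra and then invoke the classical Cartier isomorphism. First I would observe that $p$-complete derived de Rham cohomology is, by construction, left Kan extended from polynomial algebras, so it suffices to understand what happens for $B = A[x_1,\dots,x_n]$ and to check that the resulting comparison map is compatible with the smooth-local structure. Concretely, for a smooth map $A \to B$ one has a natural comparison map $L\Omega_{B/A} \to \Omega^\bullet_{B/A}$ (from derived to classical de Rham cohomology), compatible with the respective Hodge filtrations; the content is that this map is an equivalence after $p$-completion. Since both sides satisfy fppf (indeed étale) descent in $B$ over $A$, and smoothness is local, one can reduce to $B$ étale over an affine space over $A$, hence (after a further étale-localization argument, or directly using that the cotangent complex is insensitive to étale maps) to the polynomial case where $L\Omega^{\geq i}_{B/A}$ visibly matches the naive filtration on the honest de Rham complex.

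The substantive step, and where the characteristic-$p$ phenomenon enters, is identifying the associated graded pieces. On the derived side, the derived conjugate filtration (the left Kan extension of the Postnikov filtration, constructed just above in the excerpt) gives $\mathrm{gr}^i_{\mathrm{conj}} L\Omega_{B/A} \simeq \bigwedge^i L^{(1)}_{B/A}[-i]$ as an object of $\mathcal{D}(B^{(1)})$. On the classical side, for $A \to B$ smooth the Cartier isomorphism identifies $H^i(\Omega^\bullet_{B/A})$ with $\Omega^i_{B^{(1)}/A}$. So I would match these: for smooth $B/A$ one has $\bigwedge^i L_{B/A} = \Omega^i_{B/A}$ (a basic property of the cotangent complex recalled earlier), and the conjugate filtration on $L\Omega_{B/A}$ must be shown to agree with the filtration on $\Omega^\bullet_{B/A}$ whose graded pieces are $\Omega^i_{B^{(1)}/A}[-i]$ via Cartier. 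This is checked by naturality of the left Kan extension construction against the classical Cartier isomorphism — precisely the comparison alluded to in the definition of the derived conjugate filtration. Granting the identification of associated gradeds, a standard completeness/convergence argument (both conjugate filtrations are exhaustive, and after $p$-completion everything is complete) upgrades the graded-level isomorphism to an isomorphism $L\Omega_{B/A} \xrightarrow{\sim} \Omega^\bullet_{B/A}$.

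Finally I would record that this equivalence is compatible with the Hodge filtrations on both sides, which is the parenthetical claim in the theorem. For a polynomial algebra this is immediate since the derived Hodge filtration is \emph{defined} as the left Kan extension of the naive Hodge filtration; for general smooth $B/A$ it follows by the same étale-descent reduction, noting that $\mathrm{gr}^i_{\mathrm{Hodge}} L\Omega_{B/A} \simeq \bigwedge^i L_{B/A}[-i] \simeq \Omega^i_{B/A}[-i] = \mathrm{gr}^i_{\mathrm{Hodge}} \Omega^\bullet_{B/A}$, again using that differential forms and wedge powers of the cotangent complex coincide in the smooth case.

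The main obstacle, I expect, is the bookkeeping in the étale-descent reduction: one needs that both derived and classical de Rham cohomology satisfy enough descent (for the relevant topology) in the $B$-variable to pass from the polynomial case to the general smooth case, and that the comparison map respects this descent. The other delicate point is making sure the identification at the level of the conjugate filtration is genuinely the Cartier isomorphism and not some twist of it — i.e. correctly tracking the Frobenius-twist $B^{(1)} = B \otimes_{A,\varphi} A$ on both sides. Neither is conceptually hard once set up carefully, but both require attention to detail; everything else is formal given \Cref{flatcotangent}, the basic properties of the cotangent complex, and the classical Cartier isomorphism for smooth algebras.
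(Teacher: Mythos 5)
Your ``substantive step'' --- comparing via the derived conjugate filtration, using $\bigwedge^i L_{B/A} = \Omega^i_{B/A}$ for smooth maps and the classical Cartier isomorphism to match associated graded pieces --- is precisely the argument the paper sketches in the sentence preceding the theorem, so the essence matches. The preamble conflates two distinct strategies, though: had you actually reduced to polynomial $B$, the comparison would be tautological (the derived theory agrees with the classical one on polynomials by construction, so no Cartier isomorphism would be needed), while the \'etale base change for $L\Omega_{-/A}$ needed to carry out that reduction is itself a consequence of the $B^{(1)}$-module structure supplied by the very conjugate filtration you then invoke --- so the \'etale-reduction framing is either redundant or circular, and it is the direct filtered comparison for arbitrary smooth $B/A$ that actually carries the proof.
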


\begin{remark} 
\label{CartsmoothddR}
Let $R$ be a Cartier smooth $\mathbb{F}_p$-algebra. 
Then the natural map 
$L \Omega_{R/\mathbb{F}_p} \to \Omega^\bullet_{R/\mathbb{F}_p}$ is an
equivalence respecting Hodge filtrations. This also follows from the conjugate filtration. 
In fact, for each $i$, the map 
$L ( \tau^{\leq i} \Omega_{R/\mathbb{F}_p}) \to \tau^{\leq i}
\Omega^\bullet_{R/\mathbb{F}_p}$ is an equivalence; one sees this on associated
graded terms, whence it follows from the assumptions. 
\end{remark} 

A further aspect of the $p$-adic theory is the appearance of certain $p$-adic
period rings when one applies $p$-adic derived de Rham cohomology to certain
large rings, shown in \cite{Bei12} in the Hodge-completed case and explored
further in \cite{Bhattpadic}. 
This phenomenon arises from the natural appearance 
of divided powers, cf.~\cite[Prop.~3.16]{SzZa15} for a detailed account. 
\begin{example}[Divided powers via derived de Rham cohomology] 
Consider the map $\mathbb{Z}_p[x] \to \mathbb{Z}_p$. 
Then the $p$-adic derived de Rham cohomology is given by the 
$p$-complete divided power algebra $\left( \mathbb{Z}_p\left[ \frac{x^i}{i!}
\right] \right)_{\hat{p}}$: more precisely, the natural map $\mathbb{Z}_p[x] \to
L \Omega_{\mathbb{Z}/\mathbb{Z}_p[x]}$ exhibits the target as the $p$-adic
divided power completion of $(x)$ in the source. 

To see this, we observe that everything involved has a grading. Formally, we work in the $\infty$-category of nonnegatively graded animated
rings $R_{\star}$ with $R_0 = \mathbb{Z}_p$. For any map $A \to B$ of such
nonnegatively graded animated rings, the construction $L \Omega_{B/A}$
carries through in this $\infty$-category, and it is not difficult to see that
the Hodge filtration converges for grading reasons (indeed, $L
\Omega_{B/A}^{\geq i}$ is concentrated in internal degrees $\geq i$ whence
$\varprojlim_i L \Omega_{B/A}^{\geq i} =0$ in the graded derived
$\infty$-category). 
In the graded $\infty$-category, 
the isomorphism
$L \Omega_{\mathbb{Z}_p/\mathbb{Z}_p[x]} = \left( \mathbb{Z}_p\left[ \frac{x^i}{i!}
\right] \right)_{\hat{p}}$ follows by passage to the associated graded 
of the Hodge filtration
$\mathrm{gr}^*( L \Omega_{B/A}) = \bigwedge^{\ast}
L_{B/A}[-\ast]$, using 
$L_{\mathbb{Z}_p/\mathbb{Z}_p[x]} = \mathbb{Z}_p[1]$ and the d\'ecalage isomorphism
$\bigwedge^i (M[1]) = \Gamma^i (M)[i]$.\footnote{One also uses here that if $A \to B$ is a map of
animated nonnegatively graded rings with $A_0 = B_0 = \mathbb{Z}_p$, then 
$A \to L \Omega_{B/A}$ is an isomorphism in degree $1$; this follows easily from
the case of a polynomial algebra.} 
By forgetting the grading, we conclude the
desired isomorphism. 
\end{example} 

In particular, if $A$ is a $p$-complete ring and $x \in A$ is a nonzerodivisor,
then the $p$-adic derived de Rham cohomology of $A \to A/x$ is simply the
$p$-complete divided power envelope of $(x)$; this follows from the above by
base-change.

\begin{construction}[Derived de Rham cohomology as a quasisyntomic sheaf]
Let $R$ be a quasisyntomic ring; for simplicity we assume $R$ is
$p$-torsionfree or an $\mathbb{F}_p$-algebra. On the category of quasisyntomic $R$-algebras, 
the construction $A \mapsto L \Omega_{A/R}$ defines a 
sheaf of spectra, which belongs to the heart of the $t$-structure
(in fact, it takes discrete values on quasiregular semiperfectoid algebras). This follows
from reducing modulo $p$ and the derived conjugate filtration. 
Similarly, $A \mapsto L \Omega_{A/R}^{\geq i}$ defines a sheaf of 
spectra (also in the heart). 
\end{construction} 
\begin{construction}[The Hodge-completed variant] 
Let $R$ be a quasisyntomic ring. On the category of quasisyntomic $R$-algebras, 
the constructions $A \mapsto \widehat{L \Omega_{A/R}}, \widehat{L
\Omega_{A/R}^{\geq i}}$ defines a 
sheaf of coconnective spectra, which belongs to the heart of the $t$-structure
(in fact, it takes discrete values on quasiregular semiperfectoid algebras). This follows
from the Hodge filtration. 
\end{construction}

The cohomology theories of \cite{BMS2}, for algebras over a perfectoid base, can
be described as ``deformations'' of (Hodge-completed) de Rham cohomology,
which therefore plays a central role in the theory. 
This arises as the combination of the following two results. 
The first (cf.~\cite[Sec.~5]{BMS2}) gives a close relationship between de Rham and periodic cyclic
homology. 
Other proofs (which work outside the $p$-complete context) have been given
by Antieau \cite{Ant18}, Moulinos--Robalo--To\"en \cite{MRT20}, and Raksit \cite{Raksit20}. 
For the result, we write $ \HC^- = \HH^{h\mathbb{T} }, \HP = \HH^{t
\mathbb{T}}$. 
\begin{theorem} 
\label{HPanddR}
Let $R$ be a quasisyntomic ring, and let $A$ be a quasisyntomic $R$-algebra
such that $L_{R/A}$ is the suspension of a $p$-completely flat module (e.g.,
$A$ could be quasiregular semiperfectoid). 
Then  we have natural isomorphisms
$$ \pi_{2i} \HP(A/R; \mathbb{Z}_p) = \widehat{L \Omega_{A/R}} , \quad 
\pi_{2i} \HC^-(A/R; \mathbb{Z}_p) = \widehat{L \Omega_{A/R}^{\geq i}}
.$$ 

In particular, by quasisyntomic descent, we 
obtain multiplicative, convergent exhaustive $\mathbb{Z}$-indexed
descending filtrations for any quasisyntomic $R$-algebra $A$,
on $\HC^-(A/R; \mathbb{Z}_p), \HP(A/R; \mathbb{Z}_p)$ with 
$$ \gr^i \HP(A/R; \mathbb{Z}_p) = \widehat{L \Omega_{A/R}}[2i], \quad 
\gr^i \HC^-(A/R; \mathbb{Z}_p) = \widehat{L \Omega_{A/R}^{\geq i}}[2i]
.$$
\end{theorem}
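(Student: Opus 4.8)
The plan is to deduce the first assertion from the Hochschild--Kostant--Rosenberg filtration on $\HH$, together with the fact that under the hypothesis the relevant invariants are concentrated in even degrees, and then to obtain the filtrations on general quasisyntomic $R$-algebras by descent. Since $L_{A/R}$ is the suspension of a $p$-completely flat $A$-module $M$, the d\'ecalage isomorphism gives $\bigwedge^i L_{A/R} \simeq \Gamma^i(M)[i]$, so the $i$-th graded piece $\gr^i\HH(A/R;\mathbb{Z}_p) = \bigwedge^i L_{A/R}[i]$ of the Hochschild--Kostant--Rosenberg filtration is concentrated in degree $2i$; hence $\HH_\ast(A/R;\mathbb{Z}_p)$ is concentrated in even degrees. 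Note also that both sides of the asserted comparison are hypercomplete sheaves for the quasisyntomic topology on $R$-algebras (the left-hand sides by \Cref{descentforHH}, the right-hand sides by the sheaf property of $p$-adic derived de Rham cohomology recorded above), which will be used for the globalization.

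First I would apply $(-)^{h\mathbb{T}}$ and $(-)^{t\mathbb{T}}$ to the $\mathbb{T}$-equivariant filtered object $\HH(A/R;\mathbb{Z}_p)$. As $(-)^{h\mathbb{T}}$ is a homotopy limit it preserves completeness of the filtration, while $(-)^{t\mathbb{T}}$, being the cofiber of the norm map out of the colimit-preserving homotopy orbits, yields a complete and exhaustive $\mathbb{Z}$-indexed filtration; both functors are lax symmetric monoidal, so the induced filtrations on $\HC^-(A/R;\mathbb{Z}_p)$ and $\HP(A/R;\mathbb{Z}_p)$ are multiplicative. The homotopy-fixed-point and Tate spectral sequences computing these invariants are built from $H^\ast(B\mathbb{T})\otimes \HH_\ast(A/R;\mathbb{Z}_p)$, which by the evenness just noted is concentrated in even total degree; hence all differentials vanish. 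Combining the HKR filtration with the $B\mathbb{T}$-adic filtration coming from $(-)^{h\mathbb{T}}$, and passing to the associated total (diagonal) filtration, one obtains a complete multiplicative $\mathbb{Z}$-indexed filtration on $\HC^-(A/R;\mathbb{Z}_p)$ whose $i$-th graded piece is $\widehat{L\Omega_{A/R}^{\geq i}}[2i]$, and likewise one on $\HP(A/R;\mathbb{Z}_p)$ with $i$-th graded piece $\widehat{L\Omega_{A/R}}[2i]$: here the relevant first differential is identified with the de Rham differential via Connes' operator $B$ and the Hochschild--Kostant--Rosenberg theorem, and the Hodge completion is forced precisely because $(-)^{h\mathbb{T}}$ is a homotopy limit. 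Under the flatness hypothesis each such graded piece is concentrated in the single even degree $2i$, so there are no extension problems; taking $\pi_{2i}$ yields the asserted natural isomorphisms $\pi_{2i}\HP(A/R;\mathbb{Z}_p) = \widehat{L\Omega_{A/R}}$ and $\pi_{2i}\HC^-(A/R;\mathbb{Z}_p) = \widehat{L\Omega_{A/R}^{\geq i}}$.

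For the ``in particular'', fix a quasisyntomic ring $R$ and an arbitrary quasisyntomic $R$-algebra $A$. The first part applies to a generating class of quasisyntomic $R$-algebras (those with $L_{-/R}$ the suspension of a $p$-completely flat module, which one reaches after passing to a cover), and shows in particular that the sheaves $\HC^-(-/R;\mathbb{Z}_p)$ and $\HP(-/R;\mathbb{Z}_p)$ have homotopy sheaves concentrated in even degrees. Their double-speed Postnikov, resp.\ Tate, filtrations in $\shv(\qsyn,\sp)$ therefore define multiplicative, convergent, exhaustive $\mathbb{Z}$-indexed descending filtrations on $\HC^-(A/R;\mathbb{Z}_p)$ and $\HP(A/R;\mathbb{Z}_p)$; and since $\widehat{L\Omega_{-/R}}$ and $\widehat{L\Omega_{-/R}^{\geq i}}$ are themselves quasisyntomic sheaves coinciding with the respective graded pieces on that generating class, descent identifies $\gr^i \HP(A/R;\mathbb{Z}_p) = \widehat{L\Omega_{A/R}}[2i]$ and $\gr^i \HC^-(A/R;\mathbb{Z}_p) = \widehat{L\Omega_{A/R}^{\geq i}}[2i]$ for every quasisyntomic $R$-algebra $A$. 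Completeness and exhaustiveness are automatic here because on the generating class these filtrations are literally the Postnikov, resp.\ Tate, filtrations of spectra with even homotopy.

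The main obstacle is the construction in the second paragraph: producing a natural, multiplicative, \emph{filtered} comparison between $(-)^{h\mathbb{T}}$ and $(-)^{t\mathbb{T}}$ of HKR-filtered $\HH$ and (Hodge-completed) $p$-adic derived de Rham cohomology, and in particular checking that the leading differential is the de Rham differential. This is the substance of \cite[Sec.~5]{BMS2} (alternative arguments, valid outside the $p$-complete setting, appear in \cite{Ant18, MRT20, Raksit20}); the parity bookkeeping, the degeneration of the spectral sequences, and the passage to $\pi_{2i}$ are routine once the flatness hypothesis has put everything in even degrees.
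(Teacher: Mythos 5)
Your outline is essentially the same as the one the paper points to: the paper defers the proof of \Cref{HPanddR} to \cite[Sec.~5]{BMS2} (and cites Antieau, Moulinos--Robalo--To\"en, and Raksit for non-$p$-complete variants), and you correctly reconstruct the skeleton of that argument — flatness forces $\HH_*(A/R;\mathbb{Z}_p)$ into even degrees, the $\mathbb{T}$-homotopy fixed point and Tate spectral sequences degenerate, the resulting complete filtration on $\pi_{2i}$ has graded pieces $\Gamma^j(M)\simeq\bigwedge^j L_{A/R}[-j]$, and the extension must be matched with the Hodge filtration on $\widehat{L\Omega_{A/R}}$; the general case then follows by quasisyntomic descent because both sides are hypercomplete sheaves. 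You are also right that the crux is identifying the extension data with the de Rham differential (equivalently Connes' $B$-operator with $d$), which is exactly the technical content of \cite[Sec.~5]{BMS2}.

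Two small clarifications worth keeping in mind. First, your ``total/diagonal filtration'' combining HKR with the $B\mathbb{T}$-adic filtration is morally the right picture, but making it a genuine functorial, multiplicative filtered comparison requires the Beilinson $t$-structure on filtered complexes (the d\'ecalage), which is precisely how \cite[Sec.~5]{BMS2} and \cite{Ant18} produce it; a naive ``take the diagonal of a bifiltration'' is not a priori the same construction. Second, ``no extension problems'' is a slight overreach: evenness means there is no ambiguity in reading off $\pi_{2i}$ as a filtered abelian group from the degenerate spectral sequence, but it does not by itself identify that filtered group with $\widehat{L\Omega_{A/R}}$ carrying its Hodge filtration — that identification is again the substance of the BMS2/Antieau comparison, which your final paragraph correctly flags as the non-routine input.
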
 

\begin{remark} 
In characteristic zero and for $A/R$ smooth, the analogs of these filtrations are
canonically split (e.g.,
by Adams operations), and the connection 
between periodic cyclic and de Rham cohomology is classical,
cf.~\cite[Sec.~5.1.12]{Loday}. 
However, these filtrations are not canonically split in positive characteristic,
and the induced spectral sequences from de Rham cohomology to periodic
cyclic homology need not degenerate for smooth projective
varieties \cite{ABM}. 
\end{remark} 

The second result, which comes from analyzing the structure of $\THH(R;
\mathbb{Z}_p)$, 
states that $\TP$ gives a one-parameter deformation of $\HP$, for algebras 
over a perfectoid base. 

\begin{theorem}[{\cite[Th.~7.12]{BMS2}}] 
Let $R$ be a perfectoid ring, and let $A$ be any $R$-algebra. Then there is a
natural equivalence
\begin{equation} \label{TPmodxi} \TP(A; \mathbb{Z}_p) / \xi = \HP(A/R;
\mathbb{Z}_p).  \end{equation}
\end{theorem}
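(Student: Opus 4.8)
The plan is to deduce the equivalence by \emph{base change along the augmentation} $\THH(R;\mathbb{Z}_p)\to R$, using that for $R$ perfectoid the ring $\THH_*(R;\mathbb{Z}_p)=R[\sigma]$ is fully understood (\Cref{THHperfectoid}) and that $\sigma$ is precisely the ``deformation parameter'' whose Tate incarnation is $\xi$. Note first that no quasisyntomic hypothesis on $A$ enters: the only inputs will be the base-change identity \eqref{THHdeformsHH} and \Cref{THHperfectoid}, both valid for an arbitrary $R$-algebra $A$.

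\emph{Step 1 (identifying $R$ as a $\THH(R;\mathbb{Z}_p)$-module).} By \Cref{THHperfectoid}, $\THH_*(R;\mathbb{Z}_p)=R[\sigma]$ with $|\sigma|=2$, and $\sigma$ lifts to $\pi_2\TC^-(R;\mathbb{Z}_p)=\pi_2\bigl(\THH(R;\mathbb{Z}_p)^{h\mathbb{T}}\bigr)$; hence it determines a $\mathbb{T}$-equivariant, $\THH(R;\mathbb{Z}_p)$-linear self-map $\widetilde\sigma\colon \Sigma^2\THH(R;\mathbb{Z}_p)\to \THH(R;\mathbb{Z}_p)$. The $\THH(R;\mathbb{Z}_p)$-algebra structure on $\HH(A/R;\mathbb{Z}_p)$ underlying \eqref{THHdeformsHH} gives, for $A=R$, a $\mathbb{T}$-equivariant $\mathbb{E}_\infty$-map $\THH(R;\mathbb{Z}_p)\to \HH(R/R;\mathbb{Z}_p)=R$ killing $\sigma$; since $\sigma$ is a nonzerodivisor in $R[\sigma]$, comparison of homotopy groups shows $\widetilde\sigma$ identifies $\Sigma^2\THH(R;\mathbb{Z}_p)$ with the fiber of this map, i.e.
\[ R\ \simeq\ \THH(R;\mathbb{Z}_p)/\widetilde\sigma \]
as $\mathbb{T}$-equivariant $\THH(R;\mathbb{Z}_p)$-modules (indeed $\mathbb{E}_\infty$-algebras). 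Base changing along $\THH(R;\mathbb{Z}_p)\to\THH(A;\mathbb{Z}_p)$ and invoking \eqref{THHdeformsHH}, we obtain a $\mathbb{T}$-equivariant cofiber sequence
\[ \Sigma^2\THH(A;\mathbb{Z}_p)\ \xrightarrow{\ \widetilde\sigma_A\ }\ \THH(A;\mathbb{Z}_p)\ \longrightarrow\ \HH(A/R;\mathbb{Z}_p), \]
with $\widetilde\sigma_A=\widetilde\sigma\otimes\mathrm{id}$.

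\emph{Step 2 (apply the Tate construction and identify $\widetilde\sigma_A$).} The functor $(-)^{t\mathbb{T}}$ is exact and lax symmetric monoidal, so applying it to the cofiber sequence above produces a cofiber sequence
\[ \Sigma^2\TP(A;\mathbb{Z}_p)\ \xrightarrow{\ (\widetilde\sigma_A)^{t\mathbb{T}}\ }\ \TP(A;\mathbb{Z}_p)\ \longrightarrow\ \HP(A/R;\mathbb{Z}_p) \]
of $\TP(R;\mathbb{Z}_p)$-modules. Since $\widetilde\sigma_A$ is $\TP(R;\mathbb{Z}_p)$-linear and $\widetilde\sigma$ came from $\pi_2\TC^-(R;\mathbb{Z}_p)$, the map $(\widetilde\sigma_A)^{t\mathbb{T}}$ is multiplication by $\mathrm{can}(\sigma)\in\pi_2\TP(R;\mathbb{Z}_p)$, where $\mathrm{can}$ is the canonical map $\TC^-(R;\mathbb{Z}_p)\to\TP(R;\mathbb{Z}_p)$. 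By the explicit formulas of \Cref{THHperfectoid}, $\mathrm{can}(\sigma)=\xi u$ with $u\in\pi_2\TP(R;\mathbb{Z}_p)$ a unit. As multiplication by the unit $u$ is an equivalence, the cofiber of multiplication by $\xi u$ coincides with $\TP(A;\mathbb{Z}_p)/\xi$, and the displayed cofiber sequence becomes the asserted equivalence $\HP(A/R;\mathbb{Z}_p)\simeq \TP(A;\mathbb{Z}_p)/\xi$, naturally in $A$.

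\emph{The main obstacle} is Step 1: upgrading the bare computation $\THH_*(R;\mathbb{Z}_p)/\sigma=R$ to the $\mathbb{T}$-equivariant (and $\mathbb{E}_\infty$) statement $R\simeq\THH(R;\mathbb{Z}_p)/\widetilde\sigma$ of $\THH(R;\mathbb{Z}_p)$-modules. This needs both that $\sigma$ admits a $\mathbb{T}$-equivariant, module-linear lift --- which is exactly the refinement of B\"okstedt periodicity to $\TC^-$ in \Cref{THHperfectoid} --- and the compatibility of \eqref{THHdeformsHH} with the $\mathbb{T}$-actions and $\THH(R;\mathbb{Z}_p)$-module structures. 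If one additionally wants the equivalence to be multiplicative, one checks that replacing $\xi u$ by $\xi$ respects $\mathbb{E}_\infty$-structures, which is harmless since $(\xi)=(\xi u)$ as ideals of $\pi_0\TP(R;\mathbb{Z}_p)=A_{\mathrm{inf}}(R)$.
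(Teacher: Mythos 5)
Your argument is correct and is essentially the same as the one sketched in the paper (which records the base-change formula $\TP(A;\mathbb{Z}_p)\otimes_{\TP(R;\mathbb{Z}_p)}\HP(R/R;\mathbb{Z}_p)\simeq\HP(A/R;\mathbb{Z}_p)$ without justifying why the Tate construction commutes with this base change). What you add is precisely the needed justification: exhibiting $\HH(R/R;\mathbb{Z}_p)=R$ as the cofiber of the $\mathbb{T}$-equivariant self-map $\widetilde\sigma$ of $\THH(R;\mathbb{Z}_p)$ --- i.e. as a perfect $\THH(R;\mathbb{Z}_p)$-module --- so that $(-)^{t\mathbb{T}}$, being exact, carries the base-changed two-term cofiber sequence to the cofiber sequence exhibiting $\HP(A/R;\mathbb{Z}_p)$ as $\TP(A;\mathbb{Z}_p)/\mathrm{can}(\sigma)=\TP(A;\mathbb{Z}_p)/\xi$.
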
 
More precisely, we have an equivalence of $\mathbb{E}_\infty$-algebras 
$\TP(A; \mathbb{Z}_p) \otimes_{\TP(R; \mathbb{Z}_p)} \HP(R/R; \mathbb{Z}_p)  =
\HP(A/R; \mathbb{Z}_p)$. 
Using \Cref{THHperfectoid} and the surrounding discussion, we have 
that $\TP_*(R; \mathbb{Z}_p) = A_{\mathrm{inf}}[u^{\pm 1}]$ and $\HP(R/R;
\mathbb{Z}_p) = R[u^{\pm 1}]$; the map $\TP_*(R; \mathbb{Z}_p) \to \HP_*(R/R;
\mathbb{Z}_p)$ has kernel generated by the element $\xi \in A_{\mathrm{inf}}$. 
Compare also \cite{AMN} for a discussion of related results.

By considering \eqref{TPmodxi} for $A$ a quasiregular semiperfectoid
$R$-algebra, 
combining with \Cref{HPanddR}, and using the definitions of the motivic
filtrations, we find that
\begin{equation} 
\label{PrismLO}
\Prismc_A / \xi = \widehat{L \Omega_{A/R}}.
\end{equation} 
By quasisyntomic descent, we obtain \eqref{PrismLO} for all quasisyntomic
$R$-algebras $A$. 
In particular, $\Prismc_A$ gives a one-parameter deformation of (Hodge-completed)
derived de Rham cohomology. 

\begin{remark}[Non-Nygaard complete prismatic cohomology]
Given a perfectoid ring $R$, 
one can define a ``Nygaard decompleted'' version $\Prism_{-}$ of $\Prismc_{-}$,
which deforms derived de Rham cohomology rather than its Hodge completion. Namely, one considers the quasisyntomic
sheaf $\Prismc_{-}$ and restricts to formally smooth $R$-algebras, and then left Kan extends
from formally smooth (or $p$-complete polynomial) $R$-algebras to all
$p$-complete $R$-algebras, as functors to $(p, \xi)$-complete
$\mathbb{E}_\infty$-algebras over $A_{\mathrm{inf}}$. 
This yields a construction 
$A \mapsto \Prism_{A}$ which provides a deformation along the parameter $\xi$ of
derived de Rham cohomology, i.e., one has functorial equivalences $\Prism_A/\xi
\simeq L \Omega_{A/R}$, which therefore also restricts to a sheaf on
quasisyntomic $R$-algebras (and belongs to the heart). 
At least a priori, this construction depends on 
the choice of the perfectoid ring $R$ mapping to $A$. However, in
\cite[Sec.~7]{Prisms}
a purely algebraic construction of $\Prism_{-}$ is given (on quasiregular
semiperfectoid rings, from which one can descend) that makes clear that
$\Prism_{-}$ can genuinely be defined on the whole quasisyntomic site, without
the choice of a perfectoid base.  
Similarly, $\Prism_{-}$ is still equipped with a Nygaard filtration
$\left\{\mathcal{N}^{\geq \ast}\Prism_{-}\right\}$ such that the completion with
respect to this filtration is $\widehat{\Prism_{-}}$; this follows because the
associated graded terms of the Nygaard filtration are left Kan extended from
$p$-complete polynomial rings as proved in \cite[Cor.~5.21]{AMNN}. 
\label{nygaarddecompletion}
\end{remark}

We have seen that $p$-adic derived de Rham cohomology coincides with the ``underived''
version for smooth algebras. 
More generally, there is a similar description in the case of a locally complete intersection
singularity (or a quasisyntomic ring) in terms of the divided
power de Rham complex of a polynomial algebra surjecting onto it. This fact is
essentially the comparison between crystalline cohomology and derived de Rham
cohomology \cite{Bhattpadic} and the classical description (due to Berthelot
\cite[Sec.~V.2.3]{Ber74}) of crystalline cohomology in terms of the
divided power de Rham complex, cf.~also \cite{BhattDJ} for another approach. 
We do not review the general theory of divided power structures in detail and give an ad
hoc construction; cf.~also the recent work \cite{Mao} for a treatment of the
derived divided power envelope construction. 

\begin{construction}[Divided power envelopes of free algebras] 
\label{divpowerenvelope1}
Let $(A, I)$ be a pair consisting of a $p$-torsionfree
$\mathbb{Z}_{(p)}$-algebra and an ideal $I \subset A$. 
Suppose that 
$A$ is a polynomial $\mathbb{Z}_{(p)}$-algebra and $I \subset A$ is the ideal
generated by a collection of the polynomial generators, 
i.e., $(A, I)$ is a free object (in the evident sense) in the category of such
pairs. 

We define the \emph{divided power algebra} $D_I(A)$ to be the subalgebra of $A
\otimes \mathbb{Q}$ generated by $A$ and the elements $\frac{y^i}{i!}, y \in I$;
this is also the divided power envelope (cf.~for instance \cite[Tag
07H7]{stacks-project}).
We have a descending multiplicative filtration
$\left\{\mathrm{Fil}^{\geq \ast} D_I(A)\right\}$ defined by the divided powers:
$\mathrm{Fil}^{\geq r} D_I(A)$ is the ideal generated by 
all elements $\frac{y_1^{j_1} \dots y_m^{j_m}}{j_1!\dots j_m!}$ for $j_1 + \dots
+ j_m \geq r$ for the $y_k \in I$. 
On the $A$-algebra $D_I(A)$, we have a flat connection 
$d: D_I(A) \to D_I(A) \otimes_A \Omega^1_{A/\mathbb{Z}_{(p)}}$ (extended from
$d: A \to \Omega^1_A$, so for instance
$d( \frac{y_1^i}{i!}) = \frac{y_1^{i-1}}{(i-1)!} dy_1$), and this connection
satisfies the Griffiths transversality property: $d( \mathrm{Fil}^{\geq r}
D_I(A)) \subset \mathrm{Fil}^{\geq r-1 } D_I(A) \otimes_A \Omega^1_A$. 
In particular, we can form the \emph{divided power de Rham complex}
\[ 
\Omega_{D_I(A)}^\bullet = 
D_I(A) \to D_I(A) \otimes_A \Omega^1_A \to D_I(A) \otimes_A \Omega^2_A \to
\dots ,  \]
and this is in turn equipped with a 
multiplicative filtration 
such that 
\begin{equation} \label{Hodgefiltdivpower}\mathrm{Fil}^{\geq r} \Omega_{D_I(A)}^\bullet = 
\mathrm{Fil}^{\geq r} D_I(A) \to \mathrm{Fil}^{\geq r-1} D_I(A) \otimes_A
\Omega^1_A  \to \mathrm{Fil}^{\geq r-2} D_I(A) \otimes_A \Omega^2_A \to \dots
.\end{equation}
\end{construction} 

Let $(A, I)$ be a free pair as above, and let $A_0, I_0$ denote the reductions
modulo $p$. We denote by $(-)^{(1)}$ the Frobenius twist along $A_0$, so
$A_0/\phi(I_0) = (A_0/I_0)^{(1)}$, for instance. 
Then one checks (cf.~\cite[Lem.~3.42]{Bhattpadic} and \cite[Prop.~8.11]{BMS2})
that 
\( D_I(A)/p  \) admits an ascending, exhaustive, multiplicative filtration (an analog of the
conjugate filtration)
such that $$\mathrm{gr}^0 D_I(A)/p =  A_0/\phi(I_0) = (A_0/I_0)^{(1)}$$ and in general
\begin{equation} \label{conjfiltdividedpow} \mathrm{gr}^i D_I(A)/p =
(\Gamma^i_{A_0/I_0}(I_0/I_0^2))^{(1)}. \end{equation}
Explicitly, the $i$th stage of the filtration 
on $D_I(A)/p$
is the $A_0/\phi(I_0)$-module
generated by $\frac{y_1^{j_1} \dots y_m^{j_m}}{j_1! \dots j_m!}$ for $j_1 +
\dots j_m \leq pi$. 

\begin{construction}[Derived divided power envelopes] 
Given any pair $(A, I)$ consisting of a $\mathbb{Z}_{(p)}$-algebra $A$ and an
ideal $I \subset A$, we can simplicially resolve the pair in terms of pairs $(B,
J)$ which are free in the sense above. 
Taking simplicial resolutions, we obtain the \emph{derived divided power
envelope} $L D_I(A)$ (a priori, an animated  ring) equipped with its
filtration $\mathrm{Fil}^{\geq \ast} LD_I(A)$ and a connection satisfying
Griffiths transversality. 
By left Kan extension of \eqref{conjfiltdividedpow}, there exists 
an analogous increasing, multiplicative, and exhaustive filtration on
$LD_I(A)/p$. 
\end{construction}

Suppose that $(A, I)$ is a pair such that $A, A/I$ are $p$-torsionfree, the
Frobenius on $A_0 = A/p$ is flat (e.g., $A/p$ is smooth over  a perfect ring), and such
that 
the $p$-completion of 
$L_{(A/I) / A} $ is $p$-completely flat. 
In particular, it follows from the conjugate filtration (and reducing modulo
$p$) that 
$LD_I(A)$ is a $p$-torsionfree, discrete ring. In particular, it is simply the
subring of $A \otimes \mathbb{Q}$ generated by the $\frac{y^i}{i!}, y \in I$,
and (by taking resolutions) one sees that it is actually the divided power
envelope in the usual sense \cite[Tag 07H7]{stacks-project}.  
We will only be interested in this case.

We now record the main result. 
Again, we emphasize that this result is essentially the comparison between
crystalline and derived de Rham cohomology as in \cite{Bhattpadic}. 

\begin{theorem}[$L \Omega$ via the divided power de Rham complex] 
\label{LOmegadivpower}
Suppose $(A, I)$ 
is a pair such that $A, A/I$ are $p$-torsionfree, $A/p$ is Cartier smooth and
has flat Frobenius, and such
that 
the $p$-completion of 
$L_{(A/I )/ A} $ is $p$-completely flat. 
Then there is a natural multiplicative, filtered isomorphism between the
$p$-adic derived de Rham cohomology $L
\Omega_{(A/I) / \mathbb{Z}_{(p)}}$ (with the Hodge filtration) and the $p$-completed divided power de Rham complex
$\Omega^\bullet_{D_I(A)}$ (with the filtration \eqref{Hodgefiltdivpower}). 
\end{theorem}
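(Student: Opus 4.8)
The plan is to produce a natural map of filtered $\mathbb{E}_\infty$-algebras from the $p$-completed divided power de Rham complex $\Omega^\bullet_{D_I(A)}$ to the $p$-adic derived de Rham cohomology $L\Omega_{(A/I)/\mathbb{Z}_{(p)}}$, and then to prove it is an equivalence by reducing modulo $p$ and comparing associated graded pieces for the conjugate filtration. Conceptually this is the comparison of \cite{Bhattpadic} between $p$-adic derived de Rham cohomology and crystalline cohomology, together with Berthelot's classical computation of crystalline cohomology by a divided power de Rham complex; the hypotheses that $A/p$ be Cartier smooth with flat Frobenius are exactly what make the crystalline--divided power side work in this not-necessarily-smooth situation.

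To build the map I would first reduce to free pairs $(A,I)$ as in \Cref{divpowerenvelope1}. Indeed, $L\Omega_{-/\mathbb{Z}_{(p)}}$ is left Kan extended from polynomial algebras, and the divided power de Rham complex, via the derived divided power envelope, is left Kan extended from free pairs; moreover, under our hypotheses the discussion preceding the theorem shows that $LD_I(A)$ is already discrete and agrees with the classical divided power envelope $D_I(A)$, with de Rham complex and Hodge filtration \eqref{Hodgefiltdivpower} as stated (this is where the conjugate filtration \eqref{conjfiltdividedpow}, and hence the Cartier smoothness and flat Frobenius hypotheses, are used). It therefore suffices to exhibit the comparison, naturally and filtered, on a free pair. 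There $A/I$ is itself a polynomial $\mathbb{Z}_{(p)}$-algebra, so $L\Omega_{(A/I)/\mathbb{Z}_{(p)}} = \Omega^\bullet_{(A/I)/\mathbb{Z}_{(p)}}$, and the augmentation $D_I(A) \twoheadrightarrow A/I$ killing the divided power ideal is a filtered ring map compatible with the connection; the induced map $\Omega^\bullet_{D_I(A)} \to \Omega^\bullet_{(A/I)/\mathbb{Z}_{(p)}}$ is the required comparison, and multiplicativity is built in.

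Next I would show this map is an equivalence. Since $A$, $A/I$, and (in the free case) $D_I(A)$ are $p$-torsionfree and all objects are derived $p$-complete, it suffices to check this after $\otimes^L_{\mathbb{Z}_{(p)}} \mathbb{F}_p$, i.e.\ to compare $L\Omega_{(A_0/I_0)/\mathbb{F}_p}$ with $\Omega^\bullet_{D_{I_0}(A_0)}$. Both carry exhaustive increasing conjugate filtrations preserved by the map: on the derived de Rham side the derived conjugate filtration, with $\mathrm{gr}^i = \bigwedge^i L_{(A_0/I_0)/\mathbb{F}_p}^{(1)}[-i]$; on the divided power side the filtration induced from the conjugate filtration \eqref{conjfiltdividedpow} on $D_{I_0}(A_0)$ together with the Cartier isomorphism for $A_0$ (valid since $A/p$ is Cartier smooth). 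It then remains to identify the two sides on $\mathrm{gr}^i$. For this I would feed the transitivity triangle for $\mathbb{F}_p \to A_0 \to A_0/I_0$ into $\bigwedge^i$: using $L_{A_0/\mathbb{F}_p} = \Omega^1_{A_0/\mathbb{F}_p}$, flat in degree $0$ by Cartier smoothness, and $L_{(A/I)/A} = I/I^2[1]$ with $I/I^2$ flat in the free case, one gets a finite filtration on $\bigwedge^i L_{(A_0/I_0)/\mathbb{F}_p}$ with graded pieces $\bigwedge^a L_{A_0/\mathbb{F}_p} \otimes_{A_0} \bigwedge^b (I_0/I_0^2[1])$, $a+b=i$; the d\'ecalage isomorphism $\bigwedge^b(M[1]) = \Gamma^b(M)[b]$ rewrites these in terms of differential forms on $A_0$ and divided powers of $I_0/I_0^2$, which after the Frobenius twist match exactly the graded pieces produced on the divided power side by \eqref{conjfiltdividedpow} and the degreewise Cartier isomorphism, and one checks the comparison map realizes this identification. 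Exhaustiveness of the conjugate filtrations then gives the equivalence modulo $p$, hence $p$-adically.

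Finally, to see the equivalence is compatible with the Hodge filtrations I would check it on Hodge graded pieces: on the derived side $\mathrm{gr}^r_{\mathrm{Hodge}} L\Omega_{(A/I)/\mathbb{Z}_{(p)}} = \bigwedge^r L_{(A/I)/\mathbb{Z}_{(p)}}[-r]$ by definition, while filtering \eqref{Hodgefiltdivpower} by cohomological degree and applying the d\'ecalage isomorphism together with the transitivity triangle for $\mathbb{Z}_{(p)} \to A \to A/I$ (with $L_{A/\mathbb{Z}_{(p)}} = \Omega^1_A$ for a free pair) identifies $\mathrm{gr}^r_{\mathrm{Hodge}} \Omega^\bullet_{D_I(A)}$ with the same object, compatibly with the comparison map; as both Hodge filtrations are complete (clear for a free pair, $A/I$ then being a polynomial ring) this together with the underlying equivalence above gives the filtered isomorphism on free pairs, and descent through the left Kan extension yields the general case. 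I expect the main obstacle to be the graded identification on the divided power de Rham complex --- reconciling the conjugate filtration \eqref{conjfiltdividedpow} on $D_I(A)/p$ with the de Rham differential and Griffiths transversality so as to compute its associated graded in terms of forms on $A_0$ and divided powers of $I_0/I_0^2$, i.e.\ the Berthelot--Bhatt Poincar\'e lemma --- which is precisely where Cartier smoothness of $A/p$ enters essentially; the remaining steps (left Kan extension, $p$-completion bookkeeping, d\'ecalage) are formal.
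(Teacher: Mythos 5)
Your overall plan is closely aligned with the paper's: reduce modulo $p$, exploit the conjugate filtration and the Cartier isomorphism, and anchor everything in the divided power Poincar\'e lemma on free pairs (with the final filtered statement coming from left Kan extension). However, the precise route you take has a gap that the paper's argument is designed to avoid.

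You propose to prove the comparison map is an equivalence by matching conjugate filtrations on both sides, and you describe the divided power side filtration as ``the filtration induced from the conjugate filtration \eqref{conjfiltdividedpow} on $D_{I_0}(A_0)$ together with the Cartier isomorphism for $A_0$.'' This is not a well-defined filtration: the Cartier isomorphism is a statement about cohomology, not a filtration by subcomplexes. The filtration that the conjugate filtration on $D_{I_0}(A_0)$ actually induces on $\Omega^\bullet_{D_{I_0}(A_0)}$ is the one the paper uses, whose $i$-th graded piece is the entire de Rham complex over $A_0$ of $\Gamma^i_{A_0/I_0}(I_0/I_0^2)^{(1)}$ with its Frobenius-descent connection. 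That graded piece does \emph{not} match $\bigwedge^i L^{(1)}_{(A_0/I_0)/\mathbb{F}_p}[-i]$ --- the indexings are genuinely different (yours fixes the divided-power weight $i$, whereas the derived conjugate graded of weight $i$ mixes all $\Gamma^b$ with $b\leq i$) --- so the comparison map would not be filtered for those two filtrations. To make your route work you would need to construct a finer, convolved filtration (conjugate on $D$ combined with the canonical truncation filtration on $\Omega^\bullet_{A_0}$) and check compatibility of the comparison map with it. Relatedly, invoking the preceding remark that $LD_I(A)=D_I(A)$ does not by itself give that the full divided power de Rham complex, with its filtration \eqref{Hodgefiltdivpower}, coincides with its left Kan extension from free pairs; establishing exactly this point is the heart of the paper's proof.

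The paper sidesteps the filtration-matching issue entirely: it proves that $\Omega^\bullet_{D_I(A)}/p$, as a functor of the pair $(A,I)$, is already left Kan extended from free pairs. This is done by filtering $\Omega^\bullet_{D_I(A)}/p$ by the $D$-conjugate filtration, identifying each graded piece as the de Rham complex of $\Gamma^i(I_0/I_0^2)^{(1)}$ with its Frobenius-descent connection, and then using the Cartier isomorphism (the place Cartier smoothness enters) to compute the cohomology of this complex as $\Gamma^i(I_0/I_0^2)^{(1)}\otimes_{A_0}(\Omega^j_{A_0})^{(1)}$, a functor that is visibly left Kan extended. Once the left Kan extension property is established, the filtered Poincar\'e quasi-isomorphism on free pairs does the rest. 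Your graded-piece bookkeeping (transitivity triangle, d\'ecalage, $\bigwedge^b(M[1])=\Gamma^b(M)[b]$) is correct, but it belongs to the \emph{check} that the paper's computed graded pieces are LKE-friendly, rather than to a direct comparison of filtered objects, which is what you set up. Rewriting your argument along the paper's lines --- proving LKE-compatibility of $\Omega^\bullet_{D_I(A)}/p$ rather than matching two conjugate filtrations --- would remove the gap.
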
 
\begin{proof} 
We will use a similar argument as in \Cref{Lomegasmooth}. In fact, 
it suffices to show that for a pair $(A, I)$ satisfying the above conditions, 
$\Omega^\bullet_{D_I(A)}$ is quasi-isomorphic modulo $p$ to its left Kan
extension from free pairs; for a free pair the natural map induces a filtered
quasi-isomorphism $\Omega^{\bullet}_{D_I(A)} \to
\Omega^\bullet_{(A/I)/\mathbb{Z}_{(p)}}$ by the Poincar\'e lemma (i.e., the
divided power Poincar\'e lemma; this is easy to check by hand,
cf.~\cite[Lemme~2.1.2]{Ber74}). For this, we will produce an appropriate filtration
on $\Omega^{\bullet}_{D_I(A)}/p$. 

By our assumptions, $D_I(A)$ is simply the subring of $A \otimes \mathbb{Q}$
generated by the divided powers of $I$. 
Thus, we can use the conjugate filtration $\mathrm{Fil}_{\mathrm{conj}}^{\leq
\ast} D_I(A)/p$, as in \eqref{conjfiltdividedpow}, which is defined by left Kan
extension from the case of a free algebra. From its definition (and left Kan
extension), we see that $A_0 = A/p$-connection on $D_I(A)/p$ is compatible with
the conjugate filtration. 
In particular, we have 
an ascending, exhaustive, multiplicative filtration on
$\Omega^\bullet_{D_I(A)}/p$ such that 
$\mathrm{gr}^i$ is the de Rham complex over $A_0$
of the $A_0$-module-with-connection $\Gamma^i_{A_0/I_0}(I_0/I_0^2)^{(1)}$. 
It thus suffices to show that this de Rham complex (where we write $I_0 = I/p$)
\begin{equation} \label{FrobdescdR} \Gamma^i_{A_0/I_0}(I_0/I_0^2)^{(1)} \to \Gamma^i_{A_0/I_0}(I_0/I_0^2)^{(1)}
\otimes_{A_0} \Omega^1_{A_0} \to \Gamma^i_{A_0/I_0}(I_0/I_0^2)^{(1)}
\otimes_{A_0}
\Omega^2_{A_0} \to \dots 
\end{equation}
as a functor from such pairs $(A, I)$ to $\mathcal{D}(\mathbb{F}_p)$, is left Kan
extended from the free objects. 
Now the $A_0$-connection  on $\Gamma^i_{A_0/I_0}(I_0/I_0^2)^{(1)}$ is the
canonical (Frobenius descent) connections, cf.~\cite[Lem.~3.44]{Bhattpadic}:
explicitly, this follows because any $ip$-th divided power $\gamma_{ip}(y), y
\in I$ is a flat section for this connection. 
Since this is a Frobenius descent connection, the Cartier isomorphism (valid
since $A_0$ is Cartier smooth) goes into
effect: the $j$th cohomology 
of \eqref{FrobdescdR} is given by
$\Gamma^i_{A_0/I_0}(I_0/I_0^2)^{(1)}\otimes_{A_0} (\Omega^i_{A_0})^{(1)}$. Thus, it
follows that \eqref{FrobdescdR} is left Kan extended from free pairs, whence the
result. 
\end{proof}

\newcommand{\acrys}{A_{\mathrm{crys}}}
\newcommand{\ainf}{A_{\mathrm{inf}}}

\section{The ring $\acrys$}
In this section, we will construct 
the functor $A \mapsto \Prismc_A$ together with the Nygaard filtration and
divided Frobenii in characteristic $p$. We describe the construction purely
algebraically here,  and in the next section will outline the proof that it is compatible with the
construction arising from topological Hochschild homology.

We first need the divided power construction for ideals containing $p$ (and
where the divided powers are compatible with the canonical divided powers on
$(p)$). 
The construction is analogous to that
of \Cref{divpowerenvelope1}; however, we will not have the analog of the Hodge filtration. 

\begin{construction}[Derived divided powers for ideals containing $p$] 
Let $(A, I)$ be a pair consisting of a $\mathbb{Z}_{(p)}$-algebra and an ideal
$I \subset A$ containing $p$. 

Suppose first $(A, I)$ is free: in other words, that $A$ is a polynomial ring and $I \subset A$ is the ideal
generated by a subset of the polynomial generators together with $p$. 
In this case, we define $D_I(A)$ to be the subring of $A \otimes \mathbb{Q}$
generated by $A$ and $\left\{\frac{y^i}{i!}\right\}_{y \in I}$. 
In general, we define the derived divided powers $L D_I(A)$ by 
simplicially resolving the pair $(A, I)$ by free objects, and taking the induced
simplicial resolution of divided power envelopes. 
\end{construction} 

As before, $L D_{I}(A)$ defines an animated ring, and its
rationalization is simply $A$. 
To control it in general, we 
again use the conjugate filtration. 
This gives that if $(A, I)$ is a free pair, then 
$D_I(A)/p$ has an ascending filtration as in \eqref{conjfiltdividedpow}. 
In particular, we find that if the pair $(A, I)$ is such that $A$ is
$p$-torsionfree, 
$A/p$ is Cartier smooth with flat Frobenius, and $L_{(A/I) / (A/p)} $ is the
suspension of a flat $A/I$-module, then $L D_I(A)$ is discrete and
$p$-torsionfree; we will thus simply write $D_I(A)$.  
In particular, again by taking resolutions and comparing, one verifies the
universal property that $D_I(A)$ is actually the divided power envelope of $(A,
I)$ compatible with the canonical divided powers on $(p)$.

\begin{definition}[The rings $\ainf, \acrys$] 
Let $R \in \qrspp$. 
\begin{enumerate}
\item  
The ring $R^{\flat}$ (the tilt of $R$) is defined as the inverse limit
perfection of $R$, i.e., $R^{\flat }  = \varprojlim_{\phi}R $. 
This comes with a natural map $R^{\flat} \to R$, and our assumption implies that
this map is surjective. 
\item
The ring $\ainf(R)$ is defined to be $W(R^{\flat})$; we have a natural
surjective map 
\begin{equation}  \theta: W(R^{\flat}) \to R .  \end{equation}
In particular, $\ainf(R)$ is the universal $p$-complete pro-nilpotent thickening
of $R$. 
\item 
The ring $\acrys(R)$ is defined as the $p$-complete (derived) divided power
envelope of the surjection $\theta $ (whose kernel includes $(p)$). 
\end{enumerate}
\end{definition} 

\begin{remark}[Properties of $\acrys$] 
Our assumptions imply that 
$LD_{\mathrm{ker \theta}} W(R^{\flat})$ is $p$-torsionfree by the conjugate
filtration. 
Therefore, $\acrys(R)$ can equivalently be obtained by taking the subring of
$\ainf(R)[1/p]$ generated by the divided powers of $\ker ( \theta)$, and then
$p$-adically completing again. 
In particular, it is actually a discrete $p$-torsionfree, $p$-complete ring (and
not an animated 
ring). 
Moreover, it is the $p$-completion of the (classical) divided power envelope of
$\theta$ compatible with divided powers on $(p)$, since the classical and
derived divided power envelopes coincide. 
\end{remark} 

\begin{remark} 
The choice of the map $\theta$ is in some sense arbitrary. 
Given any perfect $\mathbb{F}_p$-algebra $P$ with a surjection $P
\twoheadrightarrow R$, we could instead construct $\acrys(R)$ as the
$p$-complete divided
power envelope of $W(P) \twoheadrightarrow R$; this does not change the outcome. 
\end{remark} 

\begin{example} 
\label{perfectmodx}
Let $R$ be the ring $\mathbb{F}_p[x^{1/p^\infty}]/(x)$. 
Then $\acrys(R)$ is the $p$-adic completion of the subring 
$\mathbb{Z}_p [x^{1/p^\infty}, \frac{x^i}{i!}]_{i \geq 0} \subset
\mathbb{Q}_p[x^{1/p^\infty}]$. 
\end{example} 

The construction $R \mapsto \acrys(R)$ defines a sheaf of spectra (which
actually has image in discrete spectra) on $\qrspp$. 
Indeed, since $\acrys$ takes values in $p$-complete, $p$-torsionfree abelian
groups, it suffices to observe that $\acrys(R)/p$ defines a sheaf; but this in
turn follows from the conjugate filtration as in \eqref{conjfiltdividedpow} and descent for the
cotangent complex and its wedge powers. 
In fact, one can explicitly identify its reduction modulo $p$ as a sheaf on
$\qrspp$. 
One can give a proof of this using derived divided powers for
$\mathbb{F}_p$-algebras. 

We will be interested in the case of certain lci singularities, and first we
will need the following result.

\begin{theorem}[{Cf.~\cite[Prop.~8.12]{BMS2}}] 
\label{acrysmodpisLOmega}
For $R \in \qrspp$, we have a natural isomorphism
$\acrys(R)/p = L \Omega_{R/\mathbb{F}_p}$. 
\end{theorem}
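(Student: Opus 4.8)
The plan is to match conjugate filtrations, producing the comparison map by factoring through the derived de Rham cohomology of the surjection $R^{\flat}\twoheadrightarrow R$. First I would reduce modulo $p$ on the $\acrys$ side. Since $\acrys(R)$ is discrete and $p$-torsionfree, $\acrys(R)/p$ is the (underived) reduction of the derived divided power envelope $LD_{\ker\theta}(W(R^{\flat}))$. Write $J=\ker(R^{\flat}\to R)$; recall $R^{\flat}$ is perfect and, as $R\in\qrspp$, the map $R^{\flat}\to R$ is surjective. I would then invoke the base-change compatibility of divided power envelopes — checked on free pairs $(P,I)$ with $p$ among the generators of $I$, via the conjugate filtration \eqref{conjfiltdividedpow} — to identify $\acrys(R)/p$ with the $\mathbb{F}_{p}$-linear derived divided power envelope $LD_{J}(R^{\flat})$: one has $W(R^{\flat})/p=R^{\flat}$, the image of $\ker\theta$ is $J$, and the extra divided powers on $(p)$ drop out mod $p$. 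Since $R\in\qrspp$, the ($p$-completed) cotangent complex $L_{R/R^{\flat}}$ is the suspension of a flat $R$-module — this is the quasiregularity of $R$ — so $J/J^{2}$ is flat over $R$ and $LD_{J}(R^{\flat})$ is discrete, with conjugate-filtration pieces $\Gamma^{i}_{R}(J/J^{2})^{(1)}$.

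Next, $LD_{J}(R^{\flat})$ is by construction the $p$-complete derived de Rham cohomology $L\Omega_{R/R^{\flat}}$ of the quotient map $R^{\flat}\twoheadrightarrow R$: derived de Rham cohomology of a surjection is the derived divided power envelope of its kernel, by left Kan extension from the regular case — exactly the phenomenon recorded in the excerpt for $L\Omega_{\mathbb{Z}_{p}/\mathbb{Z}_{p}[x]}$. Finally I would compare $L\Omega_{R/R^{\flat}}$ with $L\Omega_{R/\mathbb{F}_{p}}$: the base-change map $L\Omega_{R/\mathbb{F}_{p}}\to L\Omega_{R/R^{\flat}}$ induced by $\mathbb{F}_{p}\to R^{\flat}$ respects the derived conjugate filtrations, and on $\mathrm{gr}^{i}$ it is $\bigwedge^{i}L^{(1)}_{R/\mathbb{F}_{p}}[-i]\to\bigwedge^{i}L^{(1)}_{R/R^{\flat}}[-i]$, an isomorphism because $L_{R/\mathbb{F}_{p}}\xrightarrow{\ \sim\ }L_{R/R^{\flat}}$ by the transitivity triangle for $\mathbb{F}_{p}\to R^{\flat}\to R$ together with $L_{R^{\flat}/\mathbb{F}_{p}}=0$ (\Cref{perfectcot}). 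Both conjugate filtrations are exhaustive, so a filtered map that is an isomorphism on associated gradeds is an isomorphism; composing the three identifications yields the asserted natural isomorphism $\acrys(R)/p\simeq L\Omega_{R/\mathbb{F}_{p}}$. (As a consistency check, the gradeds match: décalage gives $\bigwedge^{i}L^{(1)}_{R/\mathbb{F}_{p}}[-i]=\Gamma^{i}(J/J^{2})^{(1)}$, agreeing with \eqref{conjfiltdividedpow}.)

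The main obstacle is the very first step: the mod-$p$ base-change identification of $\acrys(R)/p$ with the $\mathbb{F}_{p}$-linear divided power envelope $LD_{J}(R^{\flat})$, and in particular verifying that the conjugate filtrations — including the Frobenius twists entering them — genuinely correspond. This is a hands-on computation with divided power envelopes of free pairs in which $p$ is one of the ideal generators, using \eqref{conjfiltdividedpow} and flat descent for the cotangent complex and its divided/wedge powers; once it is in place, the remaining steps are formal and the comparison map comes for free from the factorization $L\Omega_{R/\mathbb{F}_{p}}\xrightarrow{\ \sim\ }L\Omega_{R/R^{\flat}}=LD_{J}(R^{\flat})=\acrys(R)/p$. (Alternatively one can bypass the intermediate $R^{\flat}\to R$ and match the conjugate filtrations of $\acrys(R)/p$ and $L\Omega_{R/\mathbb{F}_{p}}$ directly via the divided power Poincar\'e lemma, mirroring the proof of \Cref{LOmegadivpower}, but then constructing the comparison map itself still requires an extra argument.)
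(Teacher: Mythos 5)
Your approach is correct and aligns with the route the paper itself hints at. The paper does not print a proof of this theorem (it cites \cite[Prop.~8.12]{BMS2}), but the preceding paragraph explicitly says ``One can give a proof of this using derived divided powers for $\mathbb{F}_p$-algebras,'' which is precisely your strategy: reduce $\acrys(R)$ mod $p$, identify the result with the $\mathbb{F}_p$-linear derived divided power envelope $LD_J(R^\flat)$, identify that with $L\Omega_{R/R^\flat}$, and conclude via $L_{R^\flat/\mathbb{F}_p}=0$ (\Cref{perfectcot}) and the transitivity sequence that $L\Omega_{R/\mathbb{F}_p}\xrightarrow{\sim}L\Omega_{R/R^\flat}$. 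You also correctly identify where the real work is: the mod-$p$ base-change of divided power envelopes, to be checked on free pairs $(P,I)$ with $p\in I$ via \eqref{conjfiltdividedpow} and then left Kan extended.

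Two small points deserve more care than ``by construction.'' First, the identification $LD_J(R^\flat)\simeq L\Omega_{R/R^\flat}$ is a theorem, not a construction: it is the $\mathbb{F}_p$-linear analogue of the fact that derived de Rham cohomology of a (quasi)regular closed immersion is the derived PD-envelope, and proving it requires exactly the filtered comparison you invoke (conjugate filtrations on both sides having graded pieces $\Gamma^i(J/J^2)^{(1)}$, with the comparison map coming from the PD-structure on the Hodge filtration of $L\Omega$, or by mirroring the argument of \Cref{LOmegadivpower} with the divided power Poincar\'e lemma). Note that the paper's worked example for $L\Omega_{\mathbb{Z}_p/\mathbb{Z}_p[x]}$ relies on a grading/Hodge-convergence argument that does not transport verbatim to the general qrsp case, where it is the conjugate filtration that does the work. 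Second, when you pass to $J/J^2$ you should make sure to use the ``derived'' object $\pi_0(L_{R/R^\flat}[-1])$; the quasiregularity assumption is what guarantees this is a flat $R$-module and that the conjugate filtration of $LD_J(R^\flat)$ is discrete. Neither point is a gap in the logic, but both are places where an actual write-up would need to supply the arguments you gesture at.

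Compared to the proof of the adjacent \Cref{acrysLOmega} in the paper, which assumes the existence of a $p$-torsionfree lift $S$ with $S/p=R$ and then applies \Cref{LOmegadivpower} directly, your proof has the virtue of not presupposing a lift; it works for arbitrary $R\in\qrspp$ by replacing the lift with $R^\flat\twoheadrightarrow R$ and working $\mathbb{F}_p$-linearly. This is exactly the flexibility the hint in the text is pointing to.
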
 

One can also prove the following closely related result, identifying $\acrys(R)$
with the derived de Rham cohomology of any $p$-adic lift. 
\begin{theorem} 
\label{acrysLOmega}
Let $S$ be  a quasisyntomic ring which is $p$-torsionfree and such that $R =
S/p$ is quasiregular semiperfect. 
Then we have a natural isomorphism 
$\acrys(R) = L \Omega_{S/\mathbb{Z}_p}$. 
\end{theorem}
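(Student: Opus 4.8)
The plan is to reduce the comparison to residue characteristic $p$ and feed in \Cref{acrysmodpisLOmega}. Both $\acrys(R)$ and the $p$-adic derived de Rham cohomology $L\Omega_{S/\mathbb{Z}_p}$ are derived $p$-complete objects of $\D(\mathbb{Z}_p)$, so it suffices to produce a natural map between them and to check that it becomes an equivalence after $(-)\otimes^L_{\mathbb{Z}_p}\mathbb{F}_p$; the asserted discreteness and $p$-torsionfreeness are then automatic, since a derived $p$-complete object whose reduction modulo $p$ is concentrated in degree $0$ is itself a $p$-torsionfree discrete ring, and \Cref{acrysmodpisLOmega} exhibits $\acrys(R)/p = L\Omega_{R/\mathbb{F}_p}$ as such.

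First I would record the mod-$p$ computations. Because $S$ is $p$-torsionfree, $S$ and $\mathbb{F}_p$ are Tor-independent over $\mathbb{Z}_p$, so flat base change for the cotangent complex (hence for each Hodge graded piece, hence for the whole Hodge-filtered object) gives $L\Omega_{S/\mathbb{Z}_p}\otimes^L_{\mathbb{Z}_p}\mathbb{F}_p \simeq L\Omega_{(S/p)/\mathbb{F}_p} = L\Omega_{R/\mathbb{F}_p}$. On the other side, \Cref{acrysmodpisLOmega} gives directly $\acrys(R)/p \simeq L\Omega_{R/\mathbb{F}_p}$. Thus both sides have the same reduction modulo $p$, canonically.

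The crux is to produce a \emph{natural} map before reducing modulo $p$, since a $p$-complete $p$-torsionfree ring is far from determined by its reduction modulo $p$. Here I would use functoriality of $p$-adic derived de Rham cohomology in the base, in the spirit of \cite{Bhattpadic}. Since $W(R^{\flat})$ is $p$-torsionfree with $W(R^{\flat})/p = R^{\flat}$ perfect, \Cref{perfectcot} gives that the $p$-complete cotangent complex $L_{W(R^{\flat})/\mathbb{Z}_p}$ vanishes, and $L_{W(R^{\flat})/\mathbb{Z}_p}\otimes^L_{W(R^{\flat})}M = 0$ for every $\mathbb{F}_p$-module $M$. Writing $S = \varprojlim_n S/p^n$ exhibits $S$ as an iterated square-zero extension of $R$ by copies of $R$ (using that $p$ is a nonzerodivisor), so the usual obstruction and lifting groups — computed by $L_{W(R^{\flat})/\mathbb{Z}_p}$ tensored with such modules — all vanish, and $\theta\colon W(R^{\flat})\twoheadrightarrow R$ lifts uniquely to a $\mathbb{Z}_p$-algebra map $W(R^{\flat})\to S$ over $R$. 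Now $\acrys(R) = L\Omega_{R/W(R^{\flat})}$ by construction (identification of the $p$-complete derived divided power envelope with $p$-adic derived de Rham cohomology relative to the base, cf. the discussion around \Cref{LOmegadivpower} and the computation of $L\Omega_{(A/x)/A}$ as a divided power envelope), and I would form the chain
\[
L\Omega_{S/\mathbb{Z}_p}\ \xrightarrow{\ \alpha\ }\ L\Omega_{S/W(R^{\flat})}\ \xrightarrow{\ \beta\ }\ L\Omega_{R/W(R^{\flat})}\ =\ \acrys(R),
\]
where $\alpha$ is base change along $\mathbb{Z}_p\to W(R^{\flat})$ and $\beta$ comes from $S\twoheadrightarrow R$. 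The map $\alpha$ is an equivalence because $L_{W(R^{\flat})/\mathbb{Z}_p}$ vanishes $p$-adically (so $W(R^{\flat})$ is $p$-adically formally étale over $\mathbb{Z}_p$ and contributes nothing to derived de Rham cohomology), and $\beta$ is an equivalence because $\ker(S\to R) = (p)$ already carries its canonical divided powers over $W(R^{\flat})$, so $S$ is the (derived) divided power envelope of this ideal and the divided power Poincaré lemma applies — i.e., derived de Rham cohomology relative to $W(R^{\flat})$ is invariant under the PD-thickening $\operatorname{Spec} R\hookrightarrow\operatorname{Spec} S$.

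To finish, I would check that the composite equivalence reduces modulo $p$ to the canonical identification of both sides with $L\Omega_{R/\mathbb{F}_p}$ from the second paragraph — a bookkeeping matter of matching the base-change identifications and \Cref{acrysmodpisLOmega} with reduction modulo $p$ — and then conclude by derived Nakayama. The main obstacle is making the equivalences $\alpha$ and $\beta$ precise in the \emph{non-Hodge-completed} $p$-adic setting: on the Hodge-completed level both are immediate from the computation of Hodge graded pieces, but in general one should argue on the conjugate filtration modulo $p$, reducing each to the statement — visible on associated graded pieces via the Cartier isomorphism for $R^{\flat}$-algebras, exactly as in the proof of \Cref{LOmegadivpower} — that the relevant complexes are left Kan extended from free pairs; alternatively one may invoke the comparison of crystalline and derived de Rham cohomology of \cite{Bhattpadic} wholesale.
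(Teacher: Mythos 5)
Your overall plan (produce a natural map, then conclude by derived Nakayama) is sound, and your $\alpha$-step — using $p$-adic vanishing of $L_{W(R^{\flat})/\mathbb{Z}_p}$ to identify $L\Omega_{S/\mathbb{Z}_p}$ with $L\Omega_{S/W(R^{\flat})}$ — is exactly the first move the paper makes. But there is a genuine gap in the middle. You assert that $\acrys(R) = L\Omega_{R/W(R^{\flat})}$ ``by construction,'' but this is not how $\acrys(R)$ is defined: it is the $p$-complete derived divided power envelope of $\theta\colon W(R^{\flat})\twoheadrightarrow R$ \emph{compatible with the canonical divided powers on $(p)$}, whereas identifying $L\Omega_{R/W(R^{\flat})}$ with any divided power envelope at all requires \Cref{LOmegadivpower} or its ideas — and that result requires the \emph{target} $A/I$ to be $p$-torsionfree, which $R$ is not. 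So this identification is not a formality; it is essentially the crystalline--versus--derived-de-Rham comparison for a non-$p$-torsionfree quotient, and it carries most of the content of the theorem you are trying to prove. Your $\beta$-step (derived de Rham invariance under the PD-thickening $S\twoheadrightarrow R$) has a similar flavor and is similarly unjustified at the level of detail given. Together these two assertions are roughly equivalent to the theorem, so the argument as written is close to circular.

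The paper's route sidesteps all of this by never passing through $L\Omega_{R/W(R^{\flat})}$. Since $R=S/p$ is semiperfect and $S$ is $p$-complete, the map $W(S^{\flat})\to S$ is automatically \emph{surjective} (surjective mod $p$ plus $p$-completeness; your obstruction-theoretic lifting argument is unnecessary). Because both $W(S^{\flat})$ and $S$ are $p$-torsionfree, the ideas around \Cref{LOmegadivpower} identify $L\Omega_{S/W(S^{\flat})}$ with the $p$-complete derived divided power envelope $LD_{I}(W(S^{\flat}))$ for $I=\ker(W(S^{\flat})\to S)$. Then the only remaining step is the elementary observation (provable by left Kan extension from free pairs, where it is a direct check) that for a pair $(A,I)$ with $A$ and $A/I$ both $p$-torsionfree, the derived PD-envelope of $I$ coincides with the derived PD-envelope of $(I,p)$ compatible with divided powers on $(p)$ — and the latter is exactly $\acrys(R)$ by definition. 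If you want to keep the structure of your write-up, the fix is to replace the step ``$\acrys(R)=L\Omega_{R/W(R^{\flat})}$'' and the PD-Poincaré invariance of $\beta$ by this single lemma applied to the $p$-torsionfree pair $(W(S^{\flat}),\ker(W(S^{\flat})\to S))$, and to drop $L\Omega_{R/W(R^{\flat})}$ from the argument altogether.
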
 
\begin{proof} 
Let $S^{\flat}$ denote the inverse limit perfection of $S/p$; we have a map 
$W(S^{\flat}) \to S$ which is surjective modulo $p$ by our assumptions, hence
surjective. 
By \Cref{LOmegadivpower}, it follows that $L \Omega_{S/\mathbb{Z}_p} = L
\Omega_{S/W(S^{\flat})}$ is the $p$-complete derived divided power envelope
of the surjection $W(S^{\flat}) \to S$. 
Similarly, by construction $\acrys(R)$ is the $p$-complete derived divided power 
envelope of the surjection $W(S^{\flat}) \to S/p$ compatible with the divided
powers on $(p)$. But it is easy to see that 
for any pair $(A, I)$ with $A, A/I$ $p$-torsionfree, the derived divided power
envelope of $(A, I)$ and the derived divided power envelope of $(A, (I, p))$
(where the latter is taken compatible with divided powers on $p$) agree:
indeed, this follows by left Kan extension from the polynomial case, when the
result is clear. 
\end{proof}

By descent, we obtain from $\acrys(-)$ a sheaf of spectra on
$\qsyn_{\mathbb{F}_p}$, which is a
$p$-adic lift  of the sheaf $L \Omega_{-/\mathbb{F}_p}$. 
One can show that this is precisely \emph{derived crystalline cohomology},
i.e., the functor on $\mathbb{F}_p$-algebras obtained by left Kan extending
(absolute) crystalline cohomology. 
In particular, the basic comparison theorems in crystalline (or de Rham--Witt)
theory yields that de Rham cohomology of smooth $\mathbb{F}_p$-algebras admits a $p$-adic lift 
given by crystalline cohomology. Left Kan extending to 
quasisyntomic $\mathbb{F}_p$-algebras, one obtains a $p$-adic lift of $L
\Omega_{-/\mathbb{F}_p}$, and one shows that this is precisely 
the above one. 
In other words: 

\begin{theorem}[{Cf.~\cite[Th.~8.14]{BMS2}}] 
For a quasiregular semiperfect $\mathbb{F}_p$-algebra $R$, 
there is a natural isomorphism between the derived crystalline
cohomology\footnote{In fact, one can also use the actual crystalline cohomology
of $R$, as one sees by the universal property of $\acrys$.} 
of $R$ (or the derived de Rham--Witt cohomology of $R$), obtained by left Kan
extending crystalline cohomology from polynomial $\mathbb{F}_p$-algebras, and
the ring $\acrys(R)$. 
\end{theorem}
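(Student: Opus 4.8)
The plan is to deduce the theorem from the results already established about $\acrys$ together with Bhatt's comparison between ($p$-adic) derived de Rham cohomology and derived crystalline cohomology \cite{Bhattpadic}. By the latter, the left Kan extension of (absolute) crystalline cohomology from polynomial $\mathbb{F}_p$-algebras is canonically identified with $L\Omega_{-/\mathbb{Z}_p}$, so the statement reduces to producing a natural identification $L\Omega_{R/\mathbb{Z}_p}\simeq\acrys(R)$ for $R\in\qrspp$. Both sides moreover extend to hypercomplete sheaves on $\qsyn_{\mathbb{F}_p}$ — $L\Omega_{-/\mathbb{Z}_p}$ because its reduction modulo $p$ is $L\Omega_{-/\mathbb{F}_p}$, which is a sheaf by the derived conjugate filtration together with flat descent for the wedge powers of the cotangent complex (\Cref{flatcotangent}), and $\acrys(-)$ by construction and \Cref{acrysmodpisLOmega} — so it would in any case be enough to work on the basis $\qrspp$, where both take discrete, $p$-torsion-free values.

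Next, for $R\in\qrspp$ I would run the following computation. Let $\theta\colon\ainf(R)=W(R^\flat)\twoheadrightarrow R$ be the canonical surjection. Since $R^\flat$ is perfect, $L_{R^\flat/\mathbb{F}_p}=0$ by \Cref{perfectcot}, so $L_{W(R^\flat)/\mathbb{Z}_p}$ vanishes after $p$-completion; in particular the natural map $L\Omega_{R/\mathbb{Z}_p}\to L\Omega_{R/W(R^\flat)}$ is an equivalence (the base $W(R^\flat)$ is $p$-completely formally \'etale over $\mathbb{Z}_p$). On the other hand, $L_{R/W(R^\flat)}^{\wedge}$ equals $L_{R/\mathbb{Z}_p}^{\wedge}$, which is the suspension of a $p$-completely flat $R$-module since $R\in\qrsp$; as $\ker\theta$ also contains $p$, the divided-power--de-Rham computation underlying \Cref{LOmegadivpower} — applied, exactly as in the proof of \Cref{acrysLOmega}, over the base $W(R^\flat)$ rather than $\mathbb{Z}_{(p)}$ — identifies $L\Omega_{R/W(R^\flat)}$ with the $p$-complete derived divided power envelope of $\ker\theta$ compatible with the divided powers on $(p)$, which is $\acrys(R)$ by definition. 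Equivalently, one can phrase this last step through the crystalline site: the divided power de Rham complex of the surjection $W(R^\flat)\twoheadrightarrow R$ computes $R\Gamma_{\mathrm{crys}}(R/\mathbb{Z}_p)$ (Berthelot \cite{Ber74}), and since $\Omega^{i}_{W(R^\flat)/\mathbb{Z}_p}$ is $p$-adically trivial for $i>0$ this complex is concentrated in degree $0$, where it equals $\acrys(R)$. All of the identifications in sight are functorial in $R$, which supplies the naturality. For the de Rham--Witt variant, Illusie's comparison theorem \cite{Ill79} is a functorial quasi-isomorphism between the de Rham--Witt complex and the crystalline complex on polynomial $\mathbb{F}_p$-algebras, so it persists after left Kan extension and the same conclusion applies verbatim.

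I expect the main obstacle to be justifying that $W(R^\flat)$ — which is neither smooth nor of finite type over $\mathbb{Z}_p$ — may legitimately be used as the ``envelope'' computing the crystalline, equivalently the $p$-adic derived de Rham, cohomology of $R$ over $\mathbb{Z}_p$. This is precisely the step licensed by the $p$-adic vanishing of $L_{W(R^\flat)/\mathbb{Z}_p}$, but cashing it in requires the divided power Poincar\'e lemma together with the conjugate-filtration analysis of (derived) divided power envelopes already carried out in the proof of \Cref{LOmegadivpower}; those same ingredients also underlie the claim that the left Kan extension from polynomial $\mathbb{F}_p$-algebras is the quasisyntomic hypersheaf asserted above, so that the descent packaging is available. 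By contrast, the passages through Bhatt's comparison, \Cref{perfectcot}, \Cref{acrysmodpisLOmega}, and Illusie's theorem are essentially formal once those inputs are granted.
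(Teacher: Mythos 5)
Your argument is correct in substance, and it genuinely differs from the route sketched in the paper. The paper's discussion preceding the theorem indicates the strategy of viewing both $\acrys(-)$ and the left Kan extension of crystalline cohomology as $p$-adic lifts of $L\Omega_{-/\mathbb{F}_p}$, and then identifying them; this is essentially the rigidity mechanism formalized later as \Cref{BLMrigidity}. You instead invoke Bhatt's comparison theorem up front to translate derived crystalline cohomology into $p$-complete $L\Omega_{-/\mathbb{Z}_p}$, and then compute the latter directly via the Fontaine-style presentation $W(R^\flat)\twoheadrightarrow R$, using the $p$-adic vanishing of $L_{W(R^\flat)/\mathbb{Z}_p}$ to collapse the divided power de Rham complex to the divided power envelope itself. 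This is a more computational approach; it has the advantage of making the identification completely explicit (and transparently compatible with the Frobenius and with $\ainf(R)$), at the cost of front-loading Bhatt's crystalline--de Rham comparison as a black box.

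There are two points you should tighten. First — and you flag this yourself — \Cref{LOmegadivpower} as stated in the paper requires $A/I$ to be $p$-torsionfree, so it cannot be applied verbatim to the pair $(W(R^\flat),\ker\theta)$ since $R$ is an $\mathbb{F}_p$-algebra. You need a separate lemma asserting that for a pair $(A,I)$ with $p\in I$, $A$ $p$-torsionfree, $A/p$ Cartier smooth with flat Frobenius, and $L_{(A/I)/(A/p)}$ the suspension of a flat module, the $p$-completion of $L\Omega_{(A/I)/A}$ agrees with $LD_I(A)^{\wedge}_p$ (the derived divided power envelope compatible with the canonical divided powers on $(p)$). This does hold — the proof is by left Kan extension from free pairs, where on the one hand the Example on divided powers identifies $L\Omega_{(P/J)/P}^{\wedge}_p$ with the $p$-complete divided power algebra (extended from the single-variable case by base change), and on the other hand the compatibility of the dp envelope with the preexisting divided powers on $p$ is automatic — but since the paper proves neither this nor the analogous conjugate-filtration analysis for such pairs beyond the discreteness statement, you should state and check it rather than gesture at the proof of \Cref{acrysLOmega} (which has both $A$ and $A/I$ $p$-torsionfree and therefore uses \Cref{LOmegadivpower} as stated). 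Second, your opening reduction silently uses that the functor $L\Omega_{-/\mathbb{Z}_p}^{\wedge}_p$, restricted to animated $\mathbb{F}_p$-algebras, is itself left Kan extended from polynomial $\mathbb{F}_p$-algebras; that this coincides with the left Kan extension of crystalline cohomology on polynomial algebras is then Bhatt's theorem pointwise. The left Kan extension claim is true and follows from the conjugate filtration (whose graded pieces are manifestly Kan extended from polynomial $\mathbb{F}_p$-algebras), but it is worth making the point explicitly since left Kan extension does not commute with restriction in general.
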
 

In particular, by descent from quasiregular semiperfect $\mathbb{F}_p$-algebras, 
the construction of the ring $\acrys(R)$ provides another approach to 
crystalline cohomology; of course, the approach is not essentially different
from the classical one,
since the definition of divided powers  is fundamental to the crystalline site
and the basic construction of crystalline cohomology. 
A key insight of \cite{BMS2} is that topological Hochschild homology provides  a
new (and fundamentally different) approach to the construction of crystalline
cohomology, where divided powers arise very naturally (instead of being
introduced by fiat). Most importantly, this approach has the advantage of working equally well in
mixed characteristic, where it reproduces the prismatic cohomology
\cite{Prisms}. 
Before formulating the results, we need one more ingredient. 

\begin{definition}[{The Nygaard filtration on $\acrys$, \cite[Sec.~8.2]{BMS2}}] 
\label{nygaardfiltacrys}
Let $R$ be a quasiregular semiperfect $\mathbb{F}_p$-algebra. 
We define the \emph{Nygaard filtration} $\{\mathcal{N}^{\geq \ast} \acrys(R) \}$  
such that $\mathcal{N}^{\geq i} \acrys(R) \subset \acrys(R)$ consists of those
elements $x \in \acrys(R)$ such that $p^i \mid \phi(x)$, where $\phi: \acrys(R)
\to \acrys(R)$ denotes the endomorphism induced by Frobenius. 
By construction, since everything involved is $p$-torsionfree, we have an
induced 
divided Frobenius map \[ \phi/p^i : \mathcal{N}^{\geq i} \acrys(R) \to
\acrys(R). \]
\end{definition}

\begin{example} 
Consider the case where $R = \mathbb{F}_p[x^{1/p^\infty}]/(x)$ as in
\Cref{perfectmodx},
so that $\acrys(R)$ is the $p$-completion of the ring
$\mathbb{Z}_p[x^{1/p^\infty}, \frac{x^j}{j!}]_{j \geq 0}$. 
Then $\mathcal{N}^{\geq i} \acrys(R)
$
is the $p$-completion of the subring generated by $p^{i-j}( \frac{x^j}{j!})$ for
all $j \geq 0$. 
\end{example} 

\begin{example}[The case of a $\delta$-lift] 
\label{deltaliftnygaard}
More generally, let $S$ be a $p$-torsionfree, $p$-complete $\delta$-ring 
such that $R = S/p$ is quasiregular semiperfect. Then we have the
canonical identification $\acrys(R) = L
\Omega_S$ (cf.~\Cref{acrysLOmega}), and 
the Nygaard filtration on $\acrys(R)$ is the tensor product of the Hodge
filtration on $L \Omega_S$ and the $p$-adic filtration on $\mathbb{Z}_p$. 
In other words, in the $p$-complete filtered derived category, we have
\begin{equation} \label{tensprodfiltLOmega} \left\{\mathcal{N}^{\geq \ast} \acrys(R)\right\}  = 
\left\{ L \Omega_S^{\geq \ast}\right\} \otimes \left\{p^{\ast}
\mathbb{Z}_p\right\}.
\end{equation}
This follows by left Kan extension from the case of $p$-completed free $\delta$-ring, using
\cite[Prop.~8.7]{BMS2} (or \cite[Prop.~8.3.3]{BLM} in the setting of
Dieudonn\'e complexes). 
By descent, we obtain \eqref{tensprodfiltLOmega} for arbitrary $p$-torsionfree
quasisyntomic $\delta$-rings. 
\end{example}

Although the definition of the Nygaard filtration in this manner does not make
the claim immediately evident, each $\mathcal{N}^{\geq i} \acrys(R)$ turns out
to define a sheaf of $p$-complete spectra on $\qrspp$. 
One way to see this is to use the following proposition, which gives very
strong control over the Nygaard filtration: 

\begin{proposition}[{Cf.~\cite[Th.~8.14(2)]{BMS2}}] 
\label{strongNygaardcontrol}
Let $R \in \qrspp$. 
For each $i \geq 0$, the map $\phi/p^i$ induces an isomorphism
\[ \mathcal{N}^{\geq i} \acrys(R)/\mathcal{N}^{\geq i+1} \acrys(R)
\xrightarrow{\sim} \mathrm{Fil}_{\mathrm{conj}, \leq i} ( \acrys(R)/p).   \]
\end{proposition}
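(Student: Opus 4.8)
The plan is to reduce the statement to an explicit computation inside a divided power algebra, exploiting the $p$-torsionfreeness of $\acrys(R)$, the injectivity of its Frobenius, and the trick of presenting $\acrys(R)$ via Teichm\"uller lifts so that no $\delta$-correction terms intervene.

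First I would make the elementary reformulation. Since $R\in\qrspp$ the ring $\acrys(R)$ is discrete, $p$-torsionfree and $p$-complete, and $\phi$ is the restriction of the Witt vector Frobenius on $\ainf(R)[1/p]$, hence injective on $\acrys(R)$. Thus $\phi/p^i$ identifies $\mathcal{N}^{\geq i}\acrys(R)=\{x:p^i\mid\phi(x)\}$ with the submodule $M_i:=p^{-i}\bigl(\phi(\acrys(R))\cap p^i\acrys(R)\bigr)\subseteq\acrys(R)$. A short chase using $p$-torsionfreeness shows $M_i\cap p\,\acrys(R)=pM_{i+1}$ and that $\phi/p^i$ carries $\mathcal{N}^{\geq i+1}\acrys(R)$ onto $pM_{i+1}$; hence $\phi/p^i$ induces an \emph{injection} $\mathcal{N}^{\geq i}\acrys(R)/\mathcal{N}^{\geq i+1}\acrys(R)\hookrightarrow\acrys(R)/p$ whose image is the image of $M_i$ in $\acrys(R)/p$ — injectivity being formal, since $\phi(x)\in p^{i+1}\acrys(R)$ \emph{means} $x\in\mathcal{N}^{\geq i+1}\acrys(R)$. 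Invoking \Cref{acrysmodpisLOmega} to write $\acrys(R)/p=L\Omega_{R/\mathbb{F}_p}$, it remains to prove that this image equals $\mathrm{Fil}_{\mathrm{conj},\leq i}(L\Omega_{R/\mathbb{F}_p})$.

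Next I would produce an explicit model. Being discrete and $p$-torsionfree, $\acrys(R)$ is the classical $p$-complete divided power envelope of $\theta\colon W(R^{\flat})\twoheadrightarrow R$. Choosing generators $f_t$ ($t\in T$) of the (quasiregular) ideal $J_0=\ker(R^{\flat}\to R)$, one checks that $\ker\theta=(\{[f_t]\},p)$, so $\acrys(R)$ is the $p$-completion of $W(R^{\flat})$ with the divided powers $\gamma_n([f_t])$ adjoined. The point of Teichm\"uller lifts is that $\phi([f_t])=[f_t]^p$ \emph{exactly}, so that, using $v_p\bigl((pn)!/n!\bigr)=n$,
\[ \phi\bigl(\gamma_n([f_t])\bigr)=\gamma_n\bigl([f_t]^p\bigr)=\tfrac{(pn)!}{n!}\,\gamma_{pn}([f_t])=u_n\,p^{n}\,\gamma_{pn}([f_t]),\qquad u_n\in\mathbb{Z}_p^{\times}, \]
with no $\delta$-terms. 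I would also recall the conjugate filtration on $\acrys(R)/p=L\Omega_{R/\mathbb{F}_p}$, coming from \eqref{conjfiltdividedpow} and descent: $\mathrm{Fil}_{\mathrm{conj},\leq i}$ is the $R^{\flat}/\phi(J_0)$-submodule spanned by the divided power monomials $\prod_t\gamma_{n_t}([f_t])$ of total degree $\sum_t n_t\leq pi$, with flat graded pieces $\bigl(\Gamma^{j}(J_0/J_0^2)\bigr)^{(1)}$. Then the computation runs as follows: by the structure of $\acrys(R)$ as the $p$-complete divided power envelope of the quasiregular ideal $J_0$, it carries a $p$-completely flat increasing filtration by divided power monomial degree, and the displayed formula shows $\phi$ is ``diagonal'' for this degree, so $\mathcal{N}^{\geq i}\acrys(R)$ is the $p$-completion of $\sum_{(n_t)}p^{\max(0,\,i-|n|)}\,W(R^{\flat})\cdot\prod_t\gamma_{n_t}([f_t])$, writing $|n|=\sum_t n_t$. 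Applying $\phi/p^i$: the summands with $|n|\leq i$ go onto the $R^{\flat}$-span of the monomials $\prod_t\gamma_{pn_t}([f_t])$ reduced mod $p$ (of total degree $p|n|\leq pi$), while those with $|n|>i$ land in $p\,\acrys(R)$; and since $v_p\bigl((p(q+1))!/(pq)!\bigr)\geq 1$, multiplication by $\phi(J_0)=(f_t^p)$ annihilates these classes modulo $p$, so the $R^{\flat}$-span coincides with the $R^{\flat}/\phi(J_0)$-span, which is exactly $\mathrm{Fil}_{\mathrm{conj},\leq i}(\acrys(R)/p)$.

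The hard part will not be the valuation bookkeeping in the last step but the structural input it rests on: that $\acrys(R)$ really is the $p$-completed classical divided power envelope with a $p$-completely flat filtration by divided power monomial degree whose gradeds are $\Gamma^{\bullet}(J_0/J_0^2)$ — so that the ``diagonality'' of $\phi$ has content — together with the reconciliation of the a priori, elementwise definition $\mathcal{N}^{\geq i}\acrys(R)=\{x:p^i\mid\phi(x)\}$ with the explicit completed-sum description above. Both are obtained by reducing, via the conjugate filtration and left Kan extension from free pairs, to an explicit polynomial computation; this is the content carried out in \cite[Sec.~8]{BMS2}. (An equivalent route for the reverse inclusion $\operatorname{im}(M_i)\subseteq\mathrm{Fil}_{\mathrm{conj},\leq i}$ is to first treat the case where $R$ admits a $p$-torsionfree $\delta$-ring lift $S$, where by \Cref{deltaliftnygaard} the Nygaard filtration is $\{L\Omega_S^{\geq\ast}\}\otimes\{p^{\ast}\mathbb{Z}_p\}$ and $\phi/p^i$ is visibly the Cartier isomorphism into $\mathrm{Fil}_{\mathrm{conj},\leq i}$, and then to descend the inclusion along a $p$-completely faithfully flat cover by such rings, using that $L\Omega_{-/\mathbb{F}_p}$ transforms it into a faithfully flat map respecting conjugate filtrations.)
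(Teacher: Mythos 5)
Your proposal is essentially correct, and the key observations are sound: the reduction to showing the image of $\phi/p^i$ mod $p$ equals $\mathrm{Fil}_{\mathrm{conj},\leq i}$ is clean (your short chase with $M_i$ and the identity $M_i\cap p\,\acrys(R)=pM_{i+1}$ checks out), and the central computation — that $\delta([f_t])=0$ so $\phi(\gamma_n([f_t]))=\tfrac{(pn)!}{n!}\gamma_{pn}([f_t])=u_n p^n\gamma_{pn}([f_t])$ with $u_n\in\mathbb{Z}_p^\times$ — is exactly the valuation identity that makes $\phi$ ``diagonal'' in the divided-power monomial degree. The step where the $R^\flat$-span of $\prod_t\gamma_{pn_t}(\bar f_t)$ (for $\sum n_t\le i$) is shown to coincide with $\mathrm{Fil}_{\mathrm{conj},\leq i}$ is also correct, because $\phi(J_0)$ annihilates all of $\acrys(R)/p$ (since $y^p=p!\,\gamma_p(y)\equiv 0\bmod p$ for $y$ in the PD ideal) and multiplication by $\bar f_t^a$, $a<p$, moves $\gamma_{pn_t}$ to $\gamma_{pn_t+a}$ up to units.

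That said, the paper does not actually give a proof here — it cites \cite[Th.~8.14(2)]{BMS2} — so the comparison is against that source. Your route differs in emphasis. The BMS2 argument proceeds by isolating the universal example $\mathbb{F}_p[x^{1/p^\infty}]/(x)$ (and its tensor powers), making the one-variable divided-power computation there, and then propagating to general $R\in\qrspp$ by base change along $W(R^\flat)$-algebra maps and quasisyntomic descent. You instead work globally over $W(R^\flat)$ from the start, choosing Teichm\"uller-lift generators of $\ker\theta$; this is a legitimate repackaging of the same computation, and it makes the valuation bookkeeping transparent in one go, but it front-loads the structural input — that $\acrys(R)$ is the classical $p$-completed PD envelope carrying a $p$-completely flat increasing filtration by PD-monomial degree with graded pieces $\Gamma^\bullet(J_0/J_0^2)$, and that the elementwise definition $\mathcal{N}^{\geq i}=\{x:p^i\mid\phi(x)\}$ agrees with the explicit completed sum. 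You flag both of these honestly, and they are indeed the substantive content of \cite[Sec.~8]{BMS2} (Prop.~8.11, 8.12 and the lemmas preceding Th.~8.14). The alternative route you sketch at the end — prove it first for a $\delta$-ring lift $S$, where \Cref{deltaliftnygaard} gives the Nygaard filtration as a tensor product filtration and $\phi/p^i$ on $\gr^i$ is visibly the Cartier map, then descend along a $p$-completely faithfully flat cover — is the one that most closely parallels how this paper itself reuses the result (e.g.~in \Cref{Cartsmoothphidiv} and in the proof of \Cref{TCandacrysh}), and is arguably the cleanest way to package the argument if you wanted to avoid the monomial bookkeeping entirely. One small point of caution: when you write ``$\mathcal{N}^{\geq i}\acrys(R)$ is the $p$-completion of $\sum_{(n_t)}p^{\max(0,i-|n|)}W(R^\flat)\prod_t\gamma_{n_t}([f_t])$,'' the containment $\supseteq$ is immediate from your valuation formula, but the reverse containment genuinely uses the flatness of the PD-degree filtration (to rule out cancellations that could make a lower-degree piece become $p$-divisible after applying $\phi$); you rightly identify this as the load-bearing structural fact rather than the arithmetic.
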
 

More precisely, $\phi/p^i$ (by construction) gives a well-defined map 
$\mathcal{N}^{\geq i} \acrys(R)/\mathcal{N}^{\geq i+1} \acrys(R)
\xrightarrow{\sim}  \acrys(R)/p$; the claim is that this map has image in 
the $i$th stage of the conjugate filtration (which is also the $i$th stage of
the de Rham conjugate filtration  under \Cref{acrysmodpisLOmega}), and induces an isomorphism
onto its image.

Since the Nygaard filtration gives a sheaf of $p$-complete spectra on $\qrspp$
 (thanks to \Cref{strongNygaardcontrol} and the conjugate filtration), by quasisyntomic descent we obtain a
filtration on the derived crystalline cohomology of quasisyntomic
$\mathbb{F}_p$-algebra (in particular any smooth algebra over a perfect field). 
This is the classical construction of the Nygaard filtration. 
To describe this, we review some fundamentals of de Rham--Witt theory, after
\cite{Ill79}. 

Suppose $R$ is a smooth algebra over a perfect field $k$. 
In this case, one has the de Rham--Witt complex 
$W \Omega_R^\bullet$ of 
\cite{Ill79}, which gives an explicit  $p$-torsionfree,
$p$-adically complete commutative differential graded
algebra representing the 
crystalline cohomology of $R$, 
\[ 
WR = W \Omega_R^0 \stackrel{d}{\to} W \Omega_R^1 \stackrel{d}{\to} W \Omega_R^2 \to \dots ,\]
and equipped with a map of commutative dg-algebras
$W \Omega_R^\bullet \to \Omega^{\bullet}_{R/k}$ which induces a
quasi-isomorphism $W \Omega_R^\bullet/p \to \Omega^\bullet_{R/k}$.
The object $W \Omega_R^\bullet$ is equipped with operators of graded abelian
groups $F, V: W \Omega_R^\bullet \to W \Omega_R^\bullet$ recovering the Witt vector
Frobenius and Verschiebung in degree zero and satisfying 
the identities $$FV = VF = p, \quad dF = pFd, \quad Vd = pdV,$$ (and $FdV = d$,
which is actually a consequence by
$p$-torsionfreeness). 
Another construction of $W \Omega_R^\bullet$ as a ``strict Dieudonn\'e algebra''
(and universal property/construction in that category) is given in \cite{BLM}. 

The endomorphism $\phi$ of the commutative differential graded algebra $W \Omega_R^\bullet$ induced by the Frobenius on $R$ 
is given in degree $i$ by $p^i F$. 
Then one can define a descending, multiplicative \emph{Nygaard filtration} on the
differential graded algebra $W \Omega_R^\bullet$
such that
\begin{equation}
\label{Nygaardpolynomial}
\mathcal{N}^{\geq i} W \Omega_R^\bullet = 
p^{i-1} V W\Omega_R^0 \to p^{i-2} V W \Omega_R^1 \to \dots \to V W
\Omega_R^{i-1} \to W \Omega_R^i  \to W \Omega_R^{i+1} \to \dots .
\end{equation}
By construction, the Frobenius $\phi$ (i.e., $p^d F$ in degree $d$) is divisible 
by $p^i$ on $\mathcal{N}^{\geq i} W \Omega_R^\bullet \subset W \Omega_R^\bullet$
and we can define a divided Frobenius (of cochain complexes) 
\[ \varphi/p^i: \mathcal{N}^{\geq i} W \Omega_R^\bullet \to W \Omega_R^\bullet.  \]
Moreover, one checks by an explicit homological calculation that the map of cochain complexes
\begin{equation} \label{divfrobgr}\varphi/p^i: \mathcal{N}^{\geq i} W \Omega_R^\bullet/\mathcal{N}^{\geq i+1} W
\Omega_R^\bullet\to  W \Omega_R^\bullet/p \to \Omega_{R/k}^\bullet
\end{equation}
induces a quasi-isomorphism 
from the source to the $i$-truncation of the target (the source
lives in degrees $\leq i$). 
All this can also be developed in the generality of strict Dieudonn\'e
complexes; compare \cite[Sec.~8]{BLM}. 

The Nygaard filtration is somewhat subtle in general. 
Indeed, a purely crystalline approach to the Nygaard filtration is not expected to exist
(in filtration degrees $\geq p$).

The basic comparison result is that 
the Nygaard filtration on $\acrys(R)$ recovers by descent the above Nygaard
filtration on crystalline cohomology. 

\begin{proposition}[{Cf.~\cite[Th.8.14(3)]{BMS2}}]
The Nygaard filtration $\left\{\mathcal{N}^{\geq \ast} \acrys(R)\right\}$
descends to the above Nygaard 
filtration (in the derived category) on crystalline cohomology for smooth
algebras over a perfect field. 
\end{proposition}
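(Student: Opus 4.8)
The plan is to show that both filtrations, viewed as complete multiplicative filtrations on the crystalline cohomology $R\Gamma_{\mathrm{crys}}(R/\mathbb{Z}_p)$ of a smooth $k$-algebra $R$, admit the same intrinsic description --- namely, each $i$-th stage is the preimage of $p^i$ under the crystalline Frobenius --- and hence coincide on the nose; the earlier structural inputs (\Cref{strongNygaardcontrol}, \Cref{acrysmodpisLOmega}, and the explicit de Rham--Witt formula \eqref{Nygaardpolynomial}) serve to make this precise and to match the two pictures.

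First I would recall that, by descent from the basis $\qrspp$, the construction $R \mapsto \acrys(R)$ defines a quasisyntomic sheaf on $\qsyn_{\mathbb{F}_p}$ computing derived crystalline cohomology (the comparison with derived crystalline cohomology stated above), and that for $R$ smooth over the perfect field $k$ this is represented, Frobenius-equivariantly, by the de Rham--Witt complex $W\Omega_R^\bullet$ via the Berthelot--Illusie comparison. By \Cref{nygaardfiltacrys}, on $\qrspp$ the Nygaard filtration is \emph{literally} $\mathcal{N}^{\geq i}\acrys(R) = \varphi^{-1}(p^i\acrys(R)) = \mathrm{fib}\bigl(\acrys(R)\xrightarrow{\varphi}\acrys(R)/p^i\bigr)$, the last identification holding since $\acrys(R)$ is discrete and $p$-torsionfree. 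Since $\acrys(-)$ and $\varphi$ are functorial and sheafification is exact (so it commutes with the relevant fibers and with reduction mod $p^i$), the descended filtration is $\mathcal{N}^{\geq i} = \mathrm{fib}\bigl(\acrys(-)\xrightarrow{\varphi}\acrys(-)/p^i\bigr)$ as quasisyntomic sheaves. Evaluating at a smooth $k$-algebra $R$ and using the termwise $p$-torsionfree representative $W\Omega_R^\bullet$ (on which $\varphi$ acts as the dg-endomorphism equal to $p^dF$ in degree $d$), this homotopy fiber is computed as the honest degreewise kernel complex $\varphi^{-1}(p^i\,W\Omega_R^\bullet)$.

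Second, I would verify by hand that $\varphi^{-1}(p^i\,W\Omega_R^\bullet)$ coincides with the subcomplex \eqref{Nygaardpolynomial}. For $d \geq i$ the condition $p^dF(x)\in p^iW\Omega_R^d$ is automatic, so nothing is imposed; for $d < i$ it reads $F(x)\in p^{i-d}W\Omega_R^d$ after dividing by $p^d$ (legitimate by $p$-torsionfreeness), and applying $V$ and using $VF=FV=p$ gives $px = p^{i-d}V(z)$, hence $x\in p^{i-1-d}V W\Omega_R^d$; the converse is immediate from $FV=p$ and $dF=pFd$ (which shows the preimage is a subcomplex). This matches the degrees in \eqref{Nygaardpolynomial} exactly. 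Thus both filtrations are ``$\varphi^{-1}(p^\bullet)$'' on $W\Omega_R^\bullet$, so they are equal, and in particular the descended filtration is the Nygaard filtration of the de Rham--Witt complex. As a consistency check one sees the identification is compatible with the divided Frobenius: \Cref{strongNygaardcontrol}, combined with \Cref{acrysmodpisLOmega} and \Cref{CartsmoothddR} (which on smooth $k$-algebras identify the mod-$p$ reduction with $\Omega^\bullet_{R/k}$ and its conjugate filtration), identifies $\varphi/p^i$ on $\mathrm{gr}^i_{\mathcal{N}}$ of the descended filtration with the $i$-th conjugate truncation $\tau^{\leq i}\Omega^\bullet_{R/k}$, which is precisely \eqref{divfrobgr} on the de Rham--Witt side.

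The step requiring the most care is the passage from the ``naive'' preimage description of the Nygaard filtration, valid on the $p$-torsionfree discrete rings of $\qrspp$, to its behaviour under quasisyntomic descent and evaluation at smooth algebras: one must know that $\mathcal{N}^{\geq i}\acrys$ is a sheaf at all (this is exactly \Cref{strongNygaardcontrol}, which pins down its graded pieces as the conjugate-filtration stages) and that the descended object is represented on smooth $k$-algebras by a termwise $p$-torsionfree complex, so that the sheafy homotopy fiber genuinely computes the degreewise preimage rather than something with the same associated graded. Once these points are in place, the de Rham--Witt identity of the previous paragraph finishes the proof.
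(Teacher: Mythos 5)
The proposal has a genuine gap at its central step. You claim that for $R\in\qrspp$, because $\acrys(R)$ is discrete and $p$-torsionfree, one has $\mathcal{N}^{\geq i}\acrys(R)=\varphi^{-1}(p^i\acrys(R))=\mathrm{fib}\bigl(\acrys(R)\xrightarrow{\varphi}\acrys(R)/p^i\bigr)$. But $p$-torsionfreeness only guarantees that $\acrys(R)/p^i$ is discrete; the homotopy fiber of a map of discrete abelian groups agrees with the kernel only when the map is \emph{surjective}. Here the composite $\varphi:\acrys(R)\to\acrys(R)/p^i$ is not surjective in general: already for $R=\mathbb{F}_p[x^{1/p^\infty}]/(x)$ the image of $\varphi$ modulo $p$ misses the higher divided powers, since $\varphi(\gamma_j(x))=\frac{x^{pj}}{j!}=p^j u\,\gamma_{pj}(x)\equiv 0\pmod p$ for $j\geq 1$. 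Thus $\mathrm{fib}(\varphi\bmod p^i)$ has a nonzero $H^1$ and is not the Nygaard piece. The same error recurs at the de Rham--Witt level: the map of complexes $\varphi\colon W\Omega_R^\bullet\to W\Omega_R^\bullet/p^i$ (equal to $p^dF$ in degree $d$, then reduction) is \emph{not} degreewise surjective --- in fact it is the zero map in every degree $d\geq i$ --- so its homotopy fiber is not quasi-isomorphic to the degreewise kernel $\mathcal{N}^{\geq i}W\Omega_R^\bullet$; concretely $W\Omega_R^\bullet/\mathcal{N}^{\geq i}W\Omega_R^\bullet$ and $W\Omega_R^\bullet/p^i$ are not quasi-isomorphic. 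In short, the slogan ``preimage of $p^\bullet$ under $\varphi$'' is correct as a strict (underived) description on both sides but does not descend through the quasisyntomic site as a homotopy-fiber, which is the vehicle you use.

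The degreewise computation in your second paragraph, checking that $\{x\in W\Omega_R^d:\varphi(x)\in p^iW\Omega_R^d\}$ coincides with \eqref{Nygaardpolynomial}, is correct and worth keeping; but it does not connect to the descent as you argue. The intended route is via associated gradeds: \Cref{strongNygaardcontrol} identifies $\mathrm{gr}^i$ of the Nygaard filtration on $\acrys$ (as a quasisyntomic sheaf) with the $i$-th stage of the conjugate filtration on $\acrys(-)/p$, which for smooth $R$ is $\tau^{\leq i}\Omega^\bullet_{R/k}$ by \Cref{acrysmodpisLOmega} and \Cref{CartsmoothddR}. On the de Rham--Witt side \eqref{divfrobgr} identifies the divided Frobenius on $\mathrm{gr}^i\mathcal{N}^{\geq\ast}W\Omega_R^\bullet$ with the same truncation. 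One then produces a comparison map of filtered objects (e.g., from the universal property of $\acrys$ or the comparison of \Cref{acrysLOmega} together with functoriality along $R^\flat\to R$ and Frobenius-equivariance), and checks it is an equivalence one graded piece at a time; completeness of both filtrations then yields the statement.
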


To summarize, from the above discussion, we have an equivalence between the following two
constructions of sheaves
on $\qsyn_{\mathbb{F}_p}$.
For convenience, we will use the first notation $L W \Omega_{-}$. 
\begin{enumerate}
\item The derived functor $R \mapsto LW \Omega_R$ of the derived de Rham--Witt
cohomology on polynomial (or smooth) $\mathbb{F}_p$-algebras, together with its
Nygaard filtration $\mathcal{N}^{\geq \ast} L W \Omega_R$ obtained by left Kan
extending the Nygaard filtration \eqref{Nygaardpolynomial} on polynomial rings. 
\item
The quasisyntomic sheaf $R \mapsto R \Gamma_{\qsyn}( \spec R, \acrys(-))$ given
on the basis $\qrspp$ by the construction $\acrys(-)$, equipped with the Nygaard
filtration $\{ R \Gamma_{\qsyn}( \spec R, \mathcal{N}^{\geq \ast} \acrys(-))\}$
obtained by descending the Nygaard filtration on $\acrys$ of quasiregular semiperfect
$\mathbb{F}_p$-algebras (\Cref{nygaardfiltacrys}). 
\end{enumerate}

In fact, 
either construction of the Nygaard filtration gives a map 
in the filtered derived category
for any $R \in \qsyn_{\mathbb{F}_p}$
\[ 
\phi: 
\mathcal{N}^{\geq \ast}L W \Omega_R \to p^{\ast} L W \Omega_R
\]
where the target denotes the $p$-adic filtration on 
the derived de Rham--Witt cohomology. 

We next need to discuss the completion of this sheaf with respect to the Nygaard
filtration. 
Again, there are two equivalent ways to proceed. The first is simply to take the completion
in the filtered derived category of $\left\{\mathcal{N}^{\geq \ast}LW
\Omega_R\right\}$ for any $R \in \qsyn_{\mathbb{F}_p}$, e.g., as constructed by
left Kan extension from polynomial algebras; we denote this by 
$\left\{ \mathcal{N}^{\geq \ast}\widehat{LW \Omega_R}\right\}$. 
The second (which we
describe below) is to take the Nygaard completion 
of $\acrys(-)$ for quasiregular semiperfect $\mathbb{F}_p$-algebras, and then
descend.

\begin{construction}[The Nygaard completion of $\acrys(-)$] 
Let $R \in \qrspp$. 
Then we define the \emph{Nygaard completion} $\acrysh(R)$
to be the completion of the ring $\acrys(R)$ with respect to the Nygaard
filtration, 
$\acrysh(R) = \varprojlim_i \acrys(R)/\mathcal{N}^{\geq i} \acrys(R)$. 
We also obtain the Nygaard filtration $\mathcal{N}^{\geq \ast} \acrysh(R)$
obtained by completion, and the completed divided Frobenius
$\phi/p^i: \mathcal{N}^{\geq i} \acrysh(R) \to \acrysh(R)$ for $i \geq 0$. 
\end{construction}

\begin{proposition} 
For $R \in \qrsp_{\mathbb{F}_p}$, the ring 
$\acrysh(R)$ is $p$-complete and $p$-torsionfree. 
Moreover, we have a natural isomorphism $\acrysh(R)/p \simeq \widehat{L
\Omega_{R/\mathbb{F}_p}}$ for $R \in \qrspp$. 
\end{proposition}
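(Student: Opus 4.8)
The plan is to reduce both assertions to a single computation: that $\acrysh(R)\otimes^{L}_{\mathbb{Z}}\mathbb{F}_p$ is discrete and equal to $\widehat{L\Omega_{R/\mathbb{F}_p}}$. First observe that each quotient $\acrys(R)/\mathcal{N}^{\geq i}\acrys(R)$ is killed by $p^{i}$: indeed $p^{i}\acrys(R)\subseteq\mathcal{N}^{\geq i}\acrys(R)$ is immediate from $\mathcal{N}^{\geq i}\acrys(R)=\{x: p^{i}\mid\phi(x)\}$ together with the $p$-torsionfreeness of $\acrys(R)$. Hence each term, and so the limit $\acrysh(R)$, is derived $p$-complete. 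Therefore $\acrysh(R)$ is $p$-torsionfree exactly when $\pi_1(\acrysh(R)\otimes^{L}_{\mathbb{Z}}\mathbb{F}_p)=0$, in which case it is automatically $p$-complete and $\acrysh(R)/p=\pi_0(\acrysh(R)\otimes^{L}_{\mathbb{Z}}\mathbb{F}_p)$; so it suffices to carry out the computation.

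Since the transition maps in $\{\acrys(R)/\mathcal{N}^{\geq i}\acrys(R)\}_i$ are surjective, the ordinary limit is already a homotopy limit, and as $(-)\otimes^{L}_{\mathbb{Z}}\mathbb{F}_p\simeq\mathrm{fib}(\cdot\,p)[1]$ commutes with homotopy limits we get $\acrysh(R)\otimes^{L}_{\mathbb{Z}}\mathbb{F}_p\simeq\mathrm{R}\varprojlim_i(Q_i\otimes^{L}_{\mathbb{Z}}\mathbb{F}_p)$ with $Q_i=\acrys(R)/\mathcal{N}^{\geq i}\acrys(R)$. Because $\acrys(R)$ and its submodule $\mathcal{N}^{\geq i}\acrys(R)$ are $p$-torsionfree, $Q_i\otimes^{L}_{\mathbb{Z}}\mathbb{F}_p$ is concentrated in degrees $0,1$, with $\pi_0=\acrys(R)/(\mathcal{N}^{\geq i}\acrys(R)+p\,\acrys(R))$ and $\pi_1=Q_i[p]$, the $p$-torsion subgroup. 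The elementary point I would use is: if $px\in\mathcal{N}^{\geq i}\acrys(R)$ then $p^{i}\mid\phi(px)=p\,\phi(x)$, so $p^{i-1}\mid\phi(x)$ by $p$-torsionfreeness, i.e. $x\in\mathcal{N}^{\geq i-1}\acrys(R)$. This forces the transition maps $Q_{i+1}[p]\to Q_i[p]$ to vanish (a $p$-torsion class in $Q_{i+1}$ comes from some $x$ with $px\in\mathcal{N}^{\geq i+1}\acrys(R)$, hence $x\in\mathcal{N}^{\geq i}\acrys(R)$, so its class dies in $Q_i$); it also yields $\mathcal{N}^{\geq i}\acrys(R)\cap p\,\acrys(R)=p\,\mathcal{N}^{\geq i-1}\acrys(R)$, which identifies $\pi_1(Q_i\otimes^{L}_{\mathbb{Z}}\mathbb{F}_p)$ with $\mathrm{gr}^{i-1}_{\mathcal{N}}\acrys(R)$ (an $\mathbb{F}_p$-module, by \Cref{strongNygaardcontrol}), though only the vanishing is needed. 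Feeding this into the Milnor $\varprojlim$--$\varprojlim^{1}$ sequences: the $\pi_1$-system has zero transition maps so contributes nothing (killing $\pi_1$ and the potential $\mathrm{R}^1\varprojlim$ in degree $0$), the $\pi_0$-system has surjective transition maps so has no $\mathrm{R}^1\varprojlim$, and we conclude $\acrysh(R)\otimes^{L}_{\mathbb{Z}}\mathbb{F}_p$ is concentrated in degree $0$ and equal to $\varprojlim_i\acrys(R)/(\mathcal{N}^{\geq i}\acrys(R)+p\,\acrys(R))$. In particular $\acrysh(R)$ is $p$-torsionfree, hence $p$-complete.

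The last step --- identifying $\varprojlim_i\acrys(R)/(\mathcal{N}^{\geq i}\acrys(R)+p\,\acrys(R))$ with $\widehat{L\Omega_{R/\mathbb{F}_p}}$ --- is the one I expect to be the real obstacle. By \Cref{acrysmodpisLOmega} the $i$-th term is $L\Omega_{R/\mathbb{F}_p}/\overline{\mathcal{N}^{\geq i}\acrys(R)}$, where $\overline{\mathcal{N}^{\geq i}\acrys(R)}$ is the image of $\mathcal{N}^{\geq i}\acrys(R)$ modulo $p$, and one must identify this descending filtration of $L\Omega_{R/\mathbb{F}_p}$ with the images of the Hodge filtration $L\Omega^{\geq i}_{R/\mathbb{F}_p}$ --- a genuine assertion, since e.g. the conjugate filtration would give the wrong completion. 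The clean way in is \Cref{deltaliftnygaard}: if $R=S/p$ for a $p$-torsionfree $p$-complete quasisyntomic $\delta$-ring $S$, then $\acrys(R)=L\Omega_{S/\mathbb{Z}_p}$ and $\mathcal{N}^{\geq i}\acrys(R)=\sum_{a+b=i}p^{a}L\Omega^{\geq b}_{S/\mathbb{Z}_p}$; all summands with $a\geq 1$ die modulo $p$, and the image of the surviving term $L\Omega^{\geq i}_{S/\mathbb{Z}_p}$ is, by base change along $S\to S/p$, the image of $L\Omega^{\geq i}_{R/\mathbb{F}_p}$. Since for $R\in\qrspp$ the Hodge graded pieces $\bigwedge^{j}L_{R/\mathbb{F}_p}[-j]$ are discrete and $p$-completely flat, $L\Omega_{R/\mathbb{F}_p}/L\Omega^{\geq i}_{R/\mathbb{F}_p}$ is itself discrete, so the two filtrations have equal quotients and the limit is $\widehat{L\Omega_{R/\mathbb{F}_p}}$. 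For general $R\in\qrspp$ one descends to the $\delta$-ring case along a quasisyntomic cover --- both sides being hypercomplete sheaves on $\qrspp$ with discrete values, with $\mathcal{N}^{\geq i}\acrys(-)$ a sheaf by \Cref{strongNygaardcontrol} --- or cites \cite[Th.~8.14]{BMS2} directly; naturality in $R$ is visible throughout.
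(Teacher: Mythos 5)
Your proof is correct and follows the same three-step skeleton as the paper's --- derived $p$-completeness of the limit, $p$-torsionfreeness via the observation that $px\in\mathcal{N}^{\geq i}\acrys(R)$ forces $x\in\mathcal{N}^{\geq i-1}\acrys(R)$, and identification of the Nygaard filtration mod $p$ with the Hodge filtration --- but you are more careful where the paper is terse. Your Milnor-sequence bookkeeping (the $\pi_1$-system of $Q_i\otimes^L\mathbb{F}_p$ has vanishing transition maps, the $\pi_0$-system is Mittag--Leffler) is a genuine amplification of the paper's ``clearly derived $p$-complete''; and your use of $\mathcal{N}^{\geq i}\acrys(R)\cap p\,\acrys(R)=p\,\mathcal{N}^{\geq i-1}\acrys(R)$ to locate both $\pi_0$ and $\pi_1$ of $Q_i\otimes^L\mathbb{F}_p$ is exactly the right elaboration.

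For the mod-$p$ identification you take a somewhat different route: you invoke the $\delta$-lift description of \Cref{deltaliftnygaard}, under which $\mathcal{N}^{\geq i}\acrys(S/p)=\sum_{a+b=i}p^aL\Omega^{\geq b}_{S/\mathbb{Z}_p}$ and all $a\geq 1$ summands die mod $p$, leaving the image of the Hodge filtration. The paper instead says ``calculate explicitly for $\mathbb{F}_p[x^{1/p^\infty}]/(x)$.'' These are compatible --- the paper's computation is the $\delta$-lift computation in a single generator --- so you are implementing the same idea with cleaner packaging.

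The one place you are imprecise is the final reduction: you write ``descends to the $\delta$-ring case along a quasisyntomic cover,'' but it is not clear that every $R\in\qrspp$ admits a quasisyntomic cover by a $\delta$-liftable ring, and the paper does not claim this. The reduction in \cite{BMS2} (and in the proof of \Cref{TCandacrysh} in this paper) proceeds in the opposite direction: one chooses a surjection onto $R$ from a tensor product of copies of $\mathbb{F}_p[x^{1/p^\infty}]/(x)$ inducing a surjection on cotangent complexes, and uses that this yields surjections on $\acrys$, on $\mathcal{N}^{\geq i}\acrys$, and on the Hodge filtration of $L\Omega$, to propagate the identification from the $\delta$-liftable case. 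Since you also cite \cite[Th.~8.14]{BMS2} as a fallback, this is a loose phrase rather than a gap, but it is worth fixing: replace ``descends along a quasisyntomic cover'' with an argument via such surjections.
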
 
\begin{proof} 
First, $\acrys(R)$ is clearly derived $p$-complete as an inverse limit of
modules of bounded torsion. 
The $p$-torsionfreeness follows because for $x \in \acrys(R)$, if $px \in
\mathcal{N}^{\geq i} \acrys(R)$, then $x \in \mathcal{N}^{\geq i-1} \acrys(R)$;
this is evident from the definition of the Nygaard filtration. 
Next, one verifies that the map $\acrys(R)/p \to L \Omega_{R/\mathbb{F}_p}$
(as in \Cref{acrysmodpisLOmega}) carries $\mathcal{N}^{\geq i } \acrys(R)$ to
the $i$th stage of the Hodge filtration $L
\Omega_{R/\mathbb{F}_p}^{\geq i}$, e.g., by calculating explicitly in the case
of  
$\mathbb{F}_p[x^{1/p^\infty}]/(x)$. The result then follows,
cf.~\cite[Th.~8.14]{BMS2}. 
\end{proof} 

When we descend from 
the quasiregular semiperfect $\mathbb{F}_p$-algebras, 
we find that $\widehat{L W \Omega_R}/p \simeq \widehat{L
\Omega_{R/\mathbb{F}_p}}$ for $R \in \qsyn_{\mathbb{F}_p}$, i.e., Nygaard
completed $LW \Omega_{-}$ gives a $p$-adic lift of Hodge-completed derived de
Rham cohomology. 
For smooth algebras over a perfect ring, it follows that 
the Hodge or Nygaard completion does nothing. More generally, by
\Cref{CartsmoothddR}, this also follows for Cartier smooth algebras: 

\begin{corollary} 
Suppose $R$ is a Cartier smooth $\mathbb{F}_p$-algebra (e.g., a smooth algebra
over a perfect field). 
Then the map $ LW \Omega_R \to \widehat{L W \Omega_R}$ is an equivalence, i.e.,
the Nygaard filtration is automatically complete. 
\end{corollary}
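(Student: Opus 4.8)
The plan is to reduce the statement modulo $p$ and feed it into \Cref{CartsmoothddR}. First I would observe that everything in sight is (implicitly) derived $p$-complete: $LW\Omega_R$ by construction, and its Nygaard completion $\widehat{LW\Omega_R} = \varprojlim_i LW\Omega_R/\mathcal{N}^{\geq i}LW\Omega_R$ as an inverse limit of the derived $p$-complete objects $LW\Omega_R/\mathcal{N}^{\geq i}LW\Omega_R$ --- each of which is $p$-complete because on the basis $\qrspp$ the sheaves $\mathcal{N}^{\geq i}\acrys(-)$ take values in $p$-complete spectra, and $p$-completeness passes through descent. Consequently the natural map $LW\Omega_R \to \widehat{LW\Omega_R}$ is an equivalence as soon as it becomes one after reduction modulo $p$.

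Next I would identify this mod-$p$ reduction. From the discussion preceding the corollary, $LW\Omega_R$ is a $p$-adic lift of $L\Omega_{R/\mathbb{F}_p}$ while $\widehat{LW\Omega_R}$ is a $p$-adic lift of the Hodge completion $\widehat{L\Omega_{R/\mathbb{F}_p}}$; moreover, modulo $p$ the Nygaard filtration induces the Hodge filtration on $L\Omega_{R/\mathbb{F}_p}$ (this is built into the identification $\acrysh(R)/p \simeq \widehat{L\Omega_{R/\mathbb{F}_p}}$, and on $\qrspp$ it follows at once from the tensor-product description of the Nygaard filtration in \Cref{deltaliftnygaard}, then descends). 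Hence, naturally in $R$, the map $LW\Omega_R \to \widehat{LW\Omega_R}$ reduces modulo $p$ to the Hodge-completion map $L\Omega_{R/\mathbb{F}_p} \to \widehat{L\Omega_{R/\mathbb{F}_p}}$. So it suffices to prove that for $R$ Cartier smooth the complex $L\Omega_{R/\mathbb{F}_p}$ is already Hodge-complete.

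This last point is exactly where Cartier smoothness is used. By \Cref{CartsmoothddR} the natural map $L\Omega_{R/\mathbb{F}_p} \to \Omega^\bullet_{R/\mathbb{F}_p}$ is an equivalence compatible with Hodge filtrations, and the Hodge filtration on $\Omega^\bullet_{R/\mathbb{F}_p}$ is the brutal truncation $\mathrm{Fil}^{\geq i} = \sigma^{\geq i}\Omega^\bullet_{R/\mathbb{F}_p}$. This filtration is complete: in each fixed cohomological degree the tower $\{\mathrm{Fil}^{\geq i}\}_i$ is eventually zero, so $\varprojlim_i \mathrm{Fil}^{\geq i}\Omega^\bullet_{R/\mathbb{F}_p} = 0$ in $\mathcal{D}(\mathbb{F}_p)$. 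Therefore $L\Omega_{R/\mathbb{F}_p} \simeq \widehat{L\Omega_{R/\mathbb{F}_p}}$, the mod-$p$ reduction of $LW\Omega_R \to \widehat{LW\Omega_R}$ is an equivalence, and so is the map itself.

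I expect the only genuinely substantive input to be \Cref{CartsmoothddR} (ultimately the Cartier isomorphism); everything else is formal manipulation of $p$-complete and filtration-complete objects. The one step that deserves care --- and which I would want to state cleanly rather than wave at --- is the compatibility in the second paragraph: that reduction modulo $p$ carries the Nygaard filtration on $LW\Omega_R$ to the Hodge filtration on $L\Omega_{R/\mathbb{F}_p}$, compatibly with the completion maps. On quasiregular semiperfect $\mathbb{F}_p$-algebras this is immediate from \Cref{deltaliftnygaard} (the Nygaard filtration is the tensor product of the Hodge and $p$-adic filtrations, so killing $p$ leaves the Hodge filtration), and it then globalizes over $\qsyn_{\mathbb{F}_p}$ by quasisyntomic descent.
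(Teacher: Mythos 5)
Your argument is correct and takes essentially the same route as the paper: reduce modulo $p$ using that both $LW\Omega_R$ and $\widehat{LW\Omega_R}$ are $p$-complete with mod-$p$ reductions $L\Omega_{R/\mathbb{F}_p}$ and $\widehat{L\Omega_{R/\mathbb{F}_p}}$, and then invoke Remark~\ref{CartsmoothddR} to see that the Hodge completion does nothing for a Cartier smooth algebra. The only place you are slightly informal is the claim that the Nygaard filtration reduces mod $p$ to the Hodge filtration ``at once from \Cref{deltaliftnygaard}, then descends'' --- that example only covers $R\in\qrspp$ admitting a $\delta$-lift, so strictly one should either appeal to the proposition just above (which verifies $\acrysh(R)/p\simeq\widehat{L\Omega_{R/\mathbb{F}_p}}$ by direct computation for $\mathbb{F}_p[x^{1/p^\infty}]/(x)$ and functoriality) or note that such lifts exist on a generating class; but this is exactly the ingredient the paper uses and the rest of your reasoning, including the observation that $\varprojlim_i\sigma^{\geq i}\Omega^\bullet_{R/\mathbb{F}_p}=0$ degreewise, is sound.
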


\begin{proposition} 
\label{Cartsmoothphidiv}
If the $\mathbb{F}_p$-algebra $R$ is Cartier smooth, then the map in the
filtered derived category
\[ \phi: \left\{\mathcal{N}^{\geq \ast} LW \Omega_R\right\} \to \left\{p^{\ast}
LW \Omega_R\right\}  \]
has the property that on $\mathrm{gr}^i$ the map is the truncation $\tau^{\leq i}$. 
(Equivalently, the map 
exhibits the source as the connective cover in the Beilinson $t$-structure,
cf.~\cite[Sec.~5]{BMS2}, of
the target.) 
\end{proposition}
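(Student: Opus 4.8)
The plan is to reduce everything to the case of polynomial $\mathbb{F}_p$-algebras — where the statement is precisely the explicit de Rham--Witt computation \eqref{divfrobgr} — then to left Kan extend the resulting diagram of natural transformations, and only at the very end to invoke Cartier smoothness through \Cref{CartsmoothddR} in order to recognize the left Kan extension of the truncated de Rham complex as an honest truncation. Note first that for $R$ Cartier smooth one has $LW\Omega_R \xrightarrow{\sim}\widehat{LW\Omega_R}$ by the corollary immediately preceding, so there is no ambiguity between the Nygaard-filtered object and its completion.

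First I would unwind $\gr^i(\phi)$ on both sides. Since $\mathcal{N}^{\geq\ast} LW\Omega_{-}$ is, by definition, the left Kan extension from polynomial (equivalently smooth) $\mathbb{F}_p$-algebras of the filtration \eqref{Nygaardpolynomial}, and left Kan extension commutes with passage to associated graded pieces of a filtration, $\gr^i_{\mathcal{N}} LW\Omega_R$ is the left Kan extension, evaluated at $R$, of the functor $S \mapsto \gr^i_{\mathcal{N}} W\Omega_S^\bullet$. For a polynomial $\mathbb{F}_p$-algebra $S$ — hence smooth over the perfect field $\mathbb{F}_p$ — the map \eqref{divfrobgr} realizes $\varphi/p^i$ as an isomorphism $\gr^i_{\mathcal{N}} W\Omega_S^\bullet \xrightarrow{\sim} \tau^{\leq i}\Omega^\bullet_{S/\mathbb{F}_p}$ followed by the canonical inclusion $\tau^{\leq i}\Omega^\bullet_{S/\mathbb{F}_p}\hookrightarrow \Omega^\bullet_{S/\mathbb{F}_p} \simeq W\Omega_S^\bullet/p$. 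On the target side, $\gr^i$ of the $p$-adic filtration on $LW\Omega_R$ is $LW\Omega_R/p = L\Omega_{R/\mathbb{F}_p}$, again left Kan extended from $S \mapsto \Omega^\bullet_{S/\mathbb{F}_p}$. Left Kan extending the diagram of natural transformations appearing in \eqref{divfrobgr}, I conclude that $\gr^i(\phi)$ is the composite
\[
\gr^i_{\mathcal{N}} LW\Omega_R \ \xrightarrow{\ \sim\ }\ L\bigl(S\mapsto \tau^{\leq i}\Omega^\bullet_{S/\mathbb{F}_p}\bigr)(R)\ \longrightarrow\ L\bigl(S\mapsto \Omega^\bullet_{S/\mathbb{F}_p}\bigr)(R) = LW\Omega_R/p,
\]
where the second arrow is the left Kan extension of the truncation inclusion. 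Since for smooth algebras the conjugate filtration on the de Rham complex is the canonical (cohomological) truncation, the left Kan extension of $\tau^{\leq i}\Omega^\bullet_{-/\mathbb{F}_p}$ is exactly the $i$-th stage $\mathrm{Fil}^{\mathrm{conj}}_{\leq i} L\Omega_{R/\mathbb{F}_p}$ of the derived conjugate filtration.

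Now Cartier smoothness enters, and only here. By \Cref{CartsmoothddR} the natural map $L\Omega_{R/\mathbb{F}_p}\to\Omega^\bullet_{R/\mathbb{F}_p}$ is an equivalence and, for each $i$, $\mathrm{Fil}^{\mathrm{conj}}_{\leq i} L\Omega_{R/\mathbb{F}_p} = L(\tau^{\leq i}\Omega^\bullet_{-/\mathbb{F}_p})(R)$ maps isomorphically onto $\tau^{\leq i}\Omega^\bullet_{R/\mathbb{F}_p} = \tau^{\leq i}(LW\Omega_R/p)$; one checks this on conjugate-graded terms, which are $\bigwedge^j L_{R/\mathbb{F}_p}^{(1)}[-j]$ and thus sit in cohomological degree exactly $j$ precisely because $L_{R/\mathbb{F}_p}$ is flat in degree zero. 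Under this identification the second arrow above becomes the canonical truncation map $\tau^{\leq i}(LW\Omega_R/p)\hookrightarrow LW\Omega_R/p$, so $\gr^i(\phi)$ is the truncation $\tau^{\leq i}$, as claimed. For the parenthetical reformulation, the Beilinson-connective cover of $\{p^\ast LW\Omega_R\}$ has $i$-th graded piece $\tau^{\leq i}(LW\Omega_R/p)$; the computation of every $\gr^i(\phi)$ together with completeness of the Nygaard filtration for Cartier smooth $R$ (again the preceding corollary) then shows that $\phi$ exhibits $\{\mathcal{N}^{\geq\ast} LW\Omega_R\}$ as that connective cover.

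I expect the main obstacle to be purely a matter of coherence and naturality rather than of new ideas: one must check that \eqref{divfrobgr} is natural enough in the smooth algebra that the three functors $S\mapsto \gr^i_{\mathcal{N}} W\Omega_S^\bullet$, $S\mapsto\tau^{\leq i}\Omega^\bullet_{S/\mathbb{F}_p}$, $S\mapsto\Omega^\bullet_{S/\mathbb{F}_p}$ and the two natural transformations between them can be left Kan extended simultaneously, so that $\gr^i(\phi)$ really is the left Kan extension of the truncation inclusion and not merely abstractly isomorphic to it (this is the same bookkeeping already used in the proof of \Cref{Lomegasmooth}). Everything else — the polynomial case, the identification of the conjugate filtration with the canonical truncation for smooth algebras, and the completeness of the Nygaard filtration — is either quoted above or routine.
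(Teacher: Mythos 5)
Your proposal is correct and follows essentially the same route as the paper's proof: both identify $\mathrm{gr}^i(\phi)$ with the left Kan extension of the truncation inclusion $\tau^{\leq i}\Omega^\bullet_{-/\mathbb{F}_p} \hookrightarrow \Omega^\bullet_{-/\mathbb{F}_p}$ from smooth algebras (via \eqref{divfrobgr} / \Cref{strongNygaardcontrol}), and then invoke Cartier smoothness through \Cref{CartsmoothddR} to recognize $L(\tau^{\leq i}\Omega^\bullet)(R)$ as the honest truncation $\tau^{\leq i}\Omega^\bullet_{R/\mathbb{F}_p}$. You spell out the intermediate identification with the conjugate filtration and flag the naturality bookkeeping, which the paper leaves implicit, but the mathematical content is identical.
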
 
\begin{proof} 
Indeed, this follows because on associated graded terms, the divided Frobenius is identified with
the map 
$L ( \tau^{\leq i} \Omega^\bullet)_R \to L \Omega_{R/\mathbb{F}_p}^\bullet$, thanks to
\Cref{strongNygaardcontrol}  (and the
following discussion) by left Kan extension and descent. 
The hypothesis of Cartier smoothness implies that this map implements
$\tau^{\leq i}$-truncation, i.e., $L ( \tau^{\leq i} \Omega^\bullet_R) =
\tau^{\leq i} \Omega_R^\bullet$. 
\end{proof}

\section{The motivic filtrations for quasiregular semiperfect $\mathbb{F}_p$-algebras}

In this section, we describe the identification between the associated graded
pieces $\mathcal{N}^{\geq i} \Prism_R\left\{i\right\}$ of the motivic filtration
on $\TC^-(R; \mathbb{Z}_p)$ and the Nygaard-completed Nygaard pieces of
crystalline cohomology, for $R$ a quasisyntomic $\mathbb{F}_p$-algebra. 
By the usual descent we may work with $\qrspp$. 

Let $R \in \qrspp$ be a quasiregular semiperfect $\mathbb{F}_p$-algebra. 
From the cyclotomic spectrum $\THH(R)$, we obtain the following objects: 
\begin{enumerate}
\item The spectra $\TC^-(R) = \THH(R)^{h\mathbb{T}}, \TP(R) =
\THH(R)^{t\mathbb{T}}$, which have homotopy groups concentrated
in even degrees, calculated via the $\mathbb{T}$-homotopy fixed point and Tate
spectral sequences.   
\item An identification $\TP(R)/p  = \HP(R/\mathbb{F}_p)$ (a special case of
\eqref{TPmodxi}). 
\item The cyclotomic Frobenius $\varphi: \TC^-(R) \to\TP(R)$. 
\end{enumerate}

Consequently, given a quasiregular semiperfect $\mathbb{F}_p$-algebra $R \in \qrspp$, we
obtain the following: 
\begin{enumerate}
\item A $p$-adically complete, $p$-torsionfree ring $\widehat{\Prism_R}
\stackrel{\mathrm{def}}{=} \pi_0 (\TC^-(R))$ and an
endomorphism $\varphi_{\mathrm{cyc}}: \widehat{\Prism_R} \to
\widehat{\Prism_R}.$ induced by the cyclotomic Frobenius.   
\item A descending, multiplicative complete filtration $\mathcal{N}^{\geq \ast}
\widehat{\Prism_R}$ arising from the homotopy fixed point spectral sequence; in
particular, $\mathcal{N}^{=i } \Prism_R = \pi_{2i} \THH(R)$. 
Moreover, we have
$\mathcal{N}^{\geq i} \widehat{\Prism_R} = x^i \pi_{2i} \TC^-(R) \subset \pi_0
\TC^-(R)$. 
\item The property that $\varphi_{\mathrm{cyc}}( \mathcal{N}^{\geq i}
\widehat{\Prism}_R)  \subset p^i \widehat{\Prism}_R$. 
\item A canonical, multiplicative isomorphism $\widehat{\Prism}_R / p \simeq \widehat{L
\Omega_{R/\mathbb{F}_p}}$. 
\item When $R$ is perfect, $\widehat{\Prism_R} = W(R)$, 
the endomorphism $\phi_{\mathrm{cyc}}$ identifies with the (Witt vector)
Frobenius, the filtration $\mathcal{N}^{\geq \ast} \widehat{\Prism_R}$ is the
$p$-adic filtration. 
\end{enumerate}

\begin{theorem}[{Cf.~\cite[Th.~8.17]{BMS2}}] 
\label{TCandacrysh}
For $R \in \qrspp$, there is a functorial isomorphism of rings $\acrysh(R)
\simeq \widehat{\Prism_R}$, carrying the Nygaard filtration on $\acrysh(R)$ to
$\{\mathcal{N}^{\geq \ast} \widehat{\Prism}_R\}$. The cyclotomic Frobenius
$\varphi_{\mathrm{cyc}}$ on $\widehat{\Prism_R}$ agrees with the endomorphism
induced by the Frobenius $\varphi: R \to R$. 
\end{theorem}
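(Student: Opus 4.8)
The plan is to identify $\acrysh(R)$ and $\widehat{\Prism_R}$ by producing the isomorphism first at the level of graded pieces and then lifting it, and subsequently checking compatibility with Frobenius on $\pi_0$. First I would recall that we already know $\widehat{\Prism_R} = \pi_0 \TC^-(R;\mathbb{Z}_p)$ is $p$-torsionfree with a complete, descending, multiplicative Nygaard filtration $\mathcal{N}^{\geq \ast}$ coming from the homotopy fixed point spectral sequence, with $\mathcal{N}^{=i}\widehat{\Prism}_R = \pi_{2i}\THH(R;\mathbb{Z}_p)$; dually, the Nygaard filtration on $\acrys(R)$ (then completed) has, by \Cref{strongNygaardcontrol}, graded pieces identified via $\phi/p^i$ with $\mathrm{Fil}_{\mathrm{conj},\leq i}(\acrys(R)/p)$. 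Using \Cref{acrysmodpisLOmega} the latter is $\mathrm{Fil}_{\mathrm{conj},\leq i} L\Omega_{R/\mathbb{F}_p}$, and after Nygaard completion the two rings modulo $p$ both become $\widehat{L\Omega_{R/\mathbb{F}_p}}$: for $\widehat{\Prism_R}/p$ this is item (4) in the summary preceding the statement (equivalently $\THH(R;\mathbb{Z}_p)\otimes_{\THH(\mathbb{F}_p)}\mathbb{F}_p \simeq \HH(R/\mathbb{F}_p)$ together with \Cref{HPanddR} applied over the base $\mathbb{F}_p$), and for $\acrysh(R)/p$ it is the displayed proposition just above. So both filtered rings have the same graded pieces, compatibly with the divided Frobenii.

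Next I would construct the comparison map itself rather than merely matching invariants. The cleanest route is to use \Cref{acrysLOmega}: choose a quasisyntomic, $p$-torsionfree lift $S$ of $R$ (e.g.\ $S = W(R^{\flat})/(\text{a lift of the defining ideal})$, or more simply take the $\delta$-ring framework of \Cref{deltaliftnygaard} when a $\delta$-lift exists, which one can always arrange after passing to a quasisyntomic cover), so that $\acrysh(R) = \widehat{L\Omega_{S/\mathbb{Z}_p}}$. On the topological side, $\THH(S;\mathbb{Z}_p)$ deforms $\HH(S/\mathbb{Z}_p;\mathbb{Z}_p)$ along B\"okstedt's class, and one has the map $\THH(S;\mathbb{Z}_p) \to \THH(R;\mathbb{Z}_p)$; passing to $\TC^-$ and $\pi_0$, combined with \Cref{HPanddR} identifying $\pi_{2i}\HC^-(S/\mathbb{Z}_p;\mathbb{Z}_p) = \widehat{L\Omega^{\geq i}_{S/\mathbb{Z}_p}}$, produces a natural ring map $\widehat{L\Omega_{S/\mathbb{Z}_p}} \to \widehat{\Prism_R}$ carrying the Hodge-times-$p$-adic filtration \eqref{tensprodfiltLOmega} into the Nygaard filtration. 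That this map is a filtered isomorphism then follows from the graded-pieces computation of the previous paragraph, using completeness of both filtrations and $p$-torsionfreeness to run the usual five-lemma/limit argument degree by degree. Finally, the Frobenius compatibility on $\pi_0$ is checked by tracing the cyclotomic Frobenius $\varphi_{\mathrm{cyc}}$ through these identifications: on the perfectoid/perfect base and more generally on the level of $\THH(\mathbb{F}_p)$ one knows $\varphi$ acts as the Witt Frobenius on $\pi_0$, and one propagates this to $\widehat{\Prism_R}$ via the description of the divided Frobenii $\varphi_i$ on $\mathcal{N}^{\geq i}$, which by construction reduce mod $p$ to the inverse Cartier operator exactly matching $\phi/p^i$ on $\acrysh(R)$ (this is where \Cref{strongNygaardcontrol} is used a second time).

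The main obstacle, I expect, is not the abstract matching of graded pieces but the \emph{naturality and independence of choices}: one must check that the isomorphism $\acrysh(R) \simeq \widehat{\Prism_R}$ does not depend on the chosen lift $S$ (or perfect presentation $R^{\flat}\twoheadrightarrow R$), and that it is functorial in $R\in\qrspp$ so that it descends along the quasisyntomic topology to all of $\qsyn_{\mathbb{F}_p}$. The cleanest way to handle this is to observe that both sides are, by construction, left Kan extended (resp.\ right Kan extended via descent) from a small enough class of objects — $p$-completed free $\delta$-rings on the de Rham side, large perfectoid-type rings on the $\THH$ side — where the comparison is essentially the classical crystalline-to-derived-de-Rham comparison of \cite{Bhattpadic} made filtered; uniqueness of such extensions then forces the two to agree functorially. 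A secondary subtlety is verifying that the Frobenius $\varphi_{\mathrm{cyc}}$ really is \emph{multiplicative} and agrees with the ring endomorphism induced by $\varphi\colon R\to R$ and not merely with some twist of it — this requires care with the identification $\TP(R;\mathbb{Z}_p)/p = \HP(R/\mathbb{F}_p;\mathbb{Z}_p)$ and the behavior of the cyclotomic Frobenius under it, but it is ultimately pinned down by the perfect case (item (5) above), where everything is the honest Witt vector Frobenius.
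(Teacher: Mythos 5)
Your proposal takes a genuinely different route from the paper, and unfortunately that route has gaps as presented. The paper explicitly foregoes the direct topological construction of the comparison map (which is the original BMS2 argument, and is what you are essentially trying to reconstruct) in favor of the rigidity theorem \Cref{BLMrigidity}: a functor $F\colon\qrspp\to\mathrm{Rings}$ valued in $p$-complete $p$-torsionfree rings with $F(-)/p\simeq L\Omega_{-/\mathbb{F}_p}$ admits a \emph{unique} isomorphism to $\acrys(-)$ lifting the mod-$p$ identification. Because such an isomorphism is unique, all of the naturality and independence-of-choices worries that you flag as ``the main obstacle'' evaporate: the isomorphism is \emph{forced}, so there is nothing to check. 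That is precisely what the rigidity theorem buys, and it is the key idea you are missing.

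Two further gaps in your argument. First, before rigidity can even be applied, one has to ``decomplete'': the topological side yields $\widehat{\Prism_R}$ with $\widehat{\Prism_R}/p\simeq\widehat{L\Omega_{R/\mathbb{F}_p}}$ (Hodge-completed), whereas rigidity pins down the functor with value $L\Omega_{R/\mathbb{F}_p}$, not its Hodge completion. The paper handles this by defining $\Prism_R$ via left Kan extension from finitely generated polynomial $\mathbb{F}_p$-algebras (where the completion is harmless), producing a $p$-adic deformation of $L\Omega_{-/\mathbb{F}_p}$ to which rigidity applies, and then passing back to $\widehat{\Prism_R}$ at the end. Your construction via ``choose a $p$-torsionfree $\delta$-lift $S$ of $R$'' also does not exist for general $R\in\qrspp$: the defining ideal of $R$ over $R^{\flat}$ need not admit a lift to $W(R^{\flat})$ making $W(R^{\flat})/I$ both $p$-torsionfree and $\delta$-stable, and ``passing to a quasisyntomic cover'' reintroduces exactly the descent problem you need the isomorphism to solve.

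Second, even granting a comparison isomorphism $\Prism_R\simeq\acrys(R)$, the equality of the two Nygaard filtrations is not a formal consequence of ``same abstract graded pieces.'' The inclusion $\mathcal{N}^{\geq i}\Prism_R\subset\mathcal{N}^{\geq i}\acrys(R)$ does follow formally from the fact that $\varphi$ carries $\mathcal{N}^{\geq i}\Prism_R$ into $p^i\Prism_R$ and the characterization of the Nygaard filtration via $p^i\mid\varphi(x)$, but the reverse inclusion requires an explicit argument. The paper reduces to $R=\mathbb{F}_p[x^{1/p^\infty}]/(x)$ (via a surjectivity argument on tensor products of such rings) and then runs a weight argument using the $\mathbb{G}_m$-action $x\mapsto tx$ to see that $\frac{x^i}{i!}$ cannot survive in $\Prism_R/\mathcal{N}^{\geq i}\Prism_R$. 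Your appeal to ``a five-lemma/limit argument degree by degree'' presupposes a map inducing isomorphisms on graded pieces, which is what needs to be proved. So, in short: a correct version of your approach would essentially amount to re-deriving BMS2's original argument, whereas the paper's proof is structured around rigidity specifically to avoid that; and as written, both the existence of lifts and the equality (not mere containment) of the filtrations are left unaddressed.
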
 

\begin{corollary}[The motivic filtration] 
For $R \in \qsyn_{\mathbb{F}_p}$, there is a convergent and exhaustive descending
$\mathbb{Z}$-indexed multiplicative
filtration $\mathrm{Fil}^{\geq \ast} \TC^-(R), \mathrm{Fil}^{\geq \ast} \TP(R)$ such that
\begin{gather} \mathrm{gr}^i \TC^-(R) = \mathcal{N}^{\geq i}
\widehat{LW \Omega_R}
[2i]  \\
\mathrm{gr}^i \TP(R) =  \widehat{L W \Omega_R}[2i] .
\end{gather}
With respect to these filtrations, the cyclotomic Frobenius is the divided
Frobenius $\varphi/p^i$ on the $i$th graded piece. 
\end{corollary}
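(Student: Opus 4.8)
The plan is to reduce the entire statement to the case of quasiregular semiperfect $\mathbb{F}_p$-algebras, where everything needed is already available, and then descend. First I would recall that the motivic filtrations on $\TC^-(-;\mathbb{Z}_p)$ and $\TP(-;\mathbb{Z}_p)$ are by definition the double-speed Postnikov filtrations of these functors viewed as (hypercomplete, by \Cref{descentforHH}) sheaves of spectra on $\qsyn$, and that $\qrspp$ is a basis for $\qsyn_{\mathbb{F}_p}$. Consequently, for any $R \in \qsyn_{\mathbb{F}_p}$ the filtered objects $\mathrm{Fil}^{\geq \ast}\TP(R)$, $\mathrm{Fil}^{\geq \ast}\TC^-(R)$ and all their associated gradeds are obtained by quasisyntomic descent from their restrictions to $\qrspp$, so it suffices to work there. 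For $R \in \qrspp$, \Cref{evennnessqrsp} shows that $\THH_*(R;\mathbb{Z}_p)$, hence $\TC^-_*(R;\mathbb{Z}_p)$ and $\TP_*(R;\mathbb{Z}_p)$, are concentrated in even degrees; thus the motivic filtration is literally the double-speed Postnikov filtration of each spectrum, it is $\mathbb{Z}$-indexed, multiplicative, complete and exhaustive levelwise, with $\mathrm{gr}^i\TP(R) = \pi_{2i}\TP(R)[2i] = \Prismc_R\{i\}[2i]$ and $\mathrm{gr}^i\TC^-(R) = \pi_{2i}\TC^-(R)[2i] = \mathcal{N}^{\geq i}\Prismc_R\{i\}[2i]$ by the definitions of the Breuil--Kisin twists.

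Next I would dispose of the twists. In characteristic $p$ a quasiregular semiperfect ring $R$ receives a surjection from its tilt $R^\flat$, a perfect $\mathbb{F}_p$-algebra, and taking $R^\flat$ as the perfectoid base one has $\Prismc_{R^\flat} = W(R^\flat)$, $\xi = p$, hence $\widetilde\xi = \varphi(\xi) = \varphi(p) = p$. Using \Cref{THHperfectoid} and the surrounding computation $\TP_*(R^\flat;\mathbb{Z}_p) = W(R^\flat)[u^{\pm 1}]$, the class $u$ trivializes the twists compatibly with maps of $R^\flat$-algebras, so that $\Prismc_R\{i\} \simeq \widehat{\Prism_R}$ and $\mathcal{N}^{\geq i}\Prismc_R\{i\} \simeq \mathcal{N}^{\geq i}\widehat{\Prism_R}$; moreover, under these trivializations the cyclotomic Frobenius $\varphi\colon \TC^-(R) \to \TP(R)$ induces on $\pi_{2i}$ exactly the divided Frobenius $\varphi/\widetilde\xi^{\,i} = \varphi/p^i\colon \mathcal{N}^{\geq i}\widehat{\Prism_R} \to \widehat{\Prism_R}$. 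Now I invoke \Cref{TCandacrysh}: for $R \in \qrspp$ there is a functorial ring isomorphism $\widehat{\Prism_R} \simeq \acrysh(R)$ matching the Nygaard filtrations and carrying $\varphi_{\mathrm{cyc}}$ to the Frobenius-induced endomorphism, hence $\varphi/p^i$ to the divided Frobenius $\varphi/p^i$ of \Cref{nygaardfiltacrys}. This yields, on $\qrspp$, the identifications $\mathrm{gr}^i\TP(R) = \acrysh(R)[2i]$ and $\mathrm{gr}^i\TC^-(R) = \mathcal{N}^{\geq i}\acrysh(R)[2i]$, compatibly with the divided Frobenii.

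Finally I would descend. By the discussion preceding the corollary, the quasisyntomic sheaf on $\qsyn_{\mathbb{F}_p}$ restricting on $\qrspp$ to $\acrysh(-)$ with its Nygaard filtration is precisely $\widehat{LW \Omega_{-}}$ equipped with $\{\mathcal{N}^{\geq \ast}\widehat{LW \Omega_{-}}\}$, and the divided Frobenius descends to the corresponding map there. Combining this with the first paragraph, for any $R \in \qsyn_{\mathbb{F}_p}$ we obtain a $\mathbb{Z}$-indexed, multiplicative, convergent and exhaustive descending filtration on $\TC^-(R)$ and on $\TP(R)$ with $\mathrm{gr}^i\TP(R) = \widehat{LW \Omega_R}[2i]$ and $\mathrm{gr}^i\TC^-(R) = \mathcal{N}^{\geq i}\widehat{LW \Omega_R}[2i]$, the cyclotomic Frobenius restricting on the $i$th graded piece to $\varphi/p^i$. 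Multiplicativity is inherited from multiplicativity of the Postnikov ($=$ motivic) filtration, and convergence and exhaustiveness pass through the descent because they hold levelwise on the basis $\qrspp$, where $\TP(R;\mathbb{Z}_p)$ and $\TC^-(R;\mathbb{Z}_p)$ have even homotopy groups occurring in cofinally many degrees in both directions.

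The hard part here is essentially nothing new: \Cref{TCandacrysh} and the de Rham--Witt comparison already carry all the content, so the corollary is a bookkeeping assembly. The one place where care is genuinely needed is the treatment of the Breuil--Kisin twists in equal characteristic $p$ in the second paragraph --- checking that they trivialize compatibly so that no twist survives in the final formula, and that $\widetilde\xi = \varphi(p) = p$ makes the induced map on $\mathrm{gr}^i$ literally $\varphi/p^i$ --- together with verifying that completeness and exhaustiveness of the $\mathbb{Z}$-indexed filtration on $\TP$, obvious levelwise on $\qrspp$, are preserved under quasisyntomic descent.
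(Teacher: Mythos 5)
Your proof is correct and follows essentially the same route the paper intends: reduce to $\qrspp$ where the motivic filtration is the double-speed Postnikov filtration of a spectrum with even homotopy, trivialize the Breuil--Kisin twists using $\xi = p$ over the perfect base $R^\flat$, apply \Cref{TCandacrysh} to identify $\widehat{\Prism_R}$ with $\acrysh(R)$ compatibly with Nygaard filtrations and Frobenii, and descend. The paper states this corollary without separate proof precisely because it is this bookkeeping assembly of \Cref{TCandacrysh} and \Cref{evennnessqrsp}.
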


We will give a proof of these results below, in a slightly different manner
than \cite{BMS2}. 
While there is an explicit topological argument in \cite{BMS2} in the case of
certain algebras, we argue instead using the following rigidity property of crystalline 
cohomology as a $p$-adic deformation of de Rham cohomology. 

\begin{theorem}[{Cf.~\cite[Th.~10.1.2]{BLM}}] 
\label{BLMrigidity}
Let $R \mapsto F(R),  \ \qrspp \to \mathrm{Rings}$ be a functor on quasiregular
semiperfect $\mathbb{F}_p$-algebras taking values in $p$-adically complete,
$p$-torsionfree rings. Suppose given an isomorphism of ring-valued functors
$F(-)/p \simeq L \Omega_{-/\mathbb{F}_p}$. 
Then there is a unique isomorphism $F(-) \simeq \acrys(-)$ lifting the specified
isomorphism modulo $p$. 
\end{theorem}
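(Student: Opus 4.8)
The plan is to prove the sharper statement that $\acrys(-)$ is, up to unique isomorphism, the only functor from $\qrspp$ to $p$-complete $p$-torsionfree rings whose reduction modulo $p$ is $L\Omega_{-/\mathbb{F}_p}$; applying this to $F$ gives the theorem. Since any such functor takes values in $p$-torsionfree $p$-complete rings, it is recovered from the compatible system of its reductions modulo $p^{n}$, each flat over $\mathbb{Z}/p^{n}$, so I would argue by induction on $n$: given a chosen functorial isomorphism $F(-)/p^{n}\simeq\acrys(-)/p^{n}$, first lift it to an isomorphism modulo $p^{n+1}$, and then show the lift is unique. Because both sides are $p$-torsionfree, the kernel of each of $F(R)/p^{n+1}\to F(R)/p^{n}$ and $\acrys(R)/p^{n+1}\to\acrys(R)/p^{n}$ is canonically $L\Omega_{R/\mathbb{F}_p}$ (via multiplication by $p^{n}$), so at each stage one is comparing two functorial square-zero extensions of the $\mathbb{F}_p$-algebra $L\Omega_{R/\mathbb{F}_p}$ by itself.

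Next I would invoke the standard deformation theory of square-zero extensions to reduce the whole statement to two vanishing assertions about natural operations on the functor $L\Omega_{-/\mathbb{F}_p}$: (i) there are no nonzero natural $\mathbb{F}_p$-linear derivations $L\Omega_{-/\mathbb{F}_p}\to L\Omega_{-/\mathbb{F}_p}$, which forces uniqueness of the lift; and (ii) the natural $\mathrm{Ext}^{1}$-group classifying functorial square-zero extensions of $L\Omega_{-/\mathbb{F}_p}$ by itself vanishes, so that the extension classes coming from $F$ and from $\acrys$ necessarily agree and a comparison isomorphism exists. (One does not need to kill the $\mathrm{Ext}^{2}$ obstruction here, since $\acrys(-)$ already exhibits an honest lift.) Both groups are computed against the cotangent complex of the functor $L\Omega_{-/\mathbb{F}_p}$, with a Frobenius twist entering — precisely the twist already present in the derived conjugate filtration — in the identification of square-zero extensions of $\mathbb{F}_p$-algebras with $\mathbb{Z}/p^{2}$-algebras.

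To carry out (i) and (ii) I would use that $L\Omega_{-/\mathbb{F}_p}$, together with its derived conjugate filtration whose graded pieces are $\bigwedge^{i}L^{(1)}_{-/\mathbb{F}_p}[-i]$, is left Kan extended within $\qrspp$ from the free quasiregular semiperfect $\mathbb{F}_p$-algebras $\mathbb{F}_p[x_{t}^{1/p^{\infty}}\colon t]/(x_{t}\colon t\in S)$. This replaces the ill-behaved object $L\Omega_{R/\mathbb{F}_p}$ for a single $R$ by a computation on these generators, where the conjugate filtration degenerates to an explicit divided-power structure. On the bottom piece $\mathrm{gr}^{0}_{\mathrm{conj}}=R^{(1)}$ the cotangent complex vanishes for perfect generators by \Cref{perfectcot}, so there is no room for natural derivations or extension classes there; on the higher pieces, which involve only divided powers $\Gamma^{i}$ of flat modules over the Frobenius twist, I would run the same Frobenius-descent/divided-power Poincar\'e lemma argument as in the proof of \Cref{LOmegadivpower} to see that the relevant contributions are themselves left Kan extended from free pairs and carry no nontrivial classes. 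Reassembling along the conjugate filtration yields (i) and (ii), and the theorem follows, since the difference of two comparison isomorphisms is, to leading $p$-adic order at each inductive stage, such a natural derivation and hence zero.

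The hard part will be precisely steps (i)–(ii): the functor $L\Omega_{-/\mathbb{F}_p}$ is far from smooth — in characteristic $p$ its cotangent complex over $\mathbb{F}_p$ is a genuine complex rather than a flat module — so no cheap vanishing (such as higher cohomology on an affine) is available, and one must also be careful that the relevant $\mathrm{Ext}$-groups are the \emph{natural} ones, computed in a category of functors rather than objectwise. The resolution, and the real content, is to never argue with a single $R$ but to push everything through functoriality down to the free and perfect objects, where the conjugate and divided-power filtrations make the deformation theory completely explicit; this is exactly the rigidity of crystalline cohomology as the canonical $p$-adic deformation of de Rham cohomology, treated via saturated Dieudonn\'e complexes in \cite[Th.~10.1.2]{BLM}.
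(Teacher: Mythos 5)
The paper does not prove \Cref{BLMrigidity}; the bracketed ``Cf.~\cite[Th.~10.1.2]{BLM}'' in the statement is the reference, and no argument appears in the text. So your proposal is a genuinely independent attempt rather than a reconstruction of the paper's proof.

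Your deformation-theoretic framing --- lift a functorial isomorphism mod $p^n$ to mod $p^{n+1}$, controlled by natural derivations and natural square-zero extension classes of $L\Omega_{-/\mathbb{F}_p}$ --- is a reasonable outline and \emph{would} suffice if the two vanishing statements (i) and (ii) could be established. But the step you point to as doing the work is incorrect as stated: you claim that ``on the bottom piece $\mathrm{gr}^{0}_{\mathrm{conj}}=R^{(1)}$ the cotangent complex vanishes for perfect generators by \Cref{perfectcot}.'' The objects you Kan extend from, $\mathbb{F}_p[x_t^{1/p^\infty}: t\in T]/(x_s : s\in S)$, are not perfect when $S\neq\emptyset$ --- they are quasiregular semiperfect, and since the Frobenius twist over $\mathbb{F}_p$ is trivial one has $R^{(1)}=R$, with $L_{R/\mathbb{F}_p}\simeq I/I^2[1]$ nonzero. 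So the claimed vanishing fails, and the reduction stalls precisely where you need it. Beyond this, the functor-level $\mathrm{Ext}$ computations that you yourself flag as the hard content are not actually carried out.

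There is a more elementary route that avoids deformation theory in functor categories entirely, and which I would recommend as the correct attack. On any perfect $P\in\qrspp$, the hypothesis gives $F(P)/p = L\Omega_{P/\mathbb{F}_p} = P$; since $F(P)$ is $p$-complete and $p$-torsionfree, classical strict $p$-ring theory forces $F(P)=W(P)$, uniquely. For general $R\in\qrspp$, naturality along $R^\flat\to R$ gives $W(R^\flat)=F(R^\flat)\to F(R)$; mod $p$ this is the composite $R^\flat\twoheadrightarrow R\hookrightarrow L\Omega_{R/\mathbb{F}_p}$ ($\mathrm{gr}^0$ of the conjugate filtration). The divided power structure on $L\Omega_{R/\mathbb{F}_p}=F(R)/p$ --- in particular $\bar y^{\,p}=0$ for $\bar y$ in the PD-ideal --- together with $p$-torsionfreeness of $F(R)$ shows $y^p\in pF(R)$, hence $y^p/p\in F(R)$, for any $y$ lifting an element of the PD-ideal; iterating yields all divided powers, and the constants $\frac{(p^k)!}{p!(p!)^p\cdots}$ are $p$-adic units, so the image of $\ker(W(R^\flat)\to R)$ in $F(R)$ admits divided powers. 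The universal property of the $p$-completed PD-envelope then gives a map $\acrys(R)\to F(R)$ that is an isomorphism mod $p$, hence an isomorphism. Uniqueness follows from uniqueness on perfect rings and the fact that $\acrys(R)$ is topologically generated by $W(R^\flat)$ and divided powers, which are unique in a $p$-torsionfree ring when they exist. This uses only the universal properties of Witt vectors and PD-envelopes, plus the discreteness and nilpotence structure of $L\Omega_{R/\mathbb{F}_p}$ on $\qrspp$, and requires no vanishing of natural cohomology groups.

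Your overall structure (induct on $p$-adic order, compare square-zero extensions, note that the $\mathrm{Ext}^2$ obstruction is irrelevant because $\acrys$ already exhibits a lift) is not wrong in spirit, and the final paragraph correctly identifies the difficulty. But as written the proposal does not give a proof: the one concrete vanishing claim is false, and the rest is deferred.
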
 

\begin{remark} 
\Cref{BLMrigidity} has very recently been generalized by Mondal \cite{Mondal}: 
de Rham cohomology of $\mathbb{F}_p$-algebras admits a unique deformation over
any local artinian ring with residue field $\mathbb{F}_p$ (coming from the
base-change of crystalline cohomology). 
Mondal's work relies on some of the stacky ideas studied by Drinfeld
\cite{Drinfeld}. 
\end{remark} 

\begin{proof}[Proof of \Cref{TCandacrysh}] 
To begin with, we cannot directly apply \Cref{BLMrigidity} since 
for a quasiregular semiperfect $R \in \qrspp$, 
we have 
$\widehat{\Prism_R}/p = \widehat{L \Omega_{R/\mathbb{F}_p}}$, i.e., we obtain
the Hodge completion of the derived de Rham cohomology. 
We thus need to first ``decomplete;'' this will follow
\Cref{nygaarddecompletion} (in the case where the perfectoid base is
$\mathbb{F}_p$). 
To do this, 
we define the construction $R \mapsto \{ \mathcal{N}^{\geq \ast}\Prism_R\}$ on $\qsyn_{\mathbb{F}_p}$ by
restricting $R \mapsto \{ \mathcal{N}^{\geq \ast}\widehat{\Prism_R}\}$ to finitely generated polynomial $\mathbb{F}_p$-algebras
and then left Kan extending to all quasisyntomic $\mathbb{F}_p$-algebras. 
By construction (and \Cref{Lomegasmooth}), it follows that $R \mapsto \Prism_{R}$ is a $p$-adic
deformation of $R \mapsto L \Omega_{R/\mathbb{F}_p}$; in particular, $\Prism$
defines a sheaf of spectra on $\qsyn_{\mathbb{F}_p}$. 
It is easy to see that the completion of the filtered sheaf 
$\left\{\mathcal{N}^{\geq \ast} \Prism_R\right\}$ is indeed
$\left\{\mathcal{N}^{\geq \ast} \widehat{\Prism_R}\right\}$ (since the
associated graded terms of 
$\left\{\mathcal{N}^{\geq \ast} \widehat{\Prism_R}\right\}$, i.e., $\pi_{2*}
\THH(R)$, are already left Kan
extended from their unfolding to polynomial algebras). 

It follows from \Cref{BLMrigidity} that there is a unique functorial isomorphism
(for $R \in \qrspp$)
$\Prism_R \simeq \acrys(R)$ for $R \in \qrspp$ compatible with 
the isomorphism mod $p$ to $L \Omega_{R/\mathbb{F}_p}$; in particular, this
isomorphism is compatible 
the projection
maps to $R$. 
Moreover, again by left Kan extension the cyclotomic Frobenius defines an
endomorphism 
$$\varphi_{\mathrm{cyc}}: \Prism_R \to \Prism_R $$
carrying $\mathcal{N}^{\geq i}\Prism_R$ into $p^i \Prism_R$. 
We observe that $\phi_{\mathrm{cyc}}: \Prism_R \to \Prism_R$ (or $\acrys(R) \to
\acrys(R)$) is necessarily the endomorphism induced by functoriality
from the Frobenius $\varphi: R \to R$.
Note first that this is indeed the case when $R$ is perfect, by
\Cref{THHperfectoid}, and
$\Prism_R = \acrys(R) = W(R)$. 
It follows that when $R$ is semiperfect, 
we have by naturality (along $R^{\flat} \to R$) a commutative diagram
\[ \xymatrix{
W(R^{\flat}) = \acrys(R^{\flat}) \ar[d]  \ar[r]^{\phi_{\mathrm{cyc}}} &
W(R^{\flat}) \ar[d]  \\
\acrys(R) \ar[r]^{\varphi_{\mathrm{cyc}}} & \acrys(R)
}.\]
It follows that $\varphi_{\mathrm{cyc}}$ and $\acrys(\varphi)$ are both
endomorphisms of $\acrys(R)$ which agree when restricted to $W(R^{\flat})$; now
taking divided power envelopes and $p$-completing again show that they agree.

We have now shown that there is an isomorphism 
$\Prism_R \simeq \acrys(R)$, compatible with Frobenii (the cyclotomic Frobenius
and the Frobenius induced by functoriality), so we simply write $\varphi$. 
Now $\varphi( \mathcal{N}^{\geq i} \Prism_R)  \subset p^i \Prism_R$. 
It follows that under the above comparison, we have
$\mathcal{N}^{\geq i} \Prism_R \subset \mathcal{N}^{\geq i} \acrys(R)$
as submodules of $\Prism_R = \acrys(R)$: that is, the filtration coming from
$\mathrm{THH}$ is contained in the Nygaard filtration. It remains to show that
both filtrations are actually equal. Given this, it will follow that
$\widehat{\Prism_R} = \acrysh(R)$, compatible with filtrations and Frobenii (by
$p$-adic continuity).

It suffices to show that the inclusion
$\mathcal{N}^{\geq i} \Prism_R \subset \mathcal{N}^{\geq i} \acrys(R)$
is an equality for each $i$ and for the ring $R =
\mathbb{F}_p[x^{1/p^\infty}]/(x)$. 
Indeed, the inclusion will then be an equality for any tensor product of such
rings. Now any quasiregular semiperfect $\mathbb{F}_p$-algebra admits a
surjection from a tensor product $R' $of such rings which also induces a surjection
on cotangent complexes, from which we see that $\Prism_{R'} \to \Prism_R$ and
$\acrys(R') \to \acrys(R) $ induce surjections on filtered pieces
(cf.~\cite[Prop.~8.12]{BMS2} for this argument).  
Thus we can reduce to the case $R = \mathbb{F}_p[x^{1/p^\infty}]/(x)$.

Suppose $R = \mathbb{F}_p[x^{1/p^\infty}]/(x)$. 
we know that $\Prism_R = \acrys(R)$ is the $p$-completion of
$\mathbb{Z}_p[x^{1/p^\infty}, \frac{x^j}{j!}]_{j \geq 0}$. 
Indeed, it suffices (thanks to the explicit description of the Nygaard
filtration in this case, and since $p \in \mathcal{N}^{\geq 1} \Prism_R$) to see
that 
$\frac{x^i}{i!}$ belongs to the image of $ \mathcal{N}^{\geq i} \Prism_R \to
\Prism_R$ or equivalently maps to zero in the quotient
$\Prism_R/\mathcal{N}^{\geq i} \Prism_R$.  
For this, we use a grading argument, also used in \cite[Sec.~10.2]{BLM}. 
To this end, we can replace $R$ by $k[x^{1/p^\infty}]/(x)$ for $k$ 
a perfect field containing a transcendental element $t$; this admits an
automorphism given by sending $x^i \mapsto t^i x^i, i \in \mathbb{Z}[1/p]_{\geq
0}$. 
Now $\frac{x^i}{i!}$ has weight $i$ with respect to the induced grading (from
this automorphism), while
the weights of $\Prism_R/\mathcal{N}^{\geq i} \Prism_R$ are less than $i$ as one
sees from comparing $\mathcal{N}^{j} \Prism_R = \pi_{2j} \THH(R)$, which admits
a finite filtration by $\bigwedge^{j'} L_{R/\mathbb{F}_p}[-j']$ for $j' \leq j$.
\end{proof}

We now unwind the construction explicitly for Cartier smooth algebras and in particular verify the Segal conjecture.
In the context of topological Hochschild homology, the \emph{Segal conjecture} 
refers to the assertion that the cyclotomic Frobenius  $\varphi: \THH(R) \to
\THH(R)^{tC_p}$ should be an equivalence in sufficiently high degrees after
$p$-completion. The reason for the name comes from the case $R = \mathbb{S}$; in
this case, $\THH(\mathbb{S}) = \mathbb{S}$ and the Frobenius (or the unit map) $\mathbb{S} \to \mathbb{S}^{tC_p}$ is
actually a $p$-adic
equivalence \cite{Gun80, Lin80},\footnote{See also \cite{HW20} for a recent proof at
$p = 2$ using topological Hochschild homology.} which is the special case of the Segal Burnside ring
conjecture  for the group $C_p$ (in general a theorem of Carlsson \cite{Car84}). 
In the classical approach to topological cyclic homology, this implies that the
genuine $C_{p^n}$-fixed points of $\THH$ agree $p$-adically with the
$C_{p^n}$-homotopy fixed points for all $n \geq 0$, an insight due to
\cite{Tsalidis, BBLR} and formalized in \cite[Cor.~II.4.9]{NS18}. 
Many cases in which topological cyclic homology has been effectively computed for ring
spectra such as \cite{AR, Ausoni} are cases where the Segal conjecture holds,
and the Segal conjecture for $\THH$ seems to play a central role in the theory. 

Given this, it would be of interest to 
better understand the class of quasisyntomic rings 
for which some version of the Segal conjecture holds; this seems closely related
to some type of regularity of the ring. 
Since everything in sight is endowed with a motivic filtration, one expects to
see the Segal conjecture at the level of filtered pieces. 

We will describe this in characteristic $p$. 
First, if $R$ is any $\mathbb{F}_p$-algebra, we have an equivalence
of spectra $\THH(R)^{tC_p} \simeq \HP(R/\mathbb{F}_p) \simeq \TP(R)/p$,
cf.~\cite[Prop.~6.4]{BMS2}. 
Consequently, for $R \in \qsyn_{\mathbb{F}_p}$ we can define a complete,
$\mathbb{Z}$-indexed descending multiplicative motivic
filtration on $\THH(R)^{tC_p}$ such that $\gr^i = \widehat{L
\Omega_{R/\mathbb{F}_p}}[2i]$. 
With respect to this, the cyclotomic Frobenius $\phi: \THH(R) \to
\THH(R)^{tC_p}$ is a filtered map, and on $\gr^i$ it is given by the map 
$\phi/p^i: \mathcal{N}^{\geq i} \Prism_R/\mathcal{N}^{\geq i+1} \Prism_R \to
\Prismc_R/p = \widehat{L \Omega_{R/\mathbb{F}_p}}$; this follows from the
commutative diagram
\[ \xymatrix{
\TC^-(R) \ar[d]  \ar[r]^{\varphi} &  \TP(R) \ar[d]  \\
\THH(R) \ar[r]^{\phi} &  \THH(R)^{tC_p}
}\]
and descent from $\qrspp$, since the cyclotomic Frobenius realizes the divided
Frobenius. 
Note that $\TC^-(R)/x = \THH(R)$ for $x \in \pi_{-2}(\TC^-(\mathbb{F}_p))$ as
in \Cref{THHperfectoid} and the following discussion. 

The next result (for smooth algebras over a perfect field)  appears  in \cite[Cor.~8.18]{BMS2}. 
The last part had previously appeared in  \cite{Hes18}. 

\begin{corollary} 
For $R/\mathbb{F}_p$ Cartier smooth, 
the cyclotomic Frobenius $\THH(R) \to \THH(R)^{tC_p}$ has the property that on
$\mathrm{gr}^i[-2i]$,
it identifies with the $(-i)$-connective cover
$\tau^{\leq i} ( \Omega^{\ast}_{R/\mathbb{F}_p}) \to \Omega^{\ast}_{R/\mathbb{F}_p}$. 
If $\Omega^i_{R/\mathbb{F}_p} = 0$ for $i >d$ (e.g., $R$ could be smooth of
dimension $d$ over a perfect ring), then 
$\THH(R) \to \THH(R)^{tC_p}$ has $(d-3)$-truncated homotopy fiber. 
\end{corollary}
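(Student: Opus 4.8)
The plan is to read off the first statement from the description, already obtained above, of the cyclotomic Frobenius on the graded pieces of the motivic filtration, and then to deduce the second statement by bounding the homotopy fiber one filtration degree at a time.

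\emph{First statement.} The discussion preceding the corollary shows that $\varphi\colon\THH(R)\to\THH(R)^{tC_p}$ is filtered for the motivic filtrations, and that after the shift $[-2i]$ the map it induces on $\mathrm{gr}^i$ is the divided Frobenius $\varphi/p^i\colon \mathcal{N}^{\ge i}\widehat{LW\Omega_R}/\mathcal{N}^{\ge i+1}\widehat{LW\Omega_R}\to\widehat{LW\Omega_R}/p$. When $R$ is Cartier smooth, \Cref{CartsmoothddR} identifies $\widehat{LW\Omega_R}/p=\widehat{L\Omega_{R/\mathbb{F}_p}}$ with the ordinary de Rham complex $\Omega^\bullet_{R/\mathbb{F}_p}$, and \Cref{Cartsmoothphidiv} (via \eqref{divfrobgr}) identifies this divided Frobenius with the truncation $\tau^{\le i}\Omega^\bullet_{R/\mathbb{F}_p}\to\Omega^\bullet_{R/\mathbb{F}_p}$; for $i<0$ both sides vanish, consistently with $\tau^{\le i}\Omega^\bullet_{R/\mathbb{F}_p}=0$. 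This is exactly the first claim, so nothing further is needed there.

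\emph{Second statement.} I would set $F=\mathrm{fib}\bigl(\varphi\colon\THH(R)\to\THH(R)^{tC_p}\bigr)$. Taking associated graded objects is exact, so $F$ inherits a filtration from $\varphi$, and the first statement gives
\[ \mathrm{gr}^i F\;\simeq\;\mathrm{fib}\bigl(\tau^{\le i}\Omega^\bullet_{R/\mathbb{F}_p}\to\Omega^\bullet_{R/\mathbb{F}_p}\bigr)[2i]\;\simeq\;\bigl(\tau^{\ge i+1}\Omega^\bullet_{R/\mathbb{F}_p}\bigr)[2i-1]. \]
Now $\Omega^j_{R/\mathbb{F}_p}=0$ for $j>d$ forces $\tau^{\ge i+1}\Omega^\bullet_{R/\mathbb{F}_p}$ into cohomological degrees $[i+1,d]$: it vanishes for $i\ge d$, and for $0\le i\le d-1$ the object $\mathrm{gr}^i F$ lies in homological degrees $[2i-d-1,\,i-2]$, hence is $(i-2)$-truncated and a fortiori $(d-3)$-truncated. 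For $i<0$ one has $\mathrm{gr}^i F\simeq\Omega^\bullet_{R/\mathbb{F}_p}[2i-1]$, supported in homological degrees $\le 2i-1\le -3\le d-3$. So every graded piece is $(d-3)$-truncated.

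The remaining step is to pass from the graded pieces to $F$ itself, and this is where the only real care is needed. The motivic filtration on $\THH(R)$ is complete (its $i$-th stage is $2i$-connective) and is constant in nonpositive filtration degree, while the motivic filtration on $\THH(R)^{tC_p}=\TP(R)/p$ is complete and exhaustive; hence the filtration on $F$ is complete and exhaustive too. From completeness, $\mathrm{Fil}^{\ge i}F=\varprojlim_{j}\bigl(\mathrm{Fil}^{\ge i}F/\mathrm{Fil}^{\ge j}F\bigr)$ is a limit of finite extensions of $(d-3)$-truncated spectra, so the Milnor sequence shows each $\mathrm{Fil}^{\ge i}F$ is $(d-3)$-truncated; and then $F=\varinjlim_{i\to-\infty}\mathrm{Fil}^{\ge i}F$ is $(d-3)$-truncated as a filtered colimit of $(d-3)$-truncated spectra. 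The delicate point is precisely this convergence bookkeeping, including checking that the infinitely many negative-index graded pieces do no harm (they are increasingly coconnective); by contrast the geometric content, namely the identification of $\varphi/p^i$ on graded pieces, has already been isolated in \Cref{Cartsmoothphidiv} and ultimately rests on the Cartier isomorphism.
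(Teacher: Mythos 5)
Your proof is correct and follows the paper's approach: identify the graded pieces of $\varphi$ via \Cref{Cartsmoothphidiv}, observe that $\mathrm{gr}^i F$ lives in degrees $\leq i-2$ (hence $\leq d-3$ once $i<d$) and vanishes for $i\geq d$, and conclude. The only difference is that the paper leaves the passage from graded pieces to the fiber $F$ implicit, whereas you spell out the completeness/exhaustiveness bookkeeping; that is a reasonable elaboration of the same argument rather than a different route.
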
 
\begin{proof} 
This follows from \Cref{Cartsmoothphidiv}, given the above discussion and
description of the map $\THH(R) \to \THH(R)^{tC_p}$ on associated graded pieces. 
Note that the fiber of the map on $\mathrm{gr}^i$ lives in degrees 
$\leq i-2$ for each $i$; under the second hypothesis,   
the map $\phi$ moreover induces isomorphisms on associated gradeds
$\mathrm{gr}^i$ for $i \geq d$, which yields the second assertion. 
\end{proof} 

\begin{question} 
Note that the results of \cite[Sec.~9]{BMS2} (as well as \cite{Prisms}) prove the Segal conjecture for 
smooth algebras over a perfectoid ring. 
The Segal conjecture does hold for $p$-complete regular noetherian rings (with
mild finiteness hypotheses), cf.~\cite[Sec.~5]{MTR} for an argument that
relies on the Beilinson--Lichtenbaum conjecture for the generic fiber. For
smooth algebras over the ring of integers in a $p$-adic field (for $p > 2$),
this was proved (in a purely $p$-adic fashion, using $\TR$) in \cite{HM03,
HM04}.  
Can one prove a filtered version of the Segal conjecture in such
cases?\end{question} 

\begin{remark} 
Another approach to the motivic filtration on $\TP$ in characteristic $p$ has
been given in \cite{AN}, using the expression $\TP(-; \mathbb{Z}_p) =
(\TR(-; \mathbb{Z}_p))^{t \mathbb{T}}$
proved in \emph{loc.~cit,} using that $\TR_*$ for a regular
$\mathbb{F}_p$-algebra recovers the de Rham--Witt complex \cite{Hes96}. 
This approach does not seem to recover the filtration in mixed characteristic,
though. 
\end{remark} 
\section{The $\mathbb{Z}_p(i)$: an example and some questions}

In this section, we revisit the calculation of the $p$-adic $K$-theory (or
equivalently the topological cyclic homology) of the dual
numbers over a perfect field $k$ of characteristic $p$. 
This calculation is due to Hesselholt--Madsen \cite{HM97}, using the methods of equivariant
stable homotopy theory, and has since been extended and generalized in various
directions (see for instance \cite{HM97trunc, Hes05, AGH09, AGHL}). The
calculation (more generally for truncated polynomial algebras) was
recently revisited by Speirs \cite{Speirs}, who gave another
approach using the Nikolaus--Scholze formula 
\eqref{formulaTC}
for $\mathrm{TC}$. 

To begin with, let us discuss some aspects of the motivic filtration on $\TC$ in
particular. 
\begin{construction}[The motivic filtration on $\TC(-; \mathbb{Z}_p)$]
Given an animated  $\mathbb{Z}_p$-algebra $R$, there is a natural complete, descending,
multiplicative $\mathbb{Z}_{\geq 0}$-indexed filtration $\mathrm{Fil}^{\geq
\ast} \TC(R; \mathbb{Z}_p)$ with \begin{equation} \label{grTC}\gr^i \TC(R; \mathbb{Z}_p) =
\mathbb{Z}_p(i)(R)[2i], \quad i \geq 0.\end{equation}
\end{construction} 

For quasisyntomic rings, the motivic filtration is defined using descent as
before: it is the double speed 
Postnikov filtration (in $\shv(\qsyn, \sp)$). In particular, for a quasisyntomic
ring $R$, we have the expression
\begin{equation} 
\label{Zpigeneralexpr}
\mathbb{Z}_p(i)(R)  = \mathrm{fib}( \mathrm{id}- \phi_i: \mathcal{N}^{\geq i}
\Prismc_{R}\left\{i\right\} \to \Prismc_{R}\left\{i\right\}).
\end{equation} 
This is a consequence of \eqref{formulaTC}, using that the two terms above are
as the graded quotients of $\TC^-(R; \mathbb{Z}_p), \TP(R; \mathbb{Z}_p)$. 
Now a key feature of $\TC(-; \mathbb{Z}_p)$ (not shared by $\TC^-(-;
\mathbb{Z}_p), \TP(-;\mathbb{Z}_p)$) is that it commutes with sifted colimits,
\cite[Theorem G]{CMM}. Moreover, one can check that the motivic filtration on
$\TC(-; \mathbb{Z}_p)$ when defined in the above manner on quasisyntomic rings
is actually left Kan extended from $p$-complete polynomial algebras, and
has the property that $\mathrm{Fil}^{\geq i} \TC(R; \mathbb{Z}_p) $ is
$(i-1)$-connective, cf.~\cite[Theorem 5.1]{AMNN}. Using these facts, one can left
Kan extend the motivic filtration on $\TC(-; \mathbb{Z}_p)$ from $p$-complete polynomial algebras to all
$p$-complete animated rings. This will agree with the previous definition on
quasisyntomic rings and, because of the connectivity statement, will converge.

Let us describe the $\mathbb{Z}_p(i)$ in characteristic $p$. 
\begin{construction}[The $\mathbb{Z}_p(i)$ in characteristic $p$] 
For $R $ an animated $\mathbb{F}_p$-algebra, we recall that we have the derived de
Rham--Witt cohomology $LW \Omega_R$, the Nygaard filtration $\mathcal{N}^{\geq i
}  $, and the divided Frobenius $\varphi/p^i: \mathcal{N}^{\geq i} LW \Omega_R
\to L W \Omega_R$. 
There is a natural isomorphism
\begin{equation} \mathbb{Z}_p(i)(R) = 
\mathrm{fib}(\mathrm{id} -\varphi/p^i): \mathcal{N}^{\geq i} L W \Omega_R \to LW
\Omega_R. \label{Zpicharp}
\end{equation}
In fact, for $R \in \qsyn_{\mathbb{F}_p}$, this expression after Nygaard completion is a consequence 
of \eqref{Zpigeneralexpr}
and the identification of $\Prismc_{-}$ and $\widehat{LW\Omega_{-}}$ on
$\qsyn_{\mathbb{F}_p}$. 
Then the main observation is that the Nygaard completion is
actually superfluous in the expression for the $\mathbb{Z}_p(i)$, by a $p$-adic
continuity argument, cf.~the proof of \cite[Prop.~8.20]{BMS2}.  
Left Kan extending from finitely generated polynomial algebras, we obtain
\eqref{Zpicharp} in general.  
\end{construction} 

Using the above, one can identify the $\mathbb{Z}_p(i)$ on regular
$\mathbb{F}_p$-algebras 
as the pro-\'etale cohomology of the logarithmic de Rham--Witt sheaves $W
\Omega_{\mathrm{log}}^i[-i]$, cf.~\cite[Cor.~8.21]{BMS2}; as in \cite{KM18}
this can be generalized to Cartier smooth algebras. 
By the results of \cite{GL00}, this shows that the $\mathbb{Z}_p(i)$ are in fact
$p$-adic \'etale motivic cohomology for regular $\mathbb{F}_p$-schemes. 

\begin{remark}[The Frobenius action] 
In general, if $R$ is any $\mathbb{F}_p$-algebra, 
a key feature of the $\mathbb{Z}_p(i)$ is that the Frobenius $\phi: R \to R$
acts as multiplication by $p^i$ on $\mathbb{Z}_p(i)(R)$; this is evident from
its expression \eqref{Zpicharp}. 
In particular, the motivic filtration on $\TC(R; \mathbb{Z}_p)$ becomes, after
rationalization, the eigenspace decomposition based on the Frobenius action, and
consequently splits canonically.
This is of course analogous to the motivic filtration on $K$-theory, which
rationally diagonalizes the Adams operations. 
\end{remark}

We will give here a description of the $\mathbb{Z}_p(i)$ of $k[x]/x^2$ (which
by the motivic filtration easily gives the description of
$\mathrm{TC}(k[x]/x^2)$); the
calculation is very close to that of \cite{Speirs}. 
The key observation is that $k[x]/x^2$ admits a natural lift to a quasisyntomic $\delta$-ring, 
so we can use the divided power de Rham complex to compute everything. 
Our strategy is to use 
the fact (cf.~\Cref{deltaliftnygaard}) that if $R$ is a $p$-torsionfree,
$p$-complete $\delta$-ring in $\qsyn$, then there is a functorial divided
Frobenius $\varphi/p^i : L \Omega_{R/\mathbb{Z}_p}^{\geq i} \to L
\Omega_{R/\mathbb{Z}_p}$, and the Nygaard
filtration on $L \Omega_{R/\mathbb{Z}_p} = LW \Omega_{R/p}$ is the tensor product of the  
$p$-adic filtration and the Hodge filtration. This is a consequence of the case
of $p$-complete polynomial $\delta$-rings (by a left Kan extension argument),
where it follows by a direct comparison between the de Rham complex of $R$ and
the de Rham--Witt complex of $R/p$. Compare \cite[Sec.~8.1.2]{BMS2}.

Now 
let $(A, I)$ be a pair 
such that $A, A/I$ are equipped with the compatible structure of $\delta$-rings, 
and suppose both $A, A/I$ are $p$-complete, $p$-torsionfree, and quasisyntomic. 
Suppose $A/p$ is Cartier smooth and the Frobenius on $A/p$ is flat. 
As we have seen, 
the Nygaard filtration on 
$  LW \Omega_{A/(I, p)} = L \Omega_{(A/I) / \mathbb{Z}_p}$ identifies with the tensor product of the
Hodge filtration 
and the $p$-adic filtration. 
Using 
\Cref{LOmegadivpower}, we can identify the 
de Rham complex (with its Hodge filtration, and Frobenius) of $A/I$ as the
divided power de Rham complex of $A$, with divided powers along $I$, and with
the divided power filtration. 
In light of this, we obtain an explicit cochain complex representing
$\mathbb{Z}_p(i)(A/(I, p))$. 

Indeed, we construct the divided power de Rham complex $\Omega^\bullet_{D_I(A)}$
with the (cochain-level) Nygaard filtration $\mathcal{N}^{\geq \ast}
\Omega^\bullet_{D_I(A)}$ (defined as the tensor product filtration as above). 
The Frobenius lift $\phi: A \to A$ induces a Frobenius lift $\phi$ on 
$\Omega^\bullet_{D_I(A)}$ which becomes divisible by $p^i$ on the
subcomplex\footnote{This crucially uses that $I$ is preserved by $\delta$.
For example, 
if $x \in I$, then the element $\frac{x^i}{i!} \in \mathrm{Fil}^{\geq i}D_I(A)$ has the
property that $\phi( \frac{x^i}{i!}) = \frac{(x^p + p \delta(x))^i}{i!} =
\sum_{a+b=i} \frac{x^{pa} p^b \delta(x)^b}{a!b!}$. Each term in the sum is
divisible by $p^i$ in $D_I(A)$.}
$\mathcal{N}^{\geq i} \Omega^\bullet_{D_I(A)}$, and we can take
\begin{equation} \mathbb{Z}_p(i)(A/(I, p)) = \mathrm{fib}\left( \mathcal{N}^{\geq i} \Omega^\bullet_{D_I(A)}
\xrightarrow{\phi/p^i-1} \Omega^\bullet_{D_I(A)} \right).
\label{Zpilambda}\end{equation}
To see this, we may reduce by descent to the case where $A/(I, p)$ is quasiregular
semiperfect and $A$ is a perfect $\delta$-ring (where the Frobenius $\phi$ is an
isomorphism), and then the above is effectively the definition. Thus, we get an 
expression for $\mathbb{Z}_p(i)(A/(I, p))$ as the mapping fiber of a cochain map
between explicit cochain complexes. 

In this section, we illustrate the above method by proving the following result. 
By the motivic filtration on $\TC$, this 
reproves the 
result of \cite[Th.~8.2]{HM97} (and by \cite{McCarthy97} yields the calculation
of $K_*(k[x]/x^2; \mathbb{Z}_p)$ since $K_*(k; \mathbb{Z}_p) = \mathbb{Z}_p$ in degree zero,
\cite{Hiller, Kratzer}). 

\begin{theorem} 
Let $k$ be a perfect $\mathbb{F}_p$-algebra for $p > 2$. 
Then for $i > 0$, we have
that $\mathbb{Z}_p(i)(k[x]/x^2)$ has no cohomology in degree outside $1$. 
Moreover, $H^1( \mathbb{Z}_p(i)( k[x]/x^2))$ is isomorphic to a direct sum 
$\bigoplus_{1 \leq d \leq 2i-1, (d, 2p) = 1} W_{n(i,d)}(k)$ where 
$n = n(i,d)$ is chosen such that $p^{n-1}d \leq 2i-1 < p^n d$. 
\end{theorem}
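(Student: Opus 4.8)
\section*{Proof proposal}

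The plan is to apply the explicit divided-power de Rham description \eqref{Zpilambda} to a natural $\delta$-ring lift of $k[x]/x^2$, and then use the grading by powers of $x$ to cut the problem down to an explicit linear-algebra computation over $W(k)$. First I would set up the lift: take $A=W(k)[x]$ ($p$-completed) with the $\delta$-structure determined by $\delta(x)=0$ (so $\varphi(x)=x^p$), and $I=(x^2)$. Since $\delta(x^2)=2x^p\delta(x)+p\delta(x)^2=0\in(x^2)$, the ideal $I$ is a $\delta$-ideal, so $A/I=W(k)[x]/(x^2)$ inherits a compatible $\delta$-structure; moreover $A$ and $A/I$ are $p$-torsionfree, $p$-complete and quasisyntomic, $A/p=k[x]$ is Cartier smooth with flat (indeed free) Frobenius, and $L_{(A/I)/A}=(x^2)/(x^4)[1]$ is $p$-completely flat. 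Hence $A/(I,p)=k[x]/x^2$ and \eqref{Zpilambda} applies, giving
\[
\mathbb{Z}_p(i)(k[x]/x^2)\;=\;\mathrm{fib}\!\left(\mathcal{N}^{\geq i}\Omega^\bullet_{D_I(A)}\xrightarrow{\ \varphi/p^i-1\ }\Omega^\bullet_{D_I(A)}\right),
\]
where $D_I(A)$ is the $p$-completed divided power envelope of $(x^2)$ in $W(k)[x]$, with $W(k)$-basis $\{x^n/\lfloor n/2\rfloor!\}_{n\geq 0}$; its de Rham complex is the two-term complex $D_I(A)\xrightarrow{d}D_I(A)\,dx$; and by \Cref{deltaliftnygaard} together with quasisyntomic descent the Nygaard filtration is the tensor product of the divided-power Hodge filtration and the $p$-adic filtration.

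Next I would exploit the weight grading by powers of $x$ (with $|x|=1$, $|dx|=1$). Everything above is graded, and since $\varphi$ multiplies weight by $p$, the mapping fiber splits as a direct sum over the weight-$0$ summand and, for each $d\geq 1$ with $p\nmid d$, the ``$p$-orbit'' summand supported on weights $\{dp^m\}_{m\geq 0}$. In weight $w$ the de Rham differential $D_I(A)_w\to(D_I(A)\,dx)_w$ is, on the distinguished basis elements, multiplication by $w$ up to a unit, hence a $p$-adic unit whenever $w$ is even (using $p>2$). Consequently, for $d$ even both the source and target $d$-orbit complexes are acyclic weight-by-weight, so their mapping fiber is acyclic; likewise the weight-$0$ summand computes $\mathbb{Z}_p(i)(k)$, which vanishes for $i>0$ because $k$ is perfect (here $\varphi/p^i-1$ becomes, after untwisting, $F-p^i$ on $W(k)$, an isomorphism modulo $p$). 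This leaves the orbits with $d$ odd and $p\nmid d$, i.e.~$(d,2p)=1$; and among these only $d\leq 2i-1$ can contribute, since for $d>2i-1$ one again gets a unit map and acyclicity (and $n(i,d)=0$, $W_0(k)=0$).

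Finally, for fixed $d$ with $(d,2p)=1$ and $d\leq 2i-1$, set $n=n(i,d)$ and compute everything explicitly. On weight $dp^m$ the de Rham differential is $\times p^m$ up to a unit; the degree-$0$ and degree-$1$ pieces of $\mathcal{N}^{\geq i}$ are $p^{a_m}W(k)$ and $p^{b_m}W(k)$ with $a_m=\max(0,i-\tfrac{dp^m-1}{2})$ and $b_m=\max(0,i-\tfrac{dp^m+1}{2})$; and, using the factorial estimate $v_p((jp)!/j!)=j$, the Frobenius $\varphi$ is $\times p^{(dp^m-1)/2}$ in degree $0$ and $\times p^{(dp^m+1)/2}$ in degree $1$, up to units and a Frobenius twist on $W(k)$. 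Passing to cohomology via the mapping-fiber long exact sequence, and using that $H^0$ of both two-term complexes vanishes, the whole $d$-orbit piece reduces to the single $W(k)$-linear map induced by $\varphi/p^i-1$,
\[
f_*\ \colon\ \textstyle\bigoplus_{m\geq 0}W_{e_m}(k)\longrightarrow\bigoplus_{m\geq 0}W_m(k),\qquad e_m=m+1\ (m\leq n-1),\quad e_m=m\ (m\geq n).
\]
This $f_*$ is ``bidiagonal'': its index-shifting part $(\varphi/p^i)_*$ is an isomorphism onto the degree-raised component for $m\leq n-1$, while its index-preserving part $-\mathrm{id}_*$ is an isomorphism for $m\geq n$, with the remaining entries divisible by $p$. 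An upper-triangular elimination---solve successively for the components $y_m$ with $m\geq n$ using the diagonal isomorphisms, then for $y_{n-2},y_{n-3},\dots,y_0$ using the super-diagonal isomorphisms---shows $f_*$ is surjective (so there is no $H^2$) and that its kernel is freely parametrized by the single component $y_{n-1}\in W_{e_{n-1}}(k)=W_n(k)$. Summing over $d$ then gives $H^1(\mathbb{Z}_p(i)(k[x]/x^2))\cong\bigoplus_{1\leq d\leq 2i-1,\ (d,2p)=1}W_{n(i,d)}(k)$, with all other cohomology vanishing. I expect the main obstacle to be precisely the bookkeeping in this last step: pinning down the $p$-adic valuations $a_m,b_m,e_m$ and those of $\varphi$ exactly (the factorial estimate, and the boundary cases $dp^m=2i\pm 1$), and then checking that $f_*$ has kernel \emph{equal} to $W_n(k)$ as a module, rather than merely a finite-length module of the right Euler characteristic---which forces one to use the explicit isomorphisms and the $F$-semilinearity over $W(k)$ instead of a dimension count. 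The hypothesis $p>2$ enters exactly in making the even-weight summands vanish.
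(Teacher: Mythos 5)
Your proposal follows essentially the same route as the paper's proof: the $\delta$-lift $A = W(k)[x]$ (after $p$-completion) with $\delta(x)=0$ and $I = (x^2)$, identification of the Nygaard-filtered complex with the divided-power de Rham complex via \Cref{LOmegadivpower} and \Cref{deltaliftnygaard}, decomposition along the internal weight grading into $p$-orbits $\{dp^m\}$, and a bidiagonal elimination on $H^1$. Two small points to tighten. First, in even weight $w = 2j$ the de Rham differential on the distinguished basis element is multiplication by $2$, not by $w$: $d(x^{2j}/j!) = 2\,x^{2j-1}/(j-1)!\,dx$, while the target basis element is $x^{2j-1}/(j-1)!\,dx$. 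Your phrase ``multiplication by $w$ up to a unit'' would wrongly suggest the differential is not a unit for, say, $w = 2p$; the actual coefficient $2$ is a unit precisely because $p>2$ (for odd $w=2j+1$ the coefficient is genuinely $w$). The acyclicity of even-weight orbits is therefore correct, but your stated reason is not. Second, the forward recursion solving for $y_m$ with $m \geq n$ requires a termination check in the infinite direct sum: the superdiagonal map $S_m$ gains $p$-adic valuation $m+1-n$ at each step, so the valuation of $y_m$ grows quadratically in $m$ while the annihilator $p^m$ of $W_m(k)$ grows only linearly, forcing eventual vanishing. The paper handles this more cleanly by first showing the restricted map on the $m \geq n$ subsum is an isomorphism (the canonical map is an isomorphism there and the divided Frobenius, transported by its inverse, is locally nilpotent), and then quotients down to a finite problem indexed by $0 \leq m < n$.
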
 
\begin{proof}

In the above strategy, we consider the example $(A, I) = ( \widehat{W(k)[x]}_p, (x^2))$ where the $\delta$-structure is such
that $\delta(x) = 0$, so the Frobenius lift carries $x \mapsto x^p$. 
It follows 
from \Cref{LOmegadivpower}
that $LW \Omega_{k[x]/x^2}$ is given by the
$p$-completion of the divided power de
Rham complex
\begin{equation} \label{cryskxx2} W(k) \left[x, \frac{x^{2j}}{j!}\right]_{j \geq 0} \to W(k) \left[x,
\frac{x^{2j}}{j!}\right]_{j
\geq 0} dx.  \end{equation}
The Hodge filtration is (as in \Cref{LOmegadivpower} again), given by the divided power filtration and the
naive filtration.  That is, 
$ L \Omega_{W(k)[x]/x^2}^{\geq i}$ corresponds to the $p$-completion
of the subcomplex
\begin{equation} \label{divpowfiltkx}  \bigoplus_{j \geq i}
W(k) \left\{ \frac{x^{2j}}{j!}, 
\frac{x^{2j+1}}{j!}
\right \} _{j \geq i} \xrightarrow{d} \bigoplus_{j \geq i-1} W(k) \left\{
\frac{x^{2j}}{j!}, \frac{x^{2j+1}}{j!}\right\} dx
\end{equation}
Also, the Frobenius is defined on the complex by sending 
$x \mapsto x^p$. 

It follows now from 
\Cref{deltaliftnygaard}
that $\mathcal{N}^{\geq i} LW \Omega_{k[x]/x^2}$ is given by 
the $p$-completion of the complex 
\begin{equation} \label{Ngeqikx} \bigoplus_{j \geq 0}
p^{\mathrm{max}(i-j, 0)} W(k) \left\{ \frac{x^{2j}}{j!}, 
\frac{x^{2j+1}}{j!}
\right \} 
\xrightarrow{d} \bigoplus_{j \geq 0} 
p^{\mathrm{max}(i-j-1, 0)}
W(k) \left\{
\frac{x^{2j}}{j!}, \frac{x^{2j+1}}{j!}\right\} dx
.\end{equation}
In particular, thanks to \eqref{Zpilambda}, we obtain that $\mathbb{Z}_p(i)( k[x]/x^2)$ is the mapping fiber of 
$\phi/p^i - 1$ from the complex \eqref{Ngeqikx} to \eqref{cryskxx2}.

Let us evaluate the $\mathbb{Z}_p(i)(k[x]/x^2)$. 
First, we can evaluate the cohomology of $LW \Omega_{k[x]/x^2}$. 
Note that (other than the $W(k)$ in internal and cohomological degree zero) only $H^1$ is nonzero, and it has a natural grading (where $|x| = 1$);
with respect to this grading, we have
easily from \eqref{cryskxx2}
\[ \left( H^1( LW\Omega_{k[x]/x^2} )\right)_d = \begin{cases} 
W(k)/d & d \text{ odd } \\
0 & d \text{ even}
 \end{cases}. \]
Explicitly, the generator in degree $d  = 2j+1$ is  $\frac{x^{2j}}{j!} dx$. 

Similarly, from \eqref{Ngeqikx} we see that
$\mathcal{N}^{\geq i} LW \Omega_{k[x]/x^2}$ has cohomology concentrated in
(cohomological) degree
$1$ other than $p^i W(k)$ in internal and cohomological degree
zero,\footnote{Alternatively, we could phrase everything in terms of the
relative cohomology relative to the ideal $(x)$, and ignore these degree zero
terms; in any case they do not contribute to the $\mathbb{Z}_p(i)$ for $i > 0$.} and 
with respect to the internal grading we have
\begin{equation} \label{H1Ny} H^1( \mathcal{N}^{\geq i} LW \Omega_{k[x]/x^2})_d = 
\begin{cases}
W(k)/pd & {d = 2j+1, j < i } \\
W(k)/d & {d = 2j+1, j \geq i } \\
0 & d \text{ even} 
\end{cases} . \end{equation}
Explicitly, the generator in degree $2j+1$ is $p^{\mathrm{max}(i-j-1, 0 )}
\frac{x^{2j}}{j!} dx$. 

Now we need to understand the canonical and divided Frobenius maps. 
\begin{enumerate}
\item  
Consider 
$\varphi/p^i: \mathcal{N}^{\geq i} LW\Omega_{k[x]/x^2} \to LW\Omega_{k[x]/x^2}$. 
This map multiplies the internal grading by $p$. Suppose $j < i$.   Then it carries the class of 
$p^{i-j-1}
\frac{x^{2j}}{j!} dx$
into \begin{equation} p^{-i}p^{i-j-1} \frac{x^{p 2j} d(x^p)}{j!}  = 
p^{-j} \frac{x^{ (2j+1)p -1} }{j!} dx
.\label{classwhichisgen} \end{equation}
Now $p^j j!$, $(jp)!$, and $( \frac{(2j+1)p - 1}{2} )!$ agree up to  $p$-adic
units.  
It follows from this that 
for $d = 2j + 1$ for $j < i$, 
$\phi/p^i$ carries the degree $d$ summand 
$H^1( \mathcal{N}^{\geq i} LW\Omega_{k[x]/x^2})_d $ isomorphically to the degree
$pd$ summand of 
$H^1(  LW\Omega_{k[x]/x^2})_{pd} $ (in fact, it carries a generator to the
generator thanks to \eqref{classwhichisgen}). 
\item For $j \geq i$, 
the map $\phi/p^i$ carries the generator in degree $d  = 2j+1$ to a nonunit
multiple of the generator in degree $pd$. 
More precisely, the generator $\frac{x^{2j}}{j!} dx$ is carried to 
$p^{j-i+1}$ times a generator (arguing as above). 
\item
For $d = 2j+1$ for $j \geq i$, 
the canonical map induces an isomorphism on degree $d$ summands. 
\end{enumerate}

Fix an odd integer  $d \geq 1$ such that $d$ is not divisible by $p$. 
In this case, 
we consider the map of abelian groups
\begin{equation} \label{abgroupmap} \bigoplus_{a \geq 0} H^1( \mathcal{N}^{\geq
i} LW\Omega_{k[x]/x^2})_{p^a d} 
\xrightarrow{\phi/p^i - 1} \bigoplus_{a \geq 0}H^1(  LW\Omega_{k[x]/x^2})_{p^a d} 
\end{equation}
This suffices for the calculation of $\mathbb{Z}_p(i)(k[x]/x^2)$, since we can decompose the map $\phi/p^i -1$ over such $d$. 

Let $n = n(i, d)$  be such that $p^{n-1} d \leq 2i-1 < p^n d$. 
We claim that 
\eqref{abgroupmap} is surjective, and the kernel is $W_{n(i, d)}(k)$ if $d \leq
2i-1$ (and zero if $d > 2i-1$). 
The map 
\begin{equation} \label{abgroupmap2} \bigoplus_{a \geq n} H^1( \mathcal{N}^{\geq
i} LW\Omega_{k[x]/x^2})_{p^a d} 
\xrightarrow{\phi/p^i - 1} \bigoplus_{a \geq n}H^1(  LW\Omega_{k[x]/x^2})_{p^a d} 
\end{equation}
is seen to be an isomorphism after $p$-completion since the canonical map is an isomorphism while 
the divided Frobenius (when considered as an endomorphism via the inverse to the
canonical map) is locally nilpotent, thanks to the calculation in item (2). 
Thus, to prove the claim about \eqref{abgroupmap}, it suffices to quotient by
the summands for $a \geq n$, and to consider the map 
\begin{equation}
\label{abgroupmap3}
\bigoplus_{a < n} H^1( \mathcal{N}^{\geq i} LW \Omega_{k[x]/x^2})_{p^a d} 
\xrightarrow{\phi/p^i - 1} \bigoplus_{a < n}H^1(  LW\Omega_{k[x]/x^2})_{p^a d}. 
\end{equation}
In fact, the enumerated statements above show (e.g., by filtering 
both sides and passing to associated gradeds)
that 
$$\bigoplus_{a < n-1} H^1(\mathcal{N}^{\geq i} LW\Omega_{k[x]/x^2})_{p^a d} 
\xrightarrow{\phi/p^i - 1} \bigoplus_{a < n}H^1(  LW\Omega_{k[x]/x^2})_{p^a d}$$ is
an isomorphism, whence the kernel of \eqref{abgroupmap3}
is isomorphic to $H^1( \mathcal{N}^{\geq i} LW\Omega_{k[x]/x^2})_{p^{n-1}d} =
W_n(k)$ (by \eqref{H1Ny}), as desired. 
\end{proof}

It would be interesting to revisit the various calculations of topological
cyclic homology of $p$-adic rings, traditionally carried out using $\TR$ and
equivariant stable homotopy theory, 
using the motivic filtration (and in particular to calculate the
$\mathbb{Z}_p(i)$). In principle, 
the above gives a purely algebraic approach to the calculation for
$\mathbb{F}_p$-algebras with lci singularities. 

\begin{question} 
Can one calculate the $\mathbb{Z}_p(i)$ of $\mathbb{F}_p$-algebras with worse
than lci singularities? 
\end{question} 

A basic example
would be the case of a square-zero extension $k \oplus V$, for $k$ a perfect
field and $V$ a $k$-vector space, where the $K$-theory is calculated in
\cite{LM08}. This calculation has been extended to perfectoid rings by
Riggenbach 
\cite{Riggenbach}, using the approach to $\mathrm{TC}$ of \cite{NS18}.

In mixed characteristic, one knows \cite[Sec.~10]{BMS2} that for formally
smooth algebras over $\mathcal{O}_C$, the
$\mathbb{Z}_p(i)$ are given by the truncated $p$-adic nearby cycles of the usual
Tate twists on the generic fiber; they thus are closely related to integral
$p$-adic Hodge theory. For $i \leq p-2$, or when one works up to bounded
denominators, it is shown in \cite{AMNN} that 
that the $\mathbb{Z}_p(i)$ recover ``syntomic cohomology'' in a form essentially
due to \cite{FM87, Kato}. 
It would be interesting to carry out more calculations of the $\mathbb{Z}_p(i)$
 and $\TC$ in mixed characteristic. 

Finally, the $K$-theory of $\mathbb{Z}/p^2$
is only known in a limited range \cite{Bru01, Ang11}. 
\begin{question} 
Can one compute $\mathbb{Z}_p(i)(\mathbb{Z}/p^n)$ (and thus the $K$-theory of
$\mathbb{Z}/p^n$) for $n > 1$? 
\end{question} 

The work \cite{BCM}
uses prismatic cohomology to show that $L_{K(1)} K(\mathbb{Z}/p^n) = 0$ for $n
\geq 1$; this fact (and some generalizations) are also proved by different methods in
\cite{LMMT, MTR}. 
However, accessing the $p$-adic $K$-groups (or the $\mathbb{Z}_p(i)$) themselves seems to be substantially
more difficult. 
A stacky approach to prismatic cohomology has been proposed by Drinfeld
\cite{Drinfeld} and Bhatt--Lurie, and in particular one expects that the coherent cohomology of the object
$\Sigma''$ introduced in \emph{loc.~cit.} should be 
related to the $\mathbb{Z}_p(i)$. 
We hope that an increased understanding of the structure of $\Sigma''$ and of
prismatic cohomology in general
will also shed some light on these $K$-theoretic questions. 
The very recent work of Liu--Wang \cite{LW21} on calculating
$\TC(\mathcal{O}_K; \mathbb{F}_p)$ via descent-theoretic methods is an important step in
this direction. 

\bibliographystyle{amsalpha}
\bibliography{THHsurvey}

\newcommand{\etalchar}[1]{$^{#1}$}
\providecommand{\bysame}{\leavevmode\hbox to3em{\hrulefill}\thinspace}
\providecommand{\MR}{\relax\ifhmode\unskip\space\fi MR }
\providecommand{\MRhref}[2]{%
  \href{http://www.ams.org/mathscinet-getitem?mr=#1}{#2}
}
\providecommand{\href}[2]{#2}
\begin{thebibliography}{AMGR17b}

\bibitem[AB19]{ALB20}
Johannes Ansch\"utz and Arthur-C{\'e}sar~Le Bras, \emph{Prismatic
  {D}ieudonn{\'e} theory}, arXiv preprint arXiv:1907.10525 (2019).

\bibitem[ABM21]{ABM}
Benjamin Antieau, Bhargav Bhatt, and Akhil Mathew, \emph{Counterexamples to
  {H}ochschild-{K}ostant-{R}osenberg in characteristic {$p$}}, Forum Math.
  Sigma \textbf{9} (2021), Paper No. e49, 26. \MR{4277271}

\bibitem[AGH09]{AGH09}
Vigleik Angeltveit, Teena Gerhardt, and Lars Hesselholt, \emph{On the
  {$K$}-theory of truncated polynomial algebras over the integers}, J. Topol.
  \textbf{2} (2009), no.~2, 277--294. \MR{2529297}

\bibitem[AGHL14]{AGHL}
Vigleik Angeltveit, Teena Gerhardt, Michael~A. Hill, and Ayelet Lindenstrauss,
  \emph{On the algebraic {$K$}-theory of truncated polynomial algebras in
  several variables}, J. K-Theory \textbf{13} (2014), no.~1, 57--81.
  \MR{3177818}

\bibitem[AGV72]{SGA4}
M.~Artin, A.~Grothendieck, and J.~L. Verdier, \emph{Th\'eorie des topos et
  cohomologie \'etale des sch\'emas ({SGA} 4)}, Lecture Notes in Mathematics,
  Springer-Verlag, 1972.

\bibitem[AMGR17a]{AMGR}
David Ayala, Aaron Mazel-Gee, and Nick Rozenblyum, \emph{Factorization homology
  of enriched $\infty$-categories}, arXiv preprint arXiv:1710.06414 (2017).

\bibitem[AMGR17b]{AMGRnaive}
\bysame, \emph{A naive approach to genuine {$G$}-spectra and cyclotomic
  spectra}, arXiv preprint arXiv:1710.06416 (2017).

\bibitem[AMMN20]{AMNN}
Benjamin Antieau, Akhil Mathew, Matthew Morrow, and Thomas Nikolaus, \emph{On
  the {B}eilinson fiber square}, arXiv preprint arXiv:2003.12541 (2020).

\bibitem[AMN18]{AMN}
Benjamin Antieau, Akhil Mathew, and Thomas Nikolaus, \emph{On the
  {B}lumberg-{M}andell {K}\"{u}nneth theorem for {TP}}, Selecta Math. (N.S.)
  \textbf{24} (2018), no.~5, 4555--4576. \MR{3874698}

\bibitem[AN21]{AN}
Benjamin Antieau and Thomas Nikolaus, \emph{Cartier modules and cyclotomic
  spectra}, J. Amer. Math. Soc. \textbf{34} (2021), no.~1, 1--78. \MR{4188814}

\bibitem[And18]{Andre18}
Yves Andr\'{e}, \emph{La conjecture du facteur direct}, Publ. Math. Inst.
  Hautes \'{E}tudes Sci. \textbf{127} (2018), 71--93. \MR{3814651}

\bibitem[Ang11]{Ang11}
Vigleik Angeltveit, \emph{On the algebraic {$K$}-theory of {W}itt vectors of
  finite length}, arXiv preprint arXiv:1101.1866 (2011).

\bibitem[Ant19]{Ant18}
Benjamin Antieau, \emph{Periodic cyclic homology and derived de {R}ham
  cohomology}, Ann. K-Theory \textbf{4} (2019), no.~3, 505--519. \MR{4043467}

\bibitem[Aok20]{Aoki}
Ko~Aoki, \emph{Tensor triangular geometry of filtered objects and sheaves},
  arXiv preprint arXiv:2001.00319 (2020).

\bibitem[AR02]{AR}
Christian Ausoni and John Rognes, \emph{Algebraic {$K$}-theory of topological
  {$K$}-theory}, Acta Math. \textbf{188} (2002), no.~1, 1--39. \MR{1947457}

\bibitem[Aus10]{Ausoni}
Christian Ausoni, \emph{On the algebraic {$K$}-theory of the complex
  {$K$}-theory spectrum}, Invent. Math. \textbf{180} (2010), no.~3, 611--668.
  \MR{2609252}

\bibitem[Avr99]{Av99}
Luchezar~L. Avramov, \emph{Locally complete intersection homomorphisms and a
  conjecture of {Q}uillen on the vanishing of cotangent homology}, Ann. of
  Math. (2) \textbf{150} (1999), no.~2, 455--487. \MR{1726700}

\bibitem[BBLNR14]{BBLR}
Marcel B\"{o}kstedt, Robert~R. Bruner, Sverre Lun{\o}e-Nielsen, and John
  Rognes, \emph{On cyclic fixed points of spectra}, Math. Z. \textbf{276}
  (2014), no.~1-2, 81--91. \MR{3150193}

\bibitem[BCM20]{BCM}
Bhargav Bhatt, Dustin Clausen, and Akhil Mathew, \emph{Remarks on {$K
  (1)$}-local {$K$}-theory}, Selecta Math. (N.S.) \textbf{26} (2020), no.~3,
  Paper No. 39, 16. \MR{4110725}

\bibitem[BdJ11]{BhattDJ}
Bhargav Bhatt and Aise~Johan de~Jong, \emph{Crystalline cohomology and de
  {R}ham cohomology}, arXiv preprint arXiv:1110.5001 (2011).

\bibitem[Bei12]{Bei12}
A.~Beilinson, \emph{{$p$}-adic periods and derived de {R}ham cohomology}, J.
  Amer. Math. Soc. \textbf{25} (2012), no.~3, 715--738. \MR{2904571}

\bibitem[Ber74]{Ber74}
Pierre Berthelot, \emph{Cohomologie cristalline des sch\'{e}mas de
  caract\'{e}ristique {$p>0$}}, Lecture Notes in Mathematics, Vol. 407,
  Springer-Verlag, Berlin-New York, 1974. \MR{0384804}

\bibitem[BG16]{BG}
Clark Barwick and Saul Glasman, \emph{Cyclonic spectra, cyclotomic spectra, and
  a conjecture of {K}aledin}, arXiv preprint arXiv:1602.02163 (2016).

\bibitem[BGH18]{Exodromy}
Clark Barwick, Saul Glasman, and Peter Haine, \emph{Exodromy}, arXiv preprint
  arXiv:1807.03281 (2018).

\bibitem[BGT13]{BGT13}
Andrew~J. Blumberg, David Gepner, and Gon\c{c}alo Tabuada, \emph{A universal
  characterization of higher algebraic {$K$}-theory}, Geom. Topol. \textbf{17}
  (2013), no.~2, 733--838. \MR{3070515}

\bibitem[Bha]{trivcc}
Bhargav Bhatt, \emph{An imperfect ring with a trivial cotangent complex},
  Available at \url{http://www-personal.umich.edu/~bhattb/math/trivial-cc.pdf}.

\bibitem[Bha12a]{Bha12}
\bysame, \emph{Completions and derived de {R}ham cohomology}, arXiv preprint
  arXiv:1207.6193 (2012).

\bibitem[Bha12b]{Bhattpadic}
\bysame, \emph{{$p$}-adic derived de {R}ham cohomology}, arXiv preprint
  arXiv:1204.6560 (2012).

\bibitem[BHM93]{BHM93}
M.~B\"{o}kstedt, W.~C. Hsiang, and I.~Madsen, \emph{The cyclotomic trace and
  algebraic {$K$}-theory of spaces}, Invent. Math. \textbf{111} (1993), no.~3,
  465--539. \MR{1202133}

\bibitem[BIM19]{BIM}
Bhargav Bhatt, Srikanth~B. Iyengar, and Linquan Ma, \emph{Regular rings and
  perfect(oid) algebras}, Comm. Algebra \textbf{47} (2019), no.~6, 2367--2383.
  \MR{3957103}

\bibitem[BK86]{BK}
Spencer Bloch and Kazuya Kato, \emph{{$p$}-adic \'{e}tale cohomology}, Inst.
  Hautes \'{E}tudes Sci. Publ. Math. (1986), no.~63, 107--152. \MR{849653}

\bibitem[BLM18]{BLM}
Bhargav Bhatt, Jacob Lurie, and Akhil Mathew, \emph{Revisiting the de
  {R}ham--{W}itt complex}, to appear in Ast{\'e}risque, preprint available at
  arXiv:1805.05501 (2018).

\bibitem[Blo86]{Blo86}
Spencer Bloch, \emph{Algebraic cycles and higher {$K$}-theory}, Adv. in Math.
  \textbf{61} (1986), no.~3, 267--304. \MR{852815}

\bibitem[BM15]{BM15}
Andrew~J. Blumberg and Michael~A. Mandell, \emph{The homotopy theory of
  cyclotomic spectra}, Geom. Topol. \textbf{19} (2015), no.~6, 3105--3147.
  \MR{3447100}

\bibitem[BMS18]{BMS1}
Bhargav Bhatt, Matthew Morrow, and Peter Scholze, \emph{Integral {$p$}-adic
  {H}odge theory}, Publ. Math. Inst. Hautes \'{E}tudes Sci. \textbf{128}
  (2018), 219--397. \MR{3905467}

\bibitem[BMS19]{BMS2}
\bysame, \emph{Topological {H}ochschild homology and integral {$p$}-adic
  {H}odge theory}, Publ. Math. Inst. Hautes \'{E}tudes Sci. \textbf{129}
  (2019), 199--310. \MR{3949030}

\bibitem[Bru01]{Bru01}
Morten Brun, \emph{Filtered topological cyclic homology and relative
  {$K$}-theory of nilpotent ideals}, Algebr. Geom. Topol. \textbf{1} (2001),
  201--230. \MR{1823499}

\bibitem[BS15]{BSproet}
Bhargav Bhatt and Peter Scholze, \emph{The pro-\'{e}tale topology for schemes},
  Ast\'{e}risque (2015), no.~369, 99--201. \MR{3379634}

\bibitem[BS19]{Prisms}
Bhargav Bhatt and Peter Scholze, \emph{Prisms and prismatic cohomology}, arXiv
  preprint arXiv:1905.08229 (2019).

\bibitem[Car84]{Car84}
Gunnar Carlsson, \emph{Equivariant stable homotopy and {S}egal's {B}urnside
  ring conjecture}, Ann. of Math. (2) \textbf{120} (1984), no.~2, 189--224.
  \MR{763905}

\bibitem[CM21]{CM19}
Dustin Clausen and Akhil Mathew, \emph{Hyperdescent and {\'e}tale {K}-theory},
  Invent.~Math. \textbf{225} (2021), no.~3, 981--1076.

\bibitem[CMM21]{CMM}
Dustin Clausen, Akhil Mathew, and Matthew Morrow, \emph{{$K$}-theory and
  topological cyclic homology of henselian pairs}, J. Amer. Math. Soc.
  \textbf{34} (2021), no.~2, 411--473. \MR{4280864}

\bibitem[Cn06]{Cort06}
Guillermo Corti\~{n}as, \emph{The obstruction to excision in {$K$}-theory and
  in cyclic homology}, Invent. Math. \textbf{164} (2006), no.~1, 143--173.
  \MR{2207785}

\bibitem[{\v{C}}S19]{CS}
K{\k{e}}stutis {\v{C}}esnavi\v{c}ius and Peter Scholze, \emph{Purity for flat
  cohomology}, arXiv preprint arXiv:1912.10932 (2019).

\bibitem[DGM13]{DGM13}
Bj\o rn~Ian Dundas, Thomas~G. Goodwillie, and Randy McCarthy, \emph{The local
  structure of algebraic {K}-theory}, Algebra and Applications, vol.~18,
  Springer-Verlag London, Ltd., London, 2013. \MR{3013261}

\bibitem[DHI04]{DHI}
Daniel Dugger, Sharon Hollander, and Daniel~C. Isaksen, \emph{Hypercovers and
  simplicial presheaves}, Math. Proc. Cambridge Philos. Soc. \textbf{136}
  (2004), no.~1, 9--51. \MR{2034012}

\bibitem[Dri20]{Drinfeld}
Vladimir Drinfeld, \emph{Prismatization}, arXiv preprint arXiv:2005.04746
  (2020).

\bibitem[EHK{\etalchar{+}}20]{EHKSY}
Elden Elmanto, Marc Hoyois, Adeel~A. Khan, Vladimir Sosnilo, and Maria
  Yakerson, \emph{Modules over algebraic cobordism}, Forum Math. Pi \textbf{8}
  (2020), e14, 44. \MR{4190058}

\bibitem[FM87]{FM87}
Jean-Marc Fontaine and William Messing, \emph{{$p$}-adic periods and {$p$}-adic
  \'{e}tale cohomology}, Current trends in arithmetical algebraic geometry
  ({A}rcata, {C}alif., 1985), Contemp. Math., vol.~67, Amer. Math. Soc.,
  Providence, RI, 1987, pp.~179--207. \MR{902593}

\bibitem[Fon13]{Fo13}
Jean-Marc Fontaine, \emph{Perfecto\"{\i}des, presque puret\'{e} et
  monodromie-poids (d'apr\`es {P}eter {S}cholze)}, no. 352, 2013, S\'{e}minaire
  Bourbaki. Vol. 2011/2012. Expos\'{e}s 1043--1058, pp.~Exp. No. 1057, x,
  509--534. \MR{3087355}

\bibitem[FS02]{FS00}
Eric~M. Friedlander and Andrei Suslin, \emph{The spectral sequence relating
  algebraic {$K$}-theory to motivic cohomology}, Ann. Sci. \'{E}cole Norm. Sup.
  (4) \textbf{35} (2002), no.~6, 773--875. \MR{1949356}

\bibitem[Gab92]{Gab92}
Ofer Gabber, \emph{{$K$}-theory of {H}enselian local rings and {H}enselian
  pairs}, Algebraic {$K$}-theory, commutative algebra, and algebraic geometry
  ({S}anta {M}argherita {L}igure, 1989), Contemp. Math., vol. 126, Amer. Math.
  Soc., Providence, RI, 1992, pp.~59--70. \MR{1156502}

\bibitem[Gep19]{Gepner}
David Gepner, \emph{An introduction to higher categorical algebra}, Handbook of
  homotopy theory (Haynes Miller, ed.), CRC Press/Chapman and Hall, 2019.

\bibitem[GH99]{GH99}
Thomas Geisser and Lars Hesselholt, \emph{Topological cyclic homology of
  schemes}, Algebraic {$K$}-theory ({S}eattle, {WA}, 1997), Proc. Sympos. Pure
  Math., vol.~67, Amer. Math. Soc., Providence, RI, 1999, pp.~41--87.
  \MR{1743237}

\bibitem[GH06a]{GH06birelative}
\bysame, \emph{Bi-relative algebraic {$K$}-theory and topological cyclic
  homology}, Invent. Math. \textbf{166} (2006), no.~2, 359--395. \MR{2249803}

\bibitem[GH06b]{GH06}
\bysame, \emph{The de {R}ham-{W}itt complex and {$p$}-adic vanishing cycles},
  J. Amer. Math. Soc. \textbf{19} (2006), no.~1, 1--36. \MR{2169041}

\bibitem[GL00]{GL00}
Thomas Geisser and Marc Levine, \emph{The {$K$}-theory of fields in
  characteristic {$p$}}, Invent. Math. \textbf{139} (2000), no.~3, 459--493.
  \MR{1738056}

\bibitem[Goo86]{Goo86}
Thomas~G. Goodwillie, \emph{Relative algebraic {$K$}-theory and cyclic
  homology}, Ann. of Math. (2) \textbf{124} (1986), no.~2, 347--402.
  \MR{855300}

\bibitem[GR03]{GR03}
Ofer Gabber and Lorenzo Ramero, \emph{Almost ring theory}, Lecture Notes in
  Mathematics, vol. 1800, Springer-Verlag, Berlin, 2003. \MR{2004652}

\bibitem[GR04]{GRfoundation}
Ofer Gabber and Lorenzo Ramero, \emph{Foundations for almost ring theory --
  release 7.5}, 2004.

\bibitem[Gro66]{Gro66}
A.~Grothendieck, \emph{On the de {R}ham cohomology of algebraic varieties},
  Inst. Hautes \'{E}tudes Sci. Publ. Math. (1966), no.~29, 95--103. \MR{199194}

\bibitem[GRW89]{GRW89}
S.~Geller, L.~Reid, and C.~Weibel, \emph{The cyclic homology and {$K$}-theory
  of curves}, J. Reine Angew. Math. \textbf{393} (1989), 39--90. \MR{972360}

\bibitem[Gun80]{Gun80}
Jeremy Gunawardena, \emph{Segal's {B}urnside ring conjecture for cyclic groups
  of odd prime order}, JT {K}night prize essay, Cambridge (1980).

\bibitem[Hes96]{Hes96}
Lars Hesselholt, \emph{On the {$p$}-typical curves in {Q}uillen's
  {$K$}-theory}, Acta Math. \textbf{177} (1996), no.~1, 1--53. \MR{1417085}

\bibitem[Hes05]{Hes05}
\bysame, \emph{{$K$}-theory of truncated polynomial algebras}, Handbook of
  {$K$}-theory. {V}ol. 1, 2, Springer, Berlin, 2005, pp.~71--110. \MR{2181821}

\bibitem[Hes06]{Hesselholt06}
\bysame, \emph{On the topological cyclic homology of the algebraic closure of a
  local field}, An alpine anthology of homotopy theory, Contemp. Math., vol.
  399, Amer. Math. Soc., Providence, RI, 2006, pp.~133--162. \MR{2222509}

\bibitem[Hes18]{Hes18}
\bysame, \emph{Topological {H}ochschild homology and the {H}asse-{W}eil zeta
  function}, An alpine bouquet of algebraic topology, Contemp. Math., vol. 708,
  Amer. Math. Soc., Providence, RI, 2018, pp.~157--180. \MR{3807755}

\bibitem[Hil81]{Hiller}
Howard~L. Hiller, \emph{{$\lambda $}-rings and algebraic {$K$}-theory}, J. Pure
  Appl. Algebra \textbf{20} (1981), no.~3, 241--266. \MR{604319}

\bibitem[HM97a]{HM97trunc}
Lars Hesselholt and Ib~Madsen, \emph{Cyclic polytopes and the {$K$}-theory of
  truncated polynomial algebras}, Invent. Math. \textbf{130} (1997), no.~1,
  73--97. \MR{1471886}

\bibitem[HM97b]{HM97}
\bysame, \emph{On the {$K$}-theory of finite algebras over {W}itt vectors of
  perfect fields}, Topology \textbf{36} (1997), no.~1, 29--101. \MR{1410465}

\bibitem[HM03]{HM03}
\bysame, \emph{On the {$K$}-theory of local fields}, Ann. of Math. (2)
  \textbf{158} (2003), no.~1, 1--113. \MR{1998478}

\bibitem[HM04]{HM04}
\bysame, \emph{On the {D}e {R}ham-{W}itt complex in mixed characteristic}, Ann.
  Sci. \'{E}cole Norm. Sup. (4) \textbf{37} (2004), no.~1, 1--43. \MR{2050204}

\bibitem[HN19]{HN19}
Lars Hesselholt and Thomas Nikolaus, \emph{Topological cyclic homology},
  Handbook of homotopy theory (Haynes Miller, ed.), CRC Press/Chapman and Hall,
  2019.

\bibitem[Hoy14]{Hoy13}
Marc Hoyois, \emph{A quadratic refinement of the
  {G}rothendieck-{L}efschetz-{V}erdier trace formula}, Algebr. Geom. Topol.
  \textbf{14} (2014), no.~6, 3603--3658. \MR{3302973}

\bibitem[HW21]{HW20}
Jeremy Hahn and Dylan Wilson, \emph{Real topological {H}ochschild homology and
  the {S}egal conjecture}, Adv. Math. \textbf{387} (2021), Paper No. 107839,
  17. \MR{4274883}

\bibitem[Ill71]{Ill71}
Luc Illusie, \emph{Complexe cotangent et d\'{e}formations. {I}}, Lecture Notes
  in Mathematics, Vol. 239, Springer-Verlag, Berlin-New York, 1971.
  \MR{0491680}

\bibitem[Ill72]{Ill72}
\bysame, \emph{Complexe cotangent et d\'{e}formations. {II}}, Lecture Notes in
  Mathematics, Vol. 283, Springer-Verlag, Berlin-New York, 1972. \MR{0491681}

\bibitem[Ill79]{Ill79}
\bysame, \emph{Complexe de de {R}ham-{W}itt et cohomologie cristalline}, Ann.
  Sci. \'{E}cole Norm. Sup. (4) \textbf{12} (1979), no.~4, 501--661.
  \MR{565469}

\bibitem[Jar87]{Jardine}
J.~F. Jardine, \emph{Simplicial presheaves}, J. Pure Appl. Algebra \textbf{47}
  (1987), no.~1, 35--87. \MR{906403}

\bibitem[Kat70]{Ka70}
Nicholas~M. Katz, \emph{Nilpotent connections and the monodromy theorem:
  {A}pplications of a result of {T}urrittin}, Inst. Hautes \'{E}tudes Sci.
  Publ. Math. (1970), no.~39, 175--232. \MR{291177}

\bibitem[Kat87]{Kato}
Kazuya Kato, \emph{On {$p$}-adic vanishing cycles (application of ideas of
  {F}ontaine-{M}essing)}, Algebraic geometry, {S}endai, 1985, Adv. Stud. Pure
  Math., vol.~10, North-Holland, Amsterdam, 1987, pp.~207--251. \MR{946241}

\bibitem[KL15]{KL15}
Kiran~S. Kedlaya and Ruochuan Liu, \emph{Relative {$p$}-adic {H}odge theory:
  foundations}, Ast\'{e}risque (2015), no.~371, 239. \MR{3379653}

\bibitem[KM21]{KM18}
Shane Kelly and Matthew Morrow, \emph{{$K$}-theory of valuation rings}, Compos.
  Math. \textbf{157} (2021), no.~6, 1121--1142. \MR{4264079}

\bibitem[Kra80]{Kratzer}
Ch. Kratzer, \emph{{$\lambda $}-structure en {$K$}-th\'{e}orie alg\'{e}brique},
  Comment. Math. Helv. \textbf{55} (1980), no.~2, 233--254. \MR{576604}

\bibitem[KST21]{KST20}
Moritz Kerz, Florian Strunk, and Georg Tamme, \emph{Towards {V}orst's
  conjecture in positive characteristic}, Compos. Math. \textbf{157} (2021),
  no.~6, 1143--1171. \MR{4270122}

\bibitem[Kun69]{Ku69}
Ernst Kunz, \emph{Characterizations of regular local rings of characteristic
  {$p$}}, Amer. J. Math. \textbf{91} (1969), 772--784. \MR{252389}

\bibitem[Lev06]{Lev06}
Marc Levine, \emph{Chow's moving lemma and the homotopy coniveau tower},
  $K$-Theory \textbf{37} (2006), no.~1-2, 129--209. \MR{2274672}

\bibitem[Lev08]{Lev08}
\bysame, \emph{The homotopy coniveau tower}, J. Topol. \textbf{1} (2008),
  no.~1, 217--267. \MR{2365658}

\bibitem[Lin80]{Lin80}
Wen~Hsiung Lin, \emph{On conjectures of {M}ahowald, {S}egal and {S}ullivan},
  Math. Proc. Cambridge Philos. Soc. \textbf{87} (1980), no.~3, 449--458.
  \MR{556925}

\bibitem[LM08]{LM08}
Ayelet Lindenstrauss and Randy McCarthy, \emph{The algebraic {$K$}-theory of
  extensions of a ring by direct sums of itself}, Indiana Univ. Math. J.
  \textbf{57} (2008), no.~2, 577--625. \MR{2414329}

\bibitem[LMMT20]{LMMT}
Markus Land, Akhil Mathew, Lennart Meier, and Georg Tamme, \emph{Purity in
  chromatically localized algebraic {$K$}-theory}, arXiv preprint
  arXiv:2001.10425 (2020).

\bibitem[Lod98]{Loday}
Jean-Louis Loday, \emph{Cyclic homology}, second ed., Grundlehren der
  Mathematischen Wissenschaften [Fundamental Principles of Mathematical
  Sciences], vol. 301, Springer-Verlag, Berlin, 1998, Appendix E by Mar\'{\i}a
  O. Ronco, Chapter 13 by the author in collaboration with Teimuraz Pirashvili.
  \MR{1600246}

\bibitem[LT19]{LT19}
Markus Land and Georg Tamme, \emph{On the {$K$}-theory of pullbacks}, Ann. of
  Math. (2) \textbf{190} (2019), no.~3, 877--930. \MR{4024564}

\bibitem[Lur09]{HTT}
Jacob Lurie, \emph{Higher topos theory}, Annals of Mathematics Studies, vol.
  170, Princeton University Press, Princeton, NJ, 2009. \MR{2522659}

\bibitem[Lur17]{HA}
Jacob Lurie, \emph{Higher algebra}, 2017.

\bibitem[Lur18]{SAG}
\bysame, \emph{Spectral algebraic geometry}, 2018.

\bibitem[LW20]{LW21}
Ruochuan Liu and Guozhen Wang, \emph{Topological cyclic homology of local
  fields}, arXiv preprint arXiv:2012.15014 (2020).

\bibitem[Mad94]{Ma94}
Ib~Madsen, \emph{Algebraic {$K$}-theory and traces}, Current developments in
  mathematics, 1995 ({C}ambridge, {MA}), Int. Press, Cambridge, MA, 1994,
  pp.~191--321. \MR{1474979}

\bibitem[Mao21]{Mao}
Zhouhang Mao, \emph{Revisiting derived crystalline cohomology}, arXiv preprint
  arXiv:2107.02921 (2021).

\bibitem[Mat21]{MTR}
Akhil Mathew, \emph{On {$K(1)$}-local {TR}}, Compos. Math. \textbf{157} (2021),
  no.~5, 1079--1119. \MR{4256236}

\bibitem[McC97]{McCarthy97}
Randy McCarthy, \emph{Relative algebraic {$K$}-theory and topological cyclic
  homology}, Acta Math. \textbf{179} (1997), no.~2, 197--222. \MR{1607555}

\bibitem[Mon21]{Mondal}
Shubhodip Mondal, \emph{{$\mathbb{G}_a^{\mathrm{perf}}$}-modules and de {R}ham
  cohomology}, ar{X}iv preprint ar{X}iv:2101.03146 (2021).

\bibitem[MRT19]{MRT20}
Tasos Moulinos, Marco Robalo, and Bertrand To{\"e}n, \emph{A universal {HKR}
  theorem}, arXiv preprint arXiv:1906.00118 (2019).

\bibitem[MSV97]{MSV}
J.~McClure, R.~Schw\"{a}nzl, and R.~Vogt, \emph{{$THH(R)\cong R\otimes S^1$}
  for {$E_\infty$} ring spectra}, J. Pure Appl. Algebra \textbf{121} (1997),
  no.~2, 137--159. \MR{1473888}

\bibitem[NS18]{NS18}
Thomas Nikolaus and Peter Scholze, \emph{On topological cyclic homology}, Acta
  Math. \textbf{221} (2018), no.~2, 203--409. \MR{3904731}

\bibitem[Qui73]{Qui72}
Daniel Quillen, \emph{Higher algebraic {$K$}-theory. {I}}, Algebraic
  {$K$}-theory, {I}: {H}igher {$K$}-theories ({P}roc. {C}onf., {B}attelle
  {M}emorial {I}nst., {S}eattle, {W}ash., 1972), 1973, pp.~85--147. Lecture
  Notes in Math., Vol. 341. \MR{0338129}

\bibitem[Rak20]{Raksit20}
Arpon Raksit, \emph{Hochschild homology and the derived de {R}ham complex
  revisited}, arXiv preprint arXiv:2007.02576 (2020).

\bibitem[Ras18]{Raskin}
Sam Raskin, \emph{On the {D}undas--{G}oodwillie--{M}ccarthy theorem}, arXiv
  preprint arXiv:1807.06709 (2018).

\bibitem[Rig20]{Riggenbach}
Noah Riggenbach, \emph{On the algebraic {$K$}-theory of double points}, arXiv
  preprint arXiv:2007.01227 (2020).

\bibitem[RS10]{RS10}
Andreas Rosenschon and V.~Srinivas, \emph{The {G}riffiths group of the generic
  abelian 3-fold}, Cycles, motives and {S}himura varieties, Tata Inst. Fund.
  Res. Stud. Math., vol.~21, Tata Inst. Fund. Res., Mumbai, 2010, pp.~449--467.
  \MR{2906032}

\bibitem[Sch02]{Sch02}
Chad Schoen, \emph{Complex varieties for which the {C}how group mod {$n$} is
  not finite}, J. Algebraic Geom. \textbf{11} (2002), no.~1, 41--100.
  \MR{1865914}

\bibitem[Sch12]{Sch12}
Peter Scholze, \emph{Perfectoid spaces}, Publ. Math. Inst. Hautes \'{E}tudes
  Sci. \textbf{116} (2012), 245--313. \MR{3090258}

\bibitem[Spe20]{Speirs}
Martin Speirs, \emph{On the {$K$}-theory of truncated polynomial algebras,
  revisited}, Adv. Math. \textbf{366} (2020), 107083, 18. \MR{4070307}

\bibitem[{Sta}20]{stacks-project}
The {Stacks project authors}, \emph{The {S}tacks {P}roject},
  \url{https://stacks.math.columbia.edu}, 2020.

\bibitem[SZ18]{SzZa15}
Tam\'{a}s Szamuely and Gergely Z\'{a}br\'{a}di, \emph{The {$p$}-adic {H}odge
  decomposition according to {B}eilinson}, Algebraic geometry: {S}alt {L}ake
  {C}ity 2015, Proc. Sympos. Pure Math., vol.~97, Amer. Math. Soc., Providence,
  RI, 2018, pp.~495--572. \MR{3821183}

\bibitem[Tot16]{Tot16}
Burt Totaro, \emph{Complex varieties with infinite {C}how groups modulo 2},
  Ann. of Math. (2) \textbf{183} (2016), no.~1, 363--375. \MR{3432586}

\bibitem[Tsa98]{Tsalidis}
Stavros Tsalidis, \emph{Topological {H}ochschild homology and the homotopy
  descent problem}, Topology \textbf{37} (1998), no.~4, 913--934. \MR{1607764}

\bibitem[TT90]{TT90}
R.~W. Thomason and Thomas Trobaugh, \emph{Higher algebraic {$K$}-theory of
  schemes and of derived categories}, The {G}rothendieck {F}estschrift, {V}ol.
  {III}, Progr. Math., vol.~88, Birkh\"{a}user Boston, Boston, MA, 1990,
  pp.~247--435. \MR{1106918}

\bibitem[Voe02]{Voe02}
Vladimir Voevodsky, \emph{Motivic cohomology groups are isomorphic to higher
  {C}how groups in any characteristic}, Int. Math. Res. Not. (2002), no.~7,
  351--355. \MR{1883180}

\bibitem[Voe03]{Voe03}
\bysame, \emph{Motivic cohomology with {${\bf Z}/2$}-coefficients}, Publ. Math.
  Inst. Hautes \'{E}tudes Sci. (2003), no.~98, 59--104. \MR{2031199}

\bibitem[Voe11]{Voe11}
\bysame, \emph{On motivic cohomology with {$\bold Z/l$}-coefficients}, Ann. of
  Math. (2) \textbf{174} (2011), no.~1, 401--438. \MR{2811603}

\bibitem[Yek18]{Yek18}
Amnon Yekutieli, \emph{Flatness and completion revisited}, Algebr. Represent.
  Theory \textbf{21} (2018), no.~4, 717--736. \MR{3826724}

\end{thebibliography}

\end{document}